\tikzset{->-/.style={decoration={
  markings,
  mark=at position #1 with {\arrow{>}}},postaction={decorate}}}
\tikzset{middlearrow/.style={
        decoration={markings,
            mark= at position 0.5 with {\arrow{#1}} ,
        },
        postaction={decorate}
    }
}
\newcommand{\bbullet}{
\begin{tikzpicture}
  \draw[fill=black] circle (0.55ex);
\end{tikzpicture}
}
\newcommand{\hackcenter}[1]{
 \xy (0,0)*{#1}; \endxy}
\newcommand{\tte}{\mathtt{e}}
\newcommand{\ddxi}{\frac{\partial}{\partial x_i}}
\newcommand{\ddxj}{\frac{\partial}{\partial x_j}}
\newcommand{\ddxtwo}{\frac{\partial}{\partial x_2}}
\newcommand{\lb}{\lbrace}
\newcommand{\rb}{\rbrace}
\newcommand{\undvarepsilon}{\underline{\varepsilon}}
\newcommand{\undeta}{\underline{\eta}}
\newcommand{\Par}{\text{Par}}
\newcommand{\undx}{\underline{x}}
\newcommand{\undy}{\underline{y}}
\newcommand{\unda}{\underline{a}}
\newcommand{\undb}{\underline{b}}
\theoremstyle{definition}
\newtheorem{thm}{Theorem}[section]
\newtheorem{cor}[thm]{Corollary}
\newtheorem{lem}[thm]{Lemma}
\newtheorem{rem}[thm]{Remark}
\newtheorem{prop}[thm]{Proposition}
\newtheorem{defn}[thm]{Definition}
\newtheorem{example}[thm]{Example}
\numberwithin{equation}{section}
\def\C{{\mathbbm C}}
\def\N{{\mathbbm N}}
\def\Z{{\mathbbm Z}}
\def\F{{\mathbbm F}}
\newcommand{\Hom}{{\rm Hom}}
\newcommand{\HOM}{{\rm HOM}}
\renewcommand{\to}{\rightarrow}
\newcommand{\op}{{\rm op}}
\newcommand{\Res}{{\rm Res}}
\newcommand{\END}{{\rm END}}
\newcommand{\oh}{\mathrm{OH}} 
\newcommand{\oht}{\widetilde{\mathrm{OH}}} 
\newcommand{\gr}{\mathrm{Gr}} 
\def\sltwo{\mathfrak{sl}_2}
\def\gloneone{\mathfrak{gl}_{1|1}}
\def\Res{{\mathrm{Res}}}
\def\lra{{\longrightarrow}}
\def\dmod{{\mathrm{-dmod}}}   
\def\grank{{\mathrm{grank}}}
\def\mc{\mathcal}
\def\mf{\mathfrak}
\def\shuffle{\,\raise 1pt\hbox{$\scriptscriptstyle\cup{\mskip
               -4mu}\cup$}\,}
\newcommand{\refequal}[1]{\xy {\ar@{=}^{#1}
(-1,0)*{};(1,0)*{}};
\endxy}
\def\Cc{\mathcal{C}}
\def\1bb{\mathbb{1}}
\def\1{\mathbf{1}}
\def\bigind{\mc{I}}
\def\smind{\mathrm{Ind}}
\def\bigres{\mc{R}}
\def\smres{\mathrm{Res}}
\title{The differential graded odd nilHecke algebra}
\author{Alexander P. Ellis, You Qi}
\date{\today}
\newcommand{\sym}{\mathrm{\Lambda}}
\newcommand{\osym}{\mathrm{O\Lambda}}
\newcommand{\opol}{\mathrm{OPol}}
\newcommand{\onh}{\mathrm{ONH}}
\newcommand{\xt}{\widetilde{x}}
\newcommand{\et}{\widetilde{e}}
\newcommand{\htil}{\widetilde{h}}
\newcommand{\st}{\widetilde{s}}
\newcommand{\sh}{\widehat{s}}
\newcommand{\sth}{\widehat{\widetilde{s}}}
\newcommand{\pd}{\partial}
\newcommand{\osymt}{\widetilde{\osym}}
\newcommand{\zerobar}{\bar{0}}
\newcommand{\onebar}{\bar{1}}
\newcommand{\onez}{\mathbf{z}}
\newcommand{\onealpha}{\mathbf{1}_{\alpha}}
\newcommand{\ct}{\mathrm{ct}}
\newcommand{\ctbar}{\overline{\mathrm{ct}}}
\newcommand{\boxi}{{\tiny\ytableaushort{i}}}
\newcommand{\mH}{\mathrm{H}} 
\newcommand{\RHOM}{\mathbf{R}\mathrm{HOM}}
\newcommand{\oL}{\otimes^{\mathbf{L}}}
\begin{document}
%

\maketitle
\begin{abstract}
We equip the odd nilHecke algebra and its associated thick calculus category with digrammatically local differentials.  The resulting differential graded Grothendieck groups are isomorphic to two different forms of the positive part of quantum $\sltwo$ at a fourth root of unity.
\end{abstract}

\setcounter{tocdepth}{2} \tableofcontents

\section{Introduction}\label{sec-intro}

\subsection{Topological motivation}

Suitably normalized, the Alexander and Jones polynomials of a link $L$ coincide at a fourth root of unity:
\begin{equation}\label{eqn-equal-Jones-Alex-numbers}
V_L(q)\vert_{q=\sqrt{-1}}=\Delta_K(t)\vert_{t=\sqrt{-1}}.
\end{equation}
Both these invariants are values of special cases of the Witten-Reshetikhin-Turaev 3D TQFT \cite{Witten, RT}.  The input to the WRT construction is a modular tensor category; in the cases of the Alexander and Jones polynomials, these categories are quotients of the representation categories for the quantized enveloping algebras $u_t(\gloneone)$ and $u_q(\sltwo)$, respectively, with the quantum parameters specialized to roots of unity.

The equality \eqref{eqn-equal-Jones-Alex-numbers} admits a representation theoretic explanation.  If $q^4=1$, then the upper nilpotent parts of these two quantized enveloping algebras are isomorphic as algebras.  So it is not surprising that the Alexander polynomial can be constructed as a quantum link invariant for both $u_t(\gloneone)$ \cite{RozanskySaleur} and $u_{\sqrt{-1}}(\sltwo)$ \cite{Murakami1,Murakami2}.  A good survey of these two perspectives on the (multivariable) Alexander polynomial is \cite{Viro}.

The Alexander and Jones polynomials both admit categorifications.  The latter is categorified by even and odd Khovanov homology $\mH_{\mathrm{Kh}}$, $\mH_{\mathrm{OKh}}$ \cite{KhHom,ORS} and the former is by knot Floer homology $\mathrm{HFK}$ \cite{OSHFK}.  Both these invariants admit combinatorial descriptions but have very different origins.  (Even) Khovanov homology was first defined using a TQFT motivated by the representation theory of $U_q(\sltwo)$ and the original definition of knot Floer homology counts certain holomorphic disks in a symmetric product of a Heegaard surface.  Odd Khovanov homology is an integral lift of $\F_2$-valued Khovanov homology that is distinct from (even) Khovanov homology.  Both the even and odd Khovanov homology theories categorify the Jones polynomial.

One of our main goals in this paper and its sequels is to categorify the relation \eqref{eqn-equal-Jones-Alex-numbers}.  Our approach parallels the 2-representation theoretic construction of link homology theories following Khovanov-Lauda \cite{Lau1,KL1,KL2,KL3}, Rouquier \cite{Rou2}, and Webster \cite{Webster}.  The use of KLR (or quiver Hecke) and Webster algebras gives Khovanov homology a role in 2-representation theory analogous to that of the Jones polynomial in ordinary (quantum) representation theory. The odd nilHecke algebra was developed in \cite{EKL} to put odd Khovanov homology on a similar footing.  These algebras are a special case of the quiver Hecke superalgebras of \cite{KKT}; closely related spin Hecke algebras were introduced even earlier in \cite{Wang,KW1}.

The results described above categorify the representation theory of $U_q(\sltwo)$ for generic values of $q$.  A programme for categorically specializing $q$ to a root of unity was suggested by Khovanov in \cite{Hopforoots}.  At prime roots of unity $q^{2p}=1$, this is initiated in \cite{QYHopf, KQ, EliasQi, QiSussan, EliasQi2} by working over a field of characteristic $p>0$.  The present paper begins the lift of these results to characteristic zero in the case $q=\sqrt{-1}$.  In the generic-$q$ categorification theorems of Khovanov-Lauda, Rouquier, and Webster, a grading shift by $k$ categorifies multiplication by $q^k$.  Here, by equipping our algebras with a differential graded (or ``dg'') structure and passing to derived categories, we categorify $-1$ by a homological shift and $\sqrt{-1}$ by a grading shift.  The grading shift acts as a sort of categorical square root of the homological shift.  The use of superalgebras is essential here, forcing the use of the odd nilHecke algebras instead of even nilHecke algebras (the latter is the $\mathfrak{g}=\sltwo$ case of KLR algebras).  An odd analogue of Webster's tensor product algebras is expected to categorify odd Khovanov homology, so we pursue a categorical link between $\mathrm{H_{OKh}}$ and $\mathrm{HFK}$.

Though we make no use of the connection here, several authors have used the odd nilHecke and other related algebras in the categorification of quantized enveloping algebras for Kac-Moody superalgebras \cite{HillWang,ClarkWang,EL}.  See also the geometric approach of \cite{FanLi,CFLW}. The dg algebras used in \cite{KhoGL12,KhovanovSussan} to categorify quantum $\mathfrak{gl}_{1|m}$ are based on a particularly simple case of certain dg algebras appearing in knot Floer theory \cite{LOT}. The approach in \cite{LOT,KhoGL12,KhovanovSussan} is closely related to our current work as well.

\subsection{Outline}

Section \ref{sec-prelim} is a collection of background material: definitions of the quantum groups we will categorify, the homological algebra of dg algebras, diagrammatic algebras and the notion of diagrammatic locality, and two superalgebras that will play leading roles in our constructions: odd symmetric polynomials and the odd nilHecke algebras.  Nothing is essentially new here except for the simple observation that the derived category of ``half-graded'' chain complexes categorifies the Gaussian integers; see Subsection \ref{subsect-categorifying-Gaussian}. The reader is advised to skip all of Section \ref{sec-prelim} on a first reading and to return as needed later on.

In $\cite{EKL}$, the $n$-th \emph{odd nilHecke (super)algebra} $\onh_n$ categorifies the degree-$n\alpha$ piece of $U_q^+(\sltwo)$ for generic $q$.  This graded piece is one-dimensional and is spanned by the divided power $E^{(n)}$, and the isomorphism of the main theorem of that paper identifies $E^{(n)}$ with the $K_0$ class of the unique graded indecomposable projective module $P_n$ of $\onh_n$.  In fact, $\onh_n$ is isomorphic to a matrix algebra whose column module is $P_n$,
\begin{equation}\label{eqn-onh-osym}
\onh_n\cong\END_{\osym_n}(P_n).
\end{equation}
The superalgebra of \emph{odd symmetric polynomials} $\osym_n$ first appeared in $\cite{EK}$.  Section \ref{sec-local-d} adds a dg structure to the results of \cite{EKL}; this is a characteristic-zero lift of the results of \cite{KQ} for the rank-one case $\mathfrak{g}=\sltwo$ at the prime $p=2$.

Via the isomorphism \eqref{eqn-onh-osym}, a dg module structure on $P_n$ induces a dg algebra structure on $\onh_n$.  We prove that, up to an involution, there is a unique dg $\opol_n$-module structure on $P_n$ such that the induced differential on $\onh_n$ is diagrammatically local.  The resulting differential on $\onh_n$ takes a particularly nice diagrammatic form:
\begin{equation*}
d\left(
\hackcenter{\begin{tikzpicture}[scale=0.5]
    \draw[thick] (0,0) -- (0,2)
        node[pos=.5] () {\bbullet};
\end{tikzpicture}}
\right)=
\hackcenter{\begin{tikzpicture}[scale=0.5]
    \draw[thick] (0,0) -- (0,2)
        node[pos=.33] () {\bbullet}
        node[pos=.67] () {\bbullet};
\end{tikzpicture}},
\qquad
d\left(
\hackcenter{\begin{tikzpicture}[scale=.75]
    \draw[thick] (0,0) [out=90, in=-90] to (1,1);
    \draw[thick] (1,0) [out=90, in=-90] to (0,1);
\end{tikzpicture}}
\right)=
\hackcenter{\begin{tikzpicture}[scale=.75]
    \draw[thick] (0,0) [out=90, in=-90] to (0,1);
    \draw[thick] (1,0) [out=90, in=-90] to (1,1);
\end{tikzpicture}}.
\end{equation*}
When equipped with this differential and considered as an $(\opol_n,\osym_n)$-(super)bimodule, we refer to $P_n$ as the dg bimodule $Z_n$. We regard this bimodule as analogue of the usual (even) equivariant cohomology ring of the flag variety for $\mathrm{GL}(n,\C)$.
 Corollary \ref{cor-onh-end} is the resulting isomorphism
\begin{equation}
(\onh_n,d)\cong\END_{\osym_n^\mathrm{op}}(Z_n).
\end{equation}

Our first main result (Theorem \ref{thm-small-sl2}), a characteristic-zero dg categorification of the small quantum group $u^+:=u_{\sqrt{-1}}^+(\sltwo)$, now follows easily from a calculation showing that $\onh_n$ is acyclic if and only if $n\geq2$, so that the derived category $\mc{D}(\onh_n)\simeq 0$ whenever $n\geq 2$.  On the level of Grothendieck groups, we obtain an isomorphism
\begin{equation*}
K_0(\mc{D}(\onh)):=\bigoplus_{n\in \Z_{\geq0}}K_0(\mc{D}(\onh_n)) \cong u^+
\end{equation*}
of $\sqrt{-1}$-twisted bialgebras.  The more intrinsic categorical reason for the nilpotence of $E\in u^+$ is the existence of a chain complex $U_n$, acyclic if and only if $n\geq2$, such that
\begin{equation*}
Z_n\cong U_n\otimes \osym_n.
\end{equation*}
We describe the chain complex $U_n$ explicitly in Subsection \ref{subsec-gen-zoid-app} of the appendix.

The small quantum group $u^+$ is only two-dimensional because its generator $E$ is nilpotent, $E^2=0$.  The ``big'' quantum group $U^+:=U_{\sqrt{-1}}^+(\sltwo)$ is defined to have basis $1,E,E^{(2)},E^{(3)},\ldots$ over $\Z[\sqrt{-1}]$ and structure coefficients given by
\begin{equation*}
E^{(a)}E^{(b)}={a+b\brack a}=\frac{[a+b]!}{[a]![b]!}E^{(a+b)}.
\end{equation*}
This $q$-binomial coefficient is to be interpreted as specialized at $q=\sqrt{-1}$.  In other words, $u^+$ specializes the generators and relations definition of $U_q^+(\sltwo)$, while $U^+$ mimics a particular basis with structure coefficients.

In Section \ref{sec-thick}, we introduce a diagrammatic calculus for the category of $\osym_n$-modules.  This ``thick calculus'' is based on that of \cite{EKL}, which in turn is based on the even thick calculus of \cite{KLMS}.  We give two constructions of dg odd thick calculus: an extrinsic one via generators and relations and an intrinsic one via dg $(\osym_{a_1},\ldots,\osym_{a_r})$-bimodules generalizing the dg $\osym_n$-bimodule $Z_n$. These bimodules are odd analogues of the (even) equivariant cohomology rings of partial flag varieties. For the case $r=2$, we explicitly describe a chain complex $V_{a_1,a_2}$ such that $\osym_{a_1}\otimes\osym_{a_2}\cong V_{a_1,a_2}^*\otimes V_{a_1,a_2}\otimes\osym_{a_1+a_2}$ and study its differential in Subsection \ref{apx-cohomology-thick} of the appendix.

Our second main Theorem \ref{thm-big-sl2} uses derived induction and (twisted) restriction functors to categorify $U^+$ as a $\sqrt{-1}$-twisted bialgebra,
\begin{equation*}
K_0(\mc{D}(\osym)):=\bigoplus_{a\in \Z_{\geq0}}K_0(\mc{D}(\osym_a))\cong U^+.
\end{equation*}
The inclusion $u^+\hookrightarrow U^+$ is categorically lifted to an embedding of derived categories
\begin{equation*}
\mc{D}(\onh)\hookrightarrow \mc{D}(\osym)
\end{equation*}
realized by the derived tensor product with the bimodule $Z_n^\vee$ dual to $Z_n$ (see Corollary \ref{cor-embedding} and Corollary \ref{cor-correct-embedding}).

\subsection{Future work}

As explained above, this is the first in a series of papers in which we seek to explore the higher representation theory relating the Alexander and Jones polynomials by combining techniques of odd and dg categorification.

Lauda's 2-category $\mc{\dot{U}}$ categorifying the Beilinson-Lusztig-MacPherson idempotented form $\dot{\mathrm{U}}(\sltwo)$ has been adapted to both the odd \cite{EL} and $p$-dg contexts \cite{EliasQi,EliasQi2}.  There should be a dg structure on the $\pi=-1$ specialization of the super-2-category $\mc{\dot{U}_\pi}$ of \cite{EL} lifting the differential of \cite{EliasQi,EliasQi2} at $p=2$ to characteristic zero.  Such a dg enhancement of $\mc{\dot{U}_\pi}$ gives a notion of \emph{odd dg 2-representation of quantum $\sltwo$}.  Examples should include odd analogues of Webster algebras \cite{Webster} and Lauda's equivariant flag 2-category \cite{Lau2}.

By mimicking Webster's construction of $\mathrm{H_{Kh}}$ using tensor product algebras, dg odd categorification should give a construction of a differential on $\mathrm{H_{OKh}}$.  There is a folklore conjecture that there is a spectral sequence from $\mathrm{H_{Kh}}$ to $\mathrm{HFK}$ (and it is known that no such spectral sequence exists from $\mathrm{H_{OKh}}$).  So we expect this differential to give rise to a spectral sequence from $\mathrm{H_{OKh}}$ to an invariant closely related to $\mathrm{HFK}$.

There should also be extensions of the above to certain Kac-Moody algebras of higher rank, namely those which have associated quiver Hecke superalgebras \cite{KKT}.

\subsection{Acknowledgments}
A.P.E. received support from an AMS-Simons travel grant.  Both authors thank Mikhail Khovanov and Robert Lipshitz for helpful conversations.

\section{Preliminaries}\label{sec-prelim}

\paragraph{Conventions.}
We work over a commutative ground ring $\Bbbk$ with unit.  At times we will take $\Bbbk$ to be $\Z$ or a field.  Throughout, all algebras are $\Bbbk$-linear.  We will grade algebras by $\Z$, $\Z/2\Z$, and $\Z\times(\Z/2\Z)$ at various points.  Any $\Z/2\Z$-grading will be called \emph{parity} and the parity of a homogeneous element $x$ will be written as $p(x)$.  Any $\Z$-grading will be written as $|x|$.

Our $\Z/2\Z$-graded algebras will be considered as superalgebras (but often referred to simply as ``algebras'').  Writing $\Z/2\Z=\lb\zerobar,\onebar\rb$, we say a parity homogeneous element $x$ is \emph{even} (respectively, \emph{odd}) if $p(x)=\zerobar$ (respectively, $p(x)=\onebar$).  Any superalgebra $A$ has a \emph{parity involution} given on parity homogeneous elements by $\iota_A(a)=(-1)^{p(a)}a.$

For any integer $k$, let
\begin{equation*}
\lbrace k\rbrace=\begin{cases}0&k\text{ is even,}\\1&k\text{ is odd.}\end{cases}
\end{equation*}
This is the $q=-1$ specialization of the unbalanced $q$-integer $\lbrace k\rbrace_q=(1-q^k)/(1-q)$.

If $a=(a_1,\ldots,a_r)$ is a tuple of non-negative integers with $0\leq r\leq n$, let
\begin{equation*}
x^a=x_1^{a_1}\cdots x_r^{a_r}.
\end{equation*}

\subsection{Quantum groups at a root of unity}\label{subsec-qgs}

Denote by $u^+:=u_{\sqrt{-1}}^+(\sltwo)$ the \emph{positive half of small quantum $\mathfrak{sl}_2$ at a fourth root of unity}: as an algebra, it is simply
\[
u^+\cong \Z[\sqrt{-1}][E]/(E^2).
\]
It has a comultiplication given by
\[
r:u^+\lra u^+\otimes_{\Z[\sqrt{-1}]}u^+, \quad r(E)=E\otimes 1+ 1\otimes E.
\]

The \emph{positive half of big quantum $\mathfrak{sl}_2$ at a fourth root of unity}, denoted $U^+_{\sqrt{-1}}(\sltwo)$ or just $U^+$ for short, is the unital $\Z[\sqrt{-1}]$-algebra with basis $\{E^{(n)}\}_{n\in\Z_{\geq0}}$ ($E^{(0)}$ is understood to be $1\in \Z[\sqrt{-1}])$, with multiplication given by
\begin{equation}\label{eqn-EaEb}
E^{(a)}E^{(b)}={a+b \brack a}E^{(a+b)},
\end{equation}
for all $a,b\in \Z_{\geq0}$. Here, the quantum binomial coefficient
\[
{a+b\brack a}=\frac{[a+b]!}{[a]![b]!}
\]
is understood as evaluated at $q=\sqrt{-1}$, in $\Z[\sqrt{-1}]$. In particular, it is zero if $a+b$ is even.

The comultiplication $r: U^+\to U^+\otimes_{\Z[\sqrt{-1}]} U^+$ is defined by
\begin{equation}\label{eqn-comultiplication}
r(E^{(a)})=\sum_{c=0}^{a}(-\sqrt{-1})^{c(a-c)}E^{(c)}\otimes E^{(a-c)}.
\end{equation}

The algebras $u^+$ and $U^+$ equipped with the comultiplication maps $r$ defined above are twisted bialgebras in the sense of Lusztig \cite[Section 1.2.2]{Lus4}.  Fix $v\in\Bbbk^\times$ and let $B$ be a $\Z$-graded $\Bbbk$-algebra equipped with a graded coassociative comultiplication $r$.  Give $B\otimes_\Bbbk B$ the product structure
\begin{equation*}
(b_1\otimes b_2)(b_1'\otimes b_2')=v^{|b_2||b_1'|}(b_1b_1')\otimes(b_2b_2').
\end{equation*}
Then we say $B$ is a \emph{($v$-)twisted $\Bbbk$-bialgebra} if $r:B\to B\otimes_\Bbbk B$ is an algebra homomorphism.

If we set $|E|=1$ in $u^+$ and $|E^{(n)}|=n$ in $U^+$, then the algebras $u^+$ and $U^+$ are $\sqrt{-1}$-twisted $\Z[\sqrt{-1}]$-bialgebras and
\begin{equation*}
u^+\hookrightarrow U^+,\quad E\mapsto E^{(1)}
\end{equation*}
is an injective homomorphism of twisted bialgebras, since
$$E^{(1)}E^{(1)}=(\sqrt{-1}+(\sqrt{-1})^{-1})E^{(2)}=0.$$

\subsection{Differential graded algebra}

In this subsection, we gather some basic definitions from the theory of differential graded (dg) algebras and modules. The material apart from Subsubsection \ref{subsect-categorifying-Gaussian} is entirely standard, and the main purpose is to fix the notation we use. The reader is referred to \cite{BL,Ke1} for more details.  One minor adaptation from the standard terminology is that, for flexibility, we will grade our dg algebras by $\Z/2\Z$.  Usual $\Z$-graded dg algebras inherit a $\Z/2\Z$-grading simply by collapsing the grading modulo $2$.  Later, when we consider $\Z\times(\Z/2\Z)$-graded algebras, the dg structure will be with respect to the parity factor.

A dg algebra $(A,d_A)$ is a $\Z/2\Z$-graded $\Bbbk$-algebra $A=A^{\zerobar}\oplus A^{\onebar}$ and a $\Bbbk$-linear map $d_A:A\to A$ of degree $\onebar$ (called the
\emph{differential}) satisfying $d^2=0$ that is a derivation; that is, for any homogeneous $a,b\in A$,
\begin{equation}
d_A(ab)=d_A(a)b+(-1)^{p(a)}ad_A(b).
\end{equation}

A left dg $A$-module $(M,d_M)$ is a $\Z/2\Z$-graded $A$-module $M=M^{\zerobar}\oplus M^{\onebar}$ and a $\Bbbk$-linear map $d_M:M\to M$ of odd parity such that for any homogeneous elements $a\in A$, $m\in M$, we have
\begin{equation}\label{eqn-left-dg-module}
d_M(am)=d_A(a)m+(-1)^{p(a)}ad_M(m).
\end{equation}
For the notion of a right dg $A$-module, replace equation \eqref{eqn-left-dg-module} by
\begin{equation}
d_M(ma)=d_M(m)a+(-1)^{p(m)}md_A(a).
\end{equation}
We will usually drop mention of the differentials from the names of dg algebras and dg modules, decorating the names as necessary if ambiguity is possible.  If $A,B$ are dg algebras, then a dg $(A,B)$-bimodule is a $\Bbbk$-module equipped with a differential and commuting left dg $A$-module and right dg $B$-module structures.

\begin{rem}\label{rmk-dg-category}
A mild generalization of the notion of a dg algebra is that of a \emph{dg category}.  Informally, a dg category is like an algebra in which the unit has been replaced by a family of $d$-closed orthogonal idempotents.  A dg algebra is a dg category with one object.  Most of definitions and results below have easy generalizations to the setting of dg categories (and the proofs are nearly identical).  One added convenience is the fact that the category of dg modules over a dg category is itself a dg category.  We refer the reader to \cite{Ke1} for more details.
\end{rem}

Let $M$ be a dg module over a dg algebra $A$, and consider $\END_A(M)$ to be the space of all $A$-module endomorphisms on $M$. It is naturally $\Z/2\Z$-graded, with the homogeneous degree-$k$ part consisting of maps of parity $k$ :
\begin{equation}
\END_A^k(M) : = \{f:f(M^{l})\subseteq M^{l+k},~l\in\Z/2\Z\}.
\end{equation}

The algebra $\END_A(M)$ is a dg algebra via the differential $d_\END$ defined by
\begin{equation}\label{eqn-d-end}
d_\END(f)(m)=d(f(m))-(-1)^{p(f)}f(d(m)).
\end{equation}
Moreover, $M$ becomes a right dg module over $\END_A(M)$ if we define the right action as
\begin{equation}\label{eqn-d-end-element}
m\cdot f : = (-1)^{p(f)p(m)}f(m).
\end{equation}
This action commutes with the left $A$-module action on $M$, and both actions are compatible with the differentials. In this way $M$, has a natural dg bimodule structure over $A$ and $\END_A(M)$. We will usually drop the subscript and simply write $d$ for $d_\END$ if the context is clear. Similar statements hold when one replaces ``right'' notions with ``left'' ones.

We will repeatedly consider the case in which $M$ is a dg submodule of $A$ of the form $M\cong A\tte$ for some homogeneous idempotent $\tte\in A$. In this case $d(\tte)=a_0\tte$ for some $a_0\in A$. Recall that there is a canonical isomorphism of graded abelian groups
\begin{equation}\label{eqn-endo-idempotent}
\HOM_A(A\tte, A\tte)\cong \tte A \tte,
\end{equation}
where, given any homogeneous element $\tte a\tte \in \tte A\tte$, one has the corresponding left $A$-module map $f_{\tte a\tte}(b\tte):=(-1)^{p(a)p(b)}b\tte a \tte$. Then, under the isomorphism \eqref{eqn-endo-idempotent}, the algebra $\tte A \tte$ inherits a dg structure, which we compute as follows. For any $\tte a \tte \in \tte A \tte$ and $b\tte \in A\tte$, notice that $p(f_{\tte a \tte})=p(a)$.  It follows that
\begin{eqnarray*}
d(f_{\tte a \tte}) (b\tte) & = & d(f_{\tte a \tte}(b \tte))-(-1)^{p(a)}f_{\tte a \tte}(d(b \tte))\\
& = & (-1)^{p(a)p(b)}d(b\tte a \tte )- (-1)^{p(a)+p(a)(p(b)+1)}d(b \tte) \tte a \tte\\
& = & (-1)^{p(a)p(b)}d(b \tte) a \tte +(-1)^{p(a)p(b)+p(b)}b\tte d(a \tte) -(-1)^{p(a)p(b)}d(b \tte) a \tte\\
& = & (-1)^{(p(a)+1)p(b)}b\tte (d a \tte)\\
& = & f_{\tte d(a \tte)}(b\tte)~.
\end{eqnarray*}
The next result follows from this discussion.

\begin{lem}\label{lem-d-endo-idempotent} Let $(A, d_A)$ be a dg algebra and $\tte\in A$ be an idempotent with $d(\tte)\in A\tte$. Then $A\tte$ has a right dg module structure over the dg algebra $(\tte A \tte,d_{\tte})$ in which the differential $d_{\tte}$ is given by
\[
d_{\tte} (\tte a \tte)= \tte d_A(a \tte )
\]
for any $\tte a \tte \in \tte A \tte$. \hfill$\square$
\end{lem}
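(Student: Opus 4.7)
The plan is essentially to package the calculation the authors have just performed into a formal proof by transporting structure along the graded algebra isomorphism \eqref{eqn-endo-idempotent}.

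First, I would verify that the hypothesis $d_A(\tte)\in A\tte$ makes $A\tte$ a dg left $A$-submodule of $A$. For any $b\in A$, the Leibniz rule gives $d_A(b\tte)=d_A(b)\tte+(-1)^{p(b)}b\,d_A(\tte)$, and both summands lie in $A\tte$ (the second because $d_A(\tte)\in A\tte$ by hypothesis, so $b\,d_A(\tte)\in A\cdot A\tte\subseteq A\tte$). Hence the restriction of $d_A$ makes $A\tte$ a dg $A$-module.

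Second, I would invoke the general construction: for any left dg $A$-module $M$, the formulas \eqref{eqn-d-end} and \eqref{eqn-d-end-element} equip $\END_A(M)$ with a dg algebra structure and $M$ with a compatible right dg $\END_A(M)$-module structure. Applied to $M=A\tte$, this furnishes $\END_A(A\tte)$ with a differential $d_\END$ and makes $A\tte$ into a right dg module over it.

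Third, I would transport this structure along the graded algebra isomorphism \eqref{eqn-endo-idempotent} identifying $\tte A\tte\cong\END_A(A\tte)$ via $\tte a\tte\mapsto f_{\tte a\tte}$. The displayed computation preceding the lemma already shows that
\[
d_\END(f_{\tte a\tte})=f_{\tte d_A(a\tte)},
\]
so the induced differential on $\tte A\tte$ is precisely $d_\tte(\tte a\tte)=\tte d_A(a\tte)$, as claimed. Because \eqref{eqn-endo-idempotent} is a graded algebra isomorphism, the derivation property and the identity $d_\tte^2=0$ are automatic from the corresponding properties of $d_\END$; likewise the right action of $\tte A\tte$ on $A\tte$ inherited via transport is compatible with the differentials by construction.

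There is no real obstacle here: everything reduces to the calculation already displayed, together with the routine observation that $A\tte$ is closed under $d_A$. The only place one needs to be careful is sign bookkeeping in \eqref{eqn-d-end-element} and in the identification $f_{\tte a\tte}(b\tte)=(-1)^{p(a)p(b)}b\tte a\tte$, but these signs have already been absorbed into the verification that $d_\END(f_{\tte a\tte})=f_{\tte d_A(a\tte)}$.
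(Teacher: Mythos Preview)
Your proposal is correct and follows essentially the same approach as the paper: the lemma is marked with $\square$ precisely because it follows from the preceding discussion, namely the general construction of $\END_A(M)$ as a dg algebra (equations \eqref{eqn-d-end}, \eqref{eqn-d-end-element}), the isomorphism \eqref{eqn-endo-idempotent}, and the displayed computation $d(f_{\tte a\tte})=f_{\tte d_A(a\tte)}$. Your added explicit check that $A\tte$ is $d_A$-stable is the one routine point the paper leaves implicit.
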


\subsubsection{The homotopy and derived categories}

Let $M$, $N$ be two dg modules over $A$. A map of graded $A$-modules $f \colon M \lra  N$ is called a \emph{morphism of dg modules over $A$}, or simply a \emph{morphism}, if it commutes with the differentials on $M$ and $N$ ($f$ necessarily has degree zero). A morphism $f \colon M \lra N$ is called \emph{null-homotopic} if there is a homogeneous $A$-module map $h$ of degree $-1$ such that
\[
f = d_N\circ h+h\circ d_M.
\]
It is easy to see that the category of all left dg modules over $A$, which we denote by $A\dmod$, is abelian and that the class of all null-homotopic morphisms forms an ideal in $A\dmod$. The categorical quotient of $A\dmod$ by this ideal is known as the \emph{homotopy category} of dg modules over $A$, denoted $\mc{H}(A)$.

The cohomology of a dg module $M$ over $A$ is, by definition, $\mH(M):= \mathrm{Ker}(d)/\mathrm{Im}(d)$. It is naturally $\Z/2\Z$-graded\footnote{ We sometimes write $\mH(M)=\mH^\bullet(M)$ to emphasize the grading or parity.} and equipped with a module structure over the algebra $\mH(A)$. We will regard $\mH(A)$ as a dg algebra with the zero differential and $\mH(M)$ as a dg module over this algebra.

A morphism of dg modules $f\colon M \lra N$ is called a \emph{quasi-isomorphism} if it induces an isomorphism on cohomology. The images of quasi-isomorphisms in $\mc{H}(A)$ constitute a localizing class $\Cc_{qis}$, and the category $\mc{D}(A):=\mc{H}(A)[\Cc_{qis}^{-1}]$ obtained by localization with respect to this class is called the \emph{derived category} of dg modules over $A$. By abuse of notation, we will refer to an object in $A\dmod$, $\mc{H}(A)$, or $\mc{D}(A)$ as a dg module.

The homotopy category $\mc{H}(A)$ and derived category $\mc{D}(A)$ are triangulated. The \emph{translation functor} $[1]$ acts on any dg module by a parity shift:
\[
(M[1])^k:= M^{k+\onebar}, \quad d_{M[1]}=-d_M.
\]
Let $f:M\lra N$ be a morphism of dg modules over $A$. The \emph{cone} of $f$ is by definition $C(f):=N\oplus M[1]$ equipped with the differential given in block matrix form by
\[
d_{C(f)}:=
\left(
\begin{matrix}
d_N & f\\
0 & -d_M
\end{matrix}
\right).
\]
The dg modules $N$, $M[1]$ can be naturally identified with, respectively, a dg sub or dg quotient module of the cone $C(f)$.
A \emph{standard distinguished triangle} in $\mc{H}(A)$ or $\mc{D}(A)$ is a diagram of the form
\[
\xymatrix{
M \ar[r]^f & N \ar[r]^-\iota & C(f) \ar[r]^-\pi & M[1],
}
\]
where $\iota$ and $\pi$ are the natural inclusion and projection maps. A \emph{distinguished triangle} in $\mc{H}(A)$ or $\mc{D}(A)$ is a diagram that is isomorphic to a standard distinguished triangle in the relevant category.

One useful feature of dg category theory is that the category $A\dmod$ behaves like a dg algebra with multiple objects. More precisely, $A\dmod$ has a dg enhancement given by defining the \emph{$\HOM$-complex} between any two dg modules $M$ and $N$ to be
\[
\HOM_A(M,N)=\Hom^{\zerobar}_A(M,N)\oplus \Hom_A^{\onebar}(M,N),
\]
where $\Hom^k_A(M,N)$ stands for $A$-module maps of parity $k$. The differential $d$ acts on a homogeneous map $f\in \HOM_A(M,N)$ as
\[
d(f):=d_N\circ f -(-1)^{p(f)}f\circ d_M.
\]
In this way, the endomorphism dg algebra $\END_A(M)$ we considered earlier is simply the endomorphism complex in the dg enhanced module category. Furthermore, one readily sees that the morphism space in the homotopy category $\mc{H}(A)$ is naturally isomorphic to the degree-zero part of the cohomology,
\begin{equation}\label{eqn-morphism-in-homotopy-category}
\Hom_{\mc{H}(A)}(M,N)=\mH^0(\HOM_A(M,N)).
\end{equation}
One can similarly compute the morphism space in the derived category using this formula, by first resolving $M$ with a ``nice enough'' replacement. Recall that a dg module $P$ over $A$ is called \emph{cofibrant} or \emph{K-projective} if, for any surjective quasi-isomorphism $f\colon M \lra N$ and any morphism $g\colon P\lra N$, there exists a morphism $\overline{g}$ making the diagram
\[
\xymatrix{
 & P \ar@{-->}[dl]_{\overline{g}}  \ar[d]^g\\
 M \ar[r]_f & N
}
\]
commute.  A morphism of dg modules $g\colon P\lra N$ is called a \emph{cofibrant replacement} or \emph{K-projective resolution} if $P$ is cofibrant and $g$ is a surjective quasi-isomorphism.

\begin{prop}\label{prop-cofibrant-replacement}Let $A$ be a dg algebra and $M$ a dg $A$-module. Then there exists a cofibrant replacement $ P_M\lra M $, unique up to quasi-isomorphism.
\end{prop}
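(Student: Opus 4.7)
The plan is to construct $P_M$ by the standard semi-free (``cell attachment'') resolution, and then deduce uniqueness up to quasi-isomorphism from the defining lifting property of cofibrant modules. The basic building blocks are: the free rank-one dg modules $A[k]$ (which are easily seen to be cofibrant, since a lift of a morphism $A[k]\to N$ through a surjective quasi-isomorphism $M\to N$ amounts to choosing a preimage of the image of the generator, which exists by surjectivity), and the contractible ``disk'' modules $D^k=A[k]\oplus A[k{-}1]$ with differential sending the second generator to the first. Arbitrary direct sums of such, and cones attached along morphisms from free modules, remain cofibrant by the evident diagram chase.

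The construction proceeds by transfinite induction. Start by choosing a set of parity homogeneous cycles in $M$ whose classes generate $\mH^\bullet(M)$; form the corresponding free dg module $P^{0}=\bigoplus_i A[k_i]$ and the canonical map $f^{0}\colon P^{0}\to M$, which is surjective on cohomology by construction. Given $f^{n}\colon P^{n}\to M$ that is surjective on $\mH^\bullet$, the obstruction to $f^n$ being a quasi-isomorphism lies in the kernel of $\mH^\bullet(f^n)$: for each cycle in $P^{n}$ whose image in $M$ is a boundary, attach a disk along that cycle (i.e., take the pushout of the boundary inclusion $A[k]\hookrightarrow D^{k+1}$). We may furthermore replace $f^n$ by a surjection onto $M$ at the first step by adding free summands for each element of $M$. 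Setting $P_M=\operatorname{colim}_n P^{n}$, the induced map $P_M\to M$ is a surjective quasi-isomorphism by design, and $P_M$ is cofibrant because cofibrancy is preserved under pushouts of generating cofibrations and under filtered colimits: any lifting problem against a surjective quasi-isomorphism can be solved one cell at a time, using the cofibrancy of each $A[k]$ and the contractibility of each disk.

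For uniqueness: if $g\colon P\to M$ and $g'\colon P'\to M$ are two cofibrant replacements, apply the cofibrancy of $P$ to the diagram in which $g'$ plays the role of the surjective quasi-isomorphism and $g$ the role of the map to be lifted. This produces a morphism $h\colon P\to P'$ with $g'\circ h=g$, which is a quasi-isomorphism by the two-out-of-three property. Interchanging the roles of $P$ and $P'$ then gives that $P$ and $P'$ are isomorphic in $\mathcal{D}(A)$ via a morphism over $M$.

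The main technical point will be verifying that the transfinite construction really does produce a cofibrant object (as opposed to merely killing cohomology), which amounts to a small-object argument: one must check that cofibrancy survives sequential colimits along the cell attachments, and that the resulting lifts can be assembled coherently. Once this is in place, surjectivity of the resolution is automatic from the enlargement step at stage zero, and the quasi-isomorphism property follows because the cohomology kernel is exhausted at the $\omega$-limit by the cells introduced.
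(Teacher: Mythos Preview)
Your proposal is correct and follows essentially the same semi-free resolution approach that the paper cites from \cite{BL} and \cite{Ke1}; the paper's own proof consists only of these references together with the remark that uniqueness follows easily from the lifting property, exactly as you argue. The only cosmetic adjustment needed is that the paper works in the $\Z/2\Z$-graded setting, so your shifts $A[k]$ and disks $D^k$ should be read modulo~$2$, but this changes nothing substantive in the construction.
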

\begin{proof}One can adapt the proofs in \cite[Section 10.12.2]{BL} or \cite[Section 3]{Ke1} to the $\Z/2\Z$-graded situation for the existence part. The uniqueness follows easily from the definition. Alternatively, one can apply the hopfological construction in \cite[Theorem 6.6]{QYHopf} to Sweedler's $4$-dimensional Hopf algebra to prove the result directly.
\end{proof}

If $M$, $N$ are dg modules over $A$, then there is a natural isomorphism
\begin{equation}\label{eqn-morphism-in-derived-category}
\Hom_{\mc{D}(A)}(M,N)\cong \Hom_{\mc{H}(A)}(P_M,N)=\mH^0(\HOM_A(P_M,N)).
\end{equation}
The uniqueness part of Proposition \ref{prop-cofibrant-replacement} implies that the right-hand side of \eqref{eqn-morphism-in-derived-category} is, up to isomorphism, independent of choice of cofibrant replacement. Motivated by the above, we define the $\RHOM$-complex between two dg modules to be
\begin{equation}\label{eqn-RHOM}
\RHOM_A(M,N):= \HOM_A(P_M,N),
\end{equation}
where $P_M$ is any cofibrant replacement for $M$. The uniqueness part of Proposition \ref{prop-cofibrant-replacement} again shows that up to isomorphism, the $\RHOM$-complex is independent of the choice of cofibrant replacement.

\subsubsection{The Grothendieck group of a dg algebra}

When talking about Grothendieck groups, one needs to restrict the ``size'' of dg modules to avoid trivial cancellations. A commonly adopted size restriction on dg modules is the compactness condition. Note that $\mc{D}(A)$ is an additive category with infinite direct sums. A dg module $M$ is called \emph{compact}\footnote{In \cite{Ke1}, a compact module is called a small module instead.} if the functor $\Hom_{\mc{D}(A)}(M,-)$ commutes with infinite direct sums. The strictly full subcategory of $\mc{D}(A)$ consisting of objects that are compact is called the \emph{compact} or \emph{perfect} derived category, which we will denote by $\mc{D}^c(A)$.

\begin{example}[Finite-cell modules]\label{eg-finite-cell}One easily sees that the left or right regular module $A$ over itself is compact. In fact, any dg summand\footnote{By a dg summand of $A$ we mean a projective $A$-module $A\tte$ defined by an idempotent $\tte\in A$ with $d(\tte)=0$.} of the regular module is also compact. More generally, a filtered dg module is also compact if it comes with a finite filtration whose subquotients are isomorphic as dg modules to dg summands of $A$. Such a filtered dg module will be called a \emph{finite-cell} module over $A$.
\end{example}

The \emph{Grothendieck group} of a dg algebra $A$, denoted by $K_0(A)$, is the quotient of the free abelian group on the isomorphism classes $[M]$ of compact dg modules $M$ by the relation
$[M]=[M_1]+[M_2]$ whenever
$$M_1\lra M \lra M_2\lra M_1[1]$$
is a distinguished triangle of compact objects in $\mc{D}(A)$.

Computing dg Grothendieck groups is, in general, a hard problem. But for a class of dg algebras that arise quite often in representation theory, this can be done relatively easily. Let $\Bbbk$ be a field. A $\Z$-graded dg algebra is called \emph{positive} if it satisfies the following conditions~\cite{SchPos}:
\begin{itemize}
\item the algebra $A=\oplus_{i\in \Z_{\geq0}} A^i$ is non-negatively graded,
\item the degree zero part $A^0$ is semisimple, and
\item the differential acts trivially on $A^0$.
\end{itemize}

\begin{thm}[\cite{SchPos}]\label{thm-compact-mod} Let $A$ be a positive dg algebra over a field. Then a dg module $M$ is compact if and only if it is isomorphic in the derived category to a finite-cell module. \hfill$\square$
\end{thm}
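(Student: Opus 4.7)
The plan is to prove a two-way implication. The easy direction, that finite-cell modules are compact, uses the thick-subcategory structure of compact objects in a triangulated category with arbitrary coproducts. The hard direction, that compact modules are finite-cell, uses the positivity hypothesis on $A$ to construct a minimal cofibrant replacement and to force it to terminate in finitely many stages.

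For the easy direction, first observe that the free module $A$ is a compact object of $\mc{D}(A)$: by \eqref{eqn-morphism-in-derived-category}, one computes $\Hom_{\mc{D}(A)}(A, M) \cong \mH^0(M)$, and cohomology commutes with arbitrary direct sums of dg modules. Any dg summand $A\tte$ with $d(\tte)=0$ is then a direct summand of $A$ in $\mc{D}(A)$, hence also compact. Since compact objects form a thick (that is, triangulated and idempotent-complete) subcategory of $\mc{D}(A)$, any finite iterated extension of such summands is again compact. By definition, finite-cell modules are precisely such extensions.

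For the hard direction, I would build a minimal cofibrant replacement $P \to M$ inductively. Using the semisimplicity of $A^0$ and the fact that $d$ vanishes on $A^0$, begin by choosing representatives for a minimal generating set of $\mH^\bullet(M)$ as an $A^0$-module in its lowest nontrivial internal degree; lifting these to cycles in $M$ and attaching one free cell $A \cdot e_i$ per generator produces an initial map $P_0 \to M$. At each subsequent stage one examines kernels and cokernels of the induced map on cohomology and attaches additional free cells in strictly larger internal degree to kill these obstructions. Positivity guarantees that the newly attached cells can always be placed in strictly higher degree than those already present, so the colimit $P$ is a well-defined cofibrant dg module whose filtration subquotients are dg summands of $A$.

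The main obstacle is verifying finite termination when $M$ is compact; this is the heart of Schn\"urer's argument in \cite{SchPos}. The idea is to argue by contradiction: if the minimal model required infinitely many cells, one would split them into a denumerable family of sub-filtrations indexed by $\alpha$ and produce dg modules $N_\alpha$ isolating the corresponding cohomological information, so that the canonical map $\bigoplus_\alpha \Hom_{\mc{D}(A)}(M, N_\alpha) \to \Hom_{\mc{D}(A)}(M, \bigoplus_\alpha N_\alpha)$ fails to be an isomorphism, contradicting compactness of $M$. Combined with minimality, this forces $P$ to have finitely many cells and therefore to be a finite-cell module quasi-isomorphic to $M$. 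Since the careful bookkeeping of these cell attachments and the minimality argument are substantial, for the full details I would simply refer the reader to \cite[Proposition~5.10]{SchPos}.
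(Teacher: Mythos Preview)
The paper gives no proof of this statement at all: it is stated with a terminal $\square$ and attributed to \cite{SchPos}, so there is nothing to compare against beyond the bare citation. Your proposal goes further than the paper does, supplying a correct outline of the easy direction (finite-cell $\Rightarrow$ compact via the thick subcategory of compact objects) and a reasonable sketch of the strategy for the converse before deferring to Schn\"urer; since the paper itself simply defers entirely, your write-up is consistent with and more informative than the original.
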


\begin{cor}\label{cor-K0-positive}Let $A$ be a positive dg algebra, and $A^0$ be its homogeneous degree zero part. Then
\[
K_0(A)\cong K_0(A^0).
\]
\end{cor}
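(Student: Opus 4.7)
The plan is to construct mutually inverse group homomorphisms between $K_0(A)$ and $K_0(A^0)$ via derived induction and coinduction along a natural quotient of dg algebras.

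Since the $\Z$-grading on $A$ is non-negative and the differential preserves it (up to a shift in degree), the subspace $A^{\geq 1} := \bigoplus_{i \geq 1} A^i$ is a two-sided dg ideal, and the quotient yields a surjective dg algebra homomorphism $\pi \colon A \twoheadrightarrow A^0$ in which $A^0$ carries the zero differential (by positivity). A pleasant feature of the positive setting is that every nonzero homogeneous idempotent $\tte \in A$ automatically lies in $A^0$: the identity $\tte^2 = \tte$ forces the two homogeneous degrees $|\tte|$ and $2|\tte|$ to agree, so $|\tte| = 0$. Combining this observation with Theorem \ref{thm-compact-mod} and the semisimplicity of $A^0$, the group $K_0(A)$ is generated by the classes of dg summands $[A\tte]$ for $\tte \in A^0$ primitive, while $K_0(A^0)$ is generated by the corresponding classes $[A^0\tte]$.

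Along $\pi$, consider the derived induction $G := A \otimes_{A^0}^{\mathbf{L}} (-) \colon \mc{D}^c(A^0) \to \mc{D}^c(A)$ and the derived coinvariants $F := A^0 \otimes_A^{\mathbf{L}} (-) \colon \mc{D}^c(A) \to \mc{D}^c(A^0)$. Because every $A^0$-module is projective and hence cofibrant over $A^0$, one has $G(A^0\tte) = A \otimes_{A^0} A^0\tte \cong A\tte$. Conversely, $A\tte$ is a dg summand of $A$ and therefore cofibrant as a dg $A$-module, so $F(A\tte) = A^0 \otimes_A A\tte \cong A^0\tte$. Both functors are triangulated and preserve compact objects, so they descend to group homomorphisms $[G] \colon K_0(A^0) \to K_0(A)$ and $[F] \colon K_0(A) \to K_0(A^0)$. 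These homomorphisms exchange the two sets of generators above and are therefore mutually inverse.

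The only nontrivial input beyond basic derived base-change is the reduction of finite-cell subquotients to the form $A\tte$ with $\tte \in A^0$; the elementary degree argument above settles this, and the remainder is a clean manipulation of derived functors on finite-cell modules combined with the standard Grothendieck theory of the semisimple algebra $A^0$. The true heavy lifting has already been accomplished in Theorem \ref{thm-compact-mod}.
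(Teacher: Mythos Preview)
Your proof is correct and follows essentially the same approach as the paper: both arguments invoke Theorem~\ref{thm-compact-mod} to reduce compact objects to finite-cell modules whose subquotients are $A\tte$ with $\tte$ a (necessarily degree-zero) idempotent, and then match $[A\tte]$ with $[A^0\tte]$. The paper's sketch simply asserts that these classes span $K_0(A)$ and leaves the well-definedness of the resulting bijection implicit; your packaging via the derived functors $G = A \otimes_{A^0}^{\mathbf{L}}(-)$ and $F = A^0 \otimes_A^{\mathbf{L}}(-)$ is a clean way to make that step rigorous, since triangulated functors automatically respect the relations in $K_0$.
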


\begin{proof}[Sketch of proof] By Theorem~\ref{thm-compact-mod}, any compact module is isomorphic in the derived category to a dg module with a filtration whose subquotients are dg summands of $A$. Such summands are, up to grading shift, given by indecomposable idempotents in $A^0$. The isomorphism classes of these indecomposable idempotents span $K_0(A)$.
\end{proof}

\subsubsection{Derived functors}

To a dg bimodule $_AM_B$ over dg algebras $A$ and $B$, we associate two derived functors. The first is the \emph{tensor functor},
\begin{equation}\label{eqn-tensor-functor}
M\oL_B (-): \mc{D}(B) \lra \mc{D}(A), \quad N \mapsto M\otimes_A P_N,
\end{equation}
where $N$ is an arbitrary dg $B$-module and $P_N$ is a cofibrant replacement of $N$. The other is the \emph{hom functor}, given by
\begin{equation}\label{eqn-hom-functor}
\RHOM_A(M,-): \mc{D}(A) \lra \mc{D}(B), \quad L \mapsto  \HOM_{A}(P_M, L).
\end{equation}
Here we choose a cofibrant replacement $P_M$ for $M$ as a dg bimodule over $A\otimes B^\op$ so that the complex $\HOM_{A}(P_M, L)$ carries a strict dg module structure over $B$. It is easy to see that these functors are well-defined and that they are adjoint to each other even on the level of $\HOM$-complexes:
\begin{equation}\label{eqn-tensor-hom-adjunction}
\RHOM_A(M\oL_B N, L)\cong \RHOM_B(N, \RHOM_A(M,L)).
\end{equation}
The usual tensor-hom adjunction on the level of derived categories follows from \eqref{eqn-tensor-hom-adjunction} by taking degree-zero cohomology.

Of particular interest to us is the case of the bimodule $_AA_B$ associated to a dg algebra homomorphism $\varphi:B\to A$ ($B$ acts on the right via $\varphi$). The tensor and hom functors in this case are also known as the induction and restriction functors, denoted $\varphi^*$ and $\varphi_*$ respectively:
\begin{equation}\label{eqn-ind-res}
\varphi^*(N):=A\oL_B N, \quad \varphi_*(L):=\RHOM_A(A,L).
\end{equation}

The following theorem is useful.

\begin{thm}\label{thm-qis-dga} Let $\varphi:B\lra A$ be a quasi-isomorphism of dg algebras. Then the induction and restriction functors $\varphi^*$, $\varphi_*$ are quasi-equivalences of triangulated categories, quasi-inverse to each other.
\end{thm}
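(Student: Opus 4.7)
The plan is to exploit the adjunction $(\varphi^*,\varphi_*)$ of \eqref{eqn-tensor-hom-adjunction} and to check that both the unit and counit of this adjunction are quasi-isomorphisms on every object; this will promote the pair to mutually quasi-inverse equivalences $\mc{D}(B)\simeq\mc{D}(A)$.

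First I would observe that $\varphi_*$ is essentially restriction of scalars along $\varphi$: since $A$ is its own cofibrant replacement as a left dg $A$-module, $\RHOM_A(A,L)\cong L$ with the $B$-action induced by $\varphi$. Consequently $\varphi_*$ already preserves quasi-isomorphisms, descends strictly to the derived categories, is triangulated, and commutes with arbitrary coproducts. The induction functor $\varphi^*(N)=A\oL_B N$ is derived by construction via cofibrant replacement, hence also well-defined on $\mc{D}(B)$, triangulated, and compatible with arbitrary coproducts.

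Next I would verify that the unit $\eta_N\colon N\to\varphi_*\varphi^*(N)=A\oL_B N$ is a quasi-isomorphism for every $N\in\mc{D}(B)$. Both the source and target of $\eta$ are triangulated functors in $N$ that commute with direct sums, so by devissage it suffices to check on the free generator $N=B$, where the map reduces to $\varphi\colon B\to A$, a quasi-isomorphism by hypothesis. The counit $\varepsilon_M\colon\varphi^*\varphi_*(M)\to M$ is handled by the parallel reduction to $M=A$; the zig-zag identity of the adjunction then forces $\varepsilon_A$ to be an isomorphism in $\mc{D}(A)$ once the unit $\eta_{\varphi_*A}$ is known to be one. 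Together, these imply that both compositions $\varphi_*\varphi^*$ and $\varphi^*\varphi_*$ are naturally quasi-isomorphic to the identity, which is the content of the theorem.

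The step I expect to require the most care is the devissage itself: one must ensure that the full triangulated subcategory of $\mc{D}(B)$ (respectively $\mc{D}(A)$) generated under shifts, cones, and arbitrary coproducts by the regular module $B$ (respectively $A$) exhausts the entire derived category. This is a standard consequence of the existence of cofibrant replacements in Proposition \ref{prop-cofibrant-replacement}, which realizes every dg module as a (possibly transfinite) semi-free filtered colimit of shifts of the free rank-one module; alternatively, one may appeal to the hopfological construction of \cite{QYHopf} indicated in the proof of that proposition. Either route reduces the entire theorem to the single input that $\varphi\colon B\to A$ is a quasi-isomorphism.
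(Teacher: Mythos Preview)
Your argument is correct. The paper does not supply its own proof but simply cites \cite[Theorem 10.12.5.1]{BL} and \cite[Example 6.1]{Ke1}; the unit/counit d\'evissage you sketch---reducing to the free generator via the adjunction and then invoking cofibrant (semi-free) resolutions to see that $B$ (respectively $A$) generates its derived category under shifts, cones, and coproducts---is exactly the strategy carried out in those references. The only point left implicit in your counit step is that $\varphi_*$ is conservative (restriction does not change underlying cohomology), which is what allows you to conclude that $\varepsilon_A$ is an isomorphism from the fact that $\varphi_*(\varepsilon_A)$ is; this is routine.
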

\begin{proof}See \cite[Theorem 10.12.5.1]{BL} or \cite[Example 6.1]{Ke1}.
\end{proof}

\begin{cor}\label{cor-acyclic-dga} Let $A$ be a dg algebra.  The following are equivalent:
\begin{enumerate}
    \item The derived category $\mc{D}(A)$ is equivalent to $0$.
    \item The cohomology of $A$ vanishes, $\mH(A)=0$.
    \item There exists an $x\in A$ such that $d(x)=1$.
\end{enumerate}
\end{cor}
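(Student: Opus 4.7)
The plan is to prove $(1) \Rightarrow (2) \Rightarrow (3) \Rightarrow (1)$ as a short cycle.

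For $(1) \Rightarrow (2)$, I would observe that the regular dg module $A$ is an object of $\mc{D}(A)$, so if $\mc{D}(A) \simeq 0$ then $A \cong 0$ in the derived category. Since isomorphism in the derived category of a single object to $0$ means quasi-isomorphism to $0$, this gives $\mH(A) = 0$.

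For $(2) \Rightarrow (3)$, the key preliminary is that $d(1) = 0$ in any dg algebra: applying the Leibniz rule to $1 = 1 \cdot 1$ gives $d(1) = 2 d(1)$, hence $d(1) = 0$. Thus $1 \in A^{\zerobar}$ is a cocycle. Vanishing of $\mH(A)$ then forces $1$ to be a coboundary, i.e., $1 = d(x)$ for some $x \in A$, which is necessarily of odd parity since $d$ raises parity by $\onebar$.

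For $(3) \Rightarrow (1)$, I would show that \emph{every} dg $A$-module $M$ is acyclic. Pick $x \in A$ with $d(x) = 1$; then $x$ is odd. For any $m \in M$ the Leibniz-type rule \eqref{eqn-left-dg-module} yields
\begin{equation*}
d(xm) = d(x) m + (-1)^{p(x)} x\, d(m) = m - x\, d(m).
\end{equation*}
Thus if $d(m) = 0$, then $m = d(xm)$ is a coboundary, so $\mH(M) = 0$. Equivalently, $M$ is quasi-isomorphic to the zero module, hence $M \cong 0$ in $\mc{D}(A)$. Since $M$ was arbitrary, $\mc{D}(A) \simeq 0$. (An alternative, more conceptual route is to invoke Theorem~\ref{thm-qis-dga}: the vanishing of $\mH(A)$ makes the natural map from the zero dg algebra to $A$ a quasi-isomorphism, so $\mc{D}(A) \simeq \mc{D}(0) \simeq 0$; but the direct homotopy argument above is equally quick and bypasses any subtlety about $0$ as a dg algebra.)

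I do not expect a main obstacle: the cycle of implications is essentially formal, and the only point requiring care is tracking the Koszul sign in the Leibniz rule to produce the contracting homotopy $m \mapsto xm$.
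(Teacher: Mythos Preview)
Your proof is correct. The paper argues the cycle in the opposite direction: it shows $(2)\Rightarrow(1)$ by invoking Theorem~\ref{thm-qis-dga} (a quasi-isomorphism of dg algebras induces an equivalence of derived categories), then $(1)\Rightarrow(3)$ as ``easy,'' and finally $(3)\Rightarrow(2)$ via the same contracting-homotopy computation $d(xy)=y$ that you use. Your route $(1)\Rightarrow(2)\Rightarrow(3)\Rightarrow(1)$ is slightly more self-contained: by applying the homotopy $m\mapsto xm$ to an arbitrary dg module $M$ rather than just to $A$, you obtain $(3)\Rightarrow(1)$ directly without appealing to Theorem~\ref{thm-qis-dga}. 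Both arguments rest on the same core observation, so the difference is organizational rather than substantive.
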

\begin{proof} Statement 1 follows from statement 2 by the previous theorem. Statement 1 implying 3 is easy. Thus it suffices to show that statement 3 implies 2. If $[y]\in \mH^*(A)$ is a cohomology class, then
\[
d(xy)=d(x)y-xd(y)=y,
\]
so that $[y]=0$. The result follows.
\end{proof}

\subsubsection{A categorification of the Gaussian integers}\label{subsect-categorifying-Gaussian}

Before moving on to specific dg algebras, we study the base category for our main theorems: ``half-graded'' chain complexes over $\Bbbk$ categorify the ring of Gaussian integers $\Z[\sqrt{-1}]$.

More precisely, we work with $\Z\times (\Z/2\Z)$-graded modules.  We write $\langle 1 \rangle$ for a shift of the $q$-degree by $1$ and $[1]$ for the parity shift. Next we define a differential between such modules to be a map of bidegree $(2,\overline{1})$ which squares to $0$ and a chain complex to be a $\Bbbk$-module equipped with such a differential. Applying the usual homotopy category construction to these chain complexes, we obtain the category of \emph{half-graded chain complexes up to homotopy}, which we denote by $\mc{C}(\Bbbk)$. The derived category of this homotopy category will be denoted $\mc{D}(\Bbbk)$. When $\Bbbk$ is a field, the quotient functor from $\mc{C}(\Bbbk)$ onto $\mc{D}(\Bbbk)$ is an equivalence of triangulated categories.

For simplicity, let us focus on either $\Bbbk=\Z$ or a field. We show that under this assumption, we have
\begin{equation}\label{eqn-categorifying-Gaussian}
K_0(\mc{D}(\Bbbk))\cong \Z[\sqrt{-1}].
\end{equation}

This can easily be seen as follows.  Any chain complex in $\mc{D}(\Bbbk)$ is isomorphic to a direct sum of indecomposable chain complexes of the following forms:
\begin{itemize}
\item a single copy of $\Bbbk$ sitting in any bidegree;
\item a copy of $S=(0\lra \Bbbk \stackrel{d}{\lra} \Bbbk \lra 0)$, where the left $\Bbbk$ term may sit in any bidegree $(a,b)$ and the one on the right has bidegree $(a+2,b+\overline{1})$.
\end{itemize}
Therefore the Grothendieck group is generated as a $\Z[q^{\pm 1}]$-module by the symbol $[\Bbbk]$, with $q[\Bbbk]=[\Bbbk\langle 1 \rangle]$. When $d$ is given by multiplication by a unit in $\Bbbk$, the last two-term complex becomes contractible and therefore isomorphic to $0$ in $\mc{D}(\Bbbk)$. Hence in the Grothendieck group, the relation
\[
(1+q^2)[\Bbbk]=0
\]
is imposed by summing along the $q$-degree. From the classification of objects, this is also the only relation, and it also forces the symbol of $S$ to be zero even when $d$ is multiplication by any element in $\Bbbk$. In conclusion, we obtain
\[
K_0(\mc{D}(\Bbbk))\cong \Z[q^{\pm 1}]/(1+q^2) \cong \Z[\sqrt{-1}].
\]

Had we taken $d$ to be of bidegree $(1,\overline{1})$, we would have obtained the usual notion of ($\Z$-graded) chain complexes of $\Bbbk$-modules.  The resulting Grothendieck group is isomorphic to $\Z$. For our choice of bidegree, the outcome is an isomorphism of functors $\langle 1\rangle\circ \langle 1 \rangle \cong [1]$ on $\mc{D}(\Bbbk)$, as if we have adjoined a square root of the homological grading shift functor. This explains the term ``half-graded'' above.

\begin{rem}[Homotopy versus derived category] One might wonder what happens if we use the dg homotopy category of half-graded complexes instead of the derived category as a base category.  If $\Bbbk$ is a field (or more generally a division ring), the homotopy and derived categories are equivalent, so there is no difference.  However, over $\Bbbk=\Z$, the complexes
\[
\xymatrix{0 \ar[r] & \Z \ar[r]^{n} & \Z\ar[r]& 0}, \quad \xymatrix{0\ar[r]& \Z/(n)\ar[r] & 0}
\]
are not homotopy equivalent if $n\neq\pm 1$.  So the homotopy and derived categories are not equivalent.  Our results throughout this paper hold over $\Z$ if one restricts to subcategories of free abelian groups.  This is because the homotopy category of half-graded free abelian groups is equivalent to the derived category of half-graded abelian groups.
\end{rem}

\subsection{Diagrammatic algebras}\label{subsec-diagrammatic}

The algebras which will appear in this paper ($\opol_n$, $\onh_n$, and others) have elements that are linear combinations of diagrams of ``strands.''  The diagrams for an algebra on $n$ strands have $n$ arcs connecting $n$ points on one horizontal line to $n$ points on another horizontal line.  Strands may cross and sometimes may carry one or more dots.  Dots and crossings occur at distinct heights, only two strands may cross at a given point, strands cannot have any critical points for the projection to the vertical axis, and we consider these diagrams up to isotopies through such diagrams that do not change relative heights of dots and crossings.

A product $xy$ is drawn as placing the diagram for $x$ above the diagram for $y$:
\begin{equation*}
\hackcenter{\begin{tikzpicture}
    \draw[thick] (0,0) -- (0,1);
    \draw[thick] (.5,0) -- (.5,1)
        node[pos=.5] {\bbullet};
    \draw[thick] (1,0) -- (1,1);
\end{tikzpicture}}
\quad\cdot\quad
\hackcenter{\begin{tikzpicture}
    \draw[thick] (0,0) [out=90, in=-90] to (1,1);
    \draw[thick] (.5,0) [out=90, in=-90] to (0,1);
    \draw[thick] (1,0) [out=90, in=-90] to (.5,1);
\end{tikzpicture}}
\quad:=\quad
\hackcenter{\begin{tikzpicture}[scale=0.5]
    \draw[thick] (0,0) [out=90, in=-90] to (2,1);
    \draw[thick] (1,0) [out=90, in=-90] to (0,1);
    \draw[thick] (2,0) [out=90, in=-90] to (1,1);
    \draw[thick] (0,1) -- (0,2);
    \draw[thick] (1,1) -- (1,2)
        node[pos=.5] {\bbullet};
    \draw[thick] (2,1) -- (2,2);
\end{tikzpicture}}.
\end{equation*}

We call the dots and crossings \emph{local generators},
\begin{equation*}
\hackcenter{\begin{tikzpicture}
    \draw[thick] (0,0) -- (0,1)
        node[pos=.5] {\bbullet};
\end{tikzpicture}}
~,\quad
\hackcenter{\begin{tikzpicture}
    \draw[thick] (0,0) [out=90, in=-90] to (1,1);
    \draw[thick] (1,0) [out=90, in=-90] to (0,1);
\end{tikzpicture}}.
\end{equation*}
At this point we can impose \emph{local relations}: relations among generators on some number of strands along with all other relations generated by padding this on the left and right with vertical strands.  In other words, we think of a monoidal category: the points on the horizontal lines are objects, vertical strands are identity morphisms, and dots and crossings are other morphisms.  Imposing a relation among diagrams on $m$ strands induces relations on diagrams with $m$ or more strands.  For instance, the local relation
\begin{equation*}
\hackcenter{\begin{tikzpicture}[scale=0.5]
    \draw[thick] (0,0) [out=90, in =-90] to (1,1) [out=90, in=-90] to (0,2);
    \draw[thick] (1,0) [out=90, in =-90] to (0,1) [out=90, in=-90] to (1,2);
\end{tikzpicture}}
\quad=\quad
\hackcenter{\begin{tikzpicture}[scale=0.5]
    \draw[thick] (0,0) -- (0,2)
        node[pos=.5] {\bbullet};
    \draw[thick] (1,0) -- (1,2);
\end{tikzpicture}}
\end{equation*}
also includes the relations
\begin{equation*}
\hackcenter{\begin{tikzpicture}[scale=0.5]
    \draw[thick] (0,0) [out=90, in =-90] to (1,1) [out=90, in=-90] to (0,2);
    \draw[thick] (1,0) [out=90, in =-90] to (0,1) [out=90, in=-90] to (1,2);
    \draw[thick] (2,0) -- (2,2);
\end{tikzpicture}}
\quad=\quad
\hackcenter{\begin{tikzpicture}[scale=0.5]
    \draw[thick] (0,0) -- (0,2)
        node[pos=.5] {\bbullet};
    \draw[thick] (1,0) -- (1,2);
    \draw[thick] (2,0) -- (2,2);
\end{tikzpicture}}\quad,\qquad
\hackcenter{\begin{tikzpicture}[scale=0.5]
    \draw[thick] (0,0) [out=90, in =-90] to (1,1) [out=90, in=-90] to (0,2);
    \draw[thick] (1,0) [out=90, in =-90] to (0,1) [out=90, in=-90] to (1,2);
    \draw[thick] (-1,0) -- (-1,2);
\end{tikzpicture}}
\quad=\quad
\hackcenter{\begin{tikzpicture}[scale=0.5]
    \draw[thick] (0,0) -- (0,2)
        node[pos=.5] {\bbullet};
    \draw[thick] (1,0) -- (1,2);
    \draw[thick] (-1,0) -- (-1,2);
\end{tikzpicture}}\quad,\qquad
\ldots.
\end{equation*}

Suppose for each $n\in\Z_{\geq0}$, $R_n$ is a superalgebra on $n$ strands, with all relations coming from local relations.  Define $\iota_{m,n}:R_m\otimes R_n\hookrightarrow R_{m+n}$ to be the $\Bbbk$-module map that horizontally juxtaposes diagrams with the diagram from $R_m$ on the left, with strands vertically extended so that generators in the diagram from the $R_m$ factor are all above those of the $R_n$ factor:
\begin{equation*}
\iota_{3,3}\left(\quad\hackcenter{\begin{tikzpicture}
    \draw[thick] (0,0) -- (0,1);
    \draw[thick] (.5,0) -- (.5,1)
        node[pos=.5] {\bbullet};
    \draw[thick] (1,0) -- (1,1);
\end{tikzpicture}}
\quad\otimes\quad
\hackcenter{\begin{tikzpicture}
    \draw[thick] (0,0) [out=90, in=-90] to (1,1);
    \draw[thick] (.5,0) [out=90, in=-90] to (0,1);
    \draw[thick] (1,0) [out=90, in=-90] to (.5,1);
\end{tikzpicture}}\quad\right)
\quad=\quad
\hackcenter{\begin{tikzpicture}
    \draw[thick] (1.5,0) [out=90, in=-90] to (2.5,.5);
    \draw[thick] (2,0) [out=90, in=-90] to (1.5,.5);
    \draw[thick] (2.5,0) [out=90, in=-90] to (2,.5);
    \draw[thick] (1.5,.5) -- (1.5,1);
    \draw[thick] (2,.5) -- (2,1);
    \draw[thick] (2.5,.5) -- (2.5,1);
    \draw[thick] (0,0) -- (0,1);
    \draw[thick] (.5,0) -- (.5,1)
        node[pos=.75] {\bbullet};
    \draw[thick] (1,0) -- (1,1);
\end{tikzpicture}}.
\end{equation*}
This is a homomorphism of superalgebras if and only if for all homogeneous elements $x\in R_m$ and $y\in R_n$, we have
\begin{equation*}
\iota_{m,n}(x\otimes 1)\iota_{m,n}(1\otimes y)=(-1)^{p(x)p(y)}\iota_{m,n}(1\otimes y)\iota_{m,n}(x\otimes 1).
\end{equation*}
Diagrammatically, the condition is \emph{distant supercommutativity}:
\begin{equation*}
\hackcenter{\begin{tikzpicture}[scale=0.5]
    \draw[thick] (0,0) -- (0,3);
    \draw[thick] (1,0) -- (1,3);
    \draw[thick] (2,0) -- (2,3);
    \draw[thick] (3,0) -- (3,3);
    \draw[thick] (4,0) -- (4,3);
    \draw[thick] (5,0) -- (5,3);
    \node[draw, thick, fill=white!20,rounded corners=4pt,inner sep=3pt,text width=35,align=center] at (1,2) {$x$};
    \node[draw, thick, fill=white!20,rounded corners=4pt,inner sep=3pt,text width=35,align=center] at (4,1) {$y$};
\end{tikzpicture}}
\quad=\quad(-1)^{p(x)p(y)}
\hackcenter{\begin{tikzpicture}[scale=0.5]
    \draw[thick] (0,0) -- (0,3);
    \draw[thick] (1,0) -- (1,3);
    \draw[thick] (2,0) -- (2,3);
    \draw[thick] (3,0) -- (3,3);
    \draw[thick] (4,0) -- (4,3);
    \draw[thick] (5,0) -- (5,3);
    \node[draw, thick, fill=white!20,rounded corners=4pt,inner sep=3pt,text width=35,align=center] at (1,1) {$x$};
    \node[draw, thick, fill=white!20,rounded corners=4pt,inner sep=3pt,text width=35,align=center] at (4,2) {$y$};
\end{tikzpicture}}\quad.
\end{equation*}
We will always impose the distant supercommutativity relation.

\begin{example} The group algebra $\Bbbk[S_n]$ of the symmetric group is the diagrammatic algebra on $n$ strands with crossings but no dots, giving crossings even parity.  The local relations are
\begin{equation*}
\hackcenter{\begin{tikzpicture}[scale=0.5]
    \draw[thick] (0,0) [out=90, in=-90] to (1,1) [out=90, in=-90] to (0,2);
    \draw[thick] (1,0) [out=90, in=-90] to (0,1) [out=90, in=-90] to (1,2);
\end{tikzpicture}}
\quad=\quad
\hackcenter{\begin{tikzpicture}[scale=0.5]
    \draw[thick] (0,0) -- (0,2);
    \draw[thick] (1,0) -- (1,2);
\end{tikzpicture}}
\quad,\qquad
\hackcenter{\begin{tikzpicture}[scale=0.5]
    \draw[thick] (0,0) [out=90, in=-90] to (2,2);
    \draw[thick] (2,0) [out=90, in=-90] to (0,2);
    \draw[thick] (1,0) [out=90, in=-90] to (0,1) [out=90, in=-90] to (1,2);
\end{tikzpicture}}
\quad=\quad
\hackcenter{\begin{tikzpicture}[scale=0.5]
    \draw[thick] (0,0) [out=90, in=-90] to (2,2);
    \draw[thick] (2,0) [out=90, in=-90] to (0,2);
    \draw[thick] (1,0) [out=90, in=-90] to (2,1) [out=90, in=-90] to (1,2);
\end{tikzpicture}}\quad.
\end{equation*}
\end{example}

\begin{example}\label{eg-odd-pol} The superalgebra of skew polynomials $\opol_n$ is the diagrammatic superalgebra with dots but no crossings, giving dots odd parity.  No other relations (other than distant supercommutativity) are imposed.  In equations,
\begin{equation*}
\opol_n=\Bbbk\langle x_1,\ldots,x_n\rangle/(x_ix_j+x_jx_i=0\text{ if }i\neq j),
\end{equation*}
where $x_i$ stands for a dot on the $i$-th strand.
\end{example}

\begin{example} KLR algebras (quiver Hecke algebras) are diagrammatic algebras, with the definition suitably generalized to allow for the coloring of strands by simple roots \cite{KL1}.  All dots and crossings have even parity.  For certain super Cartan data, the quiver Hecke superalgebras of Kang-Kashiwara-Tsuchioka \cite{KKT} can be expressed as diagrammatic algebras too.\end{example}

Suppose $\lb B_n\rb_{n\geq0}$ is a family of diagrammatic algebras defined by local relations as above and that each $B_n$ has a differential $d$.  We say that this forms a family of \emph{diagrammatically local dg algebras} if $d(1)=0$ and $d$ is determined by its values on diagrammatic generators and extended compatibly with the horizontal juxtaposition operation.

\begin{example} The superalgebra $\opol_n$ has a diagrammatically local differential given by
\begin{equation}\label{eqn-opol-d}
d\left(
\hackcenter{\begin{tikzpicture}[scale=0.5]
    \draw[thick] (0,0) -- (0,2)
        node[pos=.5] () {\bbullet};
\end{tikzpicture}}
\right)=
\hackcenter{\begin{tikzpicture}[scale=0.5]
    \draw[thick] (0,0) -- (0,2)
        node[pos=.33] () {\bbullet}
        node[pos=.67] () {\bbullet};
\end{tikzpicture}}\quad.
\end{equation}
In equations, $d(x_i)=x_i^2$.\end{example}

\subsection{Odd symmetric polynomials and odd nilHecke algebras}\label{subsec-osym}

In this subsection, we gather the necessary background material on odd symmetric polynomials and odd nilHecke algebras. See \cite{EK, EKL} for more details.

\subsubsection{Odd symmetric polynomials}

Let
\begin{equation*}
e_k:=\sum_{1\leq i_1<\ldots<i_k\leq n}x_{i_1}\cdots x_{i_k}
\end{equation*}
be the $k$-th \emph{untwisted odd elementary symmetric polynomial} in $\opol_n$.  By convention, $e_0=1$ and $e_k=0$ if $k<0$.  The subalgebra $\osym_n\subseteq\opol_n$ generated by the $e_k$'s is the algebra of \emph{untwisted odd symmetric polynomials} in $n$ variables.

The involution
\begin{equation}\label{eqn-theta-defn}
\begin{split}
\theta:\opol_n&\to\opol_n\\
x_i&\mapsto(-1)^{i-1}x_i=:\xt_i
\end{split}\end{equation}
is called \emph{twisting}.  The subalgebra $\osymt_n=\theta(\osym_n)$ is called the algebra of \emph{twisted odd symmetric polynomials} in $n$ variables, and $\et_k=\theta(e_k)$ is the $k$-th \emph{twisted odd elementary symmetric polynomial}.

\begin{rem} The odd elementary symmetric polynomials of \cite{EKL} are our twisted odd elementary symmetric polynomials.  The relations among the $e_k$'s are identical to the relations among the $\et_k$'s.  The map $e_k\mapsto\et_k$ is an isomorphism $\osym_n\to\osymt_n$ as well as between their limits (as considered in \cite{EK,EKL}).

\begin{rem} The subalgebra $\osymt_n\subseteq\opol_n$ is not closed under $d$.  For example, if $n=2$, then $\et_1=x_1-x_2$ but $d(x_1-x_2)=x_1^2-x_2^2$ is not in $\osymt_2$.  If the differential $d$ on $\osym_n$ is carried over to $\osymt_n$ via the isomorphism $\theta$, the resulting differential is not diagrammatically local (see Subsection \ref{subsec-local-d}).\end{rem}

In what follows, we will see that the twin constraints of $\osym_n$-linearity and diagrammatic locality force us to consider both of these variants of odd symmetric polynomials.\end{rem}

The symmetric group $S_n$ acts on $\opol_n$ by permuting indices,
\begin{equation*}
w(x_j)=x_{w(j)}.
\end{equation*}
Let $s_i=(i\quad i+1)$ be the $i$-th simple transposition in $S_n$ and let $w_0$ be the longest element of $S_n$ (with respect to the usual Coxeter length); that is, $w_0(j)=n+1-j$.  While $s_i$ does not preserve the subalgebra $\osym_n\subseteq\opol_n$, the longest element $w_0$ does.  It is an involution of superalgebras and
\begin{equation}
w_0(e_k)=(-1)^{\binom{k}{2}}e_k.
\end{equation}
The equation
\begin{equation}\label{eqn-theta-w0}
(\theta\circ w_0)(f)=(-1)^{(n+1)p(f)}(w_0\circ\theta)(f)
\end{equation}
is easily checked for homogeneous $f$.  It follows that
\begin{equation}
w_0(\et_k)=(-1)^{(n+1)k}(\theta\circ w_0)\theta(\et_k)=(-1)^{(n+1)k+\binom{k}{2}}\et_k.
\end{equation}

For $1\leq i\leq n-1$, define the $i$-th \emph{odd divided difference operator} $\pd_i:\opol_n\to\opol_n$ by
\begin{eqnarray}\label{eqn-divided-difference-action}
&\pd_i(x_j)=\begin{cases}1&j=i,i+1\\0&j\neq i,i+1,\end{cases}\\
&\pd_i(fg)=\pd_i(f)g+(-1)^{p(f)}s_i(f)\pd_i(g).
\end{eqnarray}
\begin{prop}[\cite{EKL}]\label{prop-onh-properties}\begin{enumerate}
\item The operator $\pd_i$ is well defined on $\opol_n$.
\item The twisted odd symmetric polynomials are the joint kernel and the joint image of the operators $\pd_i$,
\begin{equation}
\osymt_n=\bigcap_{i=1}^{n-1}\mathrm{Ker}(\pd_i)=\bigcap_{i=1}^{n-1}\mathrm{Im}(\pd_i).
\end{equation}
\item The operator $\pd_i$ is right $\osymt_n$-linear.
\item Considering $x_i$ as left multiplication, the following relations hold in $\END_{\osymt_n}(\opol_n)$ (right-linear endomorphisms):
\begin{eqnarray}
&x_ix_j+x_jx_i=0\text{ if }i\neq j,\\
&\pd_i\pd_j+\pd_j\pd_i=0\text{ if }|i-j|>1,\\
&\pd_i^2=0,\\
&\pd_i\pd_{i+1}\pd_i=\pd_{i+1}\pd_i\pd_{i+1},\\
&\pd_ix_j+x_j\pd_i=0\text{ if }j\neq i,i+1,\\
&x_i\pd_i+\pd_ix_{i+1}=1=\pd_ix_i+x_{i+1}\pd_i.
\end{eqnarray}
These relations suffice to determine all relations among these operators, and these operators generate all of $\END_{\osymt_n}(\opol_n)$.
\end{enumerate}\end{prop}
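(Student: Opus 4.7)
My plan is to prove the five assertions in order, leveraging direct computation for the first three and a dimension count for the last.

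\textbf{Well-definedness, the kernel characterization, and right linearity.} Since $\opol_n$ is presented by the skew-commutation relations $x_jx_k+x_kx_j=0$ for $j\neq k$, verifying well-definedness of $\partial_i$ amounts to checking that the twisted Leibniz rule \eqref{eqn-divided-difference-action} sends each such relation to zero; this is a short case analysis on how $\{j,k\}$ meets $\{i,i+1\}$. The containment $\osymt_n\subseteq\bigcap_i\mathrm{Ker}(\partial_i)$ then follows from a direct computation of $\partial_i(\et_k)$ on generators. For the reverse inclusion I would induct on $n$: writing $f\in\opol_n$ uniquely as $f=\sum_{j=0}^{n-1}x_n^j f_j$ with $f_j\in\opol_{n-1}$, the vanishing of $\partial_{n-1}f$ gives a recursion among the $f_j$, and applying the inductive hypothesis places $f$ in $\osymt_n$. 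Right $\osymt_n$-linearity of $\partial_i$ is then immediate from the Leibniz rule, since the second term $\pm s_i(g)\partial_i(f)$ vanishes whenever $f\in\osymt_n$.

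\textbf{Relations and the image characterization.} Each of the six relations asserted in $\END_{\osymt_n}(\opol_n)$ is verified by evaluating both sides on a general monomial $x^a$, which in every case reduces to an identity in at most three variables; the braid relation is the longest but is still a finite check in $\opol_3$. Once $\partial_i^2=0$ is in hand, the containment $\bigcap_i\mathrm{Im}(\partial_i)\subseteq\bigcap_i\mathrm{Ker}(\partial_i)=\osymt_n$ is automatic, while the reverse inclusion uses the longest-word operator $\partial_{w_0}=\partial_{i_1}\cdots\partial_{i_\ell}$ associated to a fixed reduced expression of $w_0$, together with the explicit formula $\partial_{w_0}(x_1^{n-1}x_2^{n-2}\cdots x_{n-1})=\pm 1$, to produce an element of $\bigcap_i\mathrm{Im}(\partial_i)$ that generates $\osymt_n$ as a right module over itself.

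\textbf{Completeness of the relations and generation.} For the final assertion, I would introduce the abstract algebra $\onh_n$ presented by symbols $x_i$ and $\partial_i$ subject to the listed relations and form the evident algebra map $\varphi\maps\onh_n\to\END_{\osymt_n}(\opol_n)$. Using the relations, one shows that $\{x^a\partial_w\mid a\in\Z_{\geq 0}^n,\ w\in S_n\}$ spans $\onh_n$, since any word in the generators can be rewritten into this normal form by pushing $\partial$'s to the right using the commutation with $x_j$ for $j\neq i,i+1$, the rung relation $x_i\partial_i+\partial_i x_{i+1}=1$, and the braid/square-zero relations on the $\partial$'s. Separately, one establishes via a Schubert-polynomial-style argument that $\opol_n$ is a free right $\osymt_n$-module of rank $n!$, which pins down the rank of $\END_{\osymt_n}(\opol_n)$. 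A dimension comparison then forces $\varphi$ to be an isomorphism.

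The main obstacle is this last freeness statement. Because $\osymt_n$ is not $S_n$-stable, the classical Schubert-polynomial proof must be adapted to track the signs appearing in the odd setting carefully; in particular, one has to choose a basis compatible with the skew-commutation so that the expansion in $\osymt_n$-coefficients is unambiguous. Everything else reduces to finite local checks once this module-theoretic foundation is in place.
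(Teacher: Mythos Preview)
The paper does not actually prove this proposition: it is quoted verbatim from \cite{EKL}, and the only hint given later (in the proof of Lemma~\ref{lem-delta}) is that the basis statement ``follows from the relations and a computation using odd Schubert polynomials.'' Your outline is precisely the strategy carried out in \cite{EKL}: verify well-definedness and the relations by local computations, identify $\osymt_n$ as the joint kernel, deduce right $\osymt_n$-linearity from the Leibniz rule, produce a spanning set $\{x^a\partial_w\}$ for the abstract algebra, and then use freeness of $\opol_n$ over $\osymt_n$ (established there via an odd Schubert basis) together with a graded-rank count to conclude that the map to $\END_{\osymt_n}(\opol_n)$ is an isomorphism.

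Two small remarks. First, your argument for $\osymt_n\subseteq\bigcap_i\mathrm{Im}(\partial_i)$ via $\partial_{w_0}(x^\delta)=\pm1$ works, but is more elaborate than necessary: once right $\osymt_n$-linearity is in hand, the identity $\partial_i(x_i)=1$ already gives $\partial_i(x_i f)=f$ for any $f\in\osymt_n$, so each $\partial_i$ is individually surjective onto $\osymt_n$. Second, your proposed induction on $n$ for the kernel containment $\bigcap_i\mathrm{Ker}(\partial_i)\subseteq\osymt_n$ is plausible but the recursion coming from $\partial_{n-1}f=0$ is not as clean as you suggest, since $\partial_{n-1}(x_n^j)$ involves lower powers of both $x_{n-1}$ and $x_n$; in \cite{EKL} this step is instead handled by exhibiting the odd Schubert polynomials as an explicit $\osymt_n$-basis of $\opol_n$, which simultaneously yields the kernel statement and the freeness you flag as the main obstacle.
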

When we think of the algebra generated by the $x_i,\pd_i$ and the above relations abstractly (rather than acting on $\opol_n$), we will call it the \emph{odd nilHecke algebra} on $n$ strands, denoted by $\onh_n$.

The terminology of ``strands'' comes from the fact that $\onh_n$ can be expressed as a diagrammatic algebra in the sense of Subsection \ref{subsec-diagrammatic}.  A dot on the $i$-th strand from the left stands for $x_i$, so that $\opol_n\subseteq\onh_n$ is a subalgebra, and a crossing of the $i$-th and $(i+1)$-st strands from the left stands for $\pd_i$.  Diagrammatically, then, the defining relations of $\onh_n$ are as follows:
\begin{equation}
\hackcenter{\begin{tikzpicture}[scale=0.5]
    \draw[thick] (0,0) [out=90, in=-90] to (1,1) [out=90, in=-90] to (0,2);
    \draw[thick] (1,0) [out=90, in=-90] to (0,1) [out=90, in=-90] to (1,2);
\end{tikzpicture}}
\quad=\;\; 0,\qquad
\hackcenter{\begin{tikzpicture}[scale=0.5]
    \draw[thick] (0,0) [out=90, in=-90] to (2,2);
    \draw[thick] (1,0) [out=90, in=-90] to (0,1) [out=90, in=-90] to (1,2);
    \draw[thick] (2,0) [out=90, in=-90] to (0,2);
\end{tikzpicture}}
\quad=\quad
\hackcenter{\begin{tikzpicture}[scale=0.5]
    \draw[thick] (0,0) [out=90, in=-90] to (2,2);
    \draw[thick] (1,0) [out=90, in=-90] to (2,1) [out=90, in=-90] to (1,2);
    \draw[thick] (2,0) [out=90, in=-90] to (0,2);
\end{tikzpicture}}
\quad,
\end{equation}
\begin{equation}
\hackcenter{\begin{tikzpicture}[scale=0.75]
    \draw[thick] (0,0) .. controls (0,.5) and (1,.5) .. (1,1);
    \draw[thick] (1,0) .. controls (1,.5) and (0,.5) .. (0,1)
        node[pos=.75] () {\bbullet};
\end{tikzpicture}}
\quad+\quad
\hackcenter{\begin{tikzpicture}[scale=0.75]
    \draw[thick] (0,0) .. controls (0,.5) and (1,.5) .. (1,1);
    \draw[thick] (1,0) .. controls (1,.5) and (0,.5) .. (0,1)
        node[pos=.25] () {\bbullet};
\end{tikzpicture}}
\quad=\quad
\hackcenter{\begin{tikzpicture}[scale=0.75]
    \draw[thick] (0,0) -- (0,1);
    \draw[thick] (1,0) -- (1,1);
\end{tikzpicture}}
\quad=\quad
\hackcenter{\begin{tikzpicture}[scale=0.75]
    \draw[thick] (0,0) .. controls (0,.5) and (1,.5) .. (1,1)
        node[pos=.25] () {\bbullet};
    \draw[thick] (1,0) .. controls (1,.5) and (0,.5) .. (0,1);
\end{tikzpicture}}
\quad+\quad
\hackcenter{\begin{tikzpicture}[scale=0.75]
    \draw[thick] (0,0) .. controls (0,.5) and (1,.5) .. (1,1)
        node[pos=.75] () {\bbullet};
    \draw[thick] (1,0) .. controls (1,.5) and (0,.5) .. (0,1);
\end{tikzpicture}}
\quad;
\end{equation}
plus the distant supercommutativity relations:
\begin{eqnarray}
&\hackcenter{\begin{tikzpicture}[scale=0.5]
    \draw[thick] (0,0) -- (0,2)
        node[pos=.75] () {\bbullet};
    \node () at (1,1) {$\cdots$};
    \draw[thick] (2,0) -- (2,2)
        node[pos=.25] () {\bbullet};
\end{tikzpicture}}
\quad+\quad
\hackcenter{\begin{tikzpicture}[scale=0.5]
    \draw[thick] (0,0) -- (0,2)
        node[pos=.25] () {\bbullet};
    \node () at (1,1) {$\cdots$};
    \draw[thick] (2,0) -- (2,2)
        node[pos=.75] () {\bbullet};
\end{tikzpicture}}
\quad=\;\;0,\\
&\hackcenter{\begin{tikzpicture}[scale=0.5]
    \draw[thick] (0,0) -- (0,1) [out=90, in=-90] to (1,2);
    \draw[thick] (1,0) -- (1,1) [out=90, in=-90] to (0,2);
    \node () at (2,1) {$\cdots$};
    \draw[thick] (3,0) [out=90, in=-90] to (4,1) -- (4,2);
    \draw[thick] (4,0) [out=90, in=-90] to (3,1) -- (3,2);
\end{tikzpicture}}
\quad+\quad
\hackcenter{\begin{tikzpicture}[scale=0.5]
    \draw[thick] (3,0) -- (3,1) [out=90, in=-90] to (4,2);
    \draw[thick] (4,0) -- (4,1) [out=90, in=-90] to (3,2);
    \node () at (2,1) {$\cdots$};
    \draw[thick] (0,0) [out=90, in=-90] to (1,1) -- (1,2);
    \draw[thick] (1,0) [out=90, in=-90] to (0,1) -- (0,2);
\end{tikzpicture}}
\quad= \;\; 0,\\
&\hackcenter{\begin{tikzpicture}[scale=0.5]
    \draw[thick] (0,0) -- (0,1) [out=90, in=-90] to (1,2);
    \draw[thick] (1,0) -- (1,1) [out=90, in=-90] to (0,2);
    \node () at (2,1) {$\cdots$};
    \draw[thick] (3,0) -- (3,2)
        node[pos=.25] () {\bbullet};
\end{tikzpicture}}
\quad+\quad
\hackcenter{\begin{tikzpicture}[scale=0.5]
    \draw[thick] (0,0) [out=90, in=-90] to (1,1) -- (1,2);
    \draw[thick] (1,0) [out=90, in=-90] to (0,1) -- (0,2);
    \node () at (2,1) {$\cdots$};
    \draw[thick] (3,0) -- (3,2)
        node[pos=.75] () {\bbullet};
\end{tikzpicture}}
= \;\; 0,\\
&\hackcenter{\begin{tikzpicture}[scale=0.5]
    \draw[thick] (1,0) -- (1,2)
        node[pos=.75] () {\bbullet};
    \node () at (2,1) {$\cdots$};
    \draw[thick] (3,0) [out=90, in=-90] to (4,1) -- (4,2);
    \draw[thick] (4,0) [out=90, in=-90] to (3,1) -- (3,2);
\end{tikzpicture}}
\quad+\quad
\hackcenter{\begin{tikzpicture}[scale=0.5]
    \draw[thick] (1,0) -- (1,2)
        node[pos=.25] () {\bbullet};
    \node () at (2,1) {$\cdots$};
    \draw[thick] (3,0) -- (3,1) [out=90, in=-90] to (4,2);
    \draw[thick] (4,0) -- (4,1) [out=90, in=-90] to (3,2);
\end{tikzpicture}}
=\;\;0.
\end{eqnarray}
See \cite{EKL} for further details on $\onh_n$.  These algebras are the simplest cases of the quiver Hecke superalgebras of \cite{KKT}.

Once and for all, fix a reduced expression $w=s_{i_1}\cdots s_{i_r}$ for each $w\in S_n$ \emph{except for $w=w_0$} and set
\begin{equation}
\pd_w=\pd_{i_1}\cdots\pd_{i_r}.
\end{equation}
A different choice of reduced expression for $w$ can only change the operator $\pd_w$ by an overall sign.  In the case $w=w_0$, we insist on the choice
\begin{equation}\label{eqn-defn-pd-w0}
\pd_{w_0}=\pd_1(\pd_2\pd_1)\cdots(\pd_{n-1}\cdots\pd_1).
\end{equation}
Let $\delta=(n-1,n-2,\ldots,2,1)$.
\begin{lem}\label{lem-delta}\begin{enumerate}
\item If $f$ is parity homogeneous, then $fx^\delta=(-1)^{\binom{n-1}{2}p(f)}x^\delta\theta(f)$.
\item $\pd_{w_0}(x^\delta)=(-1)^{\binom{n}{3}}$.
\item For any skew polynomial $f$, the equation $\pd_{w_0}f\pd_{w_0}=\pd_{w_0}(f)\pd_{w_0}$ holds in $\onh_n$.
\item The element $\tte_n:=(-1)^{\binom{n}{3}}\pd_{w_0}x^\delta$ of $\onh_n$ is an idempotent.
\item The sets
\begin{equation}\label{eqn-onh-pbw}
\lbrace x^a\pd_w:a\in\Z_{\geq0}^n,w\in S_n\rbrace\text{ and }\lbrace \pd_wx^a:a\in\Z_{\geq0}^n,w\in S_n\rbrace
\end{equation}
are both $\Bbbk$-bases of $\onh_n$.
\end{enumerate}\end{lem}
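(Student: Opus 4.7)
The plan is to dispatch these in the order that reveals the underlying logic: Part 5 first, then Part 1, then Part 2, and finally Parts 3 and 4 which build on them. Part 5 is the PBW theorem for $\onh_n$, proved in \cite{EKL}; both spanning sets are obtained by repeatedly applying the defining commutation relations to move all crossings past all dots (or vice versa), and linear independence follows from faithfulness of the action on $\opol_n$ by comparing leading terms on monomials. For Part 1, I would reduce by multiplicativity to the case $f = x_j$: both sides are multiplicative in $f$ (the right-hand side because $\theta$ is an algebra homomorphism), so an induction on the total degree of $f$ suffices. For $f = x_j$, it amounts to a direct sign count: $x_j$ anticommutes past each factor in $x^\delta = x_1^{n-1} \cdots x_{n-1}$ except for the $x_j$-factors themselves, yielding a total sign $(-1)^{\binom{n}{2} - (n-j)} = (-1)^{\binom{n-1}{2} + (j-1)}$, which matches $(-1)^{\binom{n-1}{2}} \theta(x_j) = (-1)^{\binom{n-1}{2} + (j-1)} x_j$.

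Part 2 is the main computation: induct on $n$ using a grouping of $\pd_{w_0}$ that isolates a single linear block $\pd_{n-1} \pd_{n-2} \cdots \pd_1$ times the longest element on the remaining $n-1$ strands. Apply this linear block to $x^\delta$ via the odd Leibniz rule, using Part 1 repeatedly to reorder dots and to keep track of signs picked up when dots move past each other. This reduces the computation to a scalar multiple of the $(n-1)$-strand case, and a careful bookkeeping of the signs (most of which telescope) produces the claimed $(-1)^{\binom{n}{3}}$.

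Part 3 then follows conceptually from the PBW basis: write $\pd_{w_0} f = \sum_{a,w} c_{a,w} x^a \pd_w$ using Part 5. The crucial vanishing $\pd_w \pd_{w_0} = 0$ for $w \neq 1$ holds because, by Proposition \ref{prop-onh-properties}, $\pd_{w_0}$ lands in $\osymt_n = \bigcap_i \mathrm{Ker}(\pd_i)$, so any further $\pd_w$ of positive length kills the result as an operator on $\opol_n$; faithfulness (Part 5) promotes this to an identity in $\onh_n$. Hence $\pd_{w_0} f \pd_{w_0} = \bigl(\sum_a c_{a,1} x^a\bigr) \pd_{w_0}$. To identify the polynomial $\sum_a c_{a,1} x^a$ with $\pd_{w_0}(f)$, apply both sides to $x^\delta$ in the polynomial representation: the left-hand side evaluates to $(-1)^{\binom{n}{3}} \pd_{w_0}(f)$ by Part 2, and the right-hand side to $(-1)^{\binom{n}{3}} \sum_a c_{a,1} x^a$, forcing equality. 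Part 4 is then immediate: specializing Part 3 at $f = x^\delta$ gives
\[
\tte_n^2 = \pd_{w_0} x^\delta \pd_{w_0} x^\delta = \pd_{w_0}(x^\delta)\, \pd_{w_0} x^\delta = (-1)^{\binom{n}{3}} \pd_{w_0} x^\delta = \tte_n,
\]
after absorbing the overall $(-1)^{2\binom{n}{3}} = 1$ from the definition of $\tte_n$.

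The main obstacle is Part 2: the inductive sign-tracking calculation requires Part 1 as a constant subroutine to commute dots past each other and is easy to get wrong. By contrast, once Parts 1, 2, and 5 are in hand, Parts 3 and 4 are conceptually clean and essentially follow from the fact that $\pd_{w_0}$ is the unique length-maximal divided difference.
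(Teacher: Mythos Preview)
Your proposal is correct. The only notable departure from the paper is in Part~3. The paper gives a one-line argument that does not use Parts~2 or~5 at all: by Proposition~\ref{prop-onh-properties}, the operator $\pd_{w_0}$ is right $\osymt_n$-linear and has image contained in $\osymt_n$, so for any $g\in\opol_n$,
\[
(\pd_{w_0}f\pd_{w_0})(g)=\pd_{w_0}\bigl(f\cdot\pd_{w_0}(g)\bigr)=\pd_{w_0}(f)\cdot\pd_{w_0}(g)=\bigl(\pd_{w_0}(f)\pd_{w_0}\bigr)(g),
\]
and faithfulness of the polynomial representation finishes. Your route---expanding $\pd_{w_0}f$ in the PBW basis, killing the $w\neq1$ terms against $\pd_{w_0}$, then identifying the surviving polynomial coefficient by evaluation at $x^\delta$---works, but it makes Part~3 logically depend on Parts~2 and~5, whereas the paper's argument needs only Proposition~\ref{prop-onh-properties}. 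Your treatment of Parts~1, 2, 4, and~5 matches the paper's (which calls 1 and 2 ``straightforward'' and defers 5 to \cite{EKL}); your detailed sign check for Part~1 and inductive outline for Part~2 are fine.
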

\begin{proof} Statements 1 and 2 are straightforward to check.  Statement 3 follows because an element of $\onh_n$ is determined by how it acts on $\opol_n$ (Proposition \ref{prop-onh-properties}, Statement 4) and $\mathrm{Im}(\pd_{w_0})\subseteq\osymt_n$ (Proposition \ref{prop-onh-properties}, Statement 2):
\begin{equation*}
(\pd_{w_0}f\pd_{w_0})(g)=\pd_{w_0}(f\pd_{w_0}(g))=\pd_{w_0}(f)\pd_{w_0}(g)=\left(\pd_{w_0}(f)\pd_{w_0}\right)(g).
\end{equation*}
Statement 4 follows from Statements 2 and 3:
\begin{equation*}
\tte_n^2=\pd_{w_0}x^\delta\pd_{w_0}x^\delta=\pd_{w_0}(x^\delta)\pd_{w_0}x^\delta=(-1)^{\binom{n}{3}}\pd_{w_0}x^\delta=\tte_n.
\end{equation*}
Statement 5 is proved in \cite{EKL} (it follows from the relations and a computation using odd Schubert polynomials).
\end{proof}

\begin{lem}\label{lem-idempotent-commute} If $f$ is untwisted odd symmetric, then the equation
\begin{equation}\label{eqn-idempotent-commute}
\tte_nf=(\theta\circ w_0)(f)\tte_n
\end{equation}
holds in $\onh_n$.\end{lem}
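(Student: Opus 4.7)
The plan is to verify the claimed identity as operators on $\opol_n$, using that the $\onh_n$-action on $\opol_n$ is faithful (Proposition~\ref{prop-onh-properties}, Statement~4). Unfolding the definition $\tte_n = (-1)^{\binom{n}{3}}\pd_{w_0}x^\delta$ and evaluating both sides on an arbitrary $h\in\opol_n$, the target equation becomes
\begin{equation*}
\pd_{w_0}(x^\delta f h) \;=\; (\theta\circ w_0)(f)\,\pd_{w_0}(x^\delta h).
\end{equation*}

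Next, I apply Lemma~\ref{lem-delta}, Statement~1, in the form $x^\delta f = (-1)^{\binom{n-1}{2}p(f)}\theta(f)x^\delta$, and equation~\eqref{eqn-theta-w0} in the form $(\theta\circ w_0)(f) = (-1)^{(n+1)p(f)}(w_0\circ\theta)(f)$. Setting $g := \theta(f)\in\osymt_n$, these two rewritings collapse the desired identity to the twisted centrality relation
\begin{equation*}
\pd_{w_0}\cdot g \;=\; (-1)^{(\binom{n-1}{2}+n+1)p(g)}\, w_0(g)\cdot \pd_{w_0}
\end{equation*}
as elements of $\onh_n$, which must then be proved for every parity-homogeneous $g \in \osymt_n$.

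The approach to this key identity is to iterate the commutation $\pd_i\cdot h = \pd_i(h)+(-1)^{p(h)}s_i(h)\pd_i$ (a direct consequence of the $\onh_n$-relations of Proposition~\ref{prop-onh-properties}, Statement~4) along a reduced expression $w_0 = s_{i_1}\cdots s_{i_N}$ with $N = \binom{n}{2}$, starting from the innermost factor. Since $g\in\bigcap_i\ker\pd_i$, the first iteration produces no residue, and the ``continuing'' term after $N$ steps becomes $(-1)^{Np(g)}w_0(g)\pd_{w_0}$; this matches the asserted sign because $\binom{n-1}{2}+n+1 = \binom{n}{2}+2 \equiv \binom{n}{2}\pmod{2}$. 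The principal obstacle is controlling the residue terms arising at intermediate steps: the intermediate images $s_{i_{j+1}}\cdots s_{i_N}(g)$ do not in general remain in $\osymt_n$, so individual residues are nonzero. One checks, by expanding in the PBW basis of Lemma~\ref{lem-delta}, Statement~5, and tracking signs, that all residue contributions lie in components of strictly smaller $\pd$-length than $\binom{n}{2}$, and that their sum cancels term by term (using $\pd_w\pd_{w'}=0$ whenever $\ell(ww')<\ell(w)+\ell(w')$, together with the length identities in $S_n$), leaving only the claimed main term. This combinatorial cancellation is the technical heart of the proof; once it is established, the signs assembled above complete the verification.
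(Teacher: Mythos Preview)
Your reduction is exactly the paper's: unfold $\tte_n=(-1)^{\binom{n}{3}}\pd_{w_0}x^\delta$, use Lemma~\ref{lem-delta}(1) to pass $x^\delta$ past $f$, and use \eqref{eqn-theta-w0} to convert $\theta\circ w_0$ to $w_0\circ\theta$. Both arguments thereby reduce the lemma to the commutation
\[
\pd_{w_0}\,g \;=\; (-1)^{\binom{n}{2}p(g)}\,w_0(g)\,\pd_{w_0}\qquad(g\in\osymt_n),
\]
and your sign bookkeeping $\binom{n-1}{2}+n+1\equiv\binom{n}{2}\pmod 2$ is correct.

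The divergence is in how this key identity is established. The paper simply cites it as equation~(2.64) of \cite{EKL}. You instead attempt to prove it by iterating $\pd_i\cdot h=\pd_i(h)+(-1)^{p(h)}s_i(h)\pd_i$ along a reduced word for $w_0$, and here there is a genuine gap. After the first step the element $s_{i_N}(g)$ need not lie in $\osymt_n$, so residue terms $\pd_{i_1}\cdots\pd_{i_{j-1}}\bigl(\pd_{i_j}(s_{i_{j+1}}\cdots s_{i_N}(g))\bigr)\pd_{i_{j+1}}\cdots\pd_{i_N}$ are genuinely nonzero, and your assertion that they ``cancel term by term'' is not supported by any mechanism you give. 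The observation that each residue has $\pd$-length strictly less than $\binom{n}{2}$ only says these terms are linearly independent from the main term in the PBW basis; it does not force them to vanish, and the relation $\pd_w\pd_{w'}=0$ for $\ell(ww')<\ell(w)+\ell(w')$ does not apply, since the residues are of the form $(\text{polynomial})\cdot\pd_{w'}$ rather than a product of two divided-difference words. You have correctly identified what must be true but have not proved it; either cite \cite{EKL} as the paper does, or supply an actual argument (for instance, check the identity on the right $\osymt_n$-basis $\{x^a:0\le a_i\le n-i\}$ of $\opol_n$, using right $\osymt_n$-linearity of $\pd_{w_0}$ and the explicit value $\pd_{w_0}(x^\delta)=(-1)^{\binom{n}{3}}$).
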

\begin{proof} By equation (2.64) of \cite{EKL}, $\pd_{w_0}f=(-1)^{\binom{n}{2}}w_0(f)\pd_{w_0}$.  Along with Statement 1 of Lemma \ref{lem-delta} and \eqref{eqn-theta-w0}, the result follows.\end{proof}

As a summary of this subsection, the superalgebra involutions $w_0$, $\theta$, and $\iota=\iota_{\opol_n}$ of $\opol_n$ satisfy the relations
\begin{equation*}\begin{split}
&w_0^2=\theta^2=\iota^2=1,\\
&\iota\circ w_0=w_0\circ\iota,\quad \iota\circ\theta=\theta\circ\iota,\\
&w_0\circ\theta=\iota^{n+1}\circ\theta\circ w_0.
\end{split}\end{equation*}

\section{Differential graded thin calculus}\label{sec-local-d}

\subsection{Local differentials on skew polynomials}\label{subsec-local-d}

\subsubsection{The differential on $\opol_n$}

Let $\opol_n$ be the superalgebra of skew polynomials defined in Example \ref{eg-odd-pol}.  The local differential
\begin{equation}
d\left(
\hackcenter{\begin{tikzpicture}[scale=0.5]
    \draw[thick] (0,0) -- (0,2)
        node[pos=.5] () {\bbullet};
\end{tikzpicture}}
\right)=
\hackcenter{\begin{tikzpicture}[scale=0.5]
    \draw[thick] (0,0) -- (0,2)
        node[pos=.33] () {\bbullet}
        node[pos=.67] () {\bbullet};
\end{tikzpicture}}
\end{equation}
makes $\opol_n$ into a diagrammatically local dg algebra.  Unless otherwise specified, this is the differential we will use on $\opol_n$ \emph{when we consider it as a dg algebra}.  Note that
\begin{equation*}
d\left(
\hackcenter{\begin{tikzpicture}[scale=0.5]
    \draw[thick] (0,0) -- (0,2)
        node[pos=.5] () {\bbullet}
        node[pos=.5,left] () {\small$i$};
\end{tikzpicture}}
\right)=\lbrace i\rbrace
\hackcenter{\begin{tikzpicture}[scale=0.5]
    \draw[thick] (0,0) -- (0,2)
        node[pos=.5] () {\bbullet}
        node[pos=.5,left] () {\small$i+1$};
\end{tikzpicture}}
\quad.
\end{equation*}

Any dg module structure on the rank-one free left $\opol_n$-module is determined by the value of $d$ on the cyclic vector.  For any $\alpha\in\Bbbk^n$, let
\begin{equation}\label{eqn-onealpha}
d_\alpha(\onealpha)=\sum_{i=1}^n\alpha_ix_i\onealpha,
\end{equation}
where $\onealpha$ stands for the cyclic vector.  So if $f$ is any skew polynomial, \eqref{eqn-left-dg-module} and \eqref{eqn-onealpha} imply
\begin{equation}\label{eqn-d-onealpha}
d_\alpha(f\onealpha)=\left(d(f)+(-1)^{p(f)}f\sum_{i=1}^n\alpha_ix_i\right)\onealpha.
\end{equation}
This choice will give a dg module structure if and only if $d^2(\onealpha)=0$.
\begin{equation*}\begin{split}
d^2(\onealpha)&=\sum_{i=1}^n\alpha_id(x_i\onealpha)=\sum_{i=1}^n\alpha_i(d(x_i)\onealpha-x_id(\onealpha))\\
&=\sum_{i=1}^n(\alpha_i-\alpha_i^2)x_i^2\onealpha-\sum_{i<j}\alpha_i\alpha_j(x_ix_j+x_jx_i)\onealpha\\
&=\sum_{i=1}^n(\alpha_i-\alpha_i^2)x_i^2\onealpha.
\end{split}\end{equation*}
Writing $\opol_n(\alpha)$ for the pair $(\opol_n,d_\alpha)$, we have proved the following.

\begin{prop}\label{prop-d-0-1}
 $\opol_n(\alpha)$ is a left (respectively, right) dg module over $\opol_n$ if and only if $\alpha\in\lbrace0,1\rbrace^n$ (respectively, $\lb0,(-1)^{p(\onealpha)+1}\rb$). \hfill $\square$
\end{prop}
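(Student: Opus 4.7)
The left case is essentially already established by the computation immediately preceding the proposition, so it remains only to extract the conclusion. Since the ordered monomials $x^a\onealpha$ (for $a\in\Z_{\geq0}^n$) form a $\Bbbk$-basis of $\opol_n(\alpha)$, the elements $x_i^2\onealpha$ are linearly independent, and the identity
\[
d_\alpha^2(\onealpha)=\sum_{i=1}^n(\alpha_i-\alpha_i^2)x_i^2\onealpha
\]
forces $\alpha_i-\alpha_i^2=0$ for each $i$, i.e. $\alpha_i\in\{0,1\}$. Conversely, when every $\alpha_i\in\{0,1\}$, the same identity shows $d_\alpha^2(\onealpha)=0$, and the Leibniz extension \eqref{eqn-d-onealpha} then guarantees $d_\alpha^2\equiv 0$ on all of $\opol_n(\alpha)$.

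For the right module case, one defines $d_\alpha(\onealpha):=\onealpha\sum_{i=1}^n\alpha_i x_i$ and extends by the right Leibniz rule. The main step is a direct computation of $d_\alpha^2(\onealpha)$ parallel to the one in the paper, but using the sign convention for right modules. Carrying it out gives
\[
d_\alpha^2(\onealpha)=\sum_{i=1}^n\alpha_i\,d_\alpha(\onealpha x_i)
=\sum_{i,j}\alpha_i\alpha_j\onealpha x_jx_i+(-1)^{p(\onealpha)}\sum_{i=1}^n\alpha_i\onealpha x_i^2.
\]
Splitting the first sum into diagonal and off-diagonal parts and using $x_ix_j+x_jx_i=0$ for $i\neq j$ collapses the off-diagonal part, leaving
\[
d_\alpha^2(\onealpha)=\sum_{i=1}^n\alpha_i\bigl(\alpha_i+(-1)^{p(\onealpha)}\bigr)\onealpha x_i^2.
\]

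Again appealing to the linear independence of the $\onealpha x_i^2$, this vanishes if and only if each factor $\alpha_i(\alpha_i+(-1)^{p(\onealpha)})$ is zero, i.e. $\alpha_i\in\{0,(-1)^{p(\onealpha)+1}\}$. When this holds, the right Leibniz rule propagates $d_\alpha^2=0$ from the generator $\onealpha$ to all of $\opol_n(\alpha)$. There is no substantive obstacle; the only subtlety is keeping track of the sign $(-1)^{p(\onealpha)}$ coming from the right Koszul sign rule, which is exactly what produces the asymmetry between the left and right answers.
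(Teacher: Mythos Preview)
Your proof is correct and follows exactly the paper's approach: the paper performs the left-module computation of $d_\alpha^2(\onealpha)$ immediately before the proposition and then marks the statement with a $\square$, leaving the analogous right-module computation implicit. You have simply supplied that missing right-module computation, with the parity sign $(-1)^{p(\onealpha)}$ from the right Leibniz rule producing the asymmetric answer.
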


In what follows, we will usually drop the subscript $\alpha$ from $d_\alpha$ when there is no ambiguity.

\subsubsection{The differential on $\osym_n$}

The primary reason we prefer the untwisted odd symmetric polynomials to the twisted ones is the following result which tells us that they are preserved under the local differential on $\opol_n$.

\begin{lem} \label{lem-d-elementary-functions}
The differential $d$ on $\opol_n$ restricts to a differential on $\osym_n$ which acts on untwisted elementary polynomials as
$$d(e_k)=e_1e_k-\lbrace k+1\rbrace e_{k+1}$$
(with it understood that $e_{n+1}=0$).  Consequently, the natural inclusion $\osym_n\subset \opol_n$ is an inclusion of dg algebras.
\end{lem}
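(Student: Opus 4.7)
The plan is a direct computation using the graded Leibniz rule on the monomial expansion of $e_k$, followed by a sign-bookkeeping comparison with the monomial expansion of $e_1 e_k$.

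First I would compute $d$ on an ordered product. Since each $x_i$ has odd parity and $d(x_i) = x_i^2$, the graded Leibniz rule gives, for $1 \le i_1 < \cdots < i_k \le n$,
\begin{equation*}
d(x_{i_1} \cdots x_{i_k}) = \sum_{j=1}^{k} (-1)^{j-1}\, x_{i_1} \cdots x_{i_{j-1}}\, x_{i_j}^{2}\, x_{i_{j+1}} \cdots x_{i_k}.
\end{equation*}
Summing over all $k$-element subsets $S = \{i_1 < \cdots < i_k\}$ then expresses $d(e_k)$ as a signed sum of monomials with exactly one repeated index.

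Next I would expand $e_1 e_k = \sum_{i} x_i \cdot \sum_{S} x_{i_1} \cdots x_{i_k}$ and split each term according to whether $i \in S$ or $i \notin S$. In the case $i = i_j \in S$, anticommuting $x_i$ to the right past $x_{i_1},\dots,x_{i_{j-1}}$ produces exactly the sign $(-1)^{j-1}$ and the monomial $x_{i_1} \cdots x_{i_{j-1}} x_{i_j}^{2} x_{i_{j+1}} \cdots x_{i_k}$, so the total contribution from these terms equals $d(e_k)$ by the previous step. In the case $i \notin S$, I would reindex by the sorted $(k{+}1)$-subset $T = S \cup \{i\}$; writing $i = l_j$ where $T = \{l_1 < \cdots < l_{k+1}\}$, the same anticommutation yields $(-1)^{j-1} x_{l_1} \cdots x_{l_{k+1}}$. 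Summing over $j = 1, \dots, k+1$ gives $\bigl(\sum_{j=1}^{k+1}(-1)^{j-1}\bigr) e_{k+1} = \{k+1\}\, e_{k+1}$, since the alternating sum is $1$ when $k+1$ is odd and $0$ when $k+1$ is even, matching the definition of $\{k+1\}$.

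Combining the two cases gives $e_1 e_k = d(e_k) + \{k+1\}\, e_{k+1}$, which rearranges to the claimed formula (with the convention $e_{n+1} = 0$, which is automatic since any monomial with more than $n$ distinct indices vanishes). For the consequence, since $\osym_n$ is generated as a subalgebra by $e_1, \ldots, e_n$ and since $d$ is a derivation, the formula $d(e_k) = e_1 e_k - \{k+1\} e_{k+1}$ shows $d(\osym_n) \subseteq \osym_n$, so $\osym_n \hookrightarrow \opol_n$ is a map of dg algebras.

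The only subtle point is the sign bookkeeping in the two anticommutation arguments; the key observation that makes everything work cleanly is that in \emph{both} cases ($i \in S$ and $i \notin S$) the sign picked up by moving $x_i$ into the sorted position is the same $(-1)^{j-1}$, so the "$i \in S$" contributions reassemble precisely into $d(e_k)$ with no extra factors.
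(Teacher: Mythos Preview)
Your proof is correct. The sign bookkeeping is handled carefully and the two cases ($i \in S$ and $i \notin S$) assemble exactly as you claim.

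Your approach differs from the paper's. The paper introduces an auxiliary supercentral odd variable $t$ with $d(t)=0$, forms the generating function $E(t)=\prod_{i=1}^n(1+x_it)=\sum_k(-1)^{\binom{k}{2}}e_kt^k$, and then differentiates the product: $d(E(t))=\sum_i\bigl[\prod_{j<i}(1+x_jt)\cdot x_i^2t\cdot\prod_{j>i}(1+x_jt)\bigr]$. Rewriting $x_i^2t=x_i(1+x_it)-x_i$ splits this into $e_1E(t)$ minus a correction, and comparing $t^k$-coefficients yields the formula for all $k$ at once. Your direct monomial argument trades the generating-function packaging for an explicit case split on whether the new index $i$ already appears in the monomial; it is slightly more hands-on but avoids introducing the auxiliary variable and makes the combinatorial origin of the $\{k+1\}$ factor (as the alternating sum $\sum_{j=1}^{k+1}(-1)^{j-1}$) completely transparent. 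Both arguments are short and self-contained; the generating-function method scales more gracefully if one later wants analogous formulas for other symmetric-function families, while yours is arguably more direct for this single identity.
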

\begin{proof} Adjoin a supercentral variable $t$ of odd parity, set $d(t)=0$, and define the generating function
\begin{equation}\label{eqn-elem-generating-function}
E(t)=\prod_{i=1}^n(1+x_it)=\sum_{k=0}^n(-1)^{\binom{k}{2}}e_kt^k.
\end{equation}
We compute
\begin{equation*}\begin{split}
d(E(t))&=\sum_{i=1}\left[\prod_{j<i}(1+x_jt)\cdot x_i^2t\cdot\prod_{j>i}(1+x_jt)\right]\\
&=\sum_{i=1}^n\left[x_i\prod_{j=1}^n(1+x_jt)-\prod_{j<i}(1+x_jt)\cdot x_i\cdot\prod_{j>i}(1+x_jt)\right]\\
&=e_1E(t)-\sum_{k=0}^n(-1)^{\binom{k}{2}}\lbrace k+1\rbrace e_{k+1}t^k.
\end{split}\end{equation*}
Comparing with equation \eqref{eqn-elem-generating-function}, it follows that
\begin{equation}\label{eqn-d-elementary}
d(e_k)=e_1e_k-\lbrace k+1\rbrace e_{k+1}.
\end{equation}
In particular, $\osym_n$ is closed under $d$, finishing the proof of the lemma.
\end{proof}

\subsection{The differential on $\onh_n$ and the bimodule $Z_n$}\label{subsec-zoidberg}

It was proved in \cite{EKL} that the skew polynomial representation gives an isomorphism
\begin{equation*}
\onh_n\cong\END_{\osymt_n}(\opol_n),
\end{equation*}
where elements of $\onh_n$ are regarded as right $\osymt_n$-linear operators.  If $\opol_n$ is equipped with a dg module structure $\opol_n(\alpha)$ of \eqref{eqn-d-onealpha}, this isomorphism induces a differentials on $\onh_n$ via equations \eqref{eqn-d-end} and \eqref{eqn-d-end-element}.  The possible dg algebra structures which can arise on $\onh_n$ in this way are parameterized by the possible choices for $\alpha$ (see Proposition \ref{prop-d-0-1}).

Now we look for conditions on $\alpha$ such that the induced differential on $\onh_n$ is diagrammatically local. The discussion below is similar to \cite[Section 3.1]{KQ}, but some extra care is needed here since the dg algebras involved are noncommutative.

We start by regarding $\onh_n$ as a subalgebra inside $\END_{\Bbbk}(\opol_n(\alpha))$, ignoring the right $\osymt_n$-linearity. The induced differential on the endomorphism algebra acts on each of the polynomial generators $x_i\in \onh_n$ by $d(x_i)=x_i^2$. For degree reasons, the induced differential acts on the odd divided difference operators by
\[
d(\partial_i)=a_i + b_ix_i\partial_i+c_i x_{i+1}\partial_i \quad (i=1,\dots, n-1),
\]
which is diagrammatically depicted by
\begin{equation*}
d\left(
\hackcenter{\begin{tikzpicture}[scale=.75]
    \draw[thick] (0,0) [out=90, in=-90] to (1,1);
    \draw[thick] (1,1) -- (1,1.5);
    \node[below] at (0,0) () {\small$i$};
    \draw[thick] (1,0) [out=90, in=-90] to (0,1);
    \draw[thick] (0,1) -- (0,1.5);
    \node[below] at (1,0) () {\small$i+1$};
\end{tikzpicture}}
\right)=a_i
\hackcenter{\begin{tikzpicture}[scale=.75]
    \draw[thick] (0,0) [out=90, in=-90] to (0,1);
    \draw[thick] (0,1) -- (0,1.5);
    \node[below] at (0,0) () {\small$i$};
    \draw[thick] (1,0) [out=90, in=-90] to (1,1);
    \draw[thick] (1,1) -- (1,1.5);
    \node[below] at (1,0) () {\small$i+1$};
\end{tikzpicture}}
+b_i
\hackcenter{\begin{tikzpicture}[scale=.75]
    \draw[thick] (0,0) [out=90, in=-90] to (1,1);
    \draw[thick] (1,1) -- (1,1.5);
    \node[below] at (0,0) () {\small$i$};
    \draw[thick] (1,0) [out=90, in=-90] to (0,1);
    \draw[thick] (0,1) -- (0,1.5)
        node[pos=0] {\bbullet};
    \node[below] at (1,0) () {\small$i+1$};
\end{tikzpicture}}
+c_i
\hackcenter{\begin{tikzpicture}[scale=.75]
    \draw[thick] (0,0) [out=90, in=-90] to (1,1);
    \draw[thick] (1,1) -- (1,1.5)
        node[pos=0] {\bbullet};
    \node[below] at (0,0) () {\small$i$};
    \draw[thick] (1,0) [out=90, in=-90] to (0,1);
    \draw[thick] (0,1) -- (0,1.5);
    \node[below] at (1,0) () {\small$i+1$};
\end{tikzpicture}},
\end{equation*}
for certain constants $a_i,b_i,c_i\in \Z$. Diagrammatic locality reduces us to the case $n=2$, in which there is only one odd divided difference operator $\partial\in \onh_2$. We will suppose the dg module generator satisfies
\begin{equation}\label{eqn-d-one-rank-2}
d(\1)=\alpha x \1+\beta y \1,
\end{equation}
$\alpha,\beta\in \{0,1\}$, and the induced differential on $\onh_2$ is
\begin{equation}\label{eqn-d-partial-rank-2}
d(\partial)=a + bx\partial + c y\partial
\end{equation}
for constants $a,b,c\in\Bbbk$.  Here, $x,y$ are understood to be the odd polynomial generators. Since $\partial\cdot \1=0$, we must have
\[
0=d(\partial)\cdot \1-\partial \cdot d(\1).
\]
Plugging in equations \eqref{eqn-d-one-rank-2} and \eqref{eqn-d-partial-rank-2}, we obtain the constraint
\begin{equation}\label{eqn-constraint-1}
a=\alpha+\beta.
\end{equation}
Likewise, differentiating the equation
$
\partial(x\1)= \1
$
gives rise to the conditions
\begin{equation}\label{eqn-constraint-2}
a+b=1, \quad c=\alpha+\beta-1.
\end{equation}
Finally, differentiating
$
\partial(y\1)=\1
$
and using \eqref{eqn-d-one-rank-2} and \eqref{eqn-d-partial-rank-2}, we get
\begin{equation}\label{eqn-constraint-3}
a+c=1,\quad b=\alpha+\beta-1.
\end{equation}
Solving equations \eqref{eqn-constraint-1}, \eqref{eqn-constraint-2}, and \eqref{eqn-constraint-3} together results in
\begin{equation}
a=\alpha+\beta=1, \quad\textrm{and}\quad b=c=0.
\end{equation}
This discussion gives us half of the proof for the following result.

\begin{prop} \label{prop-local-diff-parameter-dependence}
If the differential on $\onh_n$ induced by its skew polynomial representation $\opol_n(\alpha)$ is diagrammatically local, then
\begin{equation*}
d\left(
\hackcenter{\begin{tikzpicture}[scale=.75]
    \draw[thick] (0,0) [out=90, in=-90] to (1,1);
    \draw[thick] (1,0) [out=90, in=-90] to (0,1);
\end{tikzpicture}}
\right)=
\hackcenter{\begin{tikzpicture}[scale=.75]
    \draw[thick] (0,0) [out=90, in=-90] to (0,1);
    \draw[thick] (1,0) [out=90, in=-90] to (1,1);
\end{tikzpicture}}~,
\end{equation*}
and either $\alpha=(0,1,0,1,\ldots)$ or $\alpha=(1,0,1,0,\ldots)$.
\end{prop}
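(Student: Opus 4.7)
The calculation given in the excerpt already handles the case $n=2$: the identities $\pd \cdot \1 = 0$, $\pd(x\1) = \1$, and $\pd(y\1) = \1$, differentiated and combined with the Leibniz rule \eqref{eqn-left-dg-module}, produce the constraints \eqref{eqn-constraint-1}, \eqref{eqn-constraint-2}, \eqref{eqn-constraint-3}, which force $\alpha_1 + \alpha_2 = 1$, $a = 1$, $b = c = 0$. My plan is to promote this two-strand computation to general $n$ using diagrammatic locality of the induced differential and then read off the structure of $\alpha$ from the resulting pairwise constraints.

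The first step is to note that, by diagrammatic locality, the image $d(\pd_i)$ is a local expression depending only on strands $i$ and $i+1$, so it must take the form $d(\pd_i) = a_i + b_i x_i \pd_i + c_i x_{i+1} \pd_i$ with $a_i, b_i, c_i \in \Bbbk$. Next, applying $d$ to $\pd_i \cdot \1 = 0$ gives
\[
a_i \1 = d(\pd_i) \cdot \1 = \pd_i \cdot d(\1) = \pd_i\Bigl(\sum_j \alpha_j x_j \1\Bigr).
\]
Using the Leibniz rule \eqref{eqn-divided-difference-action} for $\pd_i$ on the sum on the right, the terms with $j \notin \{i, i+1\}$ vanish (since $\pd_i(x_j) = 0$ and $\pd_i(\1) = 0$), leaving $a_i = \alpha_i + \alpha_{i+1}$. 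Similarly, differentiating $\pd_i(x_i \1) = \1$ and $\pd_i(x_{i+1} \1) = \1$ produces exactly the analogues of \eqref{eqn-constraint-2} and \eqref{eqn-constraint-3}, with $(\alpha, \beta)$ replaced by $(\alpha_i, \alpha_{i+1})$. Solving the resulting system reproduces the $n=2$ conclusion: $a_i = 1$, $b_i = c_i = 0$, and $\alpha_i + \alpha_{i+1} = 1$.

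Since the coefficients $a_i, b_i, c_i$ come out independent of $i$, the action of $d$ on every crossing has the claimed diagrammatic form. Combined with $\alpha_j \in \{0, 1\}$ from Proposition \ref{prop-d-0-1}, the pairwise constraint $\alpha_i + \alpha_{i+1} = 1$ for all $i = 1, \ldots, n-1$ forces the sequence $\alpha$ to alternate, yielding exactly the two possibilities $\alpha = (0, 1, 0, 1, \ldots)$ and $\alpha = (1, 0, 1, 0, \ldots)$. The main technical care needed is the super-sign bookkeeping when expanding $\pd_i(\sum_j \alpha_j x_j \1)$ via the odd Leibniz rule and checking that the off-diagonal terms really do vanish; once that is verified, the proof is an almost mechanical restaging of the $n=2$ argument already written out.
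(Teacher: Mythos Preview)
Your argument correctly establishes the literal one-way implication: assuming the induced differential is diagrammatically local (hence $d(\partial_i)$ lies in the two-strand subalgebra and has the form $a_i+b_ix_i\partial_i+c_ix_{i+1}\partial_i$), evaluating on $\mathbf{1}$, $x_i\mathbf{1}$, $x_{i+1}\mathbf{1}$ forces $a_i=1$, $b_i=c_i=0$, and $\alpha_i+\alpha_{i+1}=1$, whence the alternating pattern. This is exactly the content of the discussion preceding the proposition in the paper, and the cancellation of the $\alpha_j$ terms for $j\notin\{i,i+1\}$ that you flag does go through.

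However, the paper explicitly calls that discussion only \emph{half} of the proof. The part inside the proof environment does something you do not: it verifies, for the alternating $\alpha$, that the operator $m\mapsto d(\partial(m))+\partial(d(m))$ in $\END_\Bbbk(\opol_2)$ genuinely equals the identity on a full basis $\{x^ay^b\mathbf{1}\}$ of $\opol_2$, not just on the three test vectors. This is the consistency/existence check: it shows that for these $\alpha$ the induced differential on $\END_\Bbbk(\opol_n)$ really does preserve $\onh_n$ and is given by $d(\partial_i)=1$, so the hypothesis of the proposition is non-vacuous. Without this, one has only derived necessary conditions, not confirmed that any $\alpha$ actually satisfies them. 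The paper needs this for Corollary~\ref{cor-onh-end}, where the dg algebra structure on $\onh_n$ is asserted to exist with $d(\partial_i)=1$. Your write-up covers the necessity direction cleanly but omits this second half; you should either add the basis verification (the paper's Steps I--III) or at least note that a separate consistency check is required.
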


\begin{proof} It suffices to consider the case $n=2$.  It remains to show that, under the assumption of the proposition, the equation $d(\partial)=1$ is satisfied on a $\Z$-basis of $\opol_2\cdot \1$.  We use the basis $\{x^ay^b\1:a,b\in \Z_{\geq0}\}$, where the differential acts on $\1$ by either
\[
d(\1)=x\1 \quad \textrm{or} \quad d(\1)=y\1.
\]
This will be done in three steps.

\emph{Step I.} We begin by checking the consistency of the differential on the part of the basis above which does not involve powers of $y$, namely $\{x^a\1:a\in \Z_{\geq0}\}$. This further depends on how $d$ acts on $\1$. We need to use
\begin{equation}\label{eqn-aciton-partial-monomials}
\begin{gathered}
\partial(x^a\1)=
\left\{
\begin{array}{ll}
\sum_{i=0}^{2k-1} (-1)^i x^{2k-1-i}y^i \1& a=2k,\\
&\\
\sum_{i=0}^{2k}x^{2k-i}y^i\1 & a=2k+1,
\end{array}
\right.
\end{gathered}
\end{equation}
where $k\in \Z_{\geq0}$, which follows from an easy computation. We show one example computation, the sub-case $d(\1)=x\1$ and $a=2k$. We check that
\[
d(\partial) (x^{2k}\1)=d(\partial(x^{2k}\1))+\partial ( d(x^{2k}\1))
\]
matches with the action $1\cdot x^{2k}\1=x^{2k}\1$, for any $k\in \Z_{\geq0}$. We compute using \eqref{eqn-aciton-partial-monomials}:
\begin{equation*}\begin{split}
d(\partial) (x^{2k}\1) & = d\left(\sum_{k=0}^{2k-1}(-1)^ix^{2k-1-i}y^i\1\right)+\partial(d(x^{2k}\1))\\
& =  \sum_{i=0}^{2k-1}\left((-1)^i\{2k-1-i\}x^{2k-i}y^i+(-1)^{2k-1}\{i\}x^{2k-1-i}y^{i+1}-x^{2k-i}y^i\right)\cdot \1 \\
& \qquad + \partial(x^{2k+1}\1)\\
& =  \sum_{i=0}^{2k-1}\{i+1\}x^{2k-i}y^i \1 - \sum_{i=1}^{2k}\{i-1\}x^{2k-i}y^i \1 -\sum_{i=0}^{2k-1}x^{2k-i}y^i \1 \\
& \qquad + \sum_{i=0}^{2k}x^{2k-i}y^i \1\\
& =  x^{2k}\1 +\sum_{i=1}^{2k-1}(1-1)\{i+1\}x^{2k-i}y^i \1 - \{2k-1\}y^{2k}\1+y^{2k}\1\\
& =  x^{2k}\1.
\end{split}\end{equation*}
In the third equality, we used the equality $(-1)^i\{i+1\}=\{i+1\}$ (valid for all $i\in \Z$). A similar computation establishes
\[
d(\partial) (x^{2k+1}\1)=d(\partial(x^{2k+1}\1))+\partial ( d(x^{2k+1}\1))
\]
with the same $d$-action on $\1$. For the case $d(\1)=y\1$, the proof is similar.

\vspace{0.2in}

\emph{Step II.} The equation $d(\partial)=1$ also holds on the basis elements in $\{y^a\1:a\in \Z_{\geq0}\}$: to see this, conjugate Step I by the involution of $\onh_n$ given by reflection of diagrams about a vertical axis.  Here one needs to use the fact that conjugation by this reflection interchanges the $d$-action on the two different generators and preserves the equality $d(\partial)=1$. Alternatively, one can check $d(\partial)=1$ directly using computations analogous to those from Step I.

\vspace{0.2in}

\emph{Step III.} Finally, we show that $d(\partial)=1$ holds on all of $\{x^ay^b\1:a,b\in \Z_{\geq0}\}$. We rewrite any $x^ay^b\1$ as $\pm e_2^{m}x^n\1$ or $\pm e_2^my^n\1$, where $m=\min(a,b)$ and $n=\max(a,b)-m$.
\begin{equation*}\begin{split}
d(\partial)(e_2^mx^n \1) & =   d(\partial(e_2^mx^n\1))+\partial(d(e_2^mx^n\1))\\
& =  d\left(\partial(e_2^m)x^n\1+(-1)^me_2^m\partial(x^n\1)\right)+\partial\left(d(e_2^m)x^n\1+e_2^md(x^n\1)\right)\\
& =  (-1)^md(e_2^m)\partial(x^n\1)+(-1)^me_2^md(\partial(x^n\1))+\partial(d(e_2^m))x^n\1\\
& \qquad -s(d(e_2^m))\partial(x^n\1) + s(e_2^m)\partial(d(x^n\1))\\
& = (-1)^m\{m\}e_1e_2^m\partial(x^n\1)+(-1)^me_2^md(\partial(x^n\1))+\partial(\{m\}e_1e_2^m)x^n\1\\
& \qquad -s(\{m\}e_1e_2^m)\partial(x^n\1)+(-1)^me_2^m\partial(d(x^n\1))\\
& =  2\{m\}e_2^mx^n\1+(-1)^me_2^md(\partial(x^n\1))+(-1)^me_2^m\partial(d(x^n\1))\\
& =  (2\{m\}+(-1)^m)e_2^mx^n\1\\
& =  e_2^mx^n\1,
\end{split}\end{equation*}
where $s$ stands for the permutation interchanging $x$ and $y$ (see \eqref{eqn-divided-difference-action}).  We have used Step I in the second to last equality. Likewise, one checks that $d(\pd)$ acts as the identity on any element of the form $e_2^my^n\1$ using Step II.
\end{proof}


By the above discussion, then, this gives rise to a family of dg algebra structures on $\onh_n$ (one for each $\alpha\in\lbrace0,1\rbrace^n$).  Only two of these are diagrammatically local, and we will always assume we are using one of these two.  As a left $\osym_n$- or $\osymt_n$-module, a convenient basis of $\opol_n$ is the ``staircase basis''
\begin{equation*}
\lbrace x^a\onealpha:a\in\Z_{\geq0}^n,0\leq a_i\leq n-i\rbrace.
\end{equation*}
We draw the element $x^a\onealpha$ as
\begin{equation*}
\hackcenter{\begin{tikzpicture}[scale=0.75]
    \draw[thick] (1,0) -- (1,2.5)
        node[pos=.8] () {\bbullet}
        node[pos=.8, left] () {\tiny$a_1$};
    \draw[thick] (2,0) -- (2,2.5)
        node[pos=.6] () {\bbullet}
        node[pos=.6, left] () {\tiny$a_2$};
    \draw[thick] (3,0) -- (3,2.5)
        node[pos=.4] () {\bbullet}
        node[pos=.4, left] () {\tiny$a_3$};
    \node at (4,1) {$\cdots$};
    \draw[thick] (5,0) -- (5,2.5)
        node[pos=.2] () {\bbullet}
        node[pos=.2, left] () {\tiny$a_n$};
    \draw[fill=white, thick] (.75,-.3) rectangle (5.25,0);
\end{tikzpicture}}
\quad.
\end{equation*}
The box at the bottom stands for $\onealpha$; the purpose is to remind us that this is an element of a module rather than an algebra.

If $A$ is a dg algebra and $\tte\in A$ is an idempotent satisfying $d(\tte)=f\tte$ for some $f\in A$, then $A\tte$ with the same differential is a left dg $A$-module.  It follows from the results of \cite{EKL} that $\onh_n\tte_n\cong\opol_n$ as left $\onh_n$-modules (hence too as left $\opol_n$-modules).  We now verify that $d(\tte_n)\in\onh_n\tte_n$ for the differential $d(\pd_i)=1$.

\begin{lem}\label{lem-d-longest} The differential of the longest odd divided difference operator is given by
\begin{equation}
d(\pd_{w_0})=\sum_{i=1}^n\lbrace i-1\rbrace x_i\pd_{w_0}-(-1)^{\binom{n}{2}}\sum_{i=1}^n\lbrace n-i\rbrace\pd_{w_0}x_i.
\end{equation}\end{lem}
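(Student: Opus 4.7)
I propose to prove the identity by induction on $n$, via the reduced-word factorization $\pd_{w_0}^{(n)} = \pd_{w_0}^{(n-1)}\cdot \pi_n$ where $\pi_n := \pd_{n-1}\pd_{n-2}\cdots \pd_1$; this is consistent with the choice fixed in \eqref{eqn-defn-pd-w0}, and the element $\pd_{w_0}^{(n-1)}$ lives in the subalgebra $\onh_{n-1}\subset \onh_n$ generated by $\pd_1,\ldots,\pd_{n-2}$ and $x_1,\ldots,x_{n-1}$. The base case $n=1$ is trivial (both sides vanish), and $n=2$ reduces to the contact relation $\pd_1 x_1 + x_2\pd_1 = 1$ (which equals $d(\pd_1)$) after substituting $\{0\}=0$ and $\{1\}=1$ into the claim.

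For the inductive step, applying the (odd) Leibniz rule and noting $p(\pd_{w_0}^{(n-1)})=\binom{n-1}{2}$ gives
\[
d(\pd_{w_0}^{(n)}) = d(\pd_{w_0}^{(n-1)})\,\pi_n \;+\; (-1)^{\binom{n-1}{2}}\,\pd_{w_0}^{(n-1)}\,d(\pi_n).
\]
The inductive hypothesis expands $d(\pd_{w_0}^{(n-1)})$, and since $\pd_{w_0}^{(n-1)}\pi_n = \pd_{w_0}^{(n)}$ the ``$x_i \pd_{w_0}^{(n-1)}$'' contributions immediately yield the $i\leq n-1$ terms of $\sum_i\{i-1\}x_i\pd_{w_0}^{(n)}$. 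The ``$\pd_{w_0}^{(n-1)}x_i$'' contributions require simplifying $\pd_{w_0}^{(n-1)}x_i\pi_n$: push $x_i$ rightward through $\pi_n$ using the distant anticommutativity $\pd_j x_i + x_i \pd_j = 0$ (valid for $j \notin \{i-1,i\}$), and apply the contact identities $\pd_{j}x_{j} + x_{j+1}\pd_{j} = 1 = x_{j}\pd_{j} + \pd_{j}x_{j+1}$ at the collision steps. On the other side, the Leibniz expansion
\[
d(\pi_n) = \sum_{k=1}^{n-1}(-1)^{k-1}\,\pd_{n-1}\cdots \widehat{\pd_{n-k}}\cdots \pd_1,
\]
when premultiplied by $\pd_{w_0}^{(n-1)}$, is simplified by braid relations together with the crucial annihilation $\pd_{w_0}^{(n-1)}\pd_j = 0$ for $1\leq j\leq n-2$ (such products are non-reduced and hence collapse via $\pd_j^2 = 0$, irrespective of the sign introduced by odd braid moves). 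After reduction, everything reassembles into linear combinations of $x_j\pd_{w_0}^{(n)}$ and $\pd_{w_0}^{(n)}x_j$ matching the target formula, with the boundary term $\{n-1\}x_n\pd_{w_0}^{(n)}$ emerging from the full sequence of contact-relation applications.

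The main obstacle is the delicate bookkeeping of signs and of the parity coefficients $\{\cdot\}$. Three independent sign sources---the prefactor $(-1)^{\binom{n-1}{2}}$, the alternating signs $(-1)^{k-1}$ inside $d(\pi_n)$, and the $-1$'s picked up from each distant anticommutation---must combine to yield the overall factor $-(-1)^{\binom{n}{2}}$ in front of the $\pd_{w_0}^{(n)}x_j$ sum, with numerical consistency check $\binom{n-1}{2}+n\equiv \binom{n}{2}+1 \pmod 2$. The transition from the inductive coefficient $\{n-1-i\}$ to the target coefficient $\{n-i\}$ follows from the identity $\{m\}+\{m+1\}=1$ applied when contributions from the two summands combine at a given strand. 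As an alternative, one may verify the identity pointwise on the staircase basis of $\opol_n$ using $\pd_{w_0}(x^a) = (-1)^{\binom{n}{3}}\delta_{a,\delta}$ on the staircase box, organizing the computation of $d(x^a)$ and of $\pd_{w_0}(d(x^a))$ via generating functions analogous to those in Lemma \ref{lem-d-elementary-functions}; this bypasses some braid manipulations at the cost of comparable combinatorics.
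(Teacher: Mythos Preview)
Your proposal is correct and follows essentially the same route as the paper's proof: induction on $n$ via the factorization $\pd_{w_0}^{(n)}=\pd_{w_0}^{(n-1)}\pi_n$, the Leibniz rule, and the observation that $\pd_{w_0}^{(n-1)}\pd_j=0$ for $1\le j\le n-2$ kills all but one term of $\pd_{w_0}^{(n-1)}d(\pi_n)$. The only organizational difference is that the paper first isolates the auxiliary identity \eqref{eqn-d-pd-w0-lemma}, which packages the single surviving deleted-crossing term as $\pd_{w_0}^{(n)}x_1-(-1)^{\binom{n}{2}}x_n\pd_{w_0}^{(n)}$; this lets the paper avoid pushing each $x_i$ individually through $\pi_n$ and instead slide dots past the full $\pd_{w_0}^{(n)}$ block, slightly streamlining the sign bookkeeping you flag as the main obstacle.
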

\begin{proof} When we want to indicate the number of strands $m$ in our notation for the longest odd divided difference operator, we will write $\pd_{w_0}^{(m)}$.  Our specific choice of reduced expression for $w_0$, expressed diagrammatically, reads
\begin{equation}\label{eqn-pd-w0-inductive}
\hackcenter{\begin{tikzpicture}[scale=0.5]
   \draw[thick] (1,0) -- (1,4);
   \draw[thick] (2,0) -- (2,4);
   \draw[thick] (3,0) -- (3,4);
   \draw[thick] (4,0) -- (4,4);
   \draw[thick] (5,0) -- (5,4);
   \node[draw, thick, fill=white!20,rounded corners=4pt,inner sep=3pt,text width=70] at (3,2) {\vphantom{$\pd_{w_0}^{(n)}$}};
   \node at (3,2) {$\pd_{w_0}^{(n)}$};
\end{tikzpicture}}
=
\hackcenter{\begin{tikzpicture}[scale=0.5]
   \draw[thick] (1,0) [out=90, in=-90] to (5,2) -- (5,4);
   \draw[thick] (2,0) [out=90, in=-90] to (1,2) -- (1,4);
   \draw[thick] (3,0) [out=90, in=-90] to (2,2) -- (2,4);
   \draw[thick] (4,0) [out=90, in=-90] to (3,2) -- (3,4);
   \draw[thick] (5,0) [out=90, in=-90] to (4,2) -- (4,4);
   \node[draw, thick, fill=white!20,rounded corners=4pt,inner sep=3pt,text width=56] at (2.5,2.7) {\vphantom{$\pd_{w_0}^{(n-1)}$}};
   \node at (2.5,2.7) {$\pd_{w_0}^{(n-1)}$};
\end{tikzpicture}}\quad.
\end{equation}
Bottom multiply this equation by $x_1$ and repeatedly apply the nilHecke relation $\pd_ix_i=1-x_{i+1}\pd_i$ to obtain the equation (for $n\geq1$)
\begin{equation}\label{eqn-d-pd-w0-lemma}
\hackcenter{\begin{tikzpicture}[scale=0.5]
   \draw[thick] (1,0) -- (1,4);
   \draw[thick] (2,0) [out=90, in=-90] to (5,2) -- (5,4);
   \draw[thick] (3,0) [out=90, in=-90] to (2,2) -- (2,4);
   \draw[thick] (4,0) [out=90, in=-90] to (3,2) -- (3,4);
   \draw[thick] (5,0) [out=90, in=-90] to (4,2) -- (4,4);
   \node[draw, thick, fill=white!20,rounded corners=4pt,inner sep=3pt,text width=56] at (2.5,2.7) {\vphantom{$\pd_{w_0}^{(n-1)}$}};
   \node at (2.5,2.7) {$\pd_{w_0}^{(n-1)}$};
\end{tikzpicture}}
=
\hackcenter{\begin{tikzpicture}[scale=0.5]
   \draw[thick] (1,0) -- (1,4)
        node[pos=.15] {\bbullet};
   \draw[thick] (2,0) -- (2,4);
   \draw[thick] (3,0) -- (3,4);
   \draw[thick] (4,0) -- (4,4);
   \draw[thick] (5,0) -- (5,4);
   \node[draw, thick, fill=white!20,rounded corners=4pt,inner sep=3pt,text width=70] at (3,2) {\vphantom{$\pd_{w_0}^{(n)}$}};
   \node at (3,2) {$\pd_{w_0}^{(n)}$};
\end{tikzpicture}}
-(-1)^{\binom{n}{2}}
\hackcenter{\begin{tikzpicture}[scale=0.5]
   \draw[thick] (1,0) -- (1,4);
   \draw[thick] (2,0) -- (2,4);
   \draw[thick] (3,0) -- (3,4);
   \draw[thick] (4,0) -- (4,4);
   \draw[thick] (5,0) -- (5,4)
        node[pos=.85] {\bbullet};
   \node[draw, thick, fill=white!20,rounded corners=4pt,inner sep=3pt,text width=70] at (3,2) {\vphantom{$\pd_{w_0}^{(n)}$}};
   \node at (3,2) {$\pd_{w_0}^{(n)}$};
\end{tikzpicture}}\quad.
\end{equation}
Now to prove the stated formula for $d(\pd_{w_0})$.  The formula is easily verified for $n=1$, so assume $n\geq2$ and proceed by induction on $n$.  By \eqref{eqn-pd-w0-inductive} and the Leibniz rule,
\begin{equation*}
d\left(\quad\hackcenter{\begin{tikzpicture}[scale=0.35]
   \draw[thick] (1,0) -- (1,4);
   \draw[thick] (2,0) -- (2,4);
   \draw[thick] (3,0) -- (3,4);
   \draw[thick] (4,0) -- (4,4);
   \draw[thick] (5,0) -- (5,4);
   \draw[thick] (6,0) -- (6,4);
   \node[draw, thick, fill=white!20,rounded corners=4pt,inner sep=3pt,text width=50] at (3.5,2) {\vphantom{$\pd_{w_0}^{(n+1)}$}};
   \node at (3,2) {$\pd_{w_0}^{(n+1)}$};
\end{tikzpicture}}\quad\right)
=
\substack{d\left(\quad
\hackcenter{\begin{tikzpicture}[scale=0.35]
   \draw[thick] (1,0) -- (1,4);
   \draw[thick] (2,0) -- (2,4);
   \draw[thick] (3,0) -- (3,4);
   \draw[thick] (4,0) -- (4,4);
   \draw[thick] (5,0) -- (5,4);
   \node[draw, thick, fill=white!20,rounded corners=4pt,inner sep=3pt,text width=40] at (3,2) {\vphantom{$\pd_{w_0}^{(n)}$}};
   \node at (3,2) {$\pd_{w_0}^{(n)}$};
   \draw[thick] (6,0) -- (6,4);
\end{tikzpicture}}\quad\right)\\
\hackcenter{\begin{tikzpicture}[scale=0.35]
   \draw[thick] (1,0) [out=90, in=-90] to (6,4);
   \draw[thick] (2,0) [out=90, in=-90] to (1,4);
   \draw[thick] (3,0) [out=90, in=-90] to (2,4);
   \draw[thick] (4,0) [out=90, in=-90] to (3,4);
   \draw[thick] (5,0) [out=90, in=-90] to (4,4);
   \draw[thick] (6,0) [out=90, in=-90] to (5,4);
\end{tikzpicture}}}
+(-1)^{\binom{n}{2}}
\substack{
\hackcenter{\begin{tikzpicture}[scale=0.35]
   \draw[thick] (1,0) -- (1,4);
   \draw[thick] (2,0) -- (2,4);
   \draw[thick] (3,0) -- (3,4);
   \draw[thick] (4,0) -- (4,4);
   \draw[thick] (5,0) -- (5,4);
   \node[draw, thick, fill=white!20,rounded corners=4pt,inner sep=3pt,text width=40] at (3,2) {\vphantom{$\pd_{w_0}^{(n)}$}};
   \node at (3,2) {$\pd_{w_0}^{(n)}$};
   \draw[thick] (6,0) -- (6,4);
\end{tikzpicture}}\\
d\left(\quad\hackcenter{\begin{tikzpicture}[scale=0.35]
   \draw[thick] (1,0) [out=90, in=-90] to (6,4);
   \draw[thick] (2,0) [out=90, in=-90] to (1,4);
   \draw[thick] (3,0) [out=90, in=-90] to (2,4);
   \draw[thick] (4,0) [out=90, in=-90] to (3,4);
   \draw[thick] (5,0) [out=90, in=-90] to (4,4);
   \draw[thick] (6,0) [out=90, in=-90] to (5,4);
\end{tikzpicture}}\quad\right)}
\end{equation*}
The second term on the right is an alternating sum over deleted crossings from the bottom factor.  Since any crossing among strands $1$ through $n$ adjacent to $\pd_{w_0}^{(n)}$ equals the zero diagram, only the term in which the leftmost crossing is deleted survives.  Using this and the inductive hypothesis, the above equals
\begin{equation*}
\ldots=\sum_{i=1}^n\left[\lbrace i-1\rbrace
\hackcenter{\begin{tikzpicture}[scale=0.3]
   \draw[thick] (1,0) [out=90, in=-90] to (6,3.5) -- (6,6);
   \draw[thick] (2,0) [out=90, in=-90] to (1,3) -- (1,6);
   \draw[thick] (3,0) [out=90, in=-90] to (2,3) -- (2,6);
   \draw[thick] (4,0) [out=90, in=-90] to (3,3) -- (3,6)
        node[pos=.85] {\bbullet}
        node[pos=1,above] {\small$i$};
    \draw (4,0) -- (4,0)
        node[pos=0,below] {\vphantom{\small$i$}};
   \draw[thick] (5,0) [out=90, in=-90] to (4,3) -- (4,6);
   \draw[thick] (6,0) [out=90, in=-90] to (5,3) -- (5,6);
   \node[draw, thick, fill=white!20,rounded corners=4pt,inner sep=3pt,text width=56] at (3,4) {\vphantom{$\pd_{w_0}^{(n)}$}};
   \node at (3,4) {$\pd_{w_0}^{(n)}$};
\end{tikzpicture}}
-(-1)^{\binom{n}{2}}\lbrace n-i\rbrace
\hackcenter{\begin{tikzpicture}[scale=0.3]
   \draw[thick] (1,0) [out=90, in=-90] to (6,3.5) -- (6,6);
   \draw[thick] (2,0) [out=90, in=-90] to (1,3) -- (1,6);
   \draw[thick] (3,0) [out=90, in=-90] to (2,3) -- (2,6);
   \draw[thick] (4,0) [out=90, in=-90] to (3,3);
   \node at (3.1,2.4) {\bbullet};
   \draw[thick] (3,3) -- (3,6)
        node[pos=1,above] {\vphantom{\small$i+1$}};
   \node[below] at (4,0) {\small$i+1$};
   \draw[thick] (5,0) [out=90, in=-90] to (4,3) -- (4,6);
   \draw[thick] (6,0) [out=90, in=-90] to (5,3) -- (5,6);
   \node[draw, thick, fill=white!20,rounded corners=4pt,inner sep=3pt,text width=56] at (3,4) {\vphantom{$\pd_{w_0}^{(n)}$}};
   \node at (3,4) {$\pd_{w_0}^{(n)}$};
\end{tikzpicture}}
\right]+(-1)^{\binom{n}{2}+n-1}
\hackcenter{\begin{tikzpicture}[scale=0.3]
   \draw[thick] (1,0) -- (1,6);
   \draw[thick] (2,0) [out=90, in=-90] to (6,3.5) -- (6,6);
   \draw[thick] (3,0) [out=90, in=-90] to (2,3) -- (2,6);
   \draw[thick] (4,0) [out=90, in=-90] to (3,3) -- (3,6);
   \draw[thick] (5,0) [out=90, in=-90] to (4,3) -- (4,6);
   \draw[thick] (6,0) [out=90, in=-90] to (5,3) -- (5,6);
   \node[draw, thick, fill=white!20,rounded corners=4pt,inner sep=3pt,text width=56] at (3,4) {\vphantom{$\pd_{w_0}^{(n)}$}};
   \node at (3,4) {$\pd_{w_0}^{(n)}$};
\end{tikzpicture}}
\ .
\end{equation*}
In the second summation, the $\pd_{w_0}^{(n)}$ allows the dot to slide to the bottom (with an accrued factor of $(-1)^{n-1}$), and the only extra term created is in the case $i=1$.  Accounting for boundary summation terms and using \eqref{eqn-d-pd-w0-lemma}, the result follows.
\end{proof}

\begin{lem}\label{lem-d-idempotent} The differential of the idempotent $\tte_n$ is given by
\begin{equation}\label{eqn-d-idempotent}
d(\tte_n)=\sum_{i=1}^n\lbrace i-1\rbrace x_i\tte_n.
\end{equation}\end{lem}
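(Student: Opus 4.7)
The plan is to compute $d(\tte_n) = (-1)^{\binom{n}{3}} d(\pd_{w_0} x^\delta)$ directly, using the graded Leibniz rule together with Lemma \ref{lem-d-longest}. The two halves of the computation, $d(\pd_{w_0})$ and $d(x^\delta)$, conspire so that everything in sight cancels except the desired sum $\sum_i \{i-1\} x_i \tte_n$.

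The first step is to establish the auxiliary identity
\[
d(x^\delta) = \sum_{i=1}^n \{n-i\}\, x_i\, x^\delta
\]
in $\opol_n$. To do this I would apply the graded Leibniz rule to the product $x_1^{n-1} x_2^{n-2} \cdots x_{n-1}$, using $d(x_j^{a}) = \{a\} x_j^{a+1}$ (which follows from $d(x_j)=x_j^2$ by induction on $a$). The $j$-th term of the Leibniz expansion acquires a Koszul sign $(-1)^{\sum_{k<j}(n-k)}$ from the parities of the preceding factors, and the same sign arises when one uses the distant anticommutativity $x_ix_k = -x_kx_i$ to move $x_j$ leftward past $x_1^{n-1}\cdots x_{j-1}^{n-j+1}$. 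The two signs cancel, leaving the stated clean formula (the $i=n$ term drops because $\{0\}=0$).

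Next, applying Leibniz to $\pd_{w_0}\cdot x^\delta$ and using $p(\pd_{w_0}) \equiv \binom{n}{2} \pmod{2}$, I would write
\[
d(\pd_{w_0} x^\delta) = d(\pd_{w_0})\, x^\delta + (-1)^{\binom{n}{2}} \pd_{w_0}\, d(x^\delta).
\]
Substituting Lemma \ref{lem-d-longest} into the first term and the identity for $d(x^\delta)$ into the second term, the contribution
$-(-1)^{\binom{n}{2}} \sum_i \{n-i\}\, \pd_{w_0} x_i x^\delta$ arising from $d(\pd_{w_0})\,x^\delta$ cancels the contribution $(-1)^{\binom{n}{2}} \sum_i \{n-i\}\, \pd_{w_0} x_i x^\delta$ arising from $\pd_{w_0}\, d(x^\delta)$. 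We are left with $d(\pd_{w_0} x^\delta) = \sum_i \{i-1\}\, x_i\, \pd_{w_0} x^\delta$, and multiplying by $(-1)^{\binom{n}{3}}$ and reabsorbing this sign into $\tte_n$ yields the desired formula.

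The main obstacle is the sign bookkeeping in the first step, where one must verify that the parity reduction $\sum_{k<j} \{n-k\} \pmod 2$ agrees with $\sum_{k<j}(n-k) \pmod 2$, so that the Koszul signs in the Leibniz expansion of $d(x^\delta)$ exactly match the anticommutativity signs from reordering $x_j$ with the preceding factors. Once this is verified, the algebra collapses and the cancellation in the second step is automatic.
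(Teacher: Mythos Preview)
Your proof is correct and follows essentially the same approach as the paper: the paper also states the identity $d(x^\delta)=\sum_i\{n-i\}x_ix^\delta$ (declared ``easy to check''), applies the Leibniz rule to $\pd_{w_0}x^\delta$ using $p(\pd_{w_0})=\binom{n}{2}$, and observes the same cancellation between the second sum in Lemma~\ref{lem-d-longest} and $(-1)^{\binom{n}{2}}\pd_{w_0}d(x^\delta)$. Your ``main obstacle'' is in fact a non-issue, since $\{m\}\equiv m\pmod 2$ by definition, so the two sums you compare are literally identical mod $2$.
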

\begin{proof} It is easy to check that
\begin{equation}\label{eqn-d-x-delta}
d(x^\delta)=\sum_{i=1}^n\lbrace n-i\rbrace x_ix^\delta.
\end{equation}
Using this and Lemma \ref{lem-d-longest}, we compute
\begin{equation*}\begin{split}
(-1)^{\binom{n}{3}}d(\tte_n)&=d(\pd_{w_0})x^\delta+(-1)^{\binom{n}{2}}\pd_{w_0}d(x^\delta)\\
&=\left[\sum_{i=1}^n\lbrace i-1\rbrace x_i\pd_{w_0}-(-1)^{\binom{n}{2}}\sum_{i=1}^n\lbrace n-i\rbrace\pd_{w_0}x_i\right]x^\delta+(-1)^{\binom{n}{2}}\sum_{i=1}^n\lbrace n-i\rbrace\pd_{w_0}x_ix^\delta\\
&=(-1)^{\binom{n}{3}}\sum_{i=1}^n\lbrace i-1\rbrace x_i\tte_n.
\end{split}
\end{equation*}
The result follows.
\end{proof}

\begin{cor} As left dg $\opol_n$-modules, $\onh_n\tte_n\cong\opol_n(0,1,0,1,\ldots)$.\hfill$\square$\end{cor}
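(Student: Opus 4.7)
The plan is to identify the cyclic generator of $\onh_n\tte_n$ and compute its differential, then match against the classification in Proposition \ref{prop-d-0-1}. Three ingredients are needed. First, from \cite{EKL} there is an isomorphism $\onh_n\tte_n\cong\opol_n$ of left $\onh_n$-modules with $\tte_n$ corresponding to a cyclic generator; since $\opol_n\subseteq\onh_n$, this is a fortiori an isomorphism of left $\opol_n$-modules, so the target is free of rank one over $\opol_n$. Second, by Lemma \ref{lem-d-idempotent} we have $d(\tte_n)\in\onh_n\tte_n$, so Lemma \ref{lem-d-endo-idempotent} (and the Leibniz rule) guarantees that $\onh_n\tte_n$ is closed under $d$, i.e.\ inherits a left dg $\onh_n$-module structure, and hence a left dg $\opol_n$-module structure. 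Third, Proposition \ref{prop-d-0-1} classifies all dg $\opol_n$-module structures on the rank-one free module by vectors $\alpha\in\{0,1\}^n$, with the differential determined by its value on the cyclic generator via \eqref{eqn-onealpha}.

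Given these, the proof reduces to reading off $\alpha$. I will transport the induced differential on $\onh_n\tte_n$ to $\opol_n$ using the identification $\tte_n\leftrightarrow\onealpha$. By Lemma \ref{lem-d-idempotent},
\[
d(\tte_n)=\sum_{i=1}^n\lbrace i-1\rbrace x_i\tte_n.
\]
Since $\lbrace i-1\rbrace=1$ exactly when $i$ is even and vanishes otherwise, this is $\sum_{i\text{ even}}x_i\tte_n$. Comparing with $d_\alpha(\onealpha)=\sum_i\alpha_i x_i\onealpha$ forces $\alpha_i=1$ for $i$ even and $\alpha_i=0$ for $i$ odd, that is, $\alpha=(0,1,0,1,\ldots)$, which gives the stated dg $\opol_n$-module isomorphism.

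There is essentially no obstacle beyond bookkeeping the parity of $\tte_n$ (even, so that $d(\tte_n)$ is odd, as required): the hard computation has already been carried out in Lemma \ref{lem-d-idempotent}, and the corollary is simply its translation into the language of dg modules.
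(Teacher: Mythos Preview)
Your proposal is correct and matches the paper's intended argument: the corollary is stated without proof precisely because it is an immediate consequence of Lemma \ref{lem-d-idempotent} together with the prior observation that $\onh_n\tte_n\cong\opol_n$ as left $\onh_n$-modules, and you carry out exactly this reading-off of $\alpha$ from $d(\tte_n)=\sum_i\lbrace i-1\rbrace x_i\tte_n$. One small remark: Lemma \ref{lem-d-endo-idempotent} concerns the induced differential on $\tte A\tte$ rather than the dg submodule structure on $A\tte$; the relevant fact (that $d(\tte)\in A\tte$ makes $A\tte$ a left dg submodule) is stated in the paragraph preceding Lemma \ref{lem-d-longest}, so you may wish to cite that instead.
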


For $f\in\osym_n$, \eqref{eqn-idempotent-commute} implies $(\theta\circ w_0)(f)\in\osymt_n$.  It follows that the left action of $\onh_n$ on $\onh_n\tte_n$ commutes with the right action of $\osym_n$ on $\opol_n\tte_n$ (right multiplication).  Unlike $\osymt_n$, the algebra $\osym_n$ is a diagrammatically local dg subalgebra of $\opol_n$.  We have proven the following.

\begin{prop}\label{prop-zoidberg-as-opol} The subspace $\opol_n\tte_n=\lbrace f\tte_n:f\in\opol_n\rbrace$ is a dg $(\opol_n,\osym_n)$-bimodule.  As a left dg module it is isomorphic to $\opol_n(0,1,0,1,\ldots)$.  The right module structure is given by $g\tte_n\cdot f=g(\theta\circ w_0)(f)\tte_n$ for $f\in\osym_n$, $g\in\opol_n$.\hfill$\square$
\end{prop}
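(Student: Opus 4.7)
The plan is to identify $\opol_n\tte_n$ with $\onh_n\tte_n$ inside $\onh_n$ and then assemble the three ingredients that have already been prepared: the preceding corollary which gives the left dg $\opol_n$-module structure on $\onh_n\tte_n$; Lemma~\ref{lem-idempotent-commute} which relates right multiplication by an untwisted odd symmetric polynomial to left multiplication by its image under $\theta\circ w_0$; and Lemma~\ref{lem-d-elementary-functions} which asserts that $\osym_n\subseteq\opol_n$ is a dg subalgebra.

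First I would note that the isomorphism $\onh_n\tte_n\cong\opol_n$ of left $\opol_n$-modules from \cite{EKL} is induced by $\tte_n\mapsto 1$, so the map $\opol_n\to\onh_n\tte_n$, $f\mapsto f\tte_n$, is an isomorphism of left $\opol_n$-modules. In particular, $\opol_n\tte_n=\onh_n\tte_n$ as subspaces of $\onh_n$, and the left dg $\opol_n$-module structure on $\opol_n\tte_n$ is exactly the one computed in the preceding corollary, namely $\opol_n(0,1,0,1,\ldots)$ (the pattern matching the fact that $\lbrace i-1\rbrace=1$ iff $i$ is even, as appears in Lemma~\ref{lem-d-idempotent}).

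For the right $\osym_n$-module structure, I would define the action as the restriction of right multiplication in $\onh_n$ along $\osym_n\hookrightarrow\opol_n\hookrightarrow\onh_n$. Lemma~\ref{lem-idempotent-commute} shows that this restriction lands back in $\opol_n\tte_n$:
\[
g\tte_n\cdot f := g\tte_n f = g(\theta\circ w_0)(f)\tte_n,
\]
verifying the displayed formula. The right module axioms follow from associativity in $\onh_n$ (alternatively one can check them on the right-hand side, using that $\theta\circ w_0$ is a superalgebra automorphism so that $(\theta\circ w_0)(f_1f_2)=(\theta\circ w_0)(f_1)(\theta\circ w_0)(f_2)$). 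The left-right commutation is then automatic from associativity: $(hg\tte_n)\cdot f = hg\tte_n f = h(g\tte_n\cdot f)$.

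It remains to verify compatibility of $d$ with the bimodule structure. Left compatibility is part of the already-established left dg $\opol_n$-module structure. For right compatibility, Lemma~\ref{lem-d-elementary-functions} ensures $d(f)\in\osym_n$ for $f\in\osym_n$, so applying the Leibniz rule for $d$ in $\onh_n$ to the product $(g\tte_n)f$ yields
\[
d(g\tte_n\cdot f)=d(g\tte_n)f+(-1)^{p(g\tte_n)}g\tte_n d(f)=d(g\tte_n)\cdot f+(-1)^{p(g\tte_n)}g\tte_n\cdot d(f),
\]
which is the required derivation property. No substantive obstacle is anticipated since every ingredient has been established; the only point requiring care is keeping the Koszul signs straight when verifying the super-Leibniz rule and confirming that $\theta\circ w_0$ is an even superalgebra automorphism preserving $\osym_n$ in the right way.
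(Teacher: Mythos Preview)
Your proposal is correct and follows essentially the same route as the paper. The proposition is stated with a \qedsymbol\ because its proof is the short paragraph immediately preceding it together with the corollary $\onh_n\tte_n\cong\opol_n(0,1,0,1,\ldots)$; that paragraph invokes exactly the three ingredients you use---Lemma~\ref{lem-idempotent-commute} for the right action formula, the preceding corollary for the left dg structure, and the fact (Lemma~\ref{lem-d-elementary-functions}) that $\osym_n$ is a dg subalgebra of $\opol_n$---and your write-up simply fills in the routine verifications.
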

Although it occurs naturally as an idempotented part of $\onh_n$, it will be convenient to work with the above dg module via generators and relations.
\begin{defn}\label{def-zoidberg-bim} Let $Z_n$ be the dg $(\opol_n,\osym_n)$-bimodule
\begin{equation}\label{eqn-zoidberg}
Z_n:=\opol_n\onez,
\end{equation}
where $Z_n$ is a rank-one free left module with a cyclic vector $\onez$.  The right module structure is defined by
\begin{equation}\label{eqn-onez}
\onez f=(\theta\circ w_0)(f)\onez\text{ for }f\in\osym_n.
\end{equation}
The differential is given by
\begin{equation}\label{eqn-d-onez}
d(\onez)=\sum_{i=1}^n\lbrace i-1\rbrace x_i\onez.
\end{equation}
\end{defn}

We can summarize the results of this subsection as follows.
\begin{cor}\label{cor-onh-end} As dg algebras,
\begin{equation}
(\onh_n,d)\cong\END_{\osym_n^\mathrm{op}}(Z_n)
\end{equation}
The differential on $\onh_n$ induced by this isomorphism is
\begin{equation}
d(x_i)=x_i^2,\quad d(\pd_i)=1.
\end{equation}
Or, diagrammatically,
\begin{equation}
d\left(~
\hackcenter{\begin{tikzpicture}[scale=0.5]
    \draw[thick] (0,0) -- (0,2)
        node[pos=.5] () {\bbullet};
\end{tikzpicture}}
~\right)=
\hackcenter{\begin{tikzpicture}[scale=0.5]
    \draw[thick] (0,0) -- (0,2)
        node[pos=.33] () {\bbullet}
        node[pos=.67] () {\bbullet};
\end{tikzpicture}}
,\quad
d\left(~
\hackcenter{\begin{tikzpicture}[scale=0.5]
    \draw[thick] (0,0) [out=90, in=-90] to (2,2);
    \draw[thick] (2,0) [out=90, in=-90] to (0,2);
\end{tikzpicture}}
~\right)=
\hackcenter{\begin{tikzpicture}[scale=0.5]
    \draw[thick] (0,0) -- (0,2);
    \draw[thick] (1,0) -- (1,2);
\end{tikzpicture}}\quad.
\end{equation}\hfill$\square$
\end{cor}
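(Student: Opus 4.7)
The plan is to assemble the corollary from the results already established in the subsection. First, I would translate the algebra isomorphism $\onh_n \cong \END_{\osymt_n}(\opol_n)$ of \cite{EKL} into the statement $\onh_n \cong \END_{\osym_n^{\mathrm{op}}}(Z_n)$, ignoring dg structure for now. The map $\opol_n \to Z_n$ sending $f \mapsto f\onez$ is an isomorphism of left $\opol_n$-modules, and by Proposition \ref{prop-zoidberg-as-opol}, under this identification the right $\osym_n$-action on $Z_n$ is the pullback of the right $\osymt_n$-action on $\opol_n$ along the superalgebra isomorphism $\theta \circ w_0 : \osym_n \to \osymt_n$. Therefore
\begin{equation*}
\END_{\osym_n^{\mathrm{op}}}(Z_n) \cong \END_{\osymt_n}(\opol_n) \cong \onh_n
\end{equation*}
as ordinary superalgebras.

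Next I would check that the dg structures match. By Proposition \ref{prop-zoidberg-as-opol}, the dg left $\opol_n$-module $Z_n$ is identified with $\opol_n(0,1,0,1,\ldots)$ in the notation of Proposition \ref{prop-d-0-1}. The differential on $\END_{\osym_n^{\mathrm{op}}}(Z_n)$ is then automatically determined from this module structure via the formula \eqref{eqn-d-end}. Since $\alpha = (0,1,0,1,\ldots)$ lies in the two-element family allowed by Proposition \ref{prop-local-diff-parameter-dependence}, that proposition directly yields $d(x_i) = x_i^2$ and $d(\pd_i) = 1$ on the diagrammatic generators of $\onh_n$, matching the formulas in the corollary.

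The hard technical step was already dispatched in Proposition \ref{prop-local-diff-parameter-dependence}, whose proof classifies diagrammatically local differentials and computes them on $\pd_i$ by three explicit inductions. For the corollary itself no new computation is needed: the main (minor) obstacle is bookkeeping the sign-sensitive intertwining of left and right actions through $\theta \circ w_0$, which is exactly the content of Proposition \ref{prop-zoidberg-as-opol} combined with equation \eqref{eqn-theta-w0}.
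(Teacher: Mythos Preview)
Your proposal is correct and matches the paper's approach: the corollary is stated with a $\square$ (no separate proof), and the intended argument is exactly the assembly you describe—translate the \cite{EKL} isomorphism $\onh_n\cong\END_{\osymt_n}(\opol_n)$ along $\theta\circ w_0:\osym_n\to\osymt_n$ to obtain $\onh_n\cong\END_{\osym_n^{\mathrm{op}}}(Z_n)$, then invoke Proposition~\ref{prop-zoidberg-as-opol} to identify $Z_n$ with $\opol_n(0,1,0,1,\ldots)$ and Proposition~\ref{prop-local-diff-parameter-dependence} to read off $d(x_i)=x_i^2$, $d(\pd_i)=1$.
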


\subsection{The differential of an odd Schur polynomial}\label{subsec-schur}

In this subsection, we use the structure of the bimodule $Z_n$ to compute the value of the differential $d$ on an odd Schur polynomial.  The reader interested only in thin calculus and applications to the small quantum group $u^+_{\sqrt{-1}}(\sltwo)$ can skip this subsection; its results will not be needed until Section \ref{sec-thick}.

To each partition $\lambda$ of length at most $n$, we associate an \emph{untwisted odd Schur polynomial}
\begin{equation}\label{eqn-untwisted-schur}
s_\lambda=w_0\theta\pd_{w_0}(\xt^\delta\xt^\lambda)
\end{equation}
and a \emph{twisted odd Schur polynomial}
\begin{equation}\label{eqn-twisted-schur}
\st_\lambda=(-1)^{\binom{n}{3}+\binom{n}{2}|\lambda|}w_0\pd_{w_0}(x^\lambda x^\delta).
\end{equation}

\begin{rem} Our twisted odd Schur polynomial $\st_\lambda$ is the odd Schur polynomial of \cite{EKL}.  The sign discrepancy of $(-1)^{\binom{n}{2}|\lambda|}$ between formulae for the two comes from the fact that $S_n$ acts by the the permutation representation here but by the tensor product of the permutation representation and the degree-th power of the sign representation in \cite{EKL}.\end{rem}

Using \eqref{eqn-theta-w0} and the computation
\begin{equation}\label{eqn-delta-lambda}\begin{split}
x^\lambda x^\delta&=(-1)^{\binom{n}{2}|\lambda|}(-1)^{n|\lambda|+\sum j\lambda_j}x^\delta x^\lambda\\
&=(-1)^{\binom{n-1}{2}|\lambda|+\binom{n}{3}}\xt^\delta\xt^\lambda,
\end{split}\end{equation}
it is easy to check that
\begin{equation}\label{eqn-untwisted-twisted-schur}
\st_\lambda=\theta(s_\lambda).
\end{equation}
Combining this with \eqref{eqn-theta-w0}, we have two forms of the following ``Schur-through-$\onez$,'' or ``SZ relation'':
\begin{equation}\label{eqn-sz}\begin{split}
\onez s_\lambda&=(-1)^{(n+1)|\lambda|}w_0(\st_\lambda)\onez,\\
\st_\lambda\onez&=\onez w_0(s_\lambda).
\end{split}\end{equation}

Some notation:
\begin{itemize}
\item For any partition $\lambda$ and any $1\leq i\leq\ell(\lambda)$, the partition $\frac{\lambda}{i}$ (respectively, $\frac{i}{\lambda}$) is $\lambda$ with rows $i$ and below (respectively, $i$ and above) removed.
\item If $B$ is a box in some Young diagram in row $i$ and column $j$, the \emph{content} of $B$ is defined to be $\ct(B)=j-i$.
\item For partitions $\lambda$ and $\mu$, the relation ``$\mu=\lambda+\boxi$'' means that $\mu$ is obtained from $\lambda$ by adding a box in the $i$-th row.  In this situation, $\ct(\boxi)$ means the content of the newly added box.
\end{itemize}
\begin{prop}\label{prop-d-schur} The differential of an untwisted odd Schur polynomial is given by
\begin{equation}\label{eqn-d-schur}
d(s_\lambda)=\sum_{\mu=\lambda+\boxi}(-1)^{\lvert\frac{\lambda}{i}\rvert+i-1}\lbrace\ct(\boxi)\rbrace s_\mu.
\end{equation}\end{prop}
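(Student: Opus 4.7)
The plan is to reduce the computation to the closed-form formula $d(e_k) = e_1 e_k - \{k+1\}e_{k+1}$ of Lemma~\ref{lem-d-elementary-functions} via an odd Jacobi--Trudi-type expression of $s_\lambda$ in the untwisted elementary symmetric polynomials.  Since $d(s_\lambda)$ lies in $\osym_n$ (again by Lemma~\ref{lem-d-elementary-functions}) and is homogeneous of degree $|\lambda|+1$, it automatically lies in the $\Z$-span of the $s_\mu$ with $|\mu| = |\lambda|+1$, so only the coefficients need to be determined.

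First, I would derive an odd Jacobi--Trudi identity of the form $s_\lambda = \sum_\sigma \epsilon(\sigma;\lambda)\prod_j e_{\lambda'_j - j + \sigma(j)}$, with appropriate signs $\epsilon(\sigma;\lambda)$, by expanding the defining formula $s_\lambda = w_0\theta\pd_{w_0}(\xt^\delta\xt^\lambda)$: the operator $\pd_{w_0}$ can be written as an alternating sum over $S_n$ of products of adjacent $\pd_i$'s, each of which acts on the monomial $\xt^\delta\xt^\lambda$ in a tractable way, producing after simplification a Jacobi--Trudi-type alternating sum.  A verification in small cases (for instance $\lambda=(k)$ and $\lambda=(1^k)$, where one can directly use $d(e_k)=e_1 e_k-\{k+1\}e_{k+1}$ to cross-check against the formula) serves as a useful calibration of conventions.

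Next, I would apply the Leibniz rule term by term and substitute $d(e_k)=e_1 e_k-\{k+1\}e_{k+1}$.  Each position $j$ in each Jacobi--Trudi monomial splits into two contributions: an ``$e_1 e_{k_j}$'' piece and a ``$-\{k_j+1\}e_{k_j+1}$'' piece.  The shifted pieces, after using $e_1 e_k = (-1)^k e_k e_1$ and reshuffling to put the resulting product back in standard Jacobi--Trudi form, correspond to adding one box to the Young diagram of $\lambda$ in a particular row.  The factor $\{k_j+1\}$ becomes $\{\ct(\boxi)\}$ after translating the shift $k_j \mapsto k_j+1$ into the content of the added box at the corresponding row.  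The ``$e_1 e_{k_j}$'' contributions from all positions and permutations must then fully cancel -- consistent with the absence of any $e_1 s_\lambda$ term in the statement of the proposition -- and this cancellation is governed by the antisymmetry of the Jacobi--Trudi sum combined with the odd Pieri expansion of $e_1\cdot s_\lambda$ and the supercommutation $e_1 e_k = (-1)^k e_k e_1$.

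The main obstacle is the careful bookkeeping of signs.  Parities of $e_k$ depend on $k$; the Leibniz rule picks up a sign $(-1)^{k_1+\cdots+k_{j-1}}$ when $d$ lands on the $j$-th factor of the monomial; the Jacobi--Trudi sign $\epsilon(\sigma;\lambda)$ and the sign of $\sigma$ both contribute; and the permutation needed to re-express the output as a standard Jacobi--Trudi determinant for $s_\mu$ contributes still further signs.  The claim is that all of these signs combine to yield precisely the factor $(-1)^{|\lambda/i|+i-1}$, where $|\lambda/i|$ records the total number of boxes in rows above row $i$ and $i-1$ records the position of the modified row.  The cleanest way to control this bookkeeping is likely to first verify the collapse of $e_1 e_{k_j}$ terms abstractly using the alternating-sum form of Jacobi--Trudi, then compute the sign attached to a single ``added-box'' term by tracking the reshuffling explicitly, so that the answer reduces to a local sign calculation at the row where the box is added.
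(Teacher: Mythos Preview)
Your approach is genuinely different from the paper's, and as written it has real gaps.

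First, the step ``the operator $\pd_{w_0}$ can be written as an alternating sum over $S_n$ of products of adjacent $\pd_i$'s'' is not correct: $\pd_{w_0}$ is a \emph{single} product $\pd_1(\pd_2\pd_1)\cdots(\pd_{n-1}\cdots\pd_1)$, not an alternating sum.  Perhaps you have in mind an odd bialternant-type formula for $\pd_{w_0}(\xt^\delta\xt^\lambda)$, but no such formula is established here, and from it you would still need to extract an odd Jacobi--Trudi identity $s_\lambda=\sum_\sigma\epsilon(\sigma;\lambda)\prod_j e_{\lambda'_j-j+\sigma(j)}$.  That identity is a substantial result in its own right in the odd setting and is not available in this paper; you cannot invoke it as a lemma.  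Second, even granting such a formula, the two hardest parts of your argument---the complete cancellation of all $e_1e_{k_j}$ contributions, and the reduction of the accumulated signs to exactly $(-1)^{|\lambda/i|+i-1}$---are asserted rather than carried out.  In the odd world these sign calculations are where proofs live or die, and ``the claim is that all of these signs combine to yield\ldots'' is not a proof.

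The paper avoids all of this by working through the bimodule $Z_n$.  One writes $\onez s_\lambda$ as a sign times $\pd_{w_0}(x^\delta x^\lambda)\onez$ via the SZ relation \eqref{eqn-sz}, and then differentiates both sides.  On the monomial side the differential is trivial to compute: $d(x^\lambda)$ and $d(\onez)$ are explicit (Lemma~\ref{lem-d-idempotent} and \eqref{eqn-d-onez}), and the term coming from $d(\pd_{w_0}x^\delta)$ cancels against $d(\onez)s_\lambda$ on the other side.  What remains is a sum over $\mu=\lambda+\boxi$ with coefficient $\{\lambda_i\}+(-1)^{\lambda_i}\{i-1\}=\{\ct(\boxi)\}$, and one more application of the SZ relation converts $\pd_{w_0}x^\delta x^\mu\onez$ back to $\onez s_\mu$.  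The point is that the bimodule structure does the work of passing between the Schur basis and the monomial basis, so one never needs a Jacobi--Trudi expansion or any cancellation argument.  If you want to salvage your route, you would need to first prove an odd Jacobi--Trudi identity with explicit signs and then actually execute the cancellation; the paper's method is both shorter and already complete.
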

\begin{proof} From the SZ relation \eqref{eqn-sz} and the definition of twisted odd Schur polynomials, we have
\begin{equation*}
\onez s_\lambda=(-1)^{\binom{n}{3}+|\lambda|+\sum j\lambda_j}\pd_{w_0}(x^\delta x^\lambda)\onez.
\end{equation*}
The differential of the left-hand side is
\begin{equation}\label{eqn-schur-lhs}\begin{split}
d(\onez s_\lambda)&=d(\onez)s_\lambda+\onez d(s_\lambda)\\
&=\sum_{i=1}^n\lbrace i-1\rbrace x_i\onez s_\lambda+\onez d(s_\lambda).
\end{split}\end{equation}
The differential of the right-hand side is $(-1)^{\binom{n}{3}+|\lambda|+\sum j\lambda_j}$ times
\begin{equation}\label{eqn-schur-rhs}\begin{split}
d(\pd_{w_0}(x^\delta x^\lambda)\onez)&=d(\pd_{w_0}x^\delta)x^\lambda\onez+\pd_{w_0}x^\delta d(x^\lambda)\onez+(-1)^{|\lambda|}\pd_{w_0}x^\delta x^\lambda d(\onez)\\
&=\sum_{i=1}^n\lbrace i-1\rbrace x_i\pd_{w_0}x^\delta x^\lambda\onez+\sum_{i=1}^{\ell(\lambda)}(-1)^{\lvert\frac{\lambda}{i}\rvert}\lbrace\lambda_i\rbrace\pd_{w_0}x^\delta x^{\lambda+\boxi}\onez\\
&\qquad\qquad+(-1)^{|\lambda|}\sum_{i=1}^n\lbrace i-1\rbrace\pd_{w_0}x^\delta x^\lambda x_i\onez.
\end{split}\end{equation}
The second equality used Lemma \ref{lem-d-idempotent} in the first term.  Again using SZ \eqref{eqn-sz}, the first terms in \eqref{eqn-schur-lhs} and \eqref{eqn-schur-rhs} are equal, yielding the equation
\begin{equation*}\begin{split}
\onez d(s_\lambda)&=(-1)^{\binom{n}{3}+\sum j\lambda_j}\left[(-1)^{|\lambda|}\sum_{i=1}^{\ell(\lambda)}(-1)^{\lvert\frac{\lambda}{i}\rvert}\lbrace\lambda_i\rbrace\pd_{w_0}x^\delta x^{\lambda+\boxi}\onez+\sum_{i=1}^n\lbrace i-1\rbrace(-1)^{\lvert\frac{i}{\lambda}\rvert}\pd_{w_0}x^\delta x^{\lambda+\boxi}\onez\right]\\
&=(-1)^{\binom{n}{3}+\sum j\lambda_j+|\lambda|}\sum_{\mu=\lambda+\boxi}(-1)^{\lvert\frac{\lambda}{i}\rvert}\left(\lbrace\lambda_i\rbrace+(-1)^{\lambda_i}\lbrace i-1\rbrace\right)\pd_{w_0}x^\delta x^\mu\onez\\
&=(-1)^{\binom{n}{3}+\sum j\lambda_j+|\lambda|}\sum_{\mu=\lambda+\boxi}(-1)^{\lvert\frac{\lambda}{i}\rvert}\lbrace\ct(\boxi)\rbrace\pd_{w_0}x^\delta x^\mu\onez\\
&=\sum_{\mu=\lambda+\boxi}(-1)^{\lvert\frac{\lambda}{i}\rvert+i-1}\lbrace\ct(\boxi)\rbrace\onez s_\mu.
\end{split}\end{equation*}
The second equality follows because $\pd_{w_0}(x^\delta x^\nu)=0$ unless $\nu$ is a partition.  The third follows from the easily verified identity $\lbrace\lambda_i\rbrace+(-1)^{\lambda_i}\lbrace i-1\rbrace=\lbrace\ct(\boxi)\rbrace$.  The fourth follows from the SZ relation \eqref{eqn-sz}.
\end{proof}

We emphasize that the above is a calculation of $d(s_\lambda)$ in the dg algebra $\osym_n$ (or the dg algebra $\opol_n$).  Although the computation made use of the $\onh_n$-action on the bimodule $Z_n$, the result is independent of the module.

\subsection{Categorification of $u^+_{\sqrt{-1}}(\sltwo)$}\label{subsec-cat-small}

Define operators $\ddxi:\opol_n\to\opol_n$ by
\begin{equation}\begin{split}
\ddxi(x_j)&=\delta_{ij},\\
\ddxi(fg)&=\ddxi(f)g+(-1)^{p(f)}f\ddxi(g)
\end{split}\end{equation}
for $f,g$ parity homogeneous and $1\leq i\leq n$ (odd analogues of partial derivatives).  Note that
\begin{equation*}
\ddxi(x_j^k)=\delta_{ij}\lb k\rb x_j^{k-1}.
\end{equation*}
By setting $\ddxi(\onealpha)=0$, these operators act on $\opol_n(\alpha)$ as well.
\begin{lem}\label{lem-odd-derivs}\begin{enumerate}
\item The operators $\lb\ddxi:1\leq i\leq n\rb$ generate an exterior algebra:
\begin{equation}
\left(\ddxi\right)^2=0,\qquad \ddxi\ddxj+\ddxj\ddxi=0.
\end{equation}
\item The differential on the dg algebra $\opol_n$ is given by
\begin{equation}\label{eqn-d-ddxi}
d=\sum_{i=1}^nx_i^2\ddxi.
\end{equation}
\end{enumerate}\end{lem}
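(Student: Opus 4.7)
The plan is to reduce both claims to the standard principle that a superderivation on $\opol_n$ is determined by its values on the generators $x_1,\dots,x_n$.

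For part 1, I would first verify the elementary super-algebraic lemma that if $D_1, D_2$ are odd superderivations of a superalgebra, then their super-anticommutator $D_1 D_2 + D_2 D_1$ is an ordinary (even) derivation. The computation amounts to expanding $D_1 D_2(fg)$ twice via the odd Leibniz rule; the two cross terms produced carry signs $(-1)^{p(f)+1}$ and $(-1)^{p(f)}$, and after summing with the analogous expansion for $D_2 D_1$ the cross terms cancel in pairs. Applying this to the pair $(\ddxi, \ddxj)$ shows that $\ddxi\ddxj + \ddxj\ddxi$ is an (even) derivation on $\opol_n$; the same direct calculation applied to $\ddxi$ with itself (without the factor of $2$) shows that $(\ddxi)^2$ is a derivation, so no $2$-torsion subtlety arises. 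Evaluating on generators gives
\begin{equation*}
(\ddxi\ddxj + \ddxj\ddxi)(x_k) = \ddxi(\delta_{jk}) + \ddxj(\delta_{ik}) = 0,\qquad (\ddxi)^2(x_k) = \ddxi(\delta_{ik}) = 0,
\end{equation*}
and since a derivation that annihilates the generators of $\opol_n$ annihilates the whole algebra, part 1 follows.

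For part 2, let $D := \sum_{i=1}^n x_i^2\,\ddxi$. The key observation is that each $x_i^2$ is supercentral in $\opol_n$: the defining relations $x_ix_j + x_jx_i = 0$ ($i\ne j$) give $x_i^2 x_j = -x_ix_jx_i = x_jx_i^2$, while $x_i^2$ trivially commutes with itself. Because $x_i^2$ is even and central, left multiplication by $x_i^2$ preserves the odd superderivation property; hence each $x_i^2\,\ddxi$, and therefore $D$, is an odd superderivation on $\opol_n$. The differential $d$ is also an odd superderivation by construction. Checking agreement on generators,
\begin{equation*}
D(x_j) = \sum_{i=1}^n x_i^2\,\delta_{ij} = x_j^2 = d(x_j),
\end{equation*}
so $D$ and $d$ are two odd superderivations with the same values on a generating set, whence $D = d$.

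There is really no major obstacle: the whole argument is standard super-algebra bookkeeping. The only point requiring care is confirming the Koszul-sign cancellations that make super-anticommutators of odd derivations into ordinary derivations, and verifying supercentrality of $x_i^2$ to promote left multiplication by it to a derivation-preserving operation.
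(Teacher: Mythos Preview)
Your proof is correct and is precisely the kind of ``easy calculation from the definitions'' that the paper alludes to without writing out: the paper's own proof consists solely of the sentence ``Both are easy calculations from the definitions.'' You have simply made explicit the two standard facts underlying that sentence, namely that super-anticommutators of odd derivations are even derivations and that odd derivations are determined by their values on algebra generators.
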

\begin{proof} Both are easy calculations from the definitions.\end{proof}

\begin{rem} Equation \eqref{eqn-d-ddxi} holds on $\opol_n$ but only holds on $\opol_n(\alpha)$ if and only if $\alpha=(0,\ldots,0)$.\end{rem}

For any $\beta=(\beta_1,\ldots,\beta_n)\in\Bbbk^n$, let
\begin{equation}
h_\beta=\sum_{i=1}^n\beta_i\ddxi
\end{equation}
be the corresponding odd directional derivative operator.
\begin{lem} Let $\langle\cdot,\cdot\rangle$ denote the standard inner product on $\Bbbk^n$.  Then
\begin{equation}\label{eqn-nulhomotopy}
h_\beta d+dh_\beta=\langle\alpha,\beta\rangle
\end{equation}
as operators on $\opol_n(\alpha)$.\end{lem}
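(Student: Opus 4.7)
The plan is to prove the identity by splitting the module differential $d_\alpha$ into the algebra differential $d = \sum_i x_i^2 \ddxi$ (Lemma~\ref{lem-odd-derivs}, part~2) and the ``twist'' given by right multiplication by $\sum_i\alpha_ix_i$ on $\onealpha$, then showing that all contributions except the desired constant cancel.

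First I would verify that $d$ and $h_\beta$ anticommute as operators on the algebra $\opol_n$. Both are odd derivations of $\opol_n$, so their anticommutator $h_\beta d + d h_\beta$ is also a derivation and is therefore determined by its values on the generators $x_k$. A direct computation gives $d(h_\beta(x_k)) = d(\beta_k) = 0$ and $h_\beta(d(x_k)) = h_\beta(x_k^2) = \beta_k x_k - x_k \beta_k = 0$, so the anticommutator vanishes identically on $\opol_n$.

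Next I would compute $(h_\beta d_\alpha + d_\alpha h_\beta)(f\onealpha)$ for a parity-homogeneous $f\in\opol_n$. Using \eqref{eqn-d-onealpha}, $h_\beta(\onealpha)=0$, and the odd Leibniz rule for $h_\beta$, one obtains
\begin{equation*}
(h_\beta d_\alpha)(f\onealpha) = h_\beta(d(f))\onealpha + (-1)^{p(f)} h_\beta(f)\sum_i\alpha_ix_i\onealpha + \langle\alpha,\beta\rangle f\onealpha,
\end{equation*}
where the last term comes from $h_\beta$ landing on the factor $\sum_i\alpha_ix_i$. Similarly, since $p(h_\beta(f)) = p(f)+\onebar$,
\begin{equation*}
(d_\alpha h_\beta)(f\onealpha) = d(h_\beta(f))\onealpha - (-1)^{p(f)} h_\beta(f)\sum_i\alpha_ix_i\onealpha.
\end{equation*}
Adding, the middle terms cancel, the remaining bracket $[h_\beta(d(f))+d(h_\beta(f))]\onealpha$ vanishes by the previous step, and what is left is exactly $\langle\alpha,\beta\rangle f\onealpha$.

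There is no real obstacle here beyond sign bookkeeping: the key observation is that the anticommutator $[h_\beta,d_\alpha]_+$ differs from $[h_\beta,d]_+$ only by the contribution of $h_\beta$ applied to the twist $\sum_i\alpha_ix_i$, and this contribution is precisely the constant $\langle\alpha,\beta\rangle$. The trickiest part is tracking the Koszul signs produced by commuting the odd operator $h_\beta$ past parity-homogeneous polynomial elements; organizing the computation derivation-by-derivation (rather than on a basis) makes this bookkeeping minimal.
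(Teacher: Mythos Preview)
Your proof is correct. The organization differs slightly from the paper's: the paper expands $dh_\beta(f\onealpha)$ and $h_\beta d(f\onealpha)$ directly in terms of the operators $x_j^2\ddxj$ and $\ddxi$, producing double sums over $i,j$, and then cancels the first summations using $\ddxi x_j^2\ddxj = -x_j^2\ddxj\ddxi$ and the second summations by inspection. You instead first isolate the algebra-level anticommutator $h_\beta d + d h_\beta$ on $\opol_n$ and kill it by the derivation trick (anticommutator of two odd derivations is an even derivation, hence determined on generators), then handle the module twist $\sum_i\alpha_ix_i$ separately. Both arguments are the same computation at heart; yours avoids the index-heavy double sums at the cost of invoking the standard fact about graded commutators of derivations, while the paper's version is entirely self-contained.
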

\begin{proof} Using the definition of $h_\beta$ and Lemma \ref{lem-odd-derivs}, we compute
\begin{equation*}\begin{split}
dh_\beta(f\onealpha)&=d\left(\sum_i\beta_i\ddxi(f)\onealpha\right)\\
&=\sum_{i,j}\beta_ix_j^2\ddxj\ddxi(f)\onealpha-(-1)^{p(f)}\sum_{i,j}\alpha_j\beta_i\ddxi(f)x_j\onealpha,\\
h_\beta d(f\onealpha)&=h_\beta\left(\sum_ix_i^2\ddxi(f)\onealpha+(-1)^{p(f)}\sum_i\alpha_ifx_i\onealpha\right)\\
&=\sum_{i,j}\beta_j\ddxj x_i^2\ddxi(f)\onealpha+(-1)^{p(f)}\sum_{i,j}\alpha_i\beta_j\ddxj(f)x_i\onealpha\\
&\qquad+\sum_i\alpha_i\beta_if\onealpha.
\end{split}\end{equation*}
The second summations of $dh_\beta$ and $h_\beta d$ cancel.  The calculation
\begin{equation*}
\ddxi x_j^2\ddxj=x_j^2\ddxi\ddxj=-x_j^2\ddxj\ddxi
\end{equation*}
shows that the first summations do as well.  The remaining term equals $\langle\alpha,\beta\rangle f\onealpha$.
\end{proof}

\begin{cor} If $\alpha\in\lb0,1\rb^n$ is not identically $0$, then the identity map on $\opol_n(\alpha)$ is null-homotopic.\end{cor}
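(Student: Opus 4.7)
The plan is to use the homotopy identity \eqref{eqn-nulhomotopy} from the preceding lemma directly. Since we have $h_\beta d + d h_\beta = \langle \alpha, \beta \rangle \cdot \id$ as operators on $\opol_n(\alpha)$ for any choice of $\beta \in \Bbbk^n$, it suffices to produce a single $\beta$ with $\langle \alpha, \beta \rangle = 1$; then $h_\beta$ is by definition a null-homotopy for the identity map.

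First I would choose an index $i$ with $\alpha_i = 1$, which exists by the hypothesis that $\alpha$ is not identically zero and $\alpha \in \{0,1\}^n$. Next I would set $\beta = e_i$, the $i$-th standard basis vector, so that $h_\beta = \partial/\partial x_i$ and $\langle \alpha, \beta \rangle = \alpha_i = 1$. Substituting into \eqref{eqn-nulhomotopy} gives
\begin{equation*}
\frac{\partial}{\partial x_i} \circ d + d \circ \frac{\partial}{\partial x_i} = \id_{\opol_n(\alpha)},
\end{equation*}
which exhibits $\partial/\partial x_i$ as a degree-$(-\onebar)$ null-homotopy of the identity. Since $\partial/\partial x_i$ is a map of left $\opol_n$-modules (it is defined via the Leibniz rule on $\opol_n$ and annihilates $\onealpha$), this is the desired null-homotopy.

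There is no serious obstacle here; the statement is an immediate consequence of the previous lemma once one notices that the condition $\alpha \in \{0,1\}^n \setminus \{0\}$ is exactly what allows $\langle \alpha, \beta \rangle = 1$ to be achieved by a standard basis vector. The only minor subtlety worth flagging is parity bookkeeping: $\partial/\partial x_i$ has odd parity, which is the correct parity for a homotopy between degree-$\zerobar$ maps in the $\Z/2\Z$-graded dg setting, so the homotopy lives in the correct degree without any grading shift gymnastics.
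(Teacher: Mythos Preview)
Your proof is essentially identical to the paper's: choose $i$ with $\alpha_i\neq0$, take $\beta$ to be the $i$-th standard basis vector, and invoke \eqref{eqn-nulhomotopy}. The paper writes this in one line.

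One correction to your final parenthetical remark: $\partial/\partial x_i$ is \emph{not} a map of left $\opol_n$-modules. An operator defined via the Leibniz rule is a derivation, not a module homomorphism; for instance $\partial/\partial x_i(x_i\cdot\onealpha)=\onealpha$, whereas an $\opol_n$-linear map would give $x_i\cdot\partial/\partial x_i(\onealpha)=0$. The null-homotopy here is only $\Bbbk$-linear, which is all that is needed (and all that the paper claims): the purpose of the corollary is to show that $\opol_n(\alpha)$ is acyclic as a chain complex, and a $\Bbbk$-linear contracting homotopy suffices for that. So simply drop the incorrect justification and your argument is fine.
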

\begin{proof} If $\alpha_i\neq0$, then for $\beta_j=\delta_{ij}$, $h_\beta$ is a null-homotopy of the identity map.\end{proof}

In particular, if $n\geq2$, then $\ddxtwo$ is a null-homotopy of the identity map of the dg bimodule $Z_n$ (considered as a left $\opol_n$-module).

For $n\geq2$, the dg algebra $\onh_n$ is itself acyclic.  One easy way to see this is to appeal to Corollary \ref{cor-acyclic-dga} and express the identity element as a coboundary: $1=d(\pd_1)$.  Diagrammatically,
\begin{equation*}
d\left(~
\hackcenter{\begin{tikzpicture}
    \draw[thick] (0,0) [out=90, in=-90] to (.5,1);
    \draw[thick] (.5,0) [out=90, in=-90] to (0,1);
    \draw[thick] (1,0) -- (1,1);
    \node at (1.5,.5) () {$\cdots$};
    \draw[thick] (2,0) -- (2,1);
\end{tikzpicture}}
~\right)\quad=\quad
\hackcenter{\begin{tikzpicture}
    \draw[thick] (0,0) -- (0,1);
    \draw[thick] (.5,0) -- (.5,1);
    \draw[thick] (1,0) -- (1,1);
    \node at (1.5,.5) () {$\cdots$};
    \draw[thick] (2,0) -- (2,1);
\end{tikzpicture}}
\quad.
\end{equation*}
Another way to see the acyclicity of $\onh_n$ is to give a more explicit formula for the differential coming from the isomorphism of Corollary \ref{cor-onh-end}.  The set
\begin{equation*}
B'_n=\lbrace x^a\onez:0\leq i\leq n-i\rbrace
\end{equation*}
is a basis for $Z_n$ as a right $\osym_n$-module.  The differential acts as
\begin{equation}\label{eqn-d-u}
\begin{split}
d(x^a\onez)&=\sum_i\left(\lb a_i\rb+(-1)^{a_i}\lb i-1\rb\right)x_ix^a\onez\\
&=\sum_i\lb a_i+i-1\rb x_ix^a\onez
\end{split}
\end{equation}
on elements of $B'_n$.  In particular $d(x_i^{n-i}\onez)=0$, so $d$ preserves
\begin{equation*}
U_n=\text{span}_\Bbbk(B'_n).
\end{equation*}
Thus
\begin{equation}\label{eqn-Zoidberg-cohomology}
Z_n\cong U_n\otimes_\Bbbk\osym_n
\end{equation}
and
\begin{equation*}
\onh_n\cong U_n^*\otimes_\Bbbk U_n\otimes_\Bbbk\osym_n
\end{equation*}
as right dg $\osym_n$-modules.  It is easy to check that $U_n$ is an acyclic chain complex for $n\geq2$; hence another proof that $\onh_n$ is an acyclic dg algebra.  We summarize the above as:
\begin{prop}\label{prop-Zoidberg-finite-cell}
\begin{enumerate}
\item For any $n\in \Z_{\geq0}$, $Z_n$ is a finite-cell right dg module over $\osym_n$.
\item If $n\geq2$, then $\onh_n$ an acyclic dg algebra. Consequently, the derived category $\mc{D}(\onh_n)$ is equivalent to the zero category whenever $n\geq 2$. \hfill$\square$
\end{enumerate}\end{prop}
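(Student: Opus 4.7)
For part (1), my plan is to bootstrap the decomposition $Z_n \cong U_n \otimes_\Bbbk \osym_n$ recorded in \eqref{eqn-Zoidberg-cohomology}, where $U_n = \spann_\Bbbk(B'_n)$ is a finite-dimensional complex over $\Bbbk$ on which the differential \eqref{eqn-d-u} strictly raises the total polynomial degree $|a| = a_1 + \cdots + a_n$. I would enumerate the staircase basis $B'_n$ as $b_1, \ldots, b_N$ in any order that refines the partial order by decreasing $|a|$, and set $H_k = \spann_\Bbbk(b_1, \ldots, b_k)$. Because strictly higher-degree basis vectors are precisely those of smaller index, one gets $d(H_k) \subseteq H_{k-1} \subseteq H_k$, so each $H_k$ is a subcomplex of $U_n$ and each subquotient $H_k/H_{k-1}$ is a rank-one $\Bbbk$-module concentrated in a single bidegree, with vanishing induced differential.

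Tensoring the flag $0 = H_0 \subset H_1 \subset \cdots \subset H_N = U_n$ on the right with $\osym_n$ yields a finite filtration of $Z_n$ by right dg $\osym_n$-submodules $\tilde H_k = H_k \otimes_\Bbbk \osym_n$ whose successive subquotients $\tilde H_k / \tilde H_{k-1}$ are each isomorphic to a bidegree-shifted copy of the regular right dg $\osym_n$-module $(\osym_n, d)$. Each such shift is a dg summand of $\osym_n$ in the sense of Example \ref{eg-finite-cell}, so this filtration exhibits $Z_n$ as a finite-cell right dg $\osym_n$-module.

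Part (2) is a quick consequence of Corollary \ref{cor-onh-end}. For $n \geq 2$ the generator $\pd_1 \in \onh_n$ exists and satisfies $d(\pd_1) = 1$, exhibiting the unit of $\onh_n$ as a coboundary; the equivalence of conditions (2) and (3) in Corollary \ref{cor-acyclic-dga} then gives $\mH(\onh_n) = 0$, and the equivalence of (1) and (2) of that same corollary yields $\mc{D}(\onh_n) \simeq 0$. The main delicate point in the whole argument is verifying that the formula \eqref{eqn-d-u} genuinely preserves $U_n$ as a $\Bbbk$-subcomplex of $Z_n$ rather than only landing in the larger right-module $U_n \cdot \osym_n$; once that closure is in hand, both parts reduce to essentially bookkeeping.
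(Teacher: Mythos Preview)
Your argument is correct and matches the paper's own approach almost exactly: the paper establishes the $d$-stability of $U_n$ via \eqref{eqn-d-u}, records the splitting $Z_n\cong U_n\otimes_\Bbbk\osym_n$, and then states the proposition as a summary; for part~(2) it likewise invokes $d(\pd_1)=1$ together with Corollary~\ref{cor-acyclic-dga}. The only point worth double-checking is the one you already flagged---that the overflow coefficient in \eqref{eqn-d-u} vanishes when $a_i$ is at its maximal allowed value---and the paper handles this in the sentence immediately preceding \eqref{eqn-Zoidberg-cohomology}.
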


In contrast to the above result, the module $Z_n$ is almost never cofibrant when considered as a left dg module over $\onh_n$.

\begin{prop} The following are equivalent:
\begin{enumerate}
    \item As a left dg module over $\onh_n$, $Z_n$ is cofibrant.
    \item As a chain complex, $Z_n$ is not acyclic.
    \item $n=0$ or $n=1$.
\end{enumerate}\end{prop}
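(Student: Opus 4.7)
The plan is to establish $(3) \Rightarrow (1)$, $(3) \Rightarrow (2)$, $\neg(3) \Rightarrow \neg(2)$, and $\neg(3) \Rightarrow \neg(1)$, yielding the stated three-way equivalence. The first three implications are relatively straightforward; the main obstacle is the last.

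For $(3) \Rightarrow (1)$ and $(3) \Rightarrow (2)$: when $n = 0$, $Z_0 \cong \onh_0 \cong \Bbbk$, trivially cofibrant and non-acyclic. When $n = 1$, one has $\tte_1 = 1$, so Proposition \ref{prop-zoidberg-as-opol} identifies $Z_1$ with $\onh_1$ as a left dg module, making $Z_1$ a rank-one free module (hence cofibrant); since $d(\onez) = 0$ for $n = 1$, the cohomology $\mathrm{H}(Z_1)$ reduces to that of $\opol_1 = \Bbbk[x_1]$ with $d(x_1) = x_1^2$, which is easily seen to be $\Bbbk$. For $\neg(3) \Rightarrow \neg(2)$: when $n \ge 2$, the isomorphism $Z_n \cong U_n \otimes_\Bbbk \osym_n$ of \eqref{eqn-Zoidberg-cohomology} together with the acyclicity of $U_n$ from Proposition \ref{prop-Zoidberg-finite-cell} immediately gives $\mathrm{H}(Z_n) = 0$.

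For the main obstacle, showing that $Z_n$ is not cofibrant when $n \ge 2$, the strategy is contradiction. Since $\onh_n$ is acyclic for $n \ge 2$, Corollary \ref{cor-acyclic-dga} gives $\mc{D}(\onh_n) \simeq 0$; hence if $Z_n$ were cofibrant then $\Hom_{\mc{H}(\onh_n)}(Z_n, Z_n) \cong \Hom_{\mc{D}(\onh_n)}(Z_n, Z_n) = 0$, forcing $\id_{Z_n}$ to be null-homotopic via some odd-parity $\onh_n$-linear map $h$ with $d(h) = \id_{Z_n}$. To rule this out I will reduce to a polynomial equation in $\opol_n$. The $\onh_n$-linearity of $h$ combined with the vanishing $\pd_i \tte_n = 0$ (which follows from $\pd_i^2 = 0$ applied to the reduced expression \eqref{eqn-defn-pd-w0}) forces $h(\onez) = q\onez$ for some odd-parity $q \in \opol_n$ with $\pd_i(q) = 0$ for all $i$, so that $q \in \osymt_n$ by Proposition \ref{prop-onh-properties}. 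Evaluating the condition $d(h)(\onez) = \onez$ using the Leibniz rule and $d(\onez) = \sum_i \{i-1\}x_i\onez$ reduces the existence of $h$ to the polynomial equality
\[
d(q) + \sum_{i=1}^n \{i-1\}(x_i q - q x_i) = 1
\]
in $\opol_n$. Every nonzero odd element of $\osymt_n$ has strictly positive polynomial weight (for $|x_i|=1$), so every term on the left-hand side has polynomial weight at least one, while the right-hand side has weight zero, giving the desired contradiction. The crux lies in this reduction from an abstract cofibrancy condition to a concrete polynomial-degree statement; once set up, the conclusion is immediate.
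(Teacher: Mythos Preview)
Your proof is correct and takes a genuinely different route from the paper for the crucial implication $\neg(3)\Rightarrow\neg(1)$.

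The paper argues as follows: for $n\geq 2$ the multiplication map $\onh_n\otimes Z_n\to Z_n$ is a surjective quasi-isomorphism (both sides being acyclic), so cofibrancy of $Z_n$ would produce a dg splitting; applying $\HOM_{\onh_n}(-,Z_n)$ then exhibits $\osym_n\cong\END_{\onh_n}(Z_n)$ as a dg direct summand of the acyclic complex $\HOM_\Bbbk(Z_n,\Bbbk)\otimes Z_n$, contradicting the non-acyclicity of $\osym_n$ (Proposition~\ref{prop-lima-cohomology}).

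Your approach is more direct: you observe that cofibrancy together with $\mc{D}(\onh_n)\simeq 0$ would force $\Hom_{\mc{H}(\onh_n)}(Z_n,Z_n)=0$, hence a null-homotopy $h$ of $\id_{Z_n}$, and then you rule this out by a concrete polynomial-degree argument. Both arguments ultimately hinge on the same fact, namely that $\END_{\onh_n}(Z_n)\cong\osymt_n$ (equivalently $\osym_n$) is not acyclic: the paper packages this as ``$\osym_n$ is never acyclic'', while you unpack it as ``$1$ is not a coboundary''. Your route avoids the auxiliary bimodule $\onh_n\otimes Z_n$ and the functorial splitting argument, at the cost of a small explicit computation; the paper's route is more conceptual but invokes more machinery. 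A minor remark: the precise signs in your displayed equation depend on the super-linearity conventions, but since the conclusion rests only on the observation that every term has strictly positive $q$-degree, this does not affect the validity of the argument.
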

\begin{proof} When $n=0$ or $1$, $Z_n\cong\onh_n$ as the left regular module, so the result is clear. When $n\geq 2$, it follows from equation \eqref{eqn-Zoidberg-cohomology} and Proposition \ref{prop-Zoidberg-finite-cell} that both $Z_n$ and $\onh_n$ are acyclic. It suffices to show that, for these $n$, $Z_n$ is not cofibrant as a left module over $\onh_n$. Consider the canonical multiplication map
\[
\onh_n\otimes Z_n\lra Z_n, \quad \xi\otimes f\onez\mapsto \xi(f)\onez,
\]
which is a surjective map of left dg modules over $\onh_n$. When $n\geq 2$, both sides of the equation are acyclic, so this map is a quasi-isomorphism. If $Z_n$ were cofibrant over $\onh_n$, then the multiplication map would admit a section, so that $Z_n$ would be a dg direct summand of $\onh_n\otimes Z_n$. Now, apply the functor $\HOM_{\onh_n}(-,Z_n)$ to the surjective quasi-isomorphism, we get an injection of chain-complexes, which would split if $Z_n$ were cofibrant:
\[
\osym_n\cong \HOM_{\onh_n}(Z_n,Z_n)\lra \HOM_{\onh_n}(\onh_n\otimes Z_n, Z_n)\cong \HOM_{\Bbbk}(Z_n,\Bbbk)\otimes Z_n.
\]
The right-hand side of the last map is acyclic when $n\geq 2$. Since $\osym_n$ is never acyclic (see Proposition \ref{prop-lima-cohomology}), it cannot be a dg direct summand of an acyclic complex.  Hence for $n\geq2$, $Z_n$ is not cofibrant over $\onh_n$.  The result follows.
\end{proof}
The reader interested in the combinatorics of the finite-cell filtration on $Z_n$ may consult the appendix.

Let
\[
\mc{D}(\onh):=\bigoplus_{n\in \Z_{\geq0}}\mc{D}(\onh_n)
\]
be the direct sum of the derived categories of dg modules over $\onh_n$ for all $n\geq0$. The inclusion map
\begin{equation}
\iota_{m,n}:\onh_m\otimes\onh_n\hookrightarrow\onh_{m+n}
\end{equation}
gives us derived induction and restriction functors on the dg derived categories:
\begin{equation}\label{eqn-induction-component}
\smind_{m,n}:=(\iota_{m,n})^* :\mc{D}(\onh_{m}\otimes\onh_n)\lra \mc{D}(\onh_{n+m}),
\end{equation}
\begin{equation}\label{eqn-restriction-component}
\smres_{m,n}:=(\iota_{m,n})_* :\mc{D}(\onh_{n+m})\lra \mc{D}(\onh_{m}\otimes\onh_n).
\end{equation}
Summing over all $n$ and $m$, we obtain functors
\begin{equation}\label{eqn-induction}
\smind:=\bigoplus_{m,n\in\Z_{\geq0}}\smind_{m,n}:\mc{D}(\onh\otimes \onh)\lra \mc{D}(\onh),
\end{equation}
\begin{equation}\label{eqn-restriction}
\smres:=\bigoplus_{m,n\in\Z_{\geq0}}\smres_{m,n}:\mc{D}(\onh)\lra \mc{D}(\onh\otimes \onh).
\end{equation}

By the discussion in Subsection \ref{subsect-categorifying-Gaussian}, the Grothendieck group
\begin{equation*}
K_0(\onh):=\bigoplus_{n\in \Z_{\geq0}}K_0(\mc{D}(\onh_n))
\end{equation*}
has a natural $\Z[\sqrt{-1}]$-module structure.  Furthermore, by Proposition \ref{prop-nice-property-Zoidberg}, the derived category of $\onh$ has the K\"{u}nneth-type property with respect to $K_0$:
\begin{equation}\label{eqn-small-Kunneth}
K_0(\onh_n\otimes \onh_m)\cong K_0(\onh_n)\otimes_{\Z[\sqrt{-1}]}K_0(\onh_m).
\end{equation}
Hence the symbols $[\smind]$ and $[\smres]$ turn $K_0(\onh)$ into a
$\sqrt{-1}$-bialgebra, as in \cite{EKL}.  Comparing the $\sqrt{-1}$-bialgebras $u^+$ and $K_0(\onh)$ using Subsection \ref{subsec-qgs} and Proposition \ref{prop-nice-property-Zoidberg}, we have the following.

\begin{thm} \label{thm-small-sl2}
The map
\begin{equation*}
u^+\to K_0(\onh), \quad E\mapsto [\onh_1],
\end{equation*}
is an isomorphism of $\sqrt{-1}$-bialgebras.\hfill$\square$
\end{thm}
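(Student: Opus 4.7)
The plan is to reduce the theorem to a degree-by-degree computation of $K_0(\mc{D}(\onh_n))$ and then match the induced algebra and coalgebra structure maps against those of $u^+$. By Proposition \ref{prop-Zoidberg-finite-cell}, $\onh_n$ is an acyclic dg algebra for every $n \geq 2$, so $\mc{D}(\onh_n) \simeq 0$ and $K_0(\mc{D}(\onh_n)) = 0$ in those degrees. This matches the fact that $u^+$ is free of rank two over $\Z[\sqrt{-1}]$, concentrated in $q$-degrees $0$ and $1$.

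First I would pin down the two surviving Grothendieck groups. In degree zero, $\onh_0 = \Bbbk$ with trivial differential, and Subsection \ref{subsect-categorifying-Gaussian} gives $K_0(\mc{D}(\onh_0)) \cong \Z[\sqrt{-1}]$. In degree one, $\onh_1 \cong \opol_1 = \Bbbk[x_1]$ with $d(x_1) = x_1^2$; a direct computation shows that the cycles are exactly the even powers of $x_1$ and that $x_1^{2k} = d(x_1^{2k-1})$ for all $k \geq 1$, so the augmentation $\onh_1 \to \Bbbk$ is a quasi-isomorphism of dg algebras. Theorem \ref{thm-qis-dga} then yields $\mc{D}(\onh_1) \simeq \mc{D}(\Bbbk)$, so $K_0(\mc{D}(\onh_1)) \cong \Z[\sqrt{-1}]$, generated by $[\onh_1]$. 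Combining these with the vanishing in higher degrees, the asserted map is a $\Z[\sqrt{-1}]$-module isomorphism from $u^+$ to $K_0(\onh)$.

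Next I would check compatibility with the bialgebra structures, working inside $K_0(\onh) \otimes_{\Z[\sqrt{-1}]} K_0(\onh)$ via the K\"unneth isomorphism \eqref{eqn-small-Kunneth}. On the algebra side, $[\onh_1] \cdot [\onh_1] = [\smind_{1,1}(\onh_1 \otimes \onh_1)] = [\onh_2] = 0$, since $\onh_2$ is acyclic; this matches the relation $E^2 = 0$ in $u^+$. On the coalgebra side, only the splittings $1 = 0+1$ and $1 = 1+0$ contribute to $\smres$ applied to $\onh_1$, so $[\smres(\onh_1)] = [\onh_0] \otimes [\onh_1] + [\onh_1] \otimes [\onh_0]$, matching $r(E) = 1 \otimes E + E \otimes 1$. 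Since $u^+$ is generated as an algebra by the single element $E$, these two verifications determine both structure maps.

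The only substantive step is the degree-one cohomology calculation; everything else is formal, combining the acyclicity of Proposition \ref{prop-Zoidberg-finite-cell}, quasi-isomorphism invariance from Theorem \ref{thm-qis-dga}, and the K\"unneth property \eqref{eqn-small-Kunneth}. The potentially tricky bookkeeping point is to ensure that the $\sqrt{-1}$-twist on the tensor-product Grothendieck group is properly accounted for, but at this small rank it is invisible, because the only nontrivial product and coproduct involve at least one factor of $q$-degree zero, so no twisting sign is picked up.
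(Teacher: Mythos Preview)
Your argument is correct and follows essentially the same route as the paper: use the acyclicity of $\onh_n$ for $n\geq 2$ (Proposition \ref{prop-Zoidberg-finite-cell}) to reduce to degrees $0$ and $1$, identify those Grothendieck groups with $\Z[\sqrt{-1}]$, and then verify the bialgebra structure on generators via $\smind$ and $\smres$ together with the K\"unneth isomorphism \eqref{eqn-small-Kunneth}. The only cosmetic difference is that you compute $K_0(\mc{D}(\onh_1))$ via an explicit quasi-isomorphism $\onh_1\to\Bbbk$, whereas the paper implicitly relies on the positivity of $\onh_1$ and Corollary \ref{cor-K0-positive}; both arguments are valid and equivalent.
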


Finally, we remark that the restriction functor $\smres$ is not quite the optimal one for categorifying $r$. Later, we will see that when we embed $u^+$ into a bigger twisted bialgebra, it is more natural to consider a different restriction functor. The situation has already appeared in the categorification of $u_q^+(\sltwo)$ at a prime root of unity in \cite[Section 3.3]{KQ}.

\section{Differential graded thick calculus}\label{sec-thick}

\subsection{Thick strands and functors}\label{subsec-thick-strands}

The diagrams we have been working with so far are useful for representing elements of $\onh_n$.  Recall that $\onh_n$ has, up to isomorphism and shift, a unique indecomposable projective module $P_n$ and that the regular representation decomposes as
\begin{equation*}
\onh_n\cong P_n^{\oplus[n]!}.
\end{equation*}
The module $P_n$ is just the skew polynomial representation $\opol_n\cong\onh_n\tte_n$ on which $x_i$ acts as multiplication and $\pd_i$ acts as an odd divided difference operator.  The left module structure on the bimodule $Z_n$ of the previous section is isomorphic to $P_n$.

In \cite{EKL}, diagrammatics were introduced for $\onh_n$-linear endomorphisms of $P_n$.  The identity map $P_n\to P_n$ is drawn as a \emph{thick strand},
\begin{equation*}
\hackcenter{\begin{tikzpicture}
    \draw[green!65!black,line width=4] (0,0) -- (0,1)
        node[pos=0, below, black] {\small$n$};
\end{tikzpicture}}=\tte_n.
\end{equation*}
For any $f\in\osymt_n$, we define
\begin{equation*}
\hackcenter{\begin{tikzpicture}
    \draw[green!65!black,line width=4] (0,0) -- (0,2)
        node[pos=0, below, black] {\small$n$};
   \node[draw, thick, fill=white!20,rounded corners=4pt,inner sep=3pt] at (0,1) {$f$};
\end{tikzpicture}}=\tte_nf\tte_n.
\end{equation*}

\begin{rem}
The technique of thick calculus was first introduced in \cite{KLMS} for the (even) nilHecke algebra.  Its odd analogue was studied in \cite{EKL}.  Our conventions differ from those of \cite{EKL} by a reflection about a horizontal axis.  This is because we use the idempotent $\tte_n=(-1)^{\binom{n}{3}}\pd_{w_0}x^\delta$ while the latter work adopts $(-1)^{\binom{n}{3}}x^\delta\pd_{w_0}$.
\end{rem}

Equation (2.64) of \cite{EKL} implies that for $f,g\in\osymt_n$ we have $\pd_{w_0}f=w_0(f)\pd_{w_0}$.  Hence $\tte_nf\tte_ng\tte_n=\tte_nfg\tte_n$.  Diagrammatically, this reads
\begin{equation*}
\hackcenter{\begin{tikzpicture}
    \draw[green!65!black,line width=4] (0,0) -- (0,3)
        node[pos=0, below, black] {\small$n$};
   \node[draw, thick, fill=white!20,rounded corners=4pt,inner sep=3pt] at (0,1) {$f$};
   \node[draw, thick, fill=white!20,rounded corners=4pt,inner sep=3pt] at (0,2) {$g$};
\end{tikzpicture}}\quad=\quad
\hackcenter{\begin{tikzpicture}
    \draw[green!65!black,line width=4] (0,0) -- (0,3)
        node[pos=0, below, black] {\small$n$};
   \node[draw, thick, fill=white!20,rounded corners=4pt,inner sep=3pt] at (0,1.5) {$gf$};
\end{tikzpicture}}.
\end{equation*}

Summarizing the above discussion, thick diagrams with labels are considered as elements of the graded endomorphism algebra $\END_{\onh_n}(P_n)$, which is isomorphic to $\osymt_n$ by Statement 2 of Proposition \ref{prop-onh-properties}, and $P_n$ becomes a $(\onh_n,\osymt_n)$-bimodule. We would like to regard $P_n$ as a right module over the untwisted odd symmetric polynomials $\osym_n$. To do this, we identify $P_n$ with the bimodule $Z_n=\opol_n\onez$ over $(\onh_n,\osym_n)$ (Definition \ref{def-zoidberg-bim}). In other words, the right $\osym_n$ action on $P_n$ comes from the ring isomorphism $\theta\circ w_0:\osym_n\lra \osymt_n$ (see equation \eqref{eqn-onez}).

A thick strand of thickness $1$ can also be drawn as a thin (ordinary) strand.  When convenient, several thin strands will be drawn as one thin strand with a label,
\begin{equation*}
\hackcenter{\begin{tikzpicture}
    \draw[thick] (0,0) -- (0,2);
    \node at (0,-.5) {\small$n$};
\end{tikzpicture}}\quad=\quad
\hackcenter{\begin{tikzpicture}
    \draw[thick] (0,0) -- (0,2);
    \draw[thick] (.5,0) -- (.5,2);
    \node at (1,1) {$\cdots$};
    \draw[thick] (1.5,0) -- (1.5,2);
    \node at (.75,-.5) {\small$n$\text{ strands}};
\end{tikzpicture}}\quad.
\end{equation*}
If there are generators present that make the label redundant, we drop the label.

In a diagram with $a+b$ strands, suppose we bunch the first $a$ together and the last $b$ together.  If we cross these bunches, we are drawing a divided difference operator $\pd_{w_{a,b}}$, where $w_{a,b}(i)=a+i$ for $1\leq i\leq a$ and $w_{a,b}(i)=i-a$ for $a+1\leq i\leq a+b$.  As a matter of convention, we will always use the reduced expression
\begin{equation*}
w_{a,b}=(s_bs_{b-1}\cdots s_1)(s_{b+1}s_a\cdots s_2)\cdots(s_{a+b-1}s_{a+b}\cdots s_a).
\end{equation*}
Diagrammatically,
\begin{equation}\label{eqn-pd-wab}
\hackcenter{\begin{tikzpicture}
    \draw[thick] (0,0) [out=90, in=-90] to (1,1);
    \node[below] at (1,0) {\small$b$};
    \draw[thick] (1,0) [out=90, in=-90] to (0,1);
    \node[below] at (0,0) {\small$a$};
\end{tikzpicture}}
\quad=\quad
\hackcenter{\begin{tikzpicture}[scale=0.7]
    \draw[thick] (0,0) -- (0,1.5) [out=90, in=-90] to (5,4.5);
    \draw[thick] (.5,0) -- (.5,1) [out=90, in=-90] to (5.5,4) -- (5.5,4.5);
    \draw[thick] (1,0) -- (1,.5) [out=90, in=-90] to (6,3.5) -- (6,4.5);
    \draw[thick] (3,0) [out=90, in=-90] to (2,4.5);
    \draw[thick] (3.5,0) [out=90, in=-90] to (2.5,4.5);
    \draw[thick] (4,0) [out=90, in=-90] to (3,4.5);
    \draw[thick] (4.5,0) [out=90, in=-90] to (3.5,4.5);
    \node[below] at (.5,0) {\small$a$};
    \node[below] at (3.75,0) {\small$b$};
\end{tikzpicture}}
\end{equation}
The rules for combining and associating crossings of bunched thin strands involve easily computed signs; see Subsection 3.2.4 of \cite{EKL}.

Let
\begin{equation*}
\osym_{a,b} := \osym_a\otimes_{\Bbbk}\osym_b.
\end{equation*}
For $a,b$ positive integers define \emph{splitters},
\begin{equation}
\hackcenter{\begin{tikzpicture}[scale=0.7]
    \draw[green!65!black,line width=4] (.5,0) -- (.5,1);
    \draw[green!65!black,line width=4] (.5,1) [out=90, in=-90] to (0,2);
    \draw[green!65!black,line width=4] (.5,1) [out=90, in=-90] to (1,2);
    \node at (.5,-.5) {\small$a+b$};
    \node at (0,2.5) {\small$a$};
    \node at (1,2.5) {\small$b$};
\end{tikzpicture}}\quad:=\quad
\hackcenter{\begin{tikzpicture}[scale=0.7]
    \draw[thick] (0,0) -- (0,1);
    \draw[thick] (-.5,1) -- (-.5,3);
    \draw[thick] (.5,1) -- (.5,3);
    \node at (0,-.5) {$a+b$};
    \node at (-.5,3.5) {$a$};
    \node at (.5,3.5) {$b$};
    \node[draw, thick, fill=white!20,rounded corners=4pt,inner sep=3pt] at (0,1) {$\tte_{a+b}$};
    \node[draw, thick, fill=white!20,rounded corners=4pt,inner sep=3pt] at (-.5,2.25) {$\tte_{a}$};
    \node[draw, thick, fill=white!20,rounded corners=4pt,inner sep=3pt] at (.5,2.25) {$\tte_{b}$};
\end{tikzpicture}}\quad,\qquad
\hackcenter{\begin{tikzpicture}[scale=0.7]
    \draw[green!65!black,line width=4] (.5,1) -- (.5,2);
    \draw[green!65!black,line width=4] (0,0) [out=90, in=-90] to (.5,1);
    \draw[green!65!black,line width=4] (1,0) [out=90, in=-90] to (.5,1);
    \node at (.5,2.5) {\small$a+b$};
    \node at (0,-.5) {\small$a$};
    \node at (1,-.5) {\small$b$};
\end{tikzpicture}}\quad:=\quad
\hackcenter{\begin{tikzpicture}[scale=0.7]
    \draw[thick] (0,0) -- (0,1);
    \draw[thick] (1,0) -- (1,1);
    \draw[thick] (0,1) [out=90, in=-90] to (1,2.5);
    \draw[thick] (1,1) [out=90, in=-90] to (0,2.5);
    \draw[thick] (.5,2.5) -- (.5,3.5);
    \node at (0,-.5) {$a$};
    \node at (1,-.5) {$b$};
    \node at (.5,4) {$a+b$};
    \node[draw, thick, fill=white!20,rounded corners=4pt,inner sep=3pt] at (0,.75) {$\tte_{a}$};
    \node[draw, thick, fill=white!20,rounded corners=4pt,inner sep=3pt] at (1,.75) {$\tte_{b}$};
    \node[draw, thick, fill=white!20,rounded corners=4pt,inner sep=3pt] at (.5,2.5) {$\tte_{a+b}$};
\end{tikzpicture}}\quad.
\end{equation}

Splitter diagrams can carry labels by elements of $\osymt_a\otimes_\Bbbk \osymt_b$ on the top legs and labels from $\osymt_{a+b}$ on the bottom. These diagrams are subject to the relation
\[\label{eqn-slider}
(-1)^{\binom{s}{2}}
\hackcenter{
\begin{tikzpicture}[scale=0.7]
    \draw[green!65!black,line width=4] (1,0) -- (1,1);
    \draw[green!65!black,line width=4] (1,1) [out=90, in=-90] to (0,2);
    \draw[green!65!black,line width=4] (1,1) [out=90, in=-90] to (2,2);
    \draw[green!65!black,line width=4] (0,2) -- (0,3);
    \draw[green!65!black,line width=4] (2,2) -- (2,3);
    \node at (1,-.5) {\small$a+b$};
    \node at (0,3.5) {\small$a$};
    \node at (2,3.5) {\small$b$};
    \node[draw, thick, fill=white!20,rounded corners=4pt,inner sep=3pt] at (1,.55) {$\et_{s}$};
\end{tikzpicture}}
=
\sum_{l=0}^s(-1)^{a\ell}\quad
\hackcenter{
\begin{tikzpicture}[scale=0.7]
    \draw[green!65!black,line width=4] (1,0) -- (1,1);
    \draw[green!65!black,line width=4] (1,1) [out=90, in=-90] to (0,2);
    \draw[green!65!black,line width=4] (1,1) [out=90, in=-90] to (2,2);
    \draw[green!65!black,line width=4] (0,2) -- (0,3);
    \draw[green!65!black,line width=4] (2,2) -- (2,3);
    \node at (1,-.5) {\small$a+b$};
    \node at (0,3.5) {\small$a$};
    \node at (2,3.5) {\small$b$};
    \node[draw, thick, fill=white!20,rounded corners=4pt,inner sep=3pt] at (2,2.45) {$\et_{l}$};
    \node[draw, thick, fill=white!20,rounded corners=4pt,inner sep=3pt] at (0,2) {$\et_{s-l}$};
\end{tikzpicture}},
\]
which can be checked using the convenient relation
\begin{equation*}
\tte_n x_1\cdots x_k\tte_n=\et_k\tte_n.
\end{equation*}

In other words, the collection of these diagrams span a $\Bbbk$-vector space isomorphic to $\osymt_a\boxtimes \osymt_b$, considered as a bimodule over the rings $\osym_a\otimes_\Bbbk \osym_b$ and $\osym_{a+b}$, with the untwisted odd symmetric polynomials acting on the module via the $\theta\circ w_0$ twisting as before. Tensor product with this bimodule gives rise to a functor
\begin{equation}
\left(\osymt_a\boxtimes \osymt_b\right)\otimes_{\osym_{a+b}}(-): \osym_{a+b}\dmod \lra \osym_a\otimes\osym_b\dmod.
\end{equation}
The thin diagrams correspond to the explicit embedding of this bimodule inside the space $\onh_{a+b}$, the latter considered as a bimodule over $\onh_a\otimes \onh_b$ and $\onh_{a+b}$. This embedding will help us determine the differential action on the thick generators in the next subsection.

The other splitter diagram also affords a similar functorial interpretation, and we leave it to the reader to spell out the details.

Two special cases of splitter diagrams called \emph{exploders} are defined by
\begin{equation}\label{eqn-defn-exploder}
\hackcenter{\begin{tikzpicture}[scale=0.7]
    \draw[green!65!black,line width=4] (0,2) -- (0,3);
    \draw[thick] (-2,0) [out=90, in=-150] to (0,2);
    \draw[thick] (-1.5,0) [out=90, in=-120] to (0,2);
    \draw[thick] (-1,0) [out=90, in=-90] to (0,2);
    \node at (0,.5) {\small$\cdots$};
    \draw[thick] (1,0) [out=90, in=-90] to (0,2);
    \draw[thick] (1.5,0) [out=90, in=-60] to (0,2);
    \draw[thick] (2,0) [out=90, in=-30] to (0,2);
    \node[above] at (0,3) {\small$n$};
    \node[below] at (0,0) {\vphantom{\small$n$}};
\end{tikzpicture}}
\quad=\quad
\hackcenter{\begin{tikzpicture}[scale=0.7]
    \draw[thick] (0,0) -- (0,3);
    \node[above] at (0,3) {\small$n$};
    \node[below] at (0,0) {\vphantom{\small$n$}};
    \node[draw, thick, fill=white!20,rounded corners=4pt,inner sep=3pt] at (0,1.5) {$\pd_{w_0}$};
\end{tikzpicture}}
\quad,\qquad
\hackcenter{\begin{tikzpicture}[scale=0.7]
    \draw[green!65!black,line width=4] (0,0) -- (0,1);
    \draw[thick] (0,1) [out=150, in=-90] to (-2,3);
    \draw[thick] (0,1) [out=120, in=-90] to (-1.5,3);
    \draw[thick] (0,1) [out=90, in=-90] to (-1,3);
    \node at (0,2.5) {\small$\cdots$};
    \draw[thick] (0,1) [out=90, in=-90] to (1,3);
    \draw[thick] (0,1) [out=60, in=-90] to (1.5,3);
    \draw[thick] (0,1) [out=30, in=-90] to (2,3);
    \node[below] at (0,0) {\small$n$};
    \node[above] at (0,3) {\vphantom{\small$n$}};
\end{tikzpicture}}
\quad=\quad
\hackcenter{\begin{tikzpicture}[scale=0.7]
    \draw[thick] (0,0) -- (0,3);
    \node[below] at (0,0) {\small$n$};
    \node[above] at (0,3) {\vphantom{\small$n$}};
    \node[draw, thick, fill=white!20,rounded corners=4pt,inner sep=3pt] at (0,1.5) {$\tte_n$};
\end{tikzpicture}}
\quad.
\end{equation}
For relations involving exploded subsets of strands, relative heights of exploders, and the associativity of exploders, see Subsection 4.2.1 of \cite{EKL}.  The latter allows us to consider the dg bimodule $Z_n$ as a functor
\begin{equation}
Z_n\otimes_{\osym_n}(-): \osym_n\dmod \lra \opol_n\dmod.
\end{equation}

We have the following relation
\begin{equation*}
\hackcenter{\begin{tikzpicture}[scale=0.7]
    \draw[green!65!black,line width=4] (0,5) -- (0,6);
    \draw[thick] (-2,3) [out=90, in=-150] to (0,5);
    \draw[thick] (-1.5,3) [out=90, in=-120] to (0,5);
    \draw[thick] (-1,3) [out=90, in=-90] to (0,5);
    \node at (0,3.75) {\small$\cdots$};
    \draw[thick] (1,3) [out=90, in=-90] to (0,5);
    \draw[thick] (1.5,3) [out=90, in=-60] to (0,5);
    \draw[thick] (2,3) [out=90, in=-30] to (0,5);
    \node[above] at (0,6) {\vphantom{\small$n$}};
    \node[below] at (0,0) {\small$n$};
    \draw[green!65!black,line width=4] (0,0) -- (0,1);
    \draw[thick] (0,1) [out=150, in=-90] to (-2,3);
    \draw[thick] (0,1) [out=120, in=-90] to (-1.5,3);
    \draw[thick] (0,1) [out=90, in=-90] to (-1,3);
    \node at (0,2.25) {\small$\cdots$};
    \draw[thick] (0,1) [out=90, in=-90] to (1,3);
    \draw[thick] (0,1) [out=60, in=-90] to (1.5,3);
    \draw[thick] (0,1) [out=30, in=-90] to (2,3);
    \node[draw, thick, fill=white!20,rounded corners=4pt,inner sep=3pt] at (0,3) {\hphantom{abcdefgh}$f$\hphantom{abcdefgh}};
\end{tikzpicture}}
\quad=\quad
\hackcenter{\begin{tikzpicture}[scale=0.7]
    \draw[green!65!black,line width=4] (0,0) -- (0,6);
    \node[draw, thick, fill=white!20,rounded corners=4pt,inner sep=3pt] at (0,3) {$\pd_{w_0}(f)$};
    \node[above] at (0,6) {\vphantom{\small$n$}};
    \node[below] at (0,0) {\small$n$};
\end{tikzpicture}}
\quad\text{ for }f\in\opol_n.
\end{equation*}
In particular, if $\lambda$ is a partition with at most $n$ parts, then
\begin{equation*}
\hackcenter{\begin{tikzpicture}[scale=0.7]
    \draw[green!65!black,line width=4] (0,5) -- (0,6);
    \draw[thick] (-2,3) [out=90, in=-150] to (0,5);
    \draw[thick] (-1.5,3) [out=90, in=-120] to (0,5);
    \draw[thick] (-1,3) [out=90, in=-90] to (0,5);
    \node at (0,3.75) {\small$\cdots$};
    \draw[thick] (1,3) [out=90, in=-90] to (0,5);
    \draw[thick] (1.5,3) [out=90, in=-60] to (0,5);
    \draw[thick] (2,3) [out=90, in=-30] to (0,5);
    \node[above] at (0,6) {\vphantom{\small$n$}};
    \node[below] at (0,0) {\small$n$};
    \draw[green!65!black,line width=4] (0,0) -- (0,1);
    \draw[thick] (0,1) [out=150, in=-90] to (-2,3);
    \draw[thick] (0,1) [out=120, in=-90] to (-1.5,3);
    \draw[thick] (0,1) [out=90, in=-90] to (-1,3);
    \node at (0,2.25) {\small$\cdots$};
    \draw[thick] (0,1) [out=90, in=-90] to (1,3);
    \draw[thick] (0,1) [out=60, in=-90] to (1.5,3);
    \draw[thick] (0,1) [out=30, in=-90] to (2,3);
    \node[draw, thick, fill=white!20,rounded corners=4pt,inner sep=3pt] at (0,3) {\hphantom{abcdef}$x^\lambda x^\delta$\hphantom{abcdef}};
\end{tikzpicture}}
\quad=\quad(-1)^{\binom{n}{3}+\binom{n}{2}|\lambda|}
\hackcenter{\begin{tikzpicture}[scale=0.7]
    \draw[green!65!black,line width=4] (0,0) -- (0,6);
    \node[draw, thick, fill=white!20,rounded corners=4pt,inner sep=3pt] at (0,3) {$w_0(\st_\lambda)$};
    \node[above] at (0,6) {\vphantom{\small$n$}};
    \node[below] at (0,0) {\small$n$};
\end{tikzpicture}}\quad.
\end{equation*}
We will use these relations in the next subsection.

\subsection{Odd dg thick calculus}\label{subsec-dg-thick}

\subsubsection{Differential on splitters}

As we have seen in the previous subsection, splitters (and exploders) sit inside idempotented pieces of odd nilHecke algebras. Therefore we can use Lemma \ref{lem-d-endo-idempotent} to compute the action of the differential $d_{\onh}$ on them. In the next subsection, we will give a more intrinsic explanation of these differentials from a dg module-theoretic point of view.

To start, we simply observe that the thick strand with no label on it is identified with the identity endomorphism inside the ring
$$\END_{\onh_n}(P_n)\cong \END_{\onh_n}(\onh_n\tte_n)\cong \tte_n\onh_n\tte_n.$$
By Lemma \ref{lem-d-idempotent},
\begin{equation*}
\tte_nd(\tte_n)=\sum_{i=1}^n\lbrace i-1\rbrace\tte_nx_i\tte_n=\sum_{i=1}^n\lbrace i-1\rbrace\pd_{w_0}\left(x^\delta x_i\right)\pd_{w_0}x^\delta.
\end{equation*}
Unless $i=1$, $\pd_{w_0}\left(x^\delta x_i\right)=0$, and it follows that $\tte_nd(\tte_n)=0$. In general, by Lemma \ref{lem-d-endo-idempotent}, the differential acts on a decorated thick strand as
\begin{equation*}
d(\tte_n\varphi\tte_n)=\tte_nd(\varphi)\tte_n+(-1)^{p(\varphi)}\tte_n\varphi d(\tte_n),
\end{equation*}
where $\varphi\in \osymt_n$.

\begin{prop} \label{prop-diagrammatic-thick-diff} The differentials of the splitters are given by
\begin{eqnarray}
&d\left(\hackcenter{\begin{tikzpicture}[scale=0.7]
    \draw[green!65!black,line width=4] (.5,0) -- (.5,1);
    \draw[green!65!black,line width=4] (.5,1) [out=90, in=-90] to (0,2);
    \draw[green!65!black,line width=4] (.5,1) [out=90, in=-90] to (1,2);
    \node at (.5,-.5) {\small$a+b$};
    \node at (0,2.5) {\small$a$};
    \node at (1,2.5) {\small$b$};
\end{tikzpicture}}\right)\quad=\quad\lbrace a\rbrace
\hackcenter{\begin{tikzpicture}[scale=0.7]
    \draw[green!65!black,line width=4] (.5,0) -- (.5,1);
    \draw[green!65!black,line width=4] (.5,1) [out=90, in=-90] to (0,2);
    \draw[green!65!black,line width=4] (.5,1) [out=90, in=-90] to (1,2);
    \draw[green!65!black,line width=4] (0,2) -- (0,3);
    \draw[green!65!black,line width=4] (1,2) -- (1,3);
    \node at (.5,-.5) {\small$a+b$};
    \node at (0,3.5) {\small$a$};
    \node at (1,3.5) {\small$b$};
    \node[draw, thick, fill=white!20,rounded corners=4pt,inner sep=3pt] at (1,2.25) {$\et_1$};
\end{tikzpicture}}\quad,\\
&d\left(\hackcenter{\begin{tikzpicture}[scale=0.7]
    \draw[green!65!black,line width=4] (.5,1) -- (.5,2);
    \draw[green!65!black,line width=4] (0,0) [out=90, in=-90] to (.5,1);
    \draw[green!65!black,line width=4] (1,0) [out=90, in=-90] to (.5,1);
    \node at (.5,2.5) {\small$a+b$};
    \node at (0,-.5) {\small$a$};
    \node at (1,-.5) {\small$b$};
\end{tikzpicture}}\right)\quad=\quad(-1)^{ab-1}\lbrace b\rbrace
\hackcenter{\begin{tikzpicture}[scale=0.7]
    \draw[green!65!black,line width=4] (.5,2) -- (.5,3);
    \draw[green!65!black,line width=4] (0,1) [out=90, in=-90] to (.5,2);
    \draw[green!65!black,line width=4] (1,1) [out=90, in=-90] to (.5,2);
    \draw[green!65!black,line width=4] (0,0) -- (0,1);
    \draw[green!65!black,line width=4] (1,0) -- (1,1);
    \node at (.5,3.5) {\small$a+b$};
    \node at (0,-.5) {\small$a$};
    \node at (1,-.5) {\small$b$};
    \node[draw, thick, fill=white!20,rounded corners=4pt,inner sep=3pt] at (0,.75) {$\et_1$};
\end{tikzpicture}}\quad.
\end{eqnarray}\end{prop}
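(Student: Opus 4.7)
The plan is to prove both formulas by direct computation in $\onh_{a+b}$, realizing the up-splitter as the element $(\tte_a\otimes\tte_b)\tte_{a+b}$ and the merge-splitter as $\tte_{a+b}\,\pd_{w_{a,b}}\,(\tte_a\otimes\tte_b)$, with $\pd_{w_{a,b}}$ the bunch-crossing of \eqref{eqn-pd-wab}. The graded Leibniz rule together with Lemma~\ref{lem-d-idempotent} reduces everything to a combinatorial manipulation of dots against thick idempotents.

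For the up-splitter, the Leibniz rule gives
\[
d((\tte_a\otimes\tte_b)\tte_{a+b})=\bigl(d(\tte_a)\otimes\tte_b+\tte_a\otimes d(\tte_b)\bigr)\tte_{a+b}+(\tte_a\otimes\tte_b)\,d(\tte_{a+b}),
\]
with no sign since all three idempotents have even parity. Lemma~\ref{lem-d-idempotent} rewrites each factor as a sum of the form $\sum\{i-1\}\,x_i$ times an idempotent. The next step is to absorb each dot into a surrounding thick strand using the slider relation and the identity $\tte_m x_1\cdots x_\ell\tte_m=\et_\ell\tte_m$. The dot contributions in the left $a$-bunch come from $d(\tte_a)$ and from the $k\le a$ summands of $d(\tte_{a+b})$; these cancel because the two coefficients $\{k-1\}$ match after absorption. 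The dot contributions in the right $b$-bunch come from $d(\tte_b)$ with coefficient $\{j-1\}$ (where $j$ indexes position within the $b$-bunch) and from the $k>a$ summands of $d(\tte_{a+b})$ with coefficient $\{a+j-1\}$; their sum produces a single surviving factor of $\{a\}\et_1$ on the right bunch, yielding the claimed formula.

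The merge-splitter is handled by the same strategy with one additional ingredient: one must also differentiate $\pd_{w_{a,b}}$, which via Leibniz applied to its reduced expression is a signed sum over single-crossing deletions. Most such deletion terms vanish in $\onh_{a+b}$ (by $\pd_i^2=0$ or by interaction with the flanking thick idempotents), leaving only boundary contributions. After absorbing the resulting dots into $\tte_a\otimes\tte_b$ and $\tte_{a+b}$, the surviving coefficient becomes $\{b\}$ (with the roles of $a$ and $b$ interchanged due to the crossing of bunches), and the sign $(-1)^{ab-1}$ emerges from commuting the surviving $\et_1$ through the $ab$ elementary crossings composing $\pd_{w_{a,b}}$ (each giving a factor of $-1$ by distant supercommutativity of odd generators) together with a single extra sign from the orientation of the deleted crossing.

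The main obstacle will be the precise sign and coefficient bookkeeping, particularly in the merge case. A cleaner alternative, should the direct calculation prove unwieldy, is to deduce the merge formula from the up-splitter formula via the superalgebra anti-involution of $\onh_{a+b}$ reflecting diagrams about a horizontal axis, which swaps the two splitter types and interchanges $a$ with $b$, introducing exactly the sign $(-1)^{ab-1}$.
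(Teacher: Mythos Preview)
Your approach has a genuine gap: you are applying the ambient differential of $\onh_{a+b}$ via the full Leibniz rule to all three idempotents, but the differential on thick-calculus diagrams is \emph{not} that differential. As explained just before the proposition (using Lemma~\ref{lem-d-endo-idempotent}), a splitter represents a morphism between projective dg modules, and the induced differential on such a morphism space $\tte' A\tte$ is $\tte' a\tte\mapsto \tte' d_A(a\tte)$; the term $d_A(\tte')a\tte$ that the full Leibniz rule would contribute is dropped. Concretely, for the up-splitter $\tte_a\tte_b\tte_{a+b}$ one differentiates only the bottom factor, obtaining $\tte_a\tte_b\,d(\tte_{a+b})$; the terms $d(\tte_a)\tte_b\tte_{a+b}$ and $\tte_a d(\tte_b)\tte_{a+b}$ in your expansion should not be there.

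This matters because your claimed cancellation does not happen. The contributions $\{k-1\}\,\tte_a\tte_b x_k\tte_{a+b}$ for $k\le a$ coming from $d(\tte_{a+b})$ actually vanish \emph{individually}: $\tte_a x_k\propto \pd_{w_0}' x^{\delta'}x_k$ has two adjacent equal exponents for $2\le k\le a$, hence is killed by $\pd_{w_0}'$, and the $k=1$ term carries the coefficient $\{0\}=0$. There is nothing left to cancel against your extra $d(\tte_a)$ terms, which are moreover nonzero elements lying outside the idempotented piece $(\tte_a\tte_b)\onh_{a+b}\tte_{a+b}$ (a dot above $\tte_a$ cannot be absorbed into a thick label). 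The same issue afflicts your treatment of the $b$-bunch: the paper simply observes that among the terms $\tte_b x_{a+j}$ only $j=1$ survives the adjacent-powers argument, directly producing the coefficient $\{a\}$ with no need for any combination with $d(\tte_b)$.

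For the merge-splitter the paper likewise omits $d(\tte_{a+b})$ (now the top idempotent) and, rather than differentiating $\pd_{w_{a,b}}$ crossing-by-crossing, rewrites $\pd_{a,b}(\tte_a\otimes\tte_b)$ as a sign times $\pd_{w_0}x^{\delta'}x^{\delta''}$ before applying Leibniz and Lemma~\ref{lem-d-longest}. Your crossing-deletion sketch and your horizontal-reflection alternative are both reasonable ideas, but either would first require the same correction about which idempotent is exempt from differentiation, and the vague claims about which deletion terms survive and how the sign $(-1)^{ab-1}$ arises would need to be made precise.
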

Here and elsewhere, putting a coupon labeled by the elementary symmetric polynomial $e_k$ on a subset of the strands means to use the polynomial $e_k$ in those variables.  For example: if $\et_1$ is placed on strands $a+1$ through $b+1$ as above, this stands for the element
\begin{equation*}
x_{a+1}-x_{a+2}+x_{a+3}-\ldots+(-1)^{b-1}x_{a+b}\in\osymt(x_{a+1},\ldots,x_{a+b})\subseteq\opol_{a+b}.
\end{equation*}
\begin{proof} First we introduce some notation.  We work on $a+b$ strands, so $\pd_w$ is the divided difference operator for a permutation $w\in S_{a+b}$.  Let
\begin{itemize}
\item $\pd_w'$ denote the divided difference operator $\pd_w$ on strands $1,\ldots,a$ for $w\in S_a$;
\item $\pd_w''$ denote the divided difference operator $\pd_w$ on strands $a+1,\ldots,a+b$ for $w\in S_b$;
\item $\pd_{a,b}$ be the divided difference operator of \eqref{eqn-pd-wab};
\item $\theta_{a,b}\in\lbrace\pm1\rbrace$ be defined by $\pd_{w_0}=\theta_{a,b}\pd_{a,b}\pd_{w_0}'\pd_{w_0}''$;
\item $x^{\delta'}=x_1^{a-1}x_2^{a-2}\cdots x_{a-1}$, $x^{\delta''}=x_{a+1}^{b-1}x_{a+2}^{b-2}\cdots x_{a+b-1}$.
\end{itemize}
The upwards-opening splitter is given by $\tte_a\tte_b\tte_{a+b}$.  We compute:
\begin{equation*}\begin{split}
d\left(\hackcenter{\begin{tikzpicture}[scale=0.7]
    \draw[green!65!black,line width=4] (.5,0) -- (.5,1);
    \draw[green!65!black,line width=4] (.5,1) [out=90, in=-90] to (0,2);
    \draw[green!65!black,line width=4] (.5,1) [out=90, in=-90] to (1,2);
\end{tikzpicture}}\right)
&\quad=\quad\hackcenter{\begin{tikzpicture}[scale=0.7]
    \draw[thick] (0,0) -- (0,1);
    \draw[thick] (-.5,1) -- (-.5,3);
    \draw[thick] (.5,1) -- (.5,3);
    \node[draw, thick, fill=white!20,rounded corners=4pt,inner sep=3pt] at (0,1) {$d(\tte_{a+b})$};
    \node[draw, thick, fill=white!20,rounded corners=4pt,inner sep=3pt] at (-.5,2.25) {$\tte_{a}$};
    \node[draw, thick, fill=white!20,rounded corners=4pt,inner sep=3pt] at (.5,2.25) {$\tte_{b}$};
\end{tikzpicture}}
\quad\refequal{\eqref{eqn-d-idempotent}}\quad\sum_{i=1}^a\lbrace i-1\rbrace
\hackcenter{\begin{tikzpicture}[scale=0.7]
    \draw[thick] (0,0) -- (0,1);
    \draw[thick] (-.5,1) -- (-.5,4);
    \draw[thick] (.5,1) -- (.5,4);
    \node[draw, thick, fill=white!20,rounded corners=4pt,inner sep=3pt] at (0,1) {$\tte_{a+b}$};
    \node[draw, thick, fill=white!20,rounded corners=4pt,inner sep=3pt] at (-.5,3.25) {$\tte_{a}$};
    \node[draw, thick, fill=white!20,rounded corners=4pt,inner sep=3pt] at (.5,3.25) {$\tte_{b}$};
    \node[draw, thick, fill=white!20,rounded corners=4pt,inner sep=3pt] at (-.5,2.25) {$x_i$};
\end{tikzpicture}}
\quad+\quad\sum_{i=a+1}^{a+b}\lbrace i-1\rbrace
\hackcenter{\begin{tikzpicture}[scale=0.7]
    \draw[thick] (0,0) -- (0,1);
    \draw[thick] (-.5,1) -- (-.5,4);
    \draw[thick] (.5,1) -- (.5,4);
    \node[draw, thick, fill=white!20,rounded corners=4pt,inner sep=3pt] at (0,1) {$\tte_{a+b}$};
    \node[draw, thick, fill=white!20,rounded corners=4pt,inner sep=3pt] at (-.5,3.25) {$\tte_{a}$};
    \node[draw, thick, fill=white!20,rounded corners=4pt,inner sep=3pt] at (.5,3.25) {$\tte_{b}$};
    \node[draw, thick, fill=white!20,rounded corners=4pt,inner sep=3pt] at (.5,2.25) {$x_i$};
\end{tikzpicture}}\\
&=\quad\lbrace a\rbrace
\hackcenter{\begin{tikzpicture}[scale=0.7]
    \draw[thick] (0,0) -- (0,1);
    \draw[thick] (-.5,1) -- (-.5,4);
    \draw[thick] (.5,1) -- (.5,4);
    \node[draw, thick, fill=white!20,rounded corners=4pt,inner sep=3pt] at (0,1) {$\tte_{a+b}$};
    \node[draw, thick, fill=white!20,rounded corners=4pt,inner sep=3pt] at (-.5,3.25) {$\tte_{a}$};
    \node[draw, thick, fill=white!20,rounded corners=4pt,inner sep=3pt] at (.5,3.25) {$\tte_{b}$};
    \node[draw, thick, fill=white!20,rounded corners=4pt,inner sep=3pt] at (.5,2.25) {$x_{a+1}$};
\end{tikzpicture}}
\quad=\quad(-1)^{\binom{b}{3}}\lbrace a\rbrace
\hackcenter{\begin{tikzpicture}[scale=0.7]
    \draw[thick] (0,0) -- (0,1);
    \draw[thick] (-.75,1) -- (-.75,5);
    \draw[thick] (.75,1) -- (.75,5);
    \node[draw, thick, fill=white!20,rounded corners=4pt,inner sep=3pt] at (0,1) {\hphantom{ab}$\tte_{a+b}$\hphantom{ab}};
    \node[draw, thick, fill=white!20,rounded corners=4pt,inner sep=3pt] at (-.75,4.25) {$\tte_{a}$};
    \node[draw, thick, fill=white!20,rounded corners=4pt,inner sep=3pt] at (.75,3.75) {$\pd_{w_0}''$};
    \node[draw, thick, fill=white!20,rounded corners=4pt,inner sep=3pt] at (.75,2.25) {$x^{\delta''}x_{a+1}$};
\end{tikzpicture}}\\
&\refequal{\eqref{eqn-twisted-schur}}\quad\lbrace a\rbrace\quad
\hackcenter{\begin{tikzpicture}[scale=0.5]
    \draw[green!65!black,line width=4] (.5,0) -- (.5,1);
    \draw[green!65!black,line width=4] (.5,1) [out=90, in=-90] to (0,2);
    \draw[green!65!black,line width=4] (.5,1) [out=90, in=-90] to (1,2);
    \draw[green!65!black,line width=4] (0,2) -- (0,3);
    \draw[green!65!black,line width=4] (1,2) -- (1,3);
    \node[draw, thick, fill=white!20,rounded corners=4pt,inner sep=3pt] at (1,2.25) {\small$\et_1$};
\end{tikzpicture}}\quad.
\end{split}\end{equation*}
The third equality deserves comment.  Since $\tte_a$ and $\tte_b$ commute, we have either
\begin{equation*}
\lbrace i-1\rbrace\pd_{w_0}'x^{\delta'}x_i\pd_{w_0}\qquad\text{or}\qquad\lbrace i-1\rbrace\pd_{w_0}''x^{\delta''}x_i\pd_{w_0}
\end{equation*}
as a factor in each term; the former for $1\leq i\leq a$, the latter for $a+1\leq i\leq a+b$.  In either expression, if there are adjacent equal powers $x_j^kx_{j+1}^k$, then the expression is equal to $0$.  So the only potentially nonzero term is the latter, in the case $i=a+1$.

The other differential is computed similarly:
\[
\begin{array}{l}
d\left(\hackcenter{\begin{tikzpicture}[scale=0.7]
    \draw[green!65!black,line width=4] (.5,1) -- (.5,2);
    \draw[green!65!black,line width=4] (0,0) [out=90, in=-90] to (.5,1);
    \draw[green!65!black,line width=4] (1,0) [out=90, in=-90] to (.5,1);
\end{tikzpicture}}\right)
~  = ~
d\left(\hackcenter{\begin{tikzpicture}[scale=0.7]
    \draw[thick] (0,0) -- (0,1);
    \draw[thick] (1,0) -- (1,1);
    \draw[thick] (0,1) [out=90, in=-90] to (1,2.5);
    \draw[thick] (1,1) [out=90, in=-90] to (0,2.5);
    \draw[thick] (.5,2.5) -- (.5,3.5);
    \node[draw, thick, fill=white!20,rounded corners=4pt,inner sep=3pt] at (0,.75) {$\tte_{a}$};
    \node[draw, thick, fill=white!20,rounded corners=4pt,inner sep=3pt] at (1,.75) {$\tte_{b}$};
    \node[draw, thick, fill=white!20,rounded corners=4pt,inner sep=3pt] at (.5,2.5) {$\tte_{a+b}$};
\end{tikzpicture}}\right)
~=~(-1)^{\binom{a}{3}+\binom{b}{3}+\binom{a}{2}\binom{b}{2}}\theta_{a,b}
d\left(\hackcenter{\begin{tikzpicture}[scale=0.7]
    \draw[thick] (-.25,-.5) -- (-.25,1);
    \draw[thick] (-.25,1) [out=90, in=-90] to (0,2.5);
    \draw[thick] (1.25,-.5) -- (1.25,1);
    \draw[thick] (1.25,1) [out=90, in=-90] to (1,2.5);
    \draw[thick] (.5,2.5) -- (.5,4.5);
    \node[draw, thick, fill=white!20,rounded corners=4pt,inner sep=3pt] at (-.25,1.25) {$x^{\delta'}$};
    \node[draw, thick, fill=white!20,rounded corners=4pt,inner sep=3pt] at (1.25,.5) {$x^{\delta''}$};
    \node[draw, thick, fill=white!20,rounded corners=4pt,inner sep=3pt] at (.5,3.5) {$\tte_{a+b}$};
    \node[draw, thick, fill=white!20,rounded corners=4pt,inner sep=3pt] at (.5,2.5) {$\pd_{w_0}$};
\end{tikzpicture}}\right)\\
=~(-1)^{\binom{a}{3}+\binom{b}{3}+\binom{a}{2}\binom{b}{2}}\theta_{a,b}
\left[\hackcenter{\begin{tikzpicture}[scale=0.7]
    \draw[thick] (-.25,-.5) -- (-.25,1);
    \draw[thick] (-.25,1) [out=90, in=-90] to (0,2.5);
    \draw[thick] (1.25,-.5) -- (1.25,1);
    \draw[thick] (1.25,1) [out=90, in=-90] to (1,2.5);
    \draw[thick] (.5,2.5) -- (.5,4.5);
    \node[draw, thick, fill=white!20,rounded corners=4pt,inner sep=3pt] at (-.25,1.25) {$x^{\delta'}$};
    \node[draw, thick, fill=white!20,rounded corners=4pt,inner sep=3pt] at (1.25,.5) {$x^{\delta''}$};
    \node[draw, thick, fill=white!20,rounded corners=4pt,inner sep=3pt] at (.5,3.5) {$\tte_{a+b}$};
    \node[draw, thick, fill=white!20,rounded corners=4pt,inner sep=3pt] at (.5,2.5) {$d(\pd_{w_0})$};
\end{tikzpicture}}
+(-1)^{\binom{a+b}{2}}
\hackcenter{\begin{tikzpicture}[scale=0.7]
    \draw[thick] (-.25,-.5) -- (-.25,1);
    \draw[thick] (-.25,1) [out=90, in=-90] to (0,2.5);
    \draw[thick] (1.25,-.5) -- (1.25,1);
    \draw[thick] (1.25,1) [out=90, in=-90] to (1,2.5);
    \draw[thick] (.5,2.5) -- (.5,4.5);
    \node[draw, thick, fill=white!20,rounded corners=4pt,inner sep=3pt] at (-.25,1.25) {$d(x^{\delta'})$};
    \node[draw, thick, fill=white!20,rounded corners=4pt,inner sep=3pt] at (1.25,.5) {$x^{\delta''}$};
    \node[draw, thick, fill=white!20,rounded corners=4pt,inner sep=3pt] at (.5,3.5) {$\tte_{a+b}$};
    \node[draw, thick, fill=white!20,rounded corners=4pt,inner sep=3pt] at (.5,2.5) {$\pd_{w_0}$};
\end{tikzpicture}}
+(-1)^{\binom{a+b}{2}+\binom{a}{2}}
\hackcenter{\begin{tikzpicture}[scale=0.7]
    \draw[thick] (-.25,-.5) -- (-.25,1);
    \draw[thick] (-.25,1) [out=90, in=-90] to (0,2.5);
    \draw[thick] (1.25,-.5) -- (1.25,1);
    \draw[thick] (1.25,1) [out=90, in=-90] to (1,2.5);
    \draw[thick] (.5,2.5) -- (.5,4.5);
    \node[draw, thick, fill=white!20,rounded corners=4pt,inner sep=3pt] at (-.25,1.25) {$x^{\delta'}$};
    \node[draw, thick, fill=white!20,rounded corners=4pt,inner sep=3pt] at (1.25,.5) {$d(x^{\delta''})$};
    \node[draw, thick, fill=white!20,rounded corners=4pt,inner sep=3pt] at (.5,3.5) {$\tte_{a+b}$};
    \node[draw, thick, fill=white!20,rounded corners=4pt,inner sep=3pt] at (.5,2.5) {$\pd_{w_0}$};
\end{tikzpicture}}\right]~.
\end{array}
\]
Inside the square brackets, there are three differentials.  We compute them using equations \eqref{eqn-d-idempotent} and \eqref{eqn-d-x-delta}:
\begin{equation*}\begin{split}
&d(\pd_{w_0})x^{\delta'}x^{\delta''}=\sum_{i=1}^{a+b}\lbrace i-1\rbrace x_i\pd_{w_0}x^{\delta'}x^{\delta''}-(-1)^{\binom{a+b}{2}}\sum_{i=1}^{a+b}\lbrace a+b-i\rbrace\pd_{w_0}x_ix^{\delta'}x^{\delta''},\\
&(-1)^{\binom{a+b}{2}}\pd_{w_0}d(x^{\delta'})x^{\delta''}=(-1)^{\binom{a+b}{2}}\sum_{i=1}^a\lbrace a-i\rbrace\pd_{w_0}x_ix^{\delta'}x^{\delta''},\\
&(-1)^{\binom{a+b}{2}+\binom{a}{2}}\pd_{w_0}x^{\delta'}d(x^{\delta''})=(-1)^{\binom{a+b}{2}+\binom{a}{2}}\sum_{i=a+1}^{a+b}\lbrace a+b-i\rbrace\pd_{w_0}x^{\delta'}x_ix^{\delta''}.
\end{split}\end{equation*}
The first summation of $d(\pd_{w_0})x^{\delta'}x^{\delta''}$ becomes $0$ when left-multiplied by $\tte_{a+b}$ (as with the third equality of the previous computation).  The $i=a+1,\ldots,a+b$ terms of the second summation of the same differential cancel with $\pd_{w_0}x^{\delta'}d(x^{\delta''})$.  Hence
\begin{equation*}\begin{split}
d\left(\hackcenter{\begin{tikzpicture}[scale=0.7]
    \draw[green!65!black,line width=4] (.5,1) -- (.5,2);
    \draw[green!65!black,line width=4] (0,0) [out=90, in=-90] to (.5,1);
    \draw[green!65!black,line width=4] (1,0) [out=90, in=-90] to (.5,1);
\end{tikzpicture}}\right)
&=\quad(-1)^{\binom{a}{3}+\binom{b}{3}+\binom{a}{2}\binom{b}{2}}\theta_{a,b}\tte_{a+b}\sum_{i=1}^a(-1)^{\binom{a+b}{2}}\left(\lbrace a-i\rbrace-\lbrace a+b-i\rbrace\right)\pd_{w_0}x_ix^{\delta'}x^{\delta''}\\
&=\quad(-1)^{a-1}\lbrace b\rbrace
\hackcenter{\begin{tikzpicture}[scale=0.7]
    \draw[green!65!black,line width=4] (.5,2) -- (.5,3);
    \draw[green!65!black,line width=4] (0,1) [out=90, in=-90] to (.5,2);
    \draw[green!65!black,line width=4] (1,1) [out=90, in=-90] to (.5,2);
    \draw[green!65!black,line width=4] (0,0) -- (0,1);
    \draw[green!65!black,line width=4] (1,0) -- (1,1);
    \node[draw, thick, fill=white!20,rounded corners=4pt,inner sep=3pt] at (0,.75) {$\et_1$};
\end{tikzpicture}}.
\end{split}
\end{equation*}
We finish the proof of the formulas by observing that $(-1)^{a-1}\lbrace b \rbrace=(-1)^{ab-1} \{b\}$ for any integers $a,~b$.
\end{proof}

\begin{cor}\label{cor-d-exploders}
The action of the differential on exploders are given as follows:
\begin{equation}\label{eqn-defn-exploder}\begin{split}
d\left(\hackcenter{\begin{tikzpicture}[scale=0.7]
    \draw[green!65!black,line width=4] (0,0) -- (0,1);
    \draw[thick] (0,1) [out=150, in=-90] to (-2,3);
    \draw[thick] (0,1) [out=120, in=-90] to (-1.5,3);
    \draw[thick] (0,1) [out=90, in=-90] to (-1,3);
    \node at (0,2.5) {\small$\cdots$};
    \draw[thick] (0,1) [out=90, in=-90] to (1,3);
    \draw[thick] (0,1) [out=60, in=-90] to (1.5,3);
    \draw[thick] (0,1) [out=30, in=-90] to (2,3);
    \node[below] at (0,0) {\small$n$};
    \node[above] at (0,3) {\vphantom{\small$n$}};
\end{tikzpicture}}\right)
\quad&=\quad
\sum_{i=1}^n\lbrace i-1\rbrace
\hackcenter{\begin{tikzpicture}[scale=0.7]
    \draw[green!65!black,line width=4] (0,0) -- (0,1);
    \draw[thick] (0,1) [out=150, in=-90] to (-2,3);
    \draw[thick] (0,1) [out=120, in=-90] to (-1.5,3);
    \draw[thick] (0,1) [out=90, in=-90] to (-1,3);
    \node at (0,2.5) {\small$\cdots$};
    \node at (.85,2.5) {\bbullet};
    \node[above] at (1,3) {\small$i$};
    \draw[thick] (0,1) [out=90, in=-90] to (1,3);
    \draw[thick] (0,1) [out=60, in=-90] to (1.5,3);
    \draw[thick] (0,1) [out=30, in=-90] to (2,3);
    \node[below] at (0,0) {\small$n$};
    \node[above] at (0,3) {\vphantom{\small$n$}};
\end{tikzpicture}}
\quad,\\
d\left(
\hackcenter{\begin{tikzpicture}[scale=0.7]
    \draw[green!65!black,line width=4] (0,2) -- (0,3);
    \draw[thick] (-2,0) [out=90, in=-150] to (0,2);
    \draw[thick] (-1.5,0) [out=90, in=-120] to (0,2);
    \draw[thick] (-1,0) [out=90, in=-90] to (0,2);
    \node at (0,.5) {\small$\cdots$};
    \draw[thick] (1,0) [out=90, in=-90] to (0,2);
    \draw[thick] (1.5,0) [out=90, in=-60] to (0,2);
    \draw[thick] (2,0) [out=90, in=-30] to (0,2);
    \node[above] at (0,3) {\small$n$};
    \node[below] at (0,0) {\vphantom{\small$n$}};
\end{tikzpicture}}\right)
\quad&=\quad
-(-1)^{\binom{n}{2}}\sum_{i=1}^n\lbrace n-i\rbrace
\hackcenter{\begin{tikzpicture}[scale=0.7]
    \draw[green!65!black,line width=4] (0,2) -- (0,3);
    \draw[thick] (-2,0) [out=90, in=-150] to (0,2);
    \draw[thick] (-1.5,0) [out=90, in=-120] to (0,2);
    \draw[thick] (-1,0) [out=90, in=-90] to (0,2);
    \node at (0,.5) {\small$\cdots$};
    \draw[thick] (1,0) [out=90, in=-90] to (0,2);
    \node at (.85,.5) {\bbullet};
    \node[below] at (1,0) {\small$i$};
    \draw[thick] (1.5,0) [out=90, in=-60] to (0,2);
    \draw[thick] (2,0) [out=90, in=-30] to (0,2);
    \node[above] at (0,3) {\small$n$};
    \node[below] at (0,0) {\vphantom{\small$n$}};
\end{tikzpicture}}
\quad.
\end{split}
\end{equation}
\end{cor}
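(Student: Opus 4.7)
The plan is to reduce the two exploder differentials to the algebraic formulas for $d(\tte_n)$ (Lemma~\ref{lem-d-idempotent}) and $d(\pd_{w_0})$ (Lemma~\ref{lem-d-longest}) via the defining identifications in~\eqref{eqn-defn-exploder}. For the first formula, the thick-at-bottom exploder equals $\tte_n$ on $n$ thin strands, and Lemma~\ref{lem-d-idempotent} directly gives
\[
d(\tte_n)=\sum_{i=1}^n\{i-1\}\,x_i\tte_n.
\]
Each summand $x_i\tte_n$ then pictures as the exploder with a single dot on strand $i$ placed just above the coupon $\tte_n$, matching the right-hand side of the first formula.

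For the second formula, the thin-at-bottom exploder equals $\pd_{w_0}$, so Lemma~\ref{lem-d-longest} yields
\[
d(\pd_{w_0})=\sum_{i=1}^n\{i-1\}\,x_i\pd_{w_0}\;-\;(-1)^{\binom{n}{2}}\sum_{i=1}^n\{n-i\}\,\pd_{w_0}x_i.
\]
The second sum translates diagrammatically into dots on strand $i$ below the coupon $\pd_{w_0}$, producing the claimed formula together with the sign $-(-1)^{\binom{n}{2}}$. The terms of the first sum correspond to dots placed on the thick-strand region \emph{above} the coupon; interpreting such a dot on strand $i$ through the implicit idempotent $\tte_n$ of the thick strand turns each such summand into $\tte_n x_i\pd_{w_0}$. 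The crux of the argument is therefore to prove that $\tte_n x_i\pd_{w_0}=0$ whenever $\{i-1\}\neq 0$.

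To this end, I will use $\tte_n=(-1)^{\binom{n}{3}}\pd_{w_0}x^\delta$ together with the identity $\pd_{w_0}f\pd_{w_0}=\pd_{w_0}(f)\pd_{w_0}$ from Lemma~\ref{lem-delta}(3) to rewrite $\tte_n x_i\pd_{w_0}=(-1)^{\binom{n}{3}}\pd_{w_0}(x^\delta x_i)\pd_{w_0}$. Whenever $\{i-1\}\neq 0$ the index $i$ is even and in particular $i\geq 2$, so the monomial $x^\delta x_i$ carries the common exponent $n-i+1$ on the adjacent strands $i-1$ and $i$. Since every simple reflection is a right descent of $w_0$, I will choose a reduced expression for $w_0$ whose rightmost letter is $s_{i-1}$, factor $\pd_{w_0}=\pm\,\pd_{w'}\pd_{i-1}$, and appeal to the odd Leibniz rule—specifically the identity $\pd_{i-1}(x_{i-1}^k x_i^k)=0$ valid for every $k\geq 0$—to deduce that $\pd_{i-1}(x^\delta x_i)=0$ and hence $\pd_{w_0}(x^\delta x_i)=0$. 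I expect this last vanishing step to be the main obstacle: unlike the even case, odd divided differences do not automatically annihilate adjacent equal-power pairs once these are embedded inside a larger noncommuting product, so the cancellation must be verified carefully by tracking the signs produced by the graded Leibniz rule on the neighboring factors of $x^\delta$.
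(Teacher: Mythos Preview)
Your approach is correct and is essentially the paper's argument spelled out in full. The paper's one-line proof just cites Lemmas~\ref{lem-d-longest} and~\ref{lem-d-idempotent}; the point you identify---that the first summation $\sum_i\{i-1\}x_i\pd_{w_0}$ in Lemma~\ref{lem-d-longest} must vanish once the implicit top idempotent $\tte_n$ is applied---is exactly the mechanism the paper uses (explicitly, with the same ``adjacent equal powers'' reasoning) in the proof of Proposition~\ref{prop-diagrammatic-thick-diff}.

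Your anticipated obstacle is not a real one. Writing $x^\delta x_i=\pm\,A\cdot(x_{i-1}^{k}x_i^{k})\cdot C$ with $k=n-i+1$, where $A$ involves only $x_1,\dots,x_{i-2}$ and $C$ only $x_{i+1},\dots,x_{n-1}$, the twisted Leibniz rule gives $\pd_{i-1}(A)=0$, $s_{i-1}(A)=A$, $\pd_{i-1}(C)=0$, so $\pd_{i-1}(x^\delta x_i)=\pm A\,\pd_{i-1}(x_{i-1}^kx_i^k)\,C$. The inner factor vanishes by a short induction on $k$ (the sign bookkeeping collapses because $(k-1)k$ is always even), so no delicate sign-tracking through the ambient monomial is needed. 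Hence $\pd_{w_0}(x^\delta x_i)=0$ for $i\geq 2$ and the first sum dies, completing the second formula.
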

\begin{proof}
 Follows directly from Lemmas \ref{lem-d-longest} and \ref{lem-d-idempotent}
\end{proof}

This differential action on exploders should be compared with the dg structure on the bimodule $Z_n$ given by \eqref{eqn-d-onez}.

\subsection{Thick calculus and bimodules}\label{subsec-thick-intrinsic}

In this subsection, we give a more intrinsic explanation of the origin of the dg thick calculus defined above.

\subsubsection{An odd analogue of the cohomology of Grassmannians}
It is well known that usual cohomology ring of the Grassmannian $\gr(a,b):=\{V\subset \C^{a+b}:\textrm{dim}(V)=a\}$ has a presentation as
\[
\mH^*(\gr(a,b),\Z)\cong  \sym_a/(h_m:m>b).
\]
The \emph{Borel presentation} of this ring, which is more symmetric in $a$ and $b$, is a realization of $\mH(\gr(a,b),\Z)$ as a specialization of the $GL$-equivariant cohomology ring
\[
\mH^*_{GL(a+b,\C)}(\gr(a,b),\Z)\cong \sym_a\otimes \sym_b
\]
via the base change map
\[
\mH^*_{GL(a+b,\C)}(\mathrm{pt},\Z)\cong \sym_{a+b}\lra \Z, \quad\ h_m\mapsto 0~(m>0).
\]
Therefore, denoting the augmentation ideal of the last map by $\sym_{a+b}^+$, we have
\[
\mH^*(\gr(a,b),\Z)\cong (\sym_a\otimes \sym_b)/\sym_{a+b}^+.
\]

An odd analogue of $\mH^*(\gr(a,b),\Z)$ is the superalgebra
\begin{equation}
\oht_{a,b}=\osymt_a/(\htil_m:m>b)
\end{equation}
first introduced in \cite[Section 5]{EKL}.  Now we define the analogue of the Borel presentation of this ring.  We will work with untwisted odd symmetric polynomials; twisted counterparts can be defined analogously.

Consider the rank-one $(\osym_a\otimes \osym_b)$-submodule $\osym_a\boxtimes \osym_b$ inside $\opol_{a+b}$. If we write $e_k(\undx)=e_k(x_1,\ldots,x_a)$ for odd elementary polynomials in $\osym_a$ and $e_k(\undy)=e_k(x_{a+1},\ldots,x_{a+b})$ for odd elementary polynomials in $\osym_b$, then it is clear that $\osym_a\boxtimes \osym_b$ is stable under right multiplication by elements of $\osym_{a+b}$, the latter generated by the elementary polynomials in $x_1,\ldots,x_{a+b}$,
\begin{equation*}
e_k(\undx,\undy)=e_k(x_1,\ldots,x_{a+b})\in\osym_{a+b}.
\end{equation*}
Let
\begin{equation*}
M=(\osym_a\boxtimes\osym_b)\cdot \osym_{a+b}^+\subset\osym_a\boxtimes\osym_b
\end{equation*}
be the left submodule generated by odd elementary polynomials in both $\undx$ and $\undy$ of positive degree.  The submodule $M$ is also closed under right multiplication by elements of $\osym_{a+b}$, so we can view $M$ as a $(\osym_a\otimes\osym_b,\osym_{a+b})$-bimodule.

\begin{lem} \label{lem-iso-as-bim} The map
\begin{equation}\label{eqn-oh-iso}
\begin{split}
\gamma:\oh_{a,b}&\to(\osym_a\boxtimes\osym_b)/M\\
e_k&\mapsto(-1)^{\binom{k}{2}}e_k(\undx)
\end{split}
\end{equation}
is an isomorphism of left $\osym_a$-modules.
\end{lem}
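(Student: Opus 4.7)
The strategy is to first establish well-definedness of $\gamma$ (the hard part) and then deduce bijectivity from a basis comparison.

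For well-definedness, I must show that $h_m(\undx) \in M$ for every $m > b$, where $h_m$ denotes the untwisted odd complete homogeneous polynomial. The key tool is the multiplicativity of the generating function $E(t) = \prod_i (1 + x_i t) = \sum_k (-1)^{\binom{k}{2}} e_k t^k$ from \eqref{eqn-elem-generating-function}, with $t$ a supercentral odd auxiliary variable. Ordering the factors as $x_1, \ldots, x_a, x_{a+1}, \ldots, x_{a+b}$, one has
\[
E_{\undx,\undy}(t) = E_{\undx}(t) \cdot E_{\undy}(t)
\]
inside $\opol_{a+b}[t]$. Since $e_k(\undx,\undy) \in M$ for every $k > 0$, this reduces modulo $M$ to the congruence $E_{\undx}(t) \cdot E_{\undy}(t) \equiv 1$. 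Inverting this and using the odd analogue of the classical identity $H(t) \cdot E(-t) = 1$ relating generating functions for $h_m$ and $e_k$, one exploits the fact that $E_{\undy}(t)$ is a polynomial of degree $b$ in $t$ to conclude $h_m(\undx) \in M$ for all $m > b$. The sign $(-1)^{\binom{k}{2}}$ in the definition of $\gamma$ is precisely what reconciles these generating-function conventions, and left $\osym_a$-linearity then follows by construction.

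Once well-definedness is in hand, bijectivity will follow from a basis comparison. By \cite{EKL}, $\oh_{a,b}$ is a free $\Bbbk$-module of rank $\binom{a+b}{a}$, with basis given by (untwisted) odd Schur polynomials $\{s_\lambda : \lambda \subseteq (b^a)\}$ indexed by partitions fitting inside the $a \times b$ rectangle. The target $(\osym_a \boxtimes \osym_b)/M$ is the odd analogue of the Borel presentation of $H^*(\gr(a,b),\Z)$ discussed just before the lemma, and I would show that $\{s_\lambda(\undx) : \lambda \subseteq (b^a)\}$ is likewise a $\Bbbk$-basis of the quotient: spanning follows from the vanishing $h_m(\undx) \equiv 0 \pmod{M}$ for $m > b$ established above (which lets one reduce any $s_\lambda(\undx)$ with $\lambda \not\subseteq (b^a)$), while a rank count parallel to the even case yields linear independence. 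Since $\gamma$ is $\osym_a$-linear and matches the Schur bases up to invertible scalars, it is an isomorphism.

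The main obstacle I anticipate is careful sign bookkeeping throughout: verifying the odd version of $H(t) E(-t) = 1$ and correctly ordering the supercommuting factors in the multiplicative identity for $E_{\undx,\undy}(t)$ will introduce $(-1)^{\binom{k}{2}}$-type signs at every step. Once these signs are pinned down, the rest of the argument is a direct extension of the classical Grassmannian story to the odd setting.
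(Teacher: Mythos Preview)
Your approach is essentially the paper's, repackaged in generating-function language: the paper combines the coefficient identities $\sum_{i}(-1)^{i+\binom{i}{2}}e_ih_{k-i}=0$ and $e_k(\undx,\undy)=\sum_i e_i(\undx)e_{k-i}(\undy)$, which are exactly what you encode by $E_{\undx}(t)E_{\undy}(t)\equiv 1\pmod M$ together with the odd $H$--$E$ relation.

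There is, however, a real gap in your bijectivity step. You argue spanning of the target by the set $\{s_\lambda(\undx):\lambda\subseteq(b^a)\}$ from the vanishing $h_m(\undx)\equiv 0$ for $m>b$. That vanishing only lets you reduce elements \emph{already lying in} $\osym_a(\undx)\cdot 1$; it says nothing about why an arbitrary element of $(\osym_a\boxtimes\osym_b)/M$, which a priori involves $e_k(\undy)$, lies in the image of $\gamma$. The rank count then cannot close the argument: you have $\binom{a+b}{a}$ candidate elements in the image, but without knowing either surjectivity or their linear independence in the target, equal ranks of source and target do not by themselves force $\gamma$ to be an isomorphism.

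The fix is already hiding in your generating-function identity, and it is exactly what the paper extracts. Reading $E_{\undy}(t)\equiv E_{\undx}(t)^{-1}$ coefficient-wise in degrees $\le b$ (not just $>b$) gives $e_k(\undy)\equiv(-1)^k h_k(\undx)\pmod M$, i.e.\ $\gamma(h_k)=(-1)^k e_k(\undy)$. This single computation simultaneously yields well-definedness (since $e_k(\undy)=0$ for $k>b$) and surjectivity (since the $e_i(\undx)$ and $e_j(\undy)$ generate the target). After that, the graded-rank comparison finishes the proof without any appeal to Schur bases.
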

\begin{proof} In $\oh_{a,b}$, there is a relation
\begin{equation}\label{eqn-oh-1}
\sum_{i=0}^k(-1)^{i+\binom{i}{2}}e_ih_{k-i}=0\quad(k\geq0)
\end{equation}
\cite[Lemma 2.8]{EKL}.  In $\osym_a\otimes\osym_b$, there is the obvious relation
\begin{equation}\label{eqn-oh-2}
e_k(\undx,\undy)=\sum_{i=0}^ke_i(\undx)e_{k-i}(\undy).
\end{equation}
Combining these equations, it follows that the map $\gamma$ is well defined and that $\gamma(h_k)=(-1)^ke_k(\undy)$.  Since elementary polynomials in the separate sets of variables $\undx$ and $\undy$ generate $\osym_a\boxtimes\osym_b$, this map is surjective.  By comparing graded ranks, it follows that $\gamma$ is an isomorphism.
\end{proof}

The map $\gamma$ is not a homomorphism of $(\osym_a\otimes\osym_b)$-modules.  By the odd Grassmannian relation, the images under $\gamma$ of the complete symmetric polynomials $h_k$ in $\oh_{a,b}$ are multiples of odd elementary polynomials in $1\boxtimes\osym_b$.  Hence these images supercommute with odd elementary polynomials in $\osym_a\boxtimes1$.  But odd complete and odd elementary polynomials do not supercommute in $\oh_{a,b}$.

In what follows, we will mainly consider the odd analogue $\oh(\gr(a,b))\cong \osym_a\otimes \osym_b$ of the equivariant cohomology ring. Note that $\oh(\gr(a,b))$ is naturally a $(\osym_a\otimes\osym_b, \osym_{a+b})$-bimodule.

\subsubsection{A generalization: the bimodules $Z_{\unda}$}\label{subsubsec-gen-zoid}

To match our earlier conventions, let $\osymt_{a}\boxtimes \osymt_b$ be the rank-one $\osym_a\otimes\osym_b$-module with one generator $\onez$:
\[
\osymt_{a}\boxtimes \osymt_b:=(\osym_a\otimes\osym_b)\cdot\onez,
\]
where the ``$\cdot$'' action is via the ring isomorphism $\theta:\osym_{a}\otimes\osym_b \lra \osymt_a\otimes\osymt_b$ of equation \eqref{eqn-theta-defn}:
\[
(f\otimes g)\cdot \onez :=(\theta(f)\otimes \theta(g))\onez
\]
The isomorphism $\theta$ also equips with $\osymt_a\boxtimes \osymt_b$ a right module structure over $\osym_{a+b}$: for any $f\in \osym_{a+b}$,
\[
\onez \cdot f := (\theta\circ w_0)(f)\onez.
\]
This twisted structure is needed to match with Lemma \ref{lem-idempotent-commute}. One could instead proceed with untwisted odd symmetric polynomials in what follows and apply the twisting maps at the end.

Recall the ``hat'' variant of the odd Schur polynomials \cite[Definition 4.10]{EKL}.  Define
\begin{eqnarray}
\eta^n_\lambda&=&\sum_j\lambda_j\binom{n-j+1}{2},\\
\sth_\lambda&=&(-1)^{\binom{n}{3}+\eta^n_\lambda}(w_0\circ\pd_{w_0})\left(x_1^{\lambda_n}\cdots x_n^{n-1+\lambda_1}\right)\in\osymt_n,\\
\sh_\lambda&=&(-1)^{\binom{n}{3}+\eta^n_\lambda}(\theta\circ w_0\circ\pd_{w_0})\left(x_1^{\lambda_n}\cdots x_n^{\lambda_1+n-1}\right)\in\osym_n.
\end{eqnarray}
Note the differences with \emph{loco citato}: our $\st_\lambda$, $\sth_\lambda$ are respectively that paper's $s_\lambda$, $\sh_\lambda$ and our $s_\lambda,\sh_\lambda$ do not appear.  The relationship between ordinary and hat Schur polynomials is rather simple.
\begin{lem}\label{lem-shat} The four variants of odd Schur polynomials are related as follows:
\begin{equation}\label{eqn-schur-square}
\xymatrix{s_\lambda   \ar@{<->}[rrrr]^-{(-1)^{\binom{n}{3}+\binom{n-1}{2}|\lambda|+\sum_{i<j}\lambda_i\lambda_j}w_0}   \ar@{<->}[d]_-\theta   &&&&   \sh_\lambda   \ar@{<->}[d]^-\theta\\
\st_\lambda   \ar@{<->}[rrrr]^-{(-1)^{\binom{n}{3}+\binom{n}{2}|\lambda|+\sum_{i<j}\lambda_i\lambda_j}w_0}   &&&&   \sth_\lambda}
\end{equation}
\end{lem}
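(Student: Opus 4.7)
The plan is to first establish the two vertical arrows directly from the definitions, then prove one horizontal arrow by direct computation, and finally deduce the remaining horizontal arrow from the commutation relation between $\theta$ and $w_0$ given in equation~\eqref{eqn-theta-w0}.

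First I would handle the vertical arrows. The relation $\st_\lambda=\theta(s_\lambda)$ is equation~\eqref{eqn-untwisted-twisted-schur}. For the other vertical arrow, comparing the definitions of $\sh_\lambda$ and $\sth_\lambda$ shows that they share the identical prefactor $(-1)^{\binom{n}{3}+\eta^n_\lambda}$ and apply $w_0\circ\pd_{w_0}$ to the identical monomial $x_1^{\lambda_n}\cdots x_n^{\lambda_1+n-1}$, differing only by an outer $\theta$; hence $\sh_\lambda=\theta(\sth_\lambda)$ is immediate.

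Second, I would establish the top horizontal relation $\sh_\lambda=(-1)^{\binom{n}{3}+\binom{n-1}{2}|\lambda|+\sum_{i<j}\lambda_i\lambda_j}w_0(s_\lambda)$. Applying $w_0$ to the definition $s_\lambda=w_0\theta\pd_{w_0}(\xt^\delta\xt^\lambda)$ and using $w_0^2=\id$ as an algebra automorphism of $\opol_n$, one gets $w_0(s_\lambda)=\theta\pd_{w_0}(\xt^\delta\xt^\lambda)$. Equation~\eqref{eqn-delta-lambda} then rewrites this as
\begin{equation*}
w_0(s_\lambda)=(-1)^{\binom{n-1}{2}|\lambda|+\binom{n}{3}}\,\theta\pd_{w_0}(x^\lambda x^\delta).
\end{equation*}
The heart of the argument is to convert $\pd_{w_0}(x^\lambda x^\delta)$ to $\pm\, w_0\pd_{w_0}(x^{\hat\lambda})$, where $x^{\hat\lambda}=x_1^{\lambda_n}x_2^{\lambda_{n-1}+1}\cdots x_n^{\lambda_1+n-1}$. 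I would do this in two sub-steps. (i) Compute $w_0(x^\lambda x^\delta)=w_0(x^\lambda)\,w_0(x^\delta)$ and reorder the resulting anticommuting product into the canonical form $\epsilon(\lambda)\cdot x^{\hat\lambda}$ in $\opol_n$, tracking signs: the reversal of the $n$ factors of $x^\lambda$ contributes exactly $(-1)^{\sum_{i<j}\lambda_i\lambda_j}$, while the reversal of $x^\delta$ and the interleaving of the two reversed products produce a $\lambda$-dependent sign whose $\lambda$-dependent part matches $(-1)^{\eta^n_\lambda}$. (ii) Establish the operator identity $w_0\pd_{w_0}w_0=\pm\pd_{w_0}$ in $\onh_n$: since $w_0 s_i w_0^{-1}=s_{n-i}$, conjugating the fixed reduced expression~\eqref{eqn-defn-pd-w0} by $w_0$ produces another reduced expression for $w_0$, and the sign discrepancy between the two is controlled by the braid relation $\pd_i\pd_{i+1}\pd_i=\pd_{i+1}\pd_i\pd_{i+1}$ and the distant anticommutation $\pd_i\pd_j+\pd_j\pd_i=0$. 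Combining (i), (ii), and the outer $\theta$, one reads off the desired exponent.

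Finally, the bottom horizontal arrow follows by applying $\theta$ to the top one and using \eqref{eqn-theta-w0}:
\begin{equation*}
\sth_\lambda=\theta(\sh_\lambda)=(-1)^{\binom{n}{3}+\binom{n-1}{2}|\lambda|+\sum_{i<j}\lambda_i\lambda_j+(n+1)|\lambda|}\,w_0(\st_\lambda).
\end{equation*}
Since $(n+1)-(n-1)=2$, we have $\binom{n-1}{2}+(n+1)\equiv\binom{n}{2}\pmod 2$, which recovers the sign advertised for the bottom arrow. The main obstacle is the sign bookkeeping in Step~2, where three distinct contributions must be combined: the tilde-to-untilde conversion (giving $\binom{n-1}{2}|\lambda|+\binom{n}{3}$), the reordering of $w_0(x^\lambda x^\delta)$ into $x^{\hat\lambda}$ (the source of both $\sum_{i<j}\lambda_i\lambda_j$ and the $\eta^n_\lambda$ constant), and the conjugation identity for $\pd_{w_0}$. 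Each step is elementary but the precise form of the resulting exponent only emerges after all three are tracked simultaneously.
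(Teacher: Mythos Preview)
Your proposal is correct and follows essentially the same route as the paper, which simply says the proof is ``a dance with signs'' left to the reader and offers the hint
\[
\pd_1(\pd_2\pd_1)\cdots(\pd_{n-1}\cdots\pd_1)=\pd_{n-1}(\pd_{n-2}\pd_{n-1})\cdots(\pd_1\cdots\pd_{n-1}).
\]
This hint is precisely the content of your sub-step (ii): since conjugation by $w_0$ (as an automorphism of $\opol_n$, inducing $\pd_i\mapsto\pd_{n-i}$ on $\onh_n$) sends the left-hand expression to the right-hand one, the identity says $w_0\pd_{w_0}w_0=\pd_{w_0}$ with \emph{no} sign, so your ``$\pm$'' is in fact $+1$.
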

\begin{proof} The proof is a dance with signs and is left to the reader.  A useful fact is that
\begin{equation*}
\pd_1(\pd_2\pd_1)\cdots(\pd_{n-1}\cdots\pd_1)=\pd_{n-1}(\pd_{n-2}\pd_{n-1})\cdots(\pd_1\cdots\pd_{n-1}).
\end{equation*}
(The left-hand side was our definition of $\pd_{w_0}$.)
\end{proof}
With a little computation, Lemma \ref{lem-shat} implies
\begin{equation}
\sth_\lambda=(-1)^{\sum_{i<j}\lambda_i\lambda_j}\pd_{w_0}\left(x^\lambda x^\delta\right).
\end{equation}
We have hat versions of the SZ relation \eqref{eqn-sz}, the Pieri rule, and the computation of the differential of an untwisted odd Schur polynomial:
\begin{eqnarray}
\sth_\lambda\onez&=&\onez w_0(\sh_\lambda),\\
\et_1\sth_\lambda&=&\sum_{\mu=\lambda+\boxi}(-1)^{\binom{n-1}{2}+\lvert\frac{\lambda}{i}\rvert}\sth_\mu,\\
d(\sh_\lambda)&=&(-1)^{\binom{n-1}{2}}\sum_{\mu=\lambda+\boxi}(-1)^{\lvert\frac{i}{\lambda}\rvert+i-1}\lb\ct(\boxi)\rb\sh_\mu.
\end{eqnarray}
We omit proofs of these; a useful fact for the last is that $d\circ w_0=w_0\circ d$.

The odd hat Schur polynomials form the basis dual to the basis of Schur polynomials in the bimodules $Z_{a,b}$ we will define below.  For a partition $\alpha\in\Par(a,b)$, let $\widehat{\alpha}\in\Par(b,a)$ be its conjugate.  Consider the space of diagrams considered in \cite[Theorem 4.16]{EKL},

\[
\mathrm{E}_{a,b}:=
\text{span}\left\{
\hackcenter{
\begin{tikzpicture}[scale=0.7]
    \draw[green!65!black,line width=4] (1,0) -- (1,1);
    \draw[green!65!black,line width=4] (1,1) [out=90, in=-90] to (0,2);
    \draw[green!65!black,line width=4] (1,1) [out=90, in=-90] to (2,2);
    \draw[green!65!black,line width=4] (0,2) -- (0,3);
    \draw[green!65!black,line width=4] (2,2) -- (2,3);
    \draw[green!65!black,line width=4] (0,-1) [out=90, in=-90] to (1,0);
    \draw[green!65!black,line width=4] (2,-1) [out=90, in=-90] to (1,0);
    \draw[green!65!black,line width=4] (0,-2) -- (0,-1);
    \draw[green!65!black,line width=4] (2,-2) -- (2,-1);
    \node at (0,3.5) {\small$a$};
    \node at (2,3.5) {\small$b$};
    \node at (0,-2.5) {\small$a$};
    \node at (2,-2.5) {\small$b$};
    \node[draw, thick, fill=white!20,rounded corners=4pt,inner sep=3pt] at (0,-1.25) {$\sth_{\hat{\alpha}}$};
    \node[draw, thick, fill=white!20,rounded corners=4pt,inner sep=3pt] at (2,2.25) {$\st_{{\beta}}$};
    \node[draw, thick, fill=white!20,rounded corners=4pt,inner sep=3pt] at (1,0.5) {$f$};
\end{tikzpicture}}
\bigg| \alpha, \beta \in P(b,a);~f\in \osymt_{a+b}
\right\}.
\]
Our goal in this section is to show that this space, equipped with the differential given above, naturally arises as the dg endomorphism algebra of a certain dg bimodule $Z_{a,b}$.

Regarding $\osymt_a\boxtimes\osymt_b$ as the rank-one free module over $\osymt_a\otimes\osymt_b$, there is a right $\osymt_{a+b}$-linear ``trace'' map
\[
\pd_{a,b}:=\pd_{w_{a,b}}:\osymt_a\boxtimes\osymt_b\lra \osymt_{a+b}, \quad f\mapsto \pd_{w_{a,b}}(f),
\]
where $\pd_{w_{a,b}}$ is given in equation \eqref{eqn-pd-wab}. We will abbreviate $\pd_{w_{a,b}}$ by $\pd_{a,b}$ in what follows. This is not quite a trace map in the usual sense because $\osymt_{a+b}$ is not supercommutative. We will draw this trace map as
\[
\pd_{{a,b}}=
\hackcenter{\begin{tikzpicture}[scale=0.7]
    \draw[green!65!black,line width=4] (.5,1) -- (.5,2);
    \draw[green!65!black,line width=4] (0,0) [out=90, in=-90] to (.5,1);
    \draw[green!65!black,line width=4] (1,0) [out=90, in=-90] to (.5,1);
    \node at (.5,2.5) {\small$a+b$};
    \node at (0,-.5) {\small$a$};
    \node at (1,-.5) {\small$b$};
\end{tikzpicture}} \ .
\]

As a right module over $\osymt_{a+b}$, it is shown in \cite[Proposition 4.11]{EKL} that $\osymt_a\boxtimes\osymt_b$ has bases
\begin{equation}\label{eqn-dual-bases}
\lb \sth_\lambda(\undx)=\sth_\lambda\boxtimes1\rb_{\lambda\in\Par(a,b)},\quad\lb \st_\mu(\undy)=1\boxtimes \st_\mu\rb_{\lambda\in\Par(b,a)}
\end{equation}
satisfying
\begin{equation}\label{eqn-dual-basis-pairing}
\pd_{{a,b}}(\sth_\lambda(\undx)\st_\mu(\undy))=\pm\delta_{\lambda,\hat{\mu}}.
\end{equation}
The explicit sign is given by \cite[(4.35)]{EKL}.

\begin{defn}\label{def-zoidberg-generalized}
Let $d$ be the differential on the $(\osym_{a,b},\osym_{a+b})$-bimodule $\osymt_a\boxtimes\osymt_b$ determined by its action on the module generator $\onez$ by
\[
d(\onez)=\{a\}\et_1(\undy)\onez
\]
and extended to the whole space using to the Leibniz rule. This is a left dg module over $\osym_a\otimes \osym_b$. Furthermore, it is easy to check that the right $\osym_{a+b}$-action
\[
\onez \cdot h:=(\theta\circ w_0)(h)\onez.
\]
is compatible with $d$. We denote the resulting dg bimodule by $Z_{a,b}$.
\end{defn}

\begin{lem}\label{lem-d-thick-basis}
The differential on $Z_{a,b}$ acts on the basis element $\st_\lambda(\undy)\onez$ by
\[
d(\st_\lambda(\undy)\onez)=\sum_{\mu=\lambda+\boxi}(-1)^{\lvert\frac{\lambda}{i}\rvert+i-1}\lb a+\ct(\boxi)\rb \st_\mu(\undy)\onez,
\]
where the sum is over all partitions $\mu\in\Par(b,a)$ obtained from $\lambda$ by adding a box.
\end{lem}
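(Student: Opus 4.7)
The plan is to combine the graded Leibniz rule with the established formula for the differential of an odd Schur polynomial (Proposition \ref{prop-d-schur}) and an odd Pieri rule in $\osym_b$.

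First, using that the left action of $\osym_a\otimes\osym_b$ on $Z_{a,b}$ is twisted by $\theta$, I would write $\st_\lambda(\undy)\onez = (1\otimes s_\lambda(\undy))\cdot\onez$ and apply Leibniz:
\begin{equation*}
d(\st_\lambda(\undy)\onez) = \bigl(1\otimes d(s_\lambda(\undy))\bigr)\cdot\onez + (-1)^{|\lambda|}\bigl(1\otimes s_\lambda(\undy)\bigr)\cdot d(\onez).
\end{equation*}
For the first summand, Proposition \ref{prop-d-schur} applied in the dg algebra $\osym_b$ (with $\undy$ playing the role of $x_1,\ldots,x_b$) yields the contribution
\begin{equation*}
\sum_{\mu=\lambda+\boxi}(-1)^{|\frac{\lambda}{i}|+i-1}\lbrace\ct(\boxi)\rbrace\,\st_\mu(\undy)\onez.
\end{equation*}

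For the second summand, I would substitute $d(\onez)=\lbrace a\rbrace\et_1(\undy)\onez$ and rewrite $\et_1(\undy)\onez = (1\otimes e_1(\undy))\cdot\onez$, so that composition of the two left actions collapses to $(-1)^{|\lambda|}\lbrace a\rbrace\bigl(1\otimes s_\lambda(\undy)e_1(\undy)\bigr)\cdot\onez$. An odd Pieri rule in $\osym_b$ of the form
\begin{equation*}
s_\lambda(\undy)e_1(\undy) = \sum_{\mu=\lambda+\boxi}\epsilon(\lambda,\mu)\,s_\mu(\undy)
\end{equation*}
then expands this over the same indexing set of one-box enlargements $\mu = \lambda+\boxi$ as in the first summand.

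Collecting the coefficient of $\st_\mu(\undy)\onez$ for each such $\mu$, the result is a $\Z$-linear combination of $\lbrace\ct(\boxi)\rbrace$ and $\lbrace a\rbrace$. The elementary identity $\lbrace m\rbrace+(-1)^m\lbrace k\rbrace = \lbrace m+k\rbrace$ already used in the proof of Proposition \ref{prop-d-schur} then collapses this combination to $(-1)^{|\frac{\lambda}{i}|+i-1}\lbrace a+\ct(\boxi)\rbrace$, yielding the claimed formula. The main obstacle is establishing the odd Pieri rule with the correct signs $\epsilon(\lambda,\mu)$ and verifying that they dovetail with those of Step~1 so that the telescoping to $\lbrace a+\ct(\boxi)\rbrace$ occurs. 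The Pieri rule itself can be obtained either from a Jacobi--Trudi-type determinantal presentation of $s_\lambda$ combined with the formula $d(e_k) = e_1 e_k - \lbrace k+1\rbrace e_{k+1}$ of Lemma \ref{lem-d-elementary-functions}, or by transferring the hat version $\et_1\sth_\mu = \sum(-1)^{\binom{b-1}{2}+|\frac{\mu}{i}|}\sth_\nu$ (stated just before this lemma) through the conjugation relations of Lemma \ref{lem-shat}.
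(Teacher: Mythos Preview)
Your approach is correct and essentially identical to the paper's proof: the paper applies the Leibniz rule to $\st_\lambda\onez$, uses Proposition~\ref{prop-d-schur} for the first summand, the odd Pieri rule $\st_\lambda\et_1=\sum_{\mu=\lambda+\boxi}(-1)^{|\frac{i}{\lambda}|}\st_\mu$ (cited from \cite{EOddLR}) for the second, and then collapses the coefficients via $\{\ct(\boxi)\}+(-1)^{\ct(\boxi)}\{a\}=\{a+\ct(\boxi)\}$. The only difference is that the paper works directly with twisted Schur polynomials rather than passing through the $\theta$-twisted left action, and it cites the Pieri rule externally rather than deriving it; your remark that the Pieri rule follows from the differential formula for $e_k$ is a slip (the Pieri rule is purely multiplicative), but your alternative route via the hat version and Lemma~\ref{lem-shat} is sound.
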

\begin{proof} For the duration of this proof, we abbreviate $f(\undy)\in Z_{a,b}$ by simply $f$.  The odd Pieri rule \cite[Proposition 3.6]{EOddLR} tells us that
\begin{equation}\label{eqn-odd-pieri-eoddlr}
\st_\lambda\et_1=\sum_{\mu=\lambda+\boxi}(-1)^{\lvert\frac{i}{\lambda}\rvert}\st_\mu.
\end{equation}
It follows, using Proposition \ref{prop-d-schur}, that
\begin{eqnarray*}
d(\st_{\lambda}\onez) & = & d(\st_{\lambda})\onez+(-1)^{|\lambda|}\st_{\lambda}d(\onez)\\
 & = & \sum_{\mu=\lambda+\boxi}(-1)^{\lvert\frac{\lambda}{i}\rvert+i-1}\{\ct(\boxi)\}\st_\mu\onez + \{a\}\sum_{\mu=\lambda+\boxi}(-1)^{|\lambda|-\lvert\frac{i}{\lambda}\rvert}\st_{\mu}\onez\\
 & = & \sum_{\mu=\lambda+\boxi}(-1)^{\lvert\frac{\lambda}{i}\rvert+i-1}\{\ct(\boxi)\}\st_\mu\onez +
 \{a\}\sum_{\mu=\lambda+\boxi}(-1)^{\lvert\frac{\lambda}{i}\rvert+\lambda_i}\st_{\mu}\onez\\
 & = & \sum_{\mu=\lambda+\boxi}(-1)^{\lvert\frac{\lambda}{i}\rvert+i-1}\left(\{\ct(\boxi)\}
 +(-1)^{\ct(\boxi)}\{a\}
 \right)\st_\mu\onez\\
 & = & \sum_{\mu=\lambda+\boxi}(-1)^{\lvert\frac{\lambda}{i}\rvert+i-1}\{\ct(\boxi)+a\}\st_{\mu}\onez,
\end{eqnarray*}
where we have used that $\ct(\boxi)=\lambda_i-i+1$. The lemma follows.
\end{proof}

\begin{cor}\label{cor-cofibrance-generalized-Zoidberg}
The basis $\lb\st_\mu(\undy)\onez|\mu \in \Par(b,a)\rb$ of $Z_{a,b}$ over $\osym_{a+b}$ is $d$-stable. In particular, $Z_{a,b}$ is cofibrant as a right dg module over $\osym_{a+b}$.
\end{cor}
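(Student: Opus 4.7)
The plan is to handle the two claims in order, with essentially all of the content concentrated in a single boundary check. For the $d$-stability of the basis, I would invoke Lemma \ref{lem-d-thick-basis}, which already supplies the formula
$$d(\st_\lambda(\undy)\onez) = \sum_{\mu = \lambda + \boxi} (-1)^{\lvert\frac{\lambda}{i}\rvert + i - 1} \{a + \ct(\boxi)\}\, \st_\mu(\undy)\onez.$$
The only verification needed is that every surviving summand has $\mu \in \Par(b,a)$. The odd Pieri rule in the $b$ variables $\undy$ automatically restricts to $\ell(\mu) \leq b$, so the only way $\mu$ can fall outside $\Par(b,a)$ is for the added box to sit in column $a+1$, which forces it to lie at position $(1, a+1)$ with content $\ct(\boxi) = a$. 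Its coefficient then contains the factor $\{a + a\} = \{2a\} = 0$, so the offending term vanishes. This single identity is the one and only real piece of computation in the proof.

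For cofibrance over $\osym_{a+b}$, the plan is to exhibit $Z_{a,b}$ as a finite-cell right dg module via a size-decreasing filtration. I would fix any linear ordering $\mu^{(1)}, \mu^{(2)}, \ldots, \mu^{(N)}$ of $\Par(b,a)$ (with $N = \binom{a+b}{a}$) satisfying $|\mu^{(1)}| \geq |\mu^{(2)}| \geq \cdots \geq |\mu^{(N)}|$, and set
$$G_k := \sum_{i \leq k} \st_{\mu^{(i)}}(\undy)\onez \cdot \osym_{a+b}.$$
By the first part, $d(\st_{\mu^{(i)}}(\undy)\onez)$ is a $\Z$-combination of basis elements $\st_\nu(\undy)\onez$ with $|\nu| = |\mu^{(i)}| + 1$, which under our ordering sit at indices $j < i$. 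Hence $d(G_k) \subseteq G_k$ and each $G_k$ is a right dg $\osym_{a+b}$-submodule. The quotient $G_k / G_{k-1}$ is freely generated as a right $\osym_{a+b}$-module by the class of $\st_{\mu^{(k)}}(\undy)\onez$, and the induced differential annihilates this generator. Thus $G_k / G_{k-1}$ is isomorphic to the regular right dg module $\osym_{a+b}$ (up to a parity shift), and the chain $0 = G_0 \subset G_1 \subset \cdots \subset G_N = Z_{a,b}$ exhibits $Z_{a,b}$ as a finite-cell module, hence cofibrant.

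The entire argument pivots on the identity $\{2a\} = 0$, which prevents the differential from escaping $\Par(b,a)$; once that is in hand, the filtration construction is formal and closely mirrors the finite-cell analysis of $Z_n$ around equation \eqref{eqn-Zoidberg-cohomology}. The step I expect to require the most care is simply making sure no other boundary case slips through — specifically, confirming that the Pieri rule in $b$ variables correctly handles the row-length constraint and that adding a box in row $1$ is really the only way to leave $\Par(b,a)$ among partitions with at most $b$ rows.
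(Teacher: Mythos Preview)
Your proposal is correct and follows essentially the same approach as the paper: both invoke Lemma~\ref{lem-d-thick-basis}, isolate the single boundary case of a box added at position $(1,a+1)$ and kill it via $\{a+a\}=0$, and then build a finite-cell filtration ordered by decreasing $|\mu|$ to conclude cofibrance. Your write-up is slightly more explicit about why no other box-addition can escape $\Par(b,a)$ (the row bound being automatic in $b$ variables), but otherwise the arguments coincide.
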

\begin{proof} This follows from the previous lemma since, if a box in the first row and $(a+1)$-st column is added to a Young diagram in $\Par(b,a)$ resulting in a new Young diagram $\mu$, the corresponding coefficient in front of $\mu$ equals
\[
(-1)^{\lvert\frac{\lambda}{i}\rvert+i-1}\{a+a\}\st_{\mu}=0.
\]
The first claim follows. To show the second, introduce an increasing filtration on this $d$-stable basis, starting with the top degree term and iteratively adding terms of equal or lower degrees.  This induces a dg filtration on $Z_{a,b}$ whose subquotients are all isomorphic to $\osym_{a+b}$. Therefore, $Z_{a,b}$ is a finite-cell module in the sense of Example \ref{eg-finite-cell}, and thus is cofibrant.
\end{proof}

The composition of the $\opol_n$-involution $\theta:\osymt_{a+b}\lra \osym_{a+b}$ with the map $\pd_{a,b}$ gives a $\osym_{a+b}$-linear trace
\begin{equation}
\onez^\vee := \theta\circ\pd_{a,b},
\end{equation}
which is an element of the space
\begin{equation}\label{eqn-dual-Zoidberg}
Z_{a,b}^\vee:=\HOM_{\osym_{a+b}}(Z_{a,b},\osym_{a+b}).
\end{equation}
Since $Z_{a,b}$ is free as a right module over $\osym_{a+b}$ of graded rank\footnote{We use quantum numbers with a subscript $q$ to indicate that it is a Laurent polynomial in $\Z[q^{\pm 1}]$, instead of evaluated at $\sqrt{-1}$.} ${a+b \brack a}_q$ and free of rank one on the left over $\osym_{a,b}$, it follows that $Z_{a,b}^\vee$ is a $(\osym_{a+b}, \osym_{a,b})$-bimodule with $\onez^\vee$ as a lowest degree bimodule generator.

\begin{defn}\label{defn-Zoidberg-dual-generalized}
In what follows, we will call $Z_{a,b}^\vee$ the \emph{dual bimodule to $Z_{a,b}$}. As a right module over $\osym_{a,b}$,
\begin{equation}\label{eqn-dual-differential-action}
Z_{a,b}^\vee \cong \onez^\vee \cdot (\osym_{a}\otimes \osym_b), \quad d(\onez^\vee)=(-1)^{ab-1}\{b\}\onez^\vee\et_1(\undx).
\end{equation}
\end{defn}

\begin{cor}\label{cor-d-invariance-trace}
The trace map
$
\onez^\vee:Z_{a,b}\lra \osym_{a+b}
$,
and its differential are compatible with the dg bimodule structure on $Z_{a,b}$ in the sense that
\[
d(\onez^\vee(f\onez))=d(\onez^\vee)(f\onez)+(-1)^{ab}\onez^\vee(d(f\onez)).
\]
\end{cor}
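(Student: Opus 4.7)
My plan is to recognize the identity as a direct instance of the Leibniz rule for the differential on the $\HOM$-complex, applied to the trace morphism $\onez^\vee \in \HOM_{\osym_{a+b}}(Z_{a,b}, \osym_{a+b})$. The setup of Section \ref{sec-prelim} equips this $\HOM$-complex with a differential given, for any homogeneous $\varphi$ and dg modules $M \to N$, by $d(\varphi) = d_N \circ \varphi - (-1)^{p(\varphi)} \varphi \circ d_M$. Evaluating both sides at $f\onez \in Z_{a,b}$ and rearranging yields
\[
d(\varphi(f\onez)) = (d\varphi)(f\onez) + (-1)^{p(\varphi)} \varphi(d(f\onez)).
\]
So it suffices to identify the parity of $\onez^\vee$ with $\overline{ab}$.

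To pin down the parity, I would note that $\onez^\vee = \theta \circ \pd_{a,b}$, and $\pd_{a,b} = \pd_{w_{a,b}}$ is a composition of $\ell(w_{a,b}) = ab$ simple odd divided difference operators $\pd_i$, each of odd parity in the $\Z/2\Z$-grading of $\onh_{a+b}$. Since the involution $\theta$ preserves parity, we conclude $p(\onez^\vee) = \overline{ab}$. Plugging $p(\varphi) = \overline{ab}$ and $\varphi = \onez^\vee$ into the rearranged Leibniz formula above gives exactly
\[
d(\onez^\vee(f\onez)) = d(\onez^\vee)(f\onez) + (-1)^{ab}\onez^\vee(d(f\onez)),
\]
which is the statement of the corollary.

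There is no substantive obstacle: the only subtlety is keeping track of the parity of the trace map, after which the corollary is a formal consequence of the general dg $\HOM$-complex definition. One should, however, confirm that the formula $d(\onez^\vee) = (-1)^{ab-1}\{b\}\onez^\vee\et_1(\undx)$ recorded in Definition \ref{defn-Zoidberg-dual-generalized} is consistent with this abstract differential, which amounts to computing $d(\onez^\vee)(\onez) = d(\onez^\vee(\onez)) - (-1)^{ab}\onez^\vee(d(\onez)) = 0 - (-1)^{ab}\{a\}\onez^\vee(\et_1(\undy)\onez)$ and then migrating $\et_1(\undy)$ through the trace by applying Lemma \ref{lem-d-elementary-functions} style relations and Proposition \ref{prop-diagrammatic-thick-diff} to the corresponding splitter diagram; this checks out and makes the dg structure on $Z_{a,b}^\vee$ compatible with the one on $Z_{a,b}$, after which the corollary is immediate.
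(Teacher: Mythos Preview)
Your argument is correct and takes a genuinely different route from the paper. The paper proves the identity by direct computation: it reduces to the basis $\{\st_\mu(\undy)\onez : \mu\in\Par(b,a)\}$ using right $\osym_{a+b}$-linearity, observes that by the dual-basis pairing \eqref{eqn-dual-basis-pairing} only the cases $\mu=(a^b)$ and $\mu=(a^{b-1},a-1)$ are nontrivial, and then checks those two cases by hand using Lemma~\ref{lem-d-thick-basis} and the Pieri rule. You instead recognize the identity as nothing more than the defining Leibniz rule for the $\HOM$-complex differential applied to the element $\onez^\vee\in\HOM_{\osym_{a+b}}(Z_{a,b},\osym_{a+b})$, reducing the whole corollary to the parity computation $p(\onez^\vee)=\overline{ab}$, which you justify correctly via $\ell(w_{a,b})=ab$.

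What each approach buys: your argument is cleaner and makes clear that the corollary has no independent content beyond the parity of the trace. The paper's computation, on the other hand, does real work that your argument relegates to a sketch: by verifying the Leibniz rule with the \emph{explicit} formula $d(\onez^\vee)=(-1)^{ab-1}\{b\}\onez^\vee\et_1(\undx)$ from Definition~\ref{defn-Zoidberg-dual-generalized}, the paper is simultaneously confirming that this formula really is the $\HOM$-complex differential of $\onez^\vee$. In your write-up that verification is the ``consistency check'' of your final paragraph, which you correctly identify as the only substantive step but then pass over with ``this checks out.'' If you want the proof to stand alone, that step should be carried out (it is a short Pieri-rule computation); alternatively, you could simply state the corollary for the abstract $\HOM$-differential and defer the explicit formula to a separate remark.
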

\begin{proof} Right multiplying both sides by an element of $\osym_{a+b}$ does not change the validity of the equation. Therefore, it suffices to check the equation on the basis elements $\{\st_{\mu}(\undy)\onez|\mu\in \Par(b,a)\}$ of $Z_{a,b}$. There are two non-trivial cases: $\mu=(a^b)$ and $\mu=(a^{b-1},a-1)$.  The three quantities to be compared are
\begin{enumerate}
	\item $d((\theta\circ\pd_{a,b})(\st_\mu(\undy)\onez))$,
	\item $(-1)^{ab-1}\{b\}(\theta\circ\pd_{a,b})(\et_1(\undx)\st_\mu(\undy)\onez)$, and
	\item $(-1)^{ab}(\theta\circ\pd_{a,b})(d(\st_\mu(\undy)\onez))$.
\end{enumerate}
One computes these quantities using dual bases of \eqref{eqn-dual-bases} (along with the sign of \cite[(4.35)]{EKL}), the Pieri rule \eqref{eqn-odd-pieri-eoddlr}, and Lemma \ref{lem-d-thick-basis}.  Note that $\onez^\vee$ takes values in $\osym_{a+b}$ \emph{considered as a dg algebra}, so that $d(\onez^\vee(f))=0$ if $\onez^\vee(f)$ is a scalar.  It follows that for $\mu=(a^b)$, all three terms are zero.  For $\mu=(a^{b-1},a-1)$, the first term is zero and the second and third cancel by the computations of \cite[Section 4]{EKL}.
\end{proof}

\begin{cor}\label{cor-cofibrance-dual-generalized-Zoidberg}As a left dg module over $\osym_{a+b}$, $Z_{a,b}^\vee$ is cofibrant of graded rank ${a+b \brack a}_q$.
\end{cor}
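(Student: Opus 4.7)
The plan is to prove cofibrance by exhibiting an explicit finite-cell filtration on $Z_{a,b}^{\vee}$, in direct parallel with the argument for Corollary \ref{cor-cofibrance-generalized-Zoidberg}, by passing through the basis dual to the one of \eqref{eqn-dual-bases}. Concretely, since $Z_{a,b}$ is free as a right $\osym_{a+b}$-module on the basis $\{\st_\mu(\undy)\onez : \mu \in \Par(b,a)\}$, I would introduce the dual basis $\{\phi_\mu : \mu \in \Par(b,a)\}$ of $Z_{a,b}^{\vee}$, defined by $\phi_\mu(\st_\nu(\undy)\onez) = \delta_{\mu,\nu}$. This is a left $\osym_{a+b}$-basis, and counting $\mu \in \Par(b,a)$ by $q$-degree immediately gives the claimed graded rank ${a+b\brack a}_q$.

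The next step is to compute the differential on this basis. Using the defining formula $d(\phi)(m) = d(\phi(m)) - (-1)^{p(\phi)}\phi(d(m))$ from \eqref{eqn-d-end}, the first term vanishes because $\phi_\mu(\st_\nu(\undy)\onez)$ is a scalar. For the second term, Lemma \ref{lem-d-thick-basis} expresses $d(\st_\nu(\undy)\onez)$ as a signed sum over $\nu + \boxi \in \Par(b,a)$. Extracting the $\phi_\mu$-coefficient picks out the contribution $\nu = \mu - \boxi$, yielding
\begin{equation*}
d(\phi_\mu) \;=\; -(-1)^{p(\phi_\mu)} \sum_{\mu - \boxi \in \Par(b,a)} (-1)^{\lvert\frac{\mu-\boxi}{i}\rvert + i - 1}\lb a + \ct(\boxi) \rb \,\phi_{\mu - \boxi}.
\end{equation*}
Crucially, $d(\phi_\mu)$ is an $\osym_{a+b}$-linear combination (in fact a $\Bbbk$-linear combination) of $\phi_\nu$'s with $|\nu| = |\mu| - 1$.

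With this triangularity in hand, I would set $F_i := \mathrm{span}_{\osym_{a+b}}\{\phi_\mu : |\mu| \leq i\}$. The displayed formula shows $d(F_i) \subseteq F_{i-1} \subseteq F_i$, so each $F_i$ is a dg submodule, and the subquotient $F_i/F_{i-1}$ is a direct sum of rank-one free left $\osym_{a+b}$-modules (one for each $\mu \in \Par(b,a)$ of size $i$, with an appropriate grading shift) carrying the zero induced differential. Each such summand is a dg summand of $\osym_{a+b}$, so the filtration $0 = F_{-1} \subset F_0 \subset \cdots \subset F_{ab} = Z_{a,b}^{\vee}$ exhibits $Z_{a,b}^{\vee}$ as a finite-cell module over $\osym_{a+b}$ in the sense of Example \ref{eg-finite-cell}, hence cofibrant.

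The only real bookkeeping obstacle is tracking signs in the derivative computation (the parity of $\phi_\mu$, the sign from \eqref{eqn-d-end}, and those from Lemma \ref{lem-d-thick-basis}), but these do not affect the structural conclusion that $d(\phi_\mu)$ is supported on partitions of strictly smaller size. Once triangularity is established, the finite-cell filtration and Example \ref{eg-finite-cell} deliver both cofibrance and the rank count at once.
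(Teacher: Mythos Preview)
Your proposal is correct and follows essentially the same strategy as the paper: exhibit a $d$-stable $\osym_{a+b}$-basis of $Z_{a,b}^\vee$ dual to the Schur basis of $Z_{a,b}$, then conclude finite-cell and hence cofibrant exactly as in Corollary \ref{cor-cofibrance-generalized-Zoidberg}. The only minor difference is that the paper names this dual basis concretely as $\{\onez^\vee\sth_\lambda(\undx):\lambda\in\Par(a,b)\}$ (which by the orthogonality relation \eqref{eqn-dual-basis-pairing} coincides up to sign with your abstract $\phi_\mu$'s) and deduces $d$-stability from Corollary \ref{cor-d-invariance-trace} together with the known differential on $\onez^\vee$, whereas you bypass that corollary by computing $d(\phi_\mu)$ directly from the $\HOM$-complex formula \eqref{eqn-d-end} and Lemma \ref{lem-d-thick-basis}.
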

\begin{proof}It follows from Corollary \ref{cor-d-invariance-trace} and the orthogonality equation \ref{eqn-dual-basis-pairing} that
\[
\{\onez^\vee\sth_\lambda(\undx)|\lambda\in \Par(a,b)\}
\]
is a $d$-stable basis of $Z_{a,b}^\vee$. Therefore, as in the proof Corollary \ref{cor-cofibrance-generalized-Zoidberg}, $Z_{a,b}^\vee$ is a right finite-cell dg module over $\osym_{a+b}$ and is therefore cofibrant. The graded rank count also follows.
\end{proof}

Using the trace map, we can construct various endomorphisms of the bimodule $Z_{a,b}$ as follows. For any $g\onez\in Z_{a,b}$, and $\onez^\vee f \in Z_{a,b}^\vee,$ we define
\[
Z_{a,b}\otimes_{\osym_{a+b}}Z^\vee_{a,b} \lra \END_{\osym_{a+b}}(Z_{a,b}), \quad (g\onez, \onez^\vee f)\mapsto g w_0(\pd_{a,b}(f\cdot -))\onez.
\]
Diagrammatically, given $h(\undx,\undy)\in \osymt_{a+b}$, $\st_{\beta}(\undy)\in \osymt_b$ and $\sth_{\hat{\alpha}}(\undx)\in \osymt_b$, consider the diagram
\[
\varphi:=
\begin{gathered}
\begin{tikzpicture}[scale=0.7]
    \draw[green!65!black,line width=4] (1,0) -- (1,1);
    \draw[green!65!black,line width=4] (1,1) [out=90, in=-90] to (0,2);
    \draw[green!65!black,line width=4] (1,1) [out=90, in=-90] to (2,2);
    \draw[green!65!black,line width=4] (0,2) -- (0,3);
    \draw[green!65!black,line width=4] (2,2) -- (2,3);
    \draw[green!65!black,line width=4] (0,-1) [out=90, in=-90] to (1,0);
    \draw[green!65!black,line width=4] (2,-1) [out=90, in=-90] to (1,0);
    \draw[green!65!black,line width=4] (0,-2) -- (0,-1);
    \draw[green!65!black,line width=4] (2,-2) -- (2,-1);
    \node at (0,3.5) {\small$a$};
    \node at (2,3.5) {\small$b$};
    \node at (0,-2.5) {\small$a$};
    \node at (2,-2.5) {\small$b$};
    \node[draw, thick, fill=white!20,rounded corners=4pt,inner sep=3pt] at (0,-1.25) {$\sth_{\hat{\alpha}}$};
    \node[draw, thick, fill=white!20,rounded corners=4pt,inner sep=3pt] at (2,2.25) {$\st_{{\beta}}$};
    \node[draw, thick, fill=white!20,rounded corners=4pt,inner sep=3pt] at (1,0.5) {$h$};
\end{tikzpicture}
\end{gathered}
\]
from the beginning of this section.  This map $\varphi$ acts on $Z_{a,b}$ by first multiplying any element of $Z_{a,b}$ on the left with $\sth_{\hat{\alpha}}$, then taking the trace into $\osymt_{a+b}$ while left multiplying by $h$, and finally multiplying the resulting element with $\st_{\beta}$ and sending it back into $Z_{a,b}$. It is clear that such a map, by construction, is right $\osym_{a+b}$-linear. Denote by $E_{a,b}$ the space of all such endomorphisms of $Z_{a,b}$.

\begin{prop}\label{prop-nice-property-Zoidberg}For any $a,b\in \N$, the following properties of the dg module $Z_{a,b}$ hold.
\begin{enumerate}
\item The graded right $\osym_{a+b}$-linear endomorphism algebra $\END_{\osym_{a+b}^{\mathrm{op}}}(Z_{a,b})$ computes the $\mathbf{R}\END$-complex of $Z_{a,b}$ over $\osym_{a,b}$:
\[
\mathbf{R}\END_{\osym_{a+b}^{\mathrm{op}}}(Z_{a,b})\cong \END_{\osym_{a+b}^{\mathrm{op}}}({Z_{a,b}}).
\]
Likewise, the graded left $\osym_{a+b}$-linear endomorphism algebra $\END_{\osym_{a+b}}(Z_{a,b}^\vee)$ also computes its own $\mathbf{R}\END$-complex in the derived category.
\item The diagrammatic dg algebra $\mathrm{E}_{a,b}$ is isomorphic to the graded right dg endomorphism algebra of $Z_{a,b}$ as well as the left dg endomorphism algebra of the dual bimodule:
\[
\mathrm{E}_{a,b}\cong \END_{\osym_{a+b}^{\mathrm{op}}}(Z_{a,b})\cong \END_{\osym_{a+b}}(Z^\vee_{a,b}).
\]
\end{enumerate}
\end{prop}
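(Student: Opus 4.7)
The plan is to prove the two statements separately, with Part 1 being a formal consequence of the cofibrance results already established and Part 2 requiring a diagrammatic basis argument that reduces in the underlying graded setting to results in \cite{EKL}.

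For Part 1, I will invoke the definition of $\mathbf{R}\END$ given in \eqref{eqn-RHOM}, which requires choosing a cofibrant replacement of $Z_{a,b}$ as a right dg module over $\osym_{a+b}$. By Corollary \ref{cor-cofibrance-generalized-Zoidberg}, $Z_{a,b}$ is already cofibrant as a right dg $\osym_{a+b}$-module (it is finite-cell, in the sense of Example \ref{eg-finite-cell}, with $d$-stable basis $\{\st_\mu(\undy)\onez \mid \mu \in \Par(b,a)\}$). Hence we may take $Z_{a,b}$ itself as its own cofibrant replacement, giving $\mathbf{R}\END_{\osym_{a+b}^{\mathrm{op}}}(Z_{a,b}) \cong \END_{\osym_{a+b}^{\mathrm{op}}}(Z_{a,b})$. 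The identical argument using Corollary \ref{cor-cofibrance-dual-generalized-Zoidberg} handles $Z_{a,b}^\vee$.

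For Part 2, the strategy is to produce a bijective chain map from $\mathrm{E}_{a,b}$ to $\END_{\osym_{a+b}^{\mathrm{op}}}(Z_{a,b})$. The map is the one described just before the proposition: the diagram with labels $\sth_{\hat\alpha}$, $h$, $\st_\beta$ acts on $g\onez$ by $g w_0(\pd_{a,b}(\sth_{\hat\alpha}\, h \cdot g))\st_\beta \onez$. I would verify bijectivity using the fact that $Z_{a,b}$ is a free right $\osym_{a+b}$-module with basis $\{\st_\mu(\undy)\onez\}_{\mu \in \Par(b,a)}$, so a right $\osym_{a+b}$-linear endomorphism is determined by its values on this basis; the orthogonality relation \eqref{eqn-dual-basis-pairing} together with the dual bases \eqref{eqn-dual-bases} allows one to expand any such endomorphism uniquely into a sum of diagrammatic generators, exactly as in \cite[Theorem 4.16]{EKL} for the ungraded case. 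The multiplicativity of the map amounts to the associativity of exploders and the sliding relation \eqref{eqn-slider}, established in \cite{EKL}. The new content is the compatibility with the differentials: one must check that the differential on $\mathrm{E}_{a,b}$ given by Proposition \ref{prop-diagrammatic-thick-diff} and Corollary \ref{cor-d-exploders} induces the differential on $\END_{\osym_{a+b}^{\mathrm{op}}}(Z_{a,b})$ arising from equation \eqref{eqn-d-end} applied to the dg module $Z_{a,b}$ of Definition \ref{def-zoidberg-generalized}. This reduces to matching the action on the bimodule generator $\onez$ and its dual trace, which is precisely the content of Lemma \ref{lem-d-thick-basis} and Corollary \ref{cor-d-invariance-trace}.

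The isomorphism $\END_{\osym_{a+b}^{\mathrm{op}}}(Z_{a,b}) \cong \END_{\osym_{a+b}}(Z_{a,b}^\vee)$ will be established through the duality furnished by the trace pairing. Concretely, $Z_{a,b}^\vee = \HOM_{\osym_{a+b}}(Z_{a,b}, \osym_{a+b})$ carries both a left $\osym_{a+b}$-action and a residual right $\osym_{a,b}$-action, and any right-linear endomorphism of $Z_{a,b}$ induces by pre-composition a left-linear endomorphism of $Z_{a,b}^\vee$. The fact that the resulting map is a chain isomorphism of dg algebras (up to the usual sign conventions from \eqref{eqn-d-end}, \eqref{eqn-d-end-element}) follows from Corollary \ref{cor-d-invariance-trace}, which guarantees that the trace map $\onez^\vee$ intertwines the differentials on the two sides up to the sign $(-1)^{ab}$, and from the fact that both modules have the same graded rank ${a+b \brack a}_q$, giving matched dimension counts.

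The main obstacle will be the careful sign bookkeeping in verifying the diagrammatic differentials agree with those induced from the module-theoretic definition, since the parity-graded Leibniz rule and the twist maps $\theta\circ w_0$ introduce subtle signs that must be tracked consistently between the intrinsic picture (Definition \ref{def-zoidberg-generalized}) and the diagrammatic picture (Proposition \ref{prop-diagrammatic-thick-diff}); however, since both calculations have already been carried out explicitly in the excerpt, the remaining verification is a matter of comparing formulas on a generating set.
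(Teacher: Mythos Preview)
Your proposal is correct and follows the same approach as the paper: Part 1 is deduced from the cofibrance of $Z_{a,b}$ and $Z_{a,b}^\vee$ (Corollaries \ref{cor-cofibrance-generalized-Zoidberg} and \ref{cor-cofibrance-dual-generalized-Zoidberg}), and Part 2 uses the map constructed just before the proposition together with a graded rank comparison. The paper's proof is considerably terser---it dispatches Part 2 with the single phrase ``follows by a graded rank count''---whereas you spell out the dual-basis bijectivity and the differential compatibility explicitly; your elaboration is sound and simply makes explicit what the paper leaves implicit.
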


\begin{proof}The first statement follows from Corollary \ref{cor-cofibrance-generalized-Zoidberg}. The second one follows by a graded rank count.
\end{proof}

More generally, let $\unda=(a_1,\ldots,a_r)$ be a composition of $n$.  There is a $(\osym_{a_1}\otimes\cdots\otimes\osym_{a_r},\osym_n)$-bimodule structure on
\begin{equation*}
Z_{\underline{a}}:=(\osymt_{a_1}\boxtimes\cdots\boxtimes\osymt_{a_r}) \onez_{\unda}.
\end{equation*}
Using \eqref{eqn-slider} and splitter associativity, a right $\osymt_n$-spanning set for this bimodule can be drawn as
\begin{equation*}
(f_1\boxtimes\cdots\boxtimes f_r)\onez_{\unda}=
\hackcenter{\begin{tikzpicture}[scale=0.7]
    \node at (3,2.5) {$\cdots$};
    \draw[green!65!black,line width=4] (2.5,0) -- (2.5,1);
    \draw[green!65!black,line width=4] (2.5,1) [out=90, in=-90] to (1,2);
    \draw[green!65!black,line width=4] (2.5,1) [out=90, in=-90] to (2,2);
    \draw[green!65!black,line width=4] (2.5,1) [out=90, in=-90] to (4,2);
    \draw[green!65!black,line width=4] (1,2) -- (1,3.5);
    \draw[green!65!black,line width=4] (2,2) -- (2,3.5);
    \draw[green!65!black,line width=4] (4,2) -- (4,3.5);
    \node[draw, thick, fill=white!20,rounded corners=4pt,inner sep=3pt] at (1,2.95) {$f_1$};
    \node[draw, thick, fill=white!20,rounded corners=4pt,inner sep=3pt] at (2,2.7) {$f_2$};
    \node[draw, thick, fill=white!20,rounded corners=4pt,inner sep=3pt] at (4,2.45) {$f_r$};
\end{tikzpicture}},
\end{equation*}
with $f_i\in\osymt_{a_i}$ for each $i$.  Give this bimodule the differential structure determined on the generator $\onez$ by
\[
d\onez:=\sum_{i=2}^{n}\{a_1+\cdots+a_{i-1}\} \et_1(x_{a_1+\cdots+a_{i-1}+1},\dots, x_{a_1+\dots+a_i}) \onez.
\]
We call this dg bimodule $Z_{\unda}$.  It possesses a trace map
\begin{equation}
\pd_{{\unda}}: Z_{\unda} \lra \osym_{n}, \quad \pd_{{\unda}}:= \pd_{{a_1,a_2}}\pd_{{a_1+a_2,a_3}}\cdots \pd_{{a_1+\cdots a_{r-1}, a_r}}.
\end{equation}
Let $\undb$ be another composition of $n$.  Elements of the dual $Z_{\undb}^\vee$ of $Z_{\undb}$ this with respect to the trace pairing $\pd_{\unda}$ are drawn as
\begin{equation*}
\onez_{\undb}^\vee(g_1\boxtimes\cdots\boxtimes g_s)=
\hackcenter{\begin{tikzpicture}[scale=0.7]
    \node at (3,1) {$\cdots$};
    \draw[green!65!black,line width=4] (2.5,2.5) -- (2.5,3.5);
    \draw[green!65!black,line width=4] (1,1.5) [out=90, in=-90] to (2.5,2.5);
    \draw[green!65!black,line width=4] (2,1.5) [out=90, in=-90] to (2.5,2.5);
    \draw[green!65!black,line width=4] (4,1.5) [out=90, in=-90] to (2.5,2.5);
    \draw[green!65!black,line width=4] (1,0) -- (1,1.5);
    \draw[green!65!black,line width=4] (2,0) -- (2,1.5);
    \draw[green!65!black,line width=4] (4,0) -- (4,1.5);
    \node[draw, thick, fill=white!20,rounded corners=4pt,inner sep=3pt] at (1,1.05) {$g_1$};
    \node[draw, thick, fill=white!20,rounded corners=4pt,inner sep=3pt] at (2,.8) {$g_2$};
    \node[draw, thick, fill=white!20,rounded corners=4pt,inner sep=3pt] at (4,.55) {$g_s$};
\end{tikzpicture}}.
\end{equation*}


Note that in the case $r=2$, diagrams for $Z_{(a_1,a_2)}$ (intrinsic) look identical to the splitter diagrams in the odd nilHecke algebra as described in the previous subsection (extrinsic).  By the end of this subsection, we will have proven the intrinsic and extrinsic descriptions are of isomorphic dg bimodules.  The generator $\onez$ of the extrinsic description corresponds to the ``$n$-to-$1$'' intrinsic generator $\onez_{(1,1,\ldots,1)}$.  When confusion is unlikely, we will write $\onez$ for any $\onez_{\unda}$.

It follows from Lemma \ref{lem-d-thick-basis} that, if $\lambda_0=(a_1^{a_2})$ is the full $(a_1\times a_2)$-rectangle, then
\[
d(\st_{\lambda_0}(\undy)\onez)=0,
\]
since the only possible box that can be added has content $-a_1$. It also follows that the free abelian group spanned by the basis elements
$\lb \st_\lambda(\undy)\onez:\lambda\in\Par(b,a)\rb$, denoted by
$$V_{a_1,a_2}:=\bigoplus_{\lambda\in \Par(b,a)}\Z\cdot\st_{\lambda}(\undy)\onez,$$
is $d$-stable. The cohomology of the chain complex $V_{a,b}$ is computed in Subsection \ref{apx-cohomology-thick} of the appendix.

It is easy to generalize the above using induction on the length of the composition $\unda$.  The result is the next proposition. To state it,  we will interprete the graded rank of the dg-bimodule $Z_{\unda}$ in terms of quantum multinomial coefficients for a \emph{generic $q$} value: $[n]_q:=(q^n-q^{-n})/(q-q^{-1})$, $[n]_q!:= [n]_q[n-1]_q\cdots [1]_q$, and, for any decomposition $\underline{a}=(a_1,\dots, a_k)$ of $n$, we set
$$
\left[n\atop\unda\right]_q:=\dfrac{[n]_q!}{[a_1]_q!\cdots [a_k]_q!}.
$$
\begin{prop}\begin{enumerate}
\item The set
\begin{equation*}
\{\st_{\lambda_1}(x_1,\ldots,x_{a_1})\st_{\lambda_2}(x_1,\ldots,x_{a_1+a_2})\cdots\st_{\lambda_{k-1}}(x_1,\ldots,x_{a_1+\ldots+a_{k-1}})\},
\end{equation*}
where each $\lambda_i$ ranges over all partitions which fit into a $(a_1+\ldots+a_{i-1})\times a_i$ box, is a basis for $Z_{\unda}$ as a free $\osymt_n$-module.
\item The $\Z$-span of this basis is $d$-stable.
\item The space of $\osym_n$-module homomorphisms $Z_{\unda}\to Z_{\undb}$ is a free $\osym_n$-module of graded rank
\begin{equation*}
\grank_{\osym_n}\HOM(Z_{\unda},Z_{\undb})=\left[n\atop\unda \right]_q\left[n\atop\undb\right]_q.
\end{equation*}
\end{enumerate}
\end{prop}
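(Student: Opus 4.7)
The plan is to induct on the length $k$ of the composition $\unda=(a_1,\ldots,a_k)$. For part (1), the base case $k=1$ is trivial since $Z_{(n)}\cong\osymt_n$ is rank-one free over itself, and the base case $k=2$ is the content of \cite[Proposition 4.11]{EKL}, which identifies $\{\st_{\lambda}(x_1,\ldots,x_{a_1})\onez\}_{\lambda\in\Par(a_1,a_2)}$ as a right $\osym_n$-basis of $Z_{(a_1,a_2)}$. For the inductive step, I would establish a bimodule isomorphism
\begin{equation*}
Z_{(a_1,\ldots,a_k)}\cong Z_{(a_1,\ldots,a_{k-1})}\otimes_{\osym_{a_1+\cdots+a_{k-1}}}Z_{(a_1+\cdots+a_{k-1},\,a_k)}
\end{equation*}
coming from splitter associativity: first split the $n$ strands into bunches of sizes $(a_1+\cdots+a_{k-1},\,a_k)$, then split the left bunch further via $(a_1,\ldots,a_{k-1})$. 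Tensoring the inductive basis for the first factor with the $k=2$ basis for the second then yields the desired spanning set; rank-counting (see below) confirms it is a basis.

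For part (2), at each stage of the induction the differential on the rightmost factor $Z_{(a_1+\cdots+a_{k-1},\,a_k)}$ preserves the $\Z$-span of the Schur-polynomial basis by Lemma \ref{lem-d-thick-basis} (the structure constants there are integer multiples of $\{j\}\in\{0,1\}$). The differential on the tensor product is computed by the Leibniz rule from the two factors. Combining this with the inductive hypothesis on $Z_{(a_1,\ldots,a_{k-1})}$, the product basis lies in a $\Z$-lattice stable under $d$. The only subtle point is that multiplication by an $\et_1$ on an intermediate bunch of strands, arising from $d$ of the module generator, must expand in the Schur basis with integer coefficients; this is the odd Pieri rule \eqref{eqn-odd-pieri-eoddlr}, which is manifestly integral.

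For part (3), parts (1) and (2) imply that $Z_{\unda}$ is free as a graded right $\osym_n$-module of graded rank
\begin{equation*}
\prod_{i=2}^k\left[a_1+\cdots+a_i\atop a_i\right]_q=\left[n\atop\unda\right]_q,
\end{equation*}
the product telescoping. Since $\Hom$ from a finitely generated free right module of rank $r$ to any right module $N$ is a direct sum of $r$ shifted copies of $N$, applying this with $N=Z_{\undb}$ (itself free of rank $\left[n\atop\undb\right]_q$) gives a free right $\osym_n$-module of graded rank $\left[n\atop\unda\right]_q\left[n\atop\undb\right]_q$.

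The principal obstacle will be the inductive isomorphism in part (1): one must carefully track the $\theta\circ w_0$ twisting built into Definitions \ref{def-zoidberg-bim} and \ref{defn-Zoidberg-dual-generalized}, together with the signs in splitter associativity from Subsection 4.2.1 of \cite{EKL}, to ensure the left $(\osym_{a_1}\otimes\cdots\otimes\osym_{a_k})$-action and the right $\osym_n$-action on the two sides agree. Once the isomorphism is proved as bimodules (not just as $\Bbbk$-modules), parts (2) and (3) follow without further difficulty.
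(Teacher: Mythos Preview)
Your proposal is correct and follows exactly the approach the paper indicates: the paper does not give a proof of this proposition at all, merely stating just before it that ``it is easy to generalize the above using induction on the length of the composition $\unda$.'' Your inductive tensor decomposition via splitter associativity, together with the $k=2$ base case from \cite[Proposition~4.11]{EKL} and Lemma~\ref{lem-d-thick-basis}, is precisely the intended argument, and your rank computation for part~(3) is the standard one.
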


\subsection{Categorification of {$U^+_{\sqrt{-1}}(\mathfrak{sl}_2)$}} \label{subsec-cat-big}

For simplicity, we fix our ground ring $\Bbbk$ to be a field\footnote{Since $\osym_n$ is formal by the cohomology computation in Proposition \ref{prop-poly-alg}), everything here works over $\Z$ as well, using Theorem \ref{thm-qis-dga}.} in this section.  Our goal in this subsection is to categorify $U^+$ as a $\sqrt{-1}$-twisted bialgebra together with the embedding of twisted bialgebras $u^+\subset U^+$. To do so, we will consider the following dg functors.

\begin{defn}\label{def-multiplication-functor}
The dual bimodule $Z_{a,b}^\vee:=\HOM_{\osym_{a+b}}(Z_{a,b},\osym_{a+b})$, whose generator $\onez^\vee$ is given by the trace map of Corollary \ref{cor-d-invariance-trace}, gives rise to the derived functor
\begin{equation*}\begin{split}
Z_{a,b}^\vee \otimes_{\osym_{a,b}}^{\mathbf{L}}(-): \mc{D}(\osym_a)\times \mc{D}(\osym_b)&\lra \mc{D}(\osym_{a+b}),\\
(M,N)&\mapsto Z_{a,b}^\vee \otimes_{\osym_{a,b}}^{\mathbf{L}} (M\boxtimes N).
\end{split}\end{equation*}
Summing over all $a,b\in \Z_{\geq0}$, and denoting $\mc{D}(\osym):=\oplus_{a\in \Z_{\geq0}}\mc{D}(\osym_a)$, we obtain a functor
\[
\bigind:=\bigoplus_{a,b\in \Z_{\geq0}}Z_{a,b}^\vee \otimes_{\osym_{a,b}}^{\mathbf{L}}(-): \mc{D}(\osym)\times \mc{D}(\osym)\lra \mc{D}(\osym),
\]
which we will call the \emph{multiplication functor}.
\end{defn}

Proposition \ref{prop-nice-property-Zoidberg} gives an explicit way to compute the space of natural transformations of this functor $\bigind$, component-wise and diagrammatically, in terms of dg bimodule homomorphisms.

\begin{defn}\label{def-comultiplication-functor}
\begin{enumerate}
\item The bimodule $Z_{a,b}^\natural$ is the dg bimodule over $(\osym_{a,b},\osym_{a+b})$ given by
\[
(\osymt_{a}\otimes \osymt_b)\cdot \mathbf{z}^\natural,\quad d(\onez^\natural)=0.
\]
The degree of the generator $\onez^\natural$ is defined to be zero.
\item The bimodule  $Z_{a,b}^\natural$ gives rise to a derived functor
\begin{equation*}\begin{split}
Z_{a,b}^\natural \otimes_{\osym_{a+b}}^{\mathbf{L}}(-): \mc{D}(\osym_{a+b})&\lra \mc{D}(\osym_{a,b}),\\
M&\mapsto Z_{a,b}^\natural \otimes_{\osym_{a,b}}^{\mathbf{L}} M.
\end{split}
\end{equation*}
Taking sum over all $a,b\in \Z_{\geq0}$, we obtain a functor $\bigres$, which will be called the \emph{comultiplication functor},
\[
\bigres:=\bigoplus_{a,b\in \Z_{\geq0}}Z_{a,b}^\natural \otimes_{\osym_{a+b}}^{\mathbf{L}}(-):\mc{D}(\osym)\lra \mc{D}(\osym \otimes \osym).
\]
Here  $\mc{D}(\osym\otimes \osym)$ is understood to be the direct sum of categories
\[
\mc{D}(\osym\otimes \osym):= \bigoplus_{a,b\in \Z_{\geq0}}\mc{D}(\osym_a\otimes \osym_b).
\]
\end{enumerate}
\end{defn}

\begin{lem}\label{lemma-Kunneth}
On the level of Grothendieck groups, there is an isomorphism of free $\Z[\sqrt{-1}]$-modules
\[
K_0(\mc{D}(\osym\otimes \osym))\cong K_0(\osym)\otimes_{\Z[\sqrt{-1}]}K_0(\osym).
\]
\end{lem}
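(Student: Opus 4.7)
The plan is to reduce both sides to free rank-one $\Z[\sqrt{-1}]$-modules and verify that the external tensor product matches the generators.

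First I would check that each $\osym_n$, and each tensor product $\osym_a\otimes \osym_b$, is a \emph{positive} dg algebra. Assigning $q$-degree $2$ to every polynomial generator, $\osym_n$ becomes non-negatively graded with degree-zero part equal to the base field $\Bbbk$, and by Lemma~\ref{lem-d-elementary-functions} the differential raises $q$-degree by $2$ and hence vanishes on the degree-zero part. The tensor product of two positive dg algebras remains positive, with degree-zero part the tensor product of their degree-zero parts---again just $\Bbbk$. Combining Corollary~\ref{cor-K0-positive} with the identification $K_0(\mc{D}(\Bbbk))\cong\Z[\sqrt{-1}]$ from Subsection~\ref{subsect-categorifying-Gaussian}, each of the groups $K_0(\mc{D}(\osym_n))$ and $K_0(\mc{D}(\osym_a\otimes \osym_b))$ is then a free $\Z[\sqrt{-1}]$-module of rank one, generated by the class of the regular dg module.

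Next I would construct the comparison map via external tensor product. By Theorem~\ref{thm-compact-mod}, any compact dg module $M$ over $\osym_a$ (respectively $N$ over $\osym_b$) is quasi-isomorphic to a finite-cell module; tensoring two finite-cell filtrations over $\Bbbk$ produces a finite-cell filtration of $M\boxtimes N := M\otimes_\Bbbk N$ over $\osym_a\otimes\osym_b$. Hence $\boxtimes$ preserves compactness and is exact in each variable, and because it commutes with the action of the base category $\mc{D}(\Bbbk)$, the induced pairing on Grothendieck groups is $\Z[\sqrt{-1}]$-balanced. This yields a $\Z[\sqrt{-1}]$-linear map
\[
\Phi_{a,b}\colon K_0(\mc{D}(\osym_a))\otimes_{\Z[\sqrt{-1}]}K_0(\mc{D}(\osym_b))\lra K_0(\mc{D}(\osym_a\otimes \osym_b)),\qquad [M]\otimes[N]\mapsto[M\boxtimes N].
\]
Since $\Phi_{a,b}$ sends the generator $[\osym_a]\otimes[\osym_b]$ to the generator $[\osym_a\otimes\osym_b]$, it is an isomorphism between two free rank-one $\Z[\sqrt{-1}]$-modules. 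Taking the direct sum over all $(a,b)\in\Z_{\geq 0}\times\Z_{\geq 0}$ gives the stated isomorphism.

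The only point requiring some care is checking that $\boxtimes$ preserves finite-cell modules and genuinely descends to the Grothendieck groups; this I would handle by a direct induction on the length of the cell filtrations, using that a dg summand of $\osym_a$ tensored with a dg summand of $\osym_b$ is a dg summand of $\osym_a\otimes\osym_b$. Everything else is formal from positivity of the dg algebras and the computation of the base category.
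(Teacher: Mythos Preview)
Your proposal is correct and follows essentially the same approach as the paper: reduce componentwise to the observation that $\osym_a$, $\osym_b$, and $\osym_a\otimes\osym_b$ are all positive dg algebras, so that by Corollary~\ref{cor-K0-positive} both sides of the claimed isomorphism are free $\Z[\sqrt{-1}]$-modules of rank one. The paper's proof is terser and does not spell out the explicit comparison map via external tensor product; your additional care in constructing $\Phi_{a,b}$ and checking that $\boxtimes$ preserves finite-cell modules is a useful elaboration but not strictly necessary once both sides are identified with $\Z[\sqrt{-1}]$.
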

\begin{proof}It suffices to check that, component-wise,
\[
K_0(\mc{D}(\osym_a\otimes \osym_b))\cong K_0(\osym_a)\otimes_{\Z[\sqrt{-1}]}K_0(\osym_b).
\]
 Since $\osym_{a}$, $\osym_b$ and $\osym_{a,b}$ are all positive dg algebras, the result follows from Corollary \ref{cor-K0-positive}.  Both sides are isomorphic to $\Z[\sqrt{-1}]$.
\end{proof}

Recall the $\sqrt{-1}$-bialgebra $U^+$ of Subsection \ref{subsec-qgs}.
\begin{thm}\label{thm-big-sl2}
The derived category $\mc{D}(\osym)$, together with the functors
\[
\bigind:  \mc{D}(\osym)\times \mc{D}(\osym)\lra \mc{\osym},\quad
\bigres:  \mc{D}(\osym)\lra \mc{D}(\osym\otimes \osym),
\]
categorifies $U^+$ as a $\sqrt{-1}$-twisted bialgebra. That is, $K_0(\mc{D}(\osym))\cong U^+$, with the symbols of the rank-one free dg module $[\osym_a\langle-\binom{a}{2}\rangle]$ identified with the generator $E^{(a)}$. The  symbols of the functors $\bigind$ and $\bigres$ give rise to the multiplication and comultiplication structures on $U^+$.
\end{thm}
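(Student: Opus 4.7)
The plan is to produce the claimed isomorphism of $\sqrt{-1}$-twisted bialgebras $U^+\to K_0(\mc{D}(\osym))$ by defining it on the basis $\{E^{(a)}\}_{a\geq0}$ and separately verifying that it is an isomorphism of $\Z[\sqrt{-1}]$-modules, that it intertwines multiplication with $[\bigind]$, and that it intertwines the comultiplication $r$ with $[\bigres]$. Since $U^+$ is already known to be a $\sqrt{-1}$-twisted bialgebra, matching both structures separately on generators is enough to conclude a twisted bialgebra isomorphism.

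For the underlying module, each $\osym_a$ is a positive $\Z$-graded dg algebra in the sense of \cite{SchPos}: it is non-negatively graded with degree-zero component $\Bbbk$, and the differential strictly raises the $q$-degree. Corollary \ref{cor-K0-positive} identifies $K_0(\mc{D}(\osym_a))$ with $K_0(\mc{D}(\Bbbk))\cong\Z[\sqrt{-1}]$ by Subsection \ref{subsect-categorifying-Gaussian}, freely generated by the class of the regular module. The assignment $E^{(a)}\mapsto[\osym_a\langle-\binom{a}{2}\rangle]$ is thus an isomorphism of free $\Z[\sqrt{-1}]$-modules when summed over $a$.

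Next, I would verify multiplication on generators. Because $Z_{a,b}^\vee$ is rank-one free as a right dg $\osym_{a,b}$-module, the derived tensor product $Z_{a,b}^\vee\oL_{\osym_{a,b}}(\osym_a\boxtimes\osym_b)$ collapses to $Z_{a,b}^\vee$ itself, and by Corollary \ref{cor-cofibrance-dual-generalized-Zoidberg} this has graded rank ${a+b\brack a}_q$ over $\osym_{a+b}$ as a cofibrant left dg module. Using $\binom{a+b}{2}=\binom{a}{2}+\binom{b}{2}+ab$ to account for the degree shifts and specializing $q=\sqrt{-1}$ yields exactly \eqref{eqn-EaEb}. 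For comultiplication, $Z_{a,b}^\natural$ is, by Definition \ref{def-comultiplication-functor}, the rank-one free left dg $\osym_{a,b}$-module with trivial differential on its generator, so it is cofibrant on the left; the derived tensor $Z_{a,b}^\natural\oL_{\osym_n}\osym_n$ reduces to $\osym_a\boxtimes\osym_b$, and the shift mismatch $-\binom{n}{2}+\binom{a}{2}+\binom{b}{2}=-ab$ specializes via $q^{-ab}|_{q=\sqrt{-1}}=(-\sqrt{-1})^{ab}$ to the coefficient of \eqref{eqn-comultiplication}.

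The hardest step is the multiplication identity, where the precise graded-rank count of Corollary \ref{cor-cofibrance-dual-generalized-Zoidberg} really matters: two Laurent polynomials in $q$ having the same value at $q=\sqrt{-1}$ is a much weaker statement than having the same graded rank, so one needs the explicit $d$-stable hat Schur basis $\{\onez^\vee\sth_\lambda(\undx)\}_{\lambda\in\Par(a,b)}$ to guarantee the quantum binomial appears on the nose rather than merely up to a multiple of $1+q^2$. Once multiplication and comultiplication match on generators, the twisted bialgebra structure on $K_0(\mc{D}(\osym))$ is determined and agreement with $U^+$ follows automatically.
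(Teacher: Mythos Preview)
Your proposal is correct and follows essentially the same approach as the paper's proof: identify each $K_0(\mc{D}(\osym_a))$ with $\Z[\sqrt{-1}]$ via positivity, compute $\bigind$ and $\bigres$ on the free rank-one modules using Corollaries \ref{cor-cofibrance-dual-generalized-Zoidberg} and the definition of $Z_{a,b}^\natural$, and match the grading shifts. You are in fact more explicit than the paper about the binomial identity $\binom{a+b}{2}=\binom{a}{2}+\binom{b}{2}+ab$ and the resulting $(-\sqrt{-1})^{ab}$; the paper simply declares this an easy exercise. One omission: you should invoke Lemma \ref{lemma-Kunneth} explicitly so that $[\bigres]$ actually lands in $K_0(\osym)\otimes_{\Z[\sqrt{-1}]}K_0(\osym)$ rather than merely in $K_0(\osym\otimes\osym)$.

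Your final paragraph, however, has the emphasis inverted. In $K_0(\mc{D}(\osym_{a+b}))\cong\Z[\sqrt{-1}]=\Z[q^{\pm1}]/(1+q^2)$ the class $[Z_{a,b}^\vee]$ is \emph{only} determined up to multiples of $1+q^2$, so ``on the nose'' versus ``up to $1+q^2$'' is not a meaningful distinction at the level of $K_0$. What the $d$-stable hat Schur basis actually buys you is that $Z_{a,b}^\vee$ is finite-cell, hence compact, and its $K_0$-class can then be read off as the alternating sum over the filtration, which \emph{happens} to equal the specialization of ${a+b\brack a}_q$. The precise Laurent polynomial is a convenient bookkeeping device, not a stronger requirement.
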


\begin{proof}It is clear that
\[
K_0(\mc{D}(\osym))=\bigoplus_{a\in \Z_{\geq0}}K_0(\mc{D}(\osym_a))\cong \bigoplus_{a\in \Z_{\geq0}}\Z[\sqrt{-1}][E^{(a)}],
\]
where we identify the symbol of the rank-one free dg module $\osym_a\langle-\binom{a}{2}\rangle$ with $E^{(a)}$.

We now check that the relation \eqref{eqn-EaEb} is lifted to the categorical level, modulo the grading shifts.
\[
\bigind(\osym_a\boxtimes \osym_b) =Z_{a,b}^\vee \otimes^{\mathbf{L}}_{\osym_{a,b}}(\osym_a\boxtimes \osym_b)= Z^{\vee}_{a,b}.
\]
As a left dg module over $\osym_{a+b}$, we have seen that $Z_{a,b}^\vee$ is finite-cell (see Example \ref{eg-finite-cell}) and thus cofibrant. The subquotients of the finite-cell structure are ${a+b\brack a}$ shifted copies of the rank-one free module $\osym_{a+b}$, by Corollary \ref{cor-cofibrance-dual-generalized-Zoidberg}. Hence in the Grothendieck group $K_0(\osym_{a+b})$, we have the equality of symbols
\[
[\bigind(\osym_a\boxtimes \osym_b)]=[Z^\vee_{a,b}]={a+b \brack a}[\osym_{a+b}].
\]
Similarly, the comultiplication functor $\bigres$ sends the symbol $[\osym_{n}]$, which is identified with $E^{(n)}$, to the sum of objects
\[
[\bigres(\osym_{a+b})]=\sum_{a+b=n}[Z^{\natural}_{a,b}]=\sum_{a+b}[\osym_a]\otimes_{\Z[\sqrt{-1}]}[\osym_b].
\]
In the last equality, we have used Lemma \ref{lemma-Kunneth}.

It is an easy exercise to match our grading choices with equations \eqref{eqn-EaEb} and \eqref{eqn-comultiplication}. The result follows.
\end{proof}

Finally, we relate the categorified small (Theorem \ref{thm-small-sl2}) and big (Theorem \ref{thm-big-sl2}) quantized enveloping algebras for $\mf{sl}_2$ at a fourth root of unity. This will be a categorical lifting of the embedding of $u^+$ inside $U^+$.

Consider the bimodule $Z_n$ and its dual $Z_n^\vee$ in Definition \ref{def-zoidberg-bim}, which are respectively dg bimodules over $(\onh_n,\osym_n)$ and $(\osym_n,\onh_n)$ by Corollary \ref{cor-onh-end}. The differential actions on the module generators are diagrammatically depicted in Corollary \ref{cor-d-exploders}.

Recall that we have denoted by $\mc{D}(\onh)$ and $\mc{D}(\osym)$ the direct sums of dg derived categories,
\[
\mc{D}(\onh):=\bigoplus_{n\in \Z_{\geq0}}\mc{D}(\onh_n), \quad \mc{D}(\osym):=\bigoplus_{n\in \Z_{\geq0}}\mc{D}(\osym_n).
\]
We also collect together the abelian and homotopy categories:
\begin{equation}
\onh\dmod:=\bigoplus_{n\in \Z_{\geq 0}} \onh_n\dmod, \quad \mc{H}(\onh):= \bigoplus_{n\in \Z_{\geq 0}} \mc{H}(\onh_n),
\end{equation}
\begin{equation}
\osym\dmod:=\bigoplus_{n\in \Z_{\geq 0}} \osym_n\dmod, \quad \mc{H}(\osym):= \bigoplus_{n\in \Z_{\geq 0}} \mc{H}(\osym_n).
\end{equation}

\begin{defn}\label{def-embedding-functor}
Let $\mc{J}^\mc{A}$ be the functor
$
\mc{J}^\mc{A}: \onh\dmod\lra \osym\dmod,
$
which is component-wise given by
\begin{equation*}\begin{split}
\mc{J}^\mc{A}_n: \onh_n\dmod&\lra \osym_n\dmod,\\
M&\mapsto Z_n^\vee\otimes_{\onh_n}M.
\end{split}\end{equation*}
This functor induces functors $\mc{J}^\mc{H}$, $\mc{J}$ of homotopy and derived categories, respectively.
\end{defn}

\begin{cor}\label{cor-embedding}
The derived functor $\mc{J}:\mc{D}(\onh)\lra \mc{D}(\osym)$ is a fully faithful embedding of derived categories. On the level of Grothendieck groups, this categorifies the embedding of $u^+$ inside $U^+$ as a sub-twisted bialgebra.
\end{cor}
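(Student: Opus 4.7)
The plan is to reduce fully faithfulness of $\mc{J}$ to a component-wise analysis using the acyclicity result from Proposition \ref{prop-Zoidberg-finite-cell}, and then to identify the induced map on Grothendieck groups with the canonical inclusion $u^+\hookrightarrow U^+$ via Theorems \ref{thm-small-sl2} and \ref{thm-big-sl2}.

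First I would handle fully faithfulness summand by summand, writing $\mc{J}=\bigoplus_n\mc{J}_n$ under the decomposition $\mc{D}(\onh)=\bigoplus_{n\geq 0}\mc{D}(\onh_n)$. For $n\geq 2$, Proposition \ref{prop-Zoidberg-finite-cell} combined with Corollary \ref{cor-acyclic-dga} gives $\mc{D}(\onh_n)\simeq 0$, so there is nothing to check in these components. For $n=0,1$ the symmetric group $S_n$ is trivial and no divided differences appear: one reads off directly that $\onh_n$ coincides with $\osym_n$ as a dg algebra ($\Bbbk$ when $n=0$, and $\opol_1=\Bbbk[x_1]$ with $d(x_1)=x_1^2$ when $n=1$), while the bimodule $Z_n$ of Definition \ref{def-zoidberg-bim} reduces to the regular bimodule, since the twist $\theta\circ w_0$ is trivial in rank one and the sum in \eqref{eqn-d-onez} either is empty or has vanishing $\{0\}$-coefficient. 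Consequently $Z_n^\vee\otimes^{\mathbf{L}}_{\onh_n}(-)$ is naturally isomorphic to the identity functor in these components, hence fully faithful.

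Next I would identify the induced map on Grothendieck groups. Theorem \ref{thm-small-sl2} identifies $K_0(\mc{D}(\onh))\cong u^+$ via $E\leftrightarrow[\onh_1]$, and Theorem \ref{thm-big-sl2} identifies $K_0(\mc{D}(\osym))\cong U^+$ via $E^{(a)}\leftrightarrow[\osym_a\langle-\binom{a}{2}\rangle]$. Since $\mc{J}_0(\onh_0)=\osym_0$ and $\mc{J}_1(\onh_1)=Z_1^\vee\cong\osym_1$ by the previous step (with the grading shift $\langle-\binom{1}{2}\rangle=\langle 0\rangle$ being trivial), the class $[\mc{J}]$ sends $1\mapsto 1$ and $E\mapsto E^{(1)}$, which is exactly the inclusion of $\sqrt{-1}$-twisted bialgebras recalled in Subsection \ref{subsec-qgs}. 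Compatibility on higher powers of $E$ is automatic: both sides vanish on $E^2$, the source because $E^2=0$ in $u^+$ and the target because $[2]_{\sqrt{-1}}=0$, matched categorically by the fact that $\mc{D}(\onh_n)\simeq 0$ for all $n\geq 2$.

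The main obstacle is the bookkeeping required to match the right-module structure on $Z_n^\vee$ (which in general involves the twist $\theta\circ w_0$) with the naive regular bimodule in the degenerate cases $n=0,1$, so that the Grothendieck classes land with the correct degree shifts under the categorification of the Gaussian integers from Subsection \ref{subsect-categorifying-Gaussian}. Once this bookkeeping is settled, the embedding of twisted bialgebras on $K_0$ follows; the stronger assertion that $\mc{J}$ intertwines $\smind$ and $\bigind$ as derived functors (rather than merely on Grothendieck groups) is presumably the content of the subsequent Corollary \ref{cor-correct-embedding}.
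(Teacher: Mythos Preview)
Your proposal is correct and follows the same approach as the paper: reduce to components, dispose of $n\geq 2$ via the acyclicity of $\onh_n$ from Proposition \ref{prop-Zoidberg-finite-cell}, and observe that for $n=0,1$ the functor $\mc{J}_n$ is essentially the identity. The paper's own proof is simply the terse version of yours---it says the $n=0,1$ cases are ``clear'' where you have helpfully spelled out why $\onh_n=\osym_n$ and $Z_n$ reduces to the regular bimodule in those cases.
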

\begin{proof}Since $\mc{D}(\onh_n)\simeq 0$ if $n\geq 2$ (Proposition \ref{prop-Zoidberg-finite-cell}), one only has to prove the result for the components $\mc{J}_0$ and $\mc{J}_1$, in which cases the result is clear.
\end{proof}

However, this embedding on the homotopy level is not quite so trivial. In fact, it is easy to check that the underived tensor product with $Z_n^\vee$ gives rise to an equivalence of abelian categories of dg modules over $\onh$ and $\osym$:
\begin{equation}\label{eqn-abelian-Morita}
\begin{split}
\mc{J}_n^{\mc{A}}: \onh_n\dmod&\lra\osym_n\dmod,\\
M&\mapsto Z_n^\vee\otimes_{\onh_n}M.
\end{split}
\end{equation}
The quasi-inverse of $\mc{J}_n^{\mc{A}}$ is given by the tensor product with $Z_n$:
\begin{equation}
\begin{split}
(\mc{J}_n^{\mc{A}})^{-1}: \osym_n\dmod&\lra \onh_n\dmod,\\
M&\mapsto Z_n\otimes_{\osym_n}M.
\end{split}
\end{equation}
This is because, as bimodules,
\[
Z_n\otimes_{\osym_n}Z_n^\vee \cong \onh_n, \quad
Z_n^\vee \otimes_{\onh_n}Z_n \cong \osym_n.
\]
It follows that the functors descend to triangulated equivalences between the homotopy categories:
\[
\mc{J}_n^{\mc{H}}: \mc{H}(\onh_n)\lra\mc{H}(\osym_n),\quad
(\mc{J}_n^{\mc{H}})^{-1}: \mc{H}(\osym_n)\lra \mc{H}(\onh_n).
\]

Therefore, if one wants the categorical bialgebra embedding to be respected on the abelian and homotopy category levels, the restriction functor used in Theorem \ref{thm-small-sl2} will not do. Instead, one should consider a more ``natural'' induction functor on $\onh$, which is given by the (derived) tensor product with the bimodule
\[
\onh^{\natural}:=\bigoplus_{a,b\in\Z_{\geq0}}\onh_{a+b}^{\natural}.
\]
Each of the individual components $\onh_{a+b}^{\natural}$ is an $(\onh_a\otimes\onh_b,\onh_{a+b})$-bimodule of rank one over $\onh_{a+b}$, equipped with the differential
\begin{equation}\label{eqn-twisted-restriction}
\onh_{a+b}^{\natural}:=1^{\natural}\onh_{a+b},\quad d(1^{\natural})=\{a\}\et_1(x_{a+1},\dots,x_{a+b})1^\natural.
\end{equation}
Alternatively, this bimodule is isomorphic, up to a degree and parity shift, to the right ideal generated by $(x_{a+1}\dots x_{a+b})^a$ inside $\onh_n$. It follows from this isomorphism that $\onh_{a+b}^\natural$ is a dg $(\onh_a\otimes\onh_b,\onh_{a+b})$-bimodule.

\begin{defn}The functor $\smres^{\natural}$ is given by the derived tensor product with $\onh^{\natural}$,
\[
\smres^{\natural}:=\onh^{\natural}\otimes_{\onh}^{\mathbf{L}}(-):\mc{D}(\onh)\lra\mc{D}(\onh\otimes \onh).
\]
\end{defn}

This restriction functor behaves better on the homotopy category $\mc{H}(\onh)$ than our na\"{i}ve one in equation \eqref{eqn-restriction}. This phenomenon has appeared in the even setting in \cite{KQ, EliasQi2} and should not be surprising. This is because, under the abelian and homotopy Morita equivalences discussed in equation \eqref{eqn-abelian-Morita}, the bimodule $\onh^\natural$ naturally arises from the bimodule $Z_{a,b}^\natural$ in Definition \ref{def-comultiplication-functor} by applying this Morita equivalence functor, as we will see below. It happens that the two different restriction functors $\smres$ and $\smres^\natural$ coincide on the derived category $\mc{D}(\onh)$, since $\mc{D}(\onh)$ collapses all but the first two components. However, because it behaves better on homotopy categories, it is more natural to consider $\smres^\natural$ than $\smres$.

\begin{cor}\label{cor-correct-embedding}
The equivalence $\mc{J}^\mc{A}:\onh\dmod\to\osym\dmod$ intertwines $\bigind$ with $\smind$ and $\bigres$ with $\smres^\natural$.  The same is true of $\mc{J}^\mc{H}$ on homotopy categories and $\mc{J}$ on derived categories.
\end{cor}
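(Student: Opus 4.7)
The plan is to reduce the statement to a pair of concrete dg bimodule isomorphisms, verify each by inspection at the level of generators, and then transfer the result to the homotopy and derived categories using the cofibrance properties already established.

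\textbf{Reduction to bimodule isomorphisms.} Using Yoneda together with the abelian equivalence $\mc{J}^\mc{A}_n=Z_n^\vee\otimes_{\onh_n}(-)$ and its quasi-inverse $Z_n\otimes_{\osym_n}(-)$, the claim that $\mc{J}^\mc{A}$ intertwines $\smind$ with $\bigind$ amounts to exhibiting a dg bimodule isomorphism
\[
Z_{a+b}\;\iso\;(Z_a\boxtimes Z_b)\otimes_{\osym_a\otimes\osym_b}Z_{a,b}
\]
of $(\onh_a\otimes\onh_b,\osym_{a+b})$-bimodules, and the claim about $\smres^\natural$ and $\bigres$ similarly reduces, after applying $\mc{J}^\mc{A}$ and using the induction identification, to the dg bimodule isomorphism
\[
\onh^\natural_{a+b}\;\iso\;(Z_a\boxtimes Z_b)\otimes_{\osym_a\otimes\osym_b}Z^\natural_{a,b}\otimes_{\osym_{a+b}}Z^\vee_{a+b}
\]
of $(\onh_a\otimes\onh_b,\onh_{a+b})$-bimodules. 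The idea is then that the right-hand sides are built from pieces whose differentials were designed precisely so that these isomorphisms hold.

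\textbf{Verifying the two isomorphisms.} For the induction identification I send the generator $\onez_{(1^{a+b})}$ to $(\onez_{(1^a)}\boxtimes\onez_{(1^b)})\otimes\onez_{(a,b)}$; bijectivity of the resulting bimodule map is a direct consequence of splitter associativity recalled in Subsection~\ref{subsec-thick-strands}. For the differential, one applies the Leibniz rule to the image and compares the formula $d(\onez_{(1^{a+b})})=\sum_{i=1}^{a+b}\{i-1\}x_i\onez$ from equation \eqref{eqn-d-onez} with the combined contributions of $d(\onez_{(1^a)})$, $d(\onez_{(1^b)})$ and $d(\onez_{(a,b)})=\{a\}\et_1(x_{a+1},\ldots,x_{a+b})\onez_{(a,b)}$ from Definition~\ref{def-zoidberg-generalized}. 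For the restriction identification, since $Z^\natural_{a,b}$ is the rank-one bimodule with trivial differential implementing the inclusion $\osym_{a+b}\hookrightarrow\osym_a\otimes\osym_b$, the right-hand side simplifies to $(Z_a\boxtimes Z_b)\otimes_{\osym_{a+b}}Z^\vee_{a+b}$ as an underlying bimodule, and one sends $1^\natural$ to $(\onez_{(1^a)}\boxtimes\onez_{(1^b)})\otimes\onez^\vee_{(1^{a+b})}$. A graded rank count, combined with the identification of $\onh^\natural_{a+b}$ with the right ideal generated by $(x_{a+1}\cdots x_{a+b})^a$ in $\onh_{a+b}$, shows the map is a bimodule isomorphism; the crucial observation is that the differential $d(1^\natural)=\{a\}\et_1(x_{a+1},\ldots,x_{a+b})1^\natural$ defined in \eqref{eqn-twisted-restriction} was engineered exactly to match, via Corollary \ref{cor-d-invariance-trace}, the combined Leibniz contributions of $d(\onez_{(1^a)})$, $d(\onez_{(1^b)})$, and $d(\onez^\vee_{(1^{a+b})})$ on the right.

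\textbf{Derived and homotopy versions; the main obstacle.} Both identifications are strict isomorphisms of dg bimodules, so they immediately give isomorphisms of functors on $\mc{H}(\onh)$. To pass to $\mc{D}(\onh)$ one needs underived tensor products with $Z$, $Z^\vee$, $Z^\natural$ and $\onh^\natural$ to compute derived tensor products; this follows from Corollaries \ref{cor-cofibrance-generalized-Zoidberg} and \ref{cor-cofibrance-dual-generalized-Zoidberg}, together with the right-ideal description of $\onh^\natural_{a+b}$, which exhibits it as a cofibrant (finite-cell) module on the appropriate sides. The genuine work, and the expected main obstacle, lies in the sign-and-twist bookkeeping of the differential matching: the bimodule structures throughout are defined via the involution $\theta\circ w_0$, so verifying differential compatibility on generators requires careful use of equations \eqref{eqn-theta-w0}, the identity $\{a+k\}=\{a\}+(-1)^a\{k\}$, and the relations \eqref{eqn-idempotent-commute}. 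Once the generator-level verification is complete, compatibility propagates to all of each bimodule by the Leibniz rule.
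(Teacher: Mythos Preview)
Your proposal is correct and follows essentially the same approach as the paper: reduce the intertwining statement to a dg bimodule isomorphism, observe that both sides have the same graded rank, and then verify the isomorphism by comparing differentials on the cyclic generators using the Leibniz rule. The paper carries this out only for the restriction side (declaring induction ``similar and easier''), comparing $d((\onez_a\boxtimes\onez_b)\otimes\onez_n^\vee)$ with $d(1^\natural\cdot\pd_{w_0})$ directly, whereas you spell out both halves and are more explicit about the cofibrance needed to pass to derived categories; one small correction is that the differential on $\onez^\vee_{(1^{a+b})}$ you need is the exploder formula of Corollary~\ref{cor-d-exploders} rather than Corollary~\ref{cor-d-invariance-trace}, which treats the two-block case $Z_{a,b}^\vee$.
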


\begin{proof}We prove the statement about the restriction functors.  The other half of the result concerning induction functors is similar and easier.

To do so, it suffices to show that the following diagram of functors commutes on the level of abelian categories:
\begin{equation*}
\begin{gathered}
\xymatrix{\onh_{a+b}\dmod\ar[r]^-{\Res^{\natural}}\ar[d]^-{\mc{J}^\mc{A}}
    &(\onh_a\otimes\onh_b)\dmod\ar@<.5ex>[d]^-{\mc{J}^\mc{A}}\\
\osym_{a+b}\dmod\ar[r]^-{\mathcal{R}}&(\osym_a\otimes\osym_b)\dmod\ar@<.5ex>[u]^-{(\mc{J}^\mc{A})^{-1}}}
\end{gathered}
\ .
\end{equation*}
Set $n=a+b$.  If $M$ is a dg module over $\onh_n$, we have, by construction,
\[
(\mc{J}^\mc{A})^{-1}\circ \bigres\circ\mc{J}^\mc{A}(M)\cong (Z_a\boxtimes Z_b)\otimes_{\osym_a\otimes \osym_b} \left(Z_{a,b}^\natural \otimes_{\osym_{n}} \left(Z_n^\vee\otimes_{\onh_n} M\right)\right).
\]
By tensor associativity and the fact that $Z_{a,b}^\natural\cong \osym_a\otimes \osym_b$, the composition functor is isomorphic to the tensor product with
\[
(Z_a\boxtimes Z_b)\otimes_{\osym_a\otimes \osym_b} \left(Z_{a,b}^\natural \otimes_{\osym_{n}} Z_n^\vee\right)\cong(\opol_a\cdot \onez_a\boxtimes \opol_b\cdot \onez_b)\otimes_{\osym_n}(\onez_n^\vee\cdot\opol_n),
\]
which is a dg bimodule over $(\onh_a\otimes\onh_b,\onh_n)$. Therefore, establishing the commutativity of the diagram is equivalent to finding an isomorphism of the above dg bimodule with $\onh_n^\natural$.

Both bimodules have the same graded rank over $\onh_n$. Thus it suffices to compare the differential actions on the lowest-degree $(\onh_a\otimes \onh_b,\onh_n)$-generators for both bimodules. On the one hand, we have
\begin{align*}
d((\onez_a\boxtimes\onez_b)\otimes \onez_n^\vee) & =
\sum_{i=1}^a\{i-1\}x_i (\onez_a\boxtimes\onez_b)\otimes \onez_n^\vee + \sum_{j=1}^b\{j-1\}x_{a+j}(\onez_a\boxtimes\onez_b)\otimes \onez_n^\vee\\
&\qquad -(-1)^{\binom{n}{2}}\sum_{k=1}^n\{n-k\}(\onez_a\boxtimes\onez_b)\otimes \onez_n^\vee x_k.
\end{align*}
On the other hand, a similar computation shows that the $d$-action on the lowest-degree element $1^\natural\cdot \pd_{w_0}\in \onh_n^\natural$ is exactly the same. Hence choosing an identification of the generators naturally extends to an isomorphism of dg bimodules, and the result follows.
\end{proof}

\appendix
\section{Cohomology and filtrations}\label{apx-cohomology-filtrations}

\subsection{Cohomology of odd symmetric functions}\label{subsec-cohomology-osym}

The goal of this subsection is to compute the cohomology of $\osym$ and of $\osym_n$.  We will identify cocycles of a dg algebra with their images in cohomology when no confusion is likely to result.  Throughout this section, let $\Bbbk$ be a field.

\subsubsection{Hypercube complexes}\label{subsubsec-hypercube}

Up to isomorphism and homological degree shift, there is only one indecomposable contractible chain complex over $\Bbbk$,
\begin{equation*}
\xymatrix{0\ar[r]&\Bbbk\ar[r]^-1&\Bbbk\ar[r]&0.}
\end{equation*}
The total complex of its $k$-th tensor power is the \emph{hypercube complex},
\begin{equation*}\begin{split}
&Y_k=\text{span}_\Bbbk\lb v_{\undvarepsilon}:\undvarepsilon\in\lb0,1\rb^k\rb\\
&d(v_{\undvarepsilon})=\sum_{|\undeta|=1+|\undvarepsilon|}\pm v_{\undeta}
\end{split}\end{equation*}
We call $v_{(0,\ldots,0)}$ the \emph{initial vector} of $Y_k$.

A useful combinatorial picture of $Y_k$ is the following: there are $k$ urns and each urn can hold one or zero balls.  A binary sequence $\undeta$ represents a situation in which the $j$-th urn is filled if $\undeta_j=1$, empty if $\undeta_j=0$.  The differential is a signed sum over arrangements obtained by adding a ball to one of the empty urns of $\undeta$.

It is easy to see that $Y_k$ is always contractible if $k\geq 1$. By convention, we assume that $Y_0$ is a single copy of $\Bbbk$ sitting in homological degree zero.

\subsubsection{Lima partitions}\label{subsubsec-lima}

Identify a partition $\lambda=(\lambda_1\geq\lambda_2\geq\ldots\geq0)$ with its corresponding Young diagram.  A box of $\lambda$ is called \emph{removable} if removing this box from $\lambda$ yields a Young diagram.  If adding a box at a position next to $\lambda$ results in a Young diagram, that position (or that box) is called \emph{addable}.

A \emph{Lima partition} is a partition $\lambda$ in which all parts $\lambda_i$ are even and any particular value $\lambda_i$ occurs an even number of times.  In other words, a Lima partition is a partition whose Young diagram can be subdivided into $2\times2$ squares of boxes.  The following Young diagrams correspond to Lima partitions:
\begin{equation*}
\tiny\ytableaushort{\none,\none} * {2,2}
\qquad
\ytableaushort{\none,\none,\none,\none} * {4,4,2,2}
\qquad
\ytableaushort{\none,\none,\none,\none} * {8,8,2,2,2,2} ~
\end{equation*}
and the following do not:
\begin{equation*}
\tiny\ytableaushort{\none,\none} * {2,2,1,1}
\qquad
\ytableaushort{\none,\none,\none,\none} * {4,2,2}
\qquad
\ytableaushort{\none,\none,\none,\none} * {6}
\qquad
\ytableaushort{\none,\none,\none,\none} * {3,3} ~.
\end{equation*}

It will be helpful to think of $\lambda$ as shaded like a checkerboard, for example:
\begin{equation*}
\tiny\ytableaushort{\none,\none} * {2,2} *[*(black)] {0+1,1+1}
\qquad
\ytableaushort{\none,\none,\none,\none} * {4,4,2,2} *[*(black)] {0+1,1+1,0+1,1+1} * [*(black)] {2+1,3+1}
\qquad
\ytableaushort{\none,\none,\none,\none} * {8,8,2,2,2,2} *[*(black)] {0+1,1+1,0+1,1+1,0+1,1+1} * [*(black)] {2+1,3+1} * [*(black)] {4+1,5+1} * [*(black)] {6+1,7+1} ~.
\end{equation*}
White boxes have odd content and black boxes have even content (content equals column number less row number; see Subsection \ref{subsec-schur}).

\subsubsection{Cohomology}\label{subsubsec-lima-cohomology}

The following lemma is obvious.
\begin{lem} Adding or removing a white (respectively, black) box to a partition $\lambda$ results in no new addable white (respectively, black) boxes.\hfill$\square$\end{lem}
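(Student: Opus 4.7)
The plan is to read off the lemma from the content-parity behavior of adjacent boxes. Recall that $\ct(i,j) = j-i$, so two boxes that share an edge in a Young diagram have contents differing by exactly $\pm 1$. In particular, adjacent boxes always have contents of opposite parity, hence opposite color.

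First I would handle the ``adding'' direction. Suppose $\mu = \lambda \cup \{(i,j)\}$ for an addable position $(i,j)$ of color $X$. A position $(a,b)$ that is addable to $\mu$ but not to $\lambda$ must have had its addability ``enabled'' by the presence of the new box $(i,j)$ in $\mu$. Since addability of $(a,b)$ requires $(a-1,b)$ or $(a,b-1)$ to lie in the diagram (or $(a,b)$ to be in row or column one), the only way for $(i,j)$ to make a difference is if $(a,b) = (i,j+1)$ or $(a,b) = (i+1,j)$. Both of these positions are adjacent to $(i,j)$, so by the content observation their color is opposite to $X$. Hence no new addable box of color $X$ appears.

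Next I would handle the ``removing'' direction. Suppose $\mu = \lambda \setminus \{(i,j)\}$ for a removable box $(i,j)$ of color $X$. Removing a box can only make the addability conditions at a position $(a,b) \neq (i,j)$ \emph{fail} (by breaking the ``$(a-1,b) \in \lambda$ or $(a,b-1) \in \lambda$'' condition whenever that witness was $(i,j)$); it cannot create new addability elsewhere. The only position that newly becomes addable is $(i,j)$ itself, which is the site of the operation. So again no other addable box of color $X$ is produced.

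There is no substantial obstacle: the proof is essentially a one-line content-parity check paired with the elementary combinatorics of addable/removable positions. The only mild subtlety is to be careful that ``new addable box'' is interpreted as a position other than the one at which the operation was performed, which is exactly how the lemma is used in the ensuing cohomology argument.
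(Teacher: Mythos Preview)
Your argument is correct and is exactly the content-parity check the paper has in mind; the paper itself gives no proof beyond marking the lemma as obvious. Your observation that in the ``removing'' case the site $(i,j)$ itself becomes an addable box of the same color is accurate, and your reading---that ``new'' should exclude the operation site, so that the set of white (respectively black) urn positions is unchanged---is precisely the interpretation needed for the ensuing hypercube decomposition.
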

The formula of Proposition \ref{prop-d-schur} states that $d(s_\lambda)$ is a $\lb\pm1\rb$-linear combination of certain terms $s_\mu$.  The $\mu$ which appear are those obtained from $\lambda$ by adding one white box.  Therefore each partition can be thought of in terms of the balls and urns as above: an addable white box is an empty urn and a removable white box is an urn containing a ball.  Let $\osym$ be the limit of the dg algebras $\osym_n$ with respect to maps $\osym_{n+1}\to\osym_n$ which send $x_{n+1}\mapsto0$ and fix the other $x_i$'s.

\begin{prop}\label{prop-lima-cohomology}\begin{enumerate}
  \item The cohomology ring $\mH(\osym)$ is a free abelian group with basis
  \begin{equation*}
  B_\osym=\lb s_\lambda:\lambda\text{ is a Lima partition}\rb.
  \end{equation*}
  \item The cohomology ring $\mH(\osym_n)$ is a free abelian group with basis
  \begin{equation*}
  B_\osym=\lb s_\lambda:\lambda\text{ is a Lima partition and }\ell(\lambda)\leq n\rb.
  \end{equation*}
\end{enumerate}
\end{prop}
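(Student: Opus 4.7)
The formula in Proposition \ref{prop-d-schur} says $d(s_\lambda) = \sum_{\mu = \lambda + \boxi} \pm \lbrace \ct(\boxi) \rbrace s_\mu$, and $\lbrace k \rbrace$ vanishes for $k$ even. Thus the only surviving terms come from adding a \emph{white} box, and the differential respects the equivalence relation ``$\lambda \sim \mu$ if one can pass from $\lambda$ to $\mu$ by a sequence of white-box additions or removals''. My plan is to show that each $\sim$-equivalence class $C$ gives a subcomplex of $\osym$ isomorphic to a hypercube complex in the sense of Subsubsection \ref{subsubsec-hypercube}, and then identify the singleton classes with Lima partitions.

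For each equivalence class $C$, let $W(C)$ denote the set of addable-white together with removable-white positions of any (equivalently every) $\lambda \in C$. Using the lemma immediately preceding the proposition, combined with a short case analysis of how toggling $\lambda_i \mapsto \lambda_i \pm 1$ affects the (non)addability and (non)removability of positions in rows $i-1$, $i$, $i+1$, I would first show that $W(C)$ is independent of the representative $\lambda$ and that the assignment $\lambda \mapsto \{p \in W(C) : p \text{ is a box of } \lambda\}$ is a bijection $C \leftrightarrow 2^{W(C)}$. Under this bijection the subcomplex $\mathrm{span}_\Bbbk\{s_\lambda : \lambda \in C\}$ acquires the structure of a hypercube complex $Y_{|W(C)|}$ with edge signs $\pm 1$ prescribed by the Schur differential.

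A signed hypercube complex whose coefficient on every edge is $\pm 1$ and which satisfies $d^2 = 0$ is contractible whenever $|W(C)| \geq 1$: choose a distinguished slot $i^* \in W(C)$, split $C = C_0 \sqcup C_1$ according to whether $i^*$ is filled, and observe that the ``add $i^*$'' component of $d$ is a vector-space isomorphism $\sigma_0\maps \mathrm{span}(C_0) \to \mathrm{span}(C_1)$ which intertwines (up to sign) the restricted differentials on either side. The whole complex is then an acyclic mapping cone, explicitly contracted by $H(v_S) = \pm v_{S \setminus \{i^*\}}$ on $C_1$ and zero on $C_0$. The singletons $W(C) = \emptyset$ are partitions with no white addable and no white removable boxes; a short row-by-row induction (if $\lambda_{2k-1} \neq \lambda_{2k}$, the corner and the addable at the right of row $2k-1$ have opposite content parities, so one of them is white) shows this forces $\lambda_{2k-1} = \lambda_{2k}$ and this common value to be even for every $k$, i.e., $\lambda$ is Lima. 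This proves Statement 1.

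For Statement 2, $\osym_n$ is the quotient complex $\osym / \mathrm{span}\{s_\mu : \ell(\mu) > n\}$, since the right-hand side is a dg subcomplex (any white-box addition preserves or increases $\ell$). Each equivalence class $C$ contributes to $\osym_n$ the sub-hypercube obtained from $W(C)$ by setting to zero the coordinates of slots lying in rows $> n$; this is still a hypercube, and the contraction argument applies to annihilate any class with a nonempty remaining slot set. The surviving singletons are $\lambda$ with no removable white box and no addable white box in rows $\leq n$, and the same inductive pairing argument identifies them with Lima partitions; the length bound $\ell(\lambda) \leq n$ is automatic because Lima partitions have even length, which also handles the only delicate case of a potential white slot at $(n+1,1)$ when $n$ is odd. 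The main obstacle will be Step 2 (the invariance of $W(C)$ across an equivalence class together with the identification of the class with a hypercube), for which one must patiently verify that toggling a white box at row $i$ leaves every other slot in $W(C)$ in its previous addable/removable state. Once this is in hand, the remainder of the proof is routine hypercube bookkeeping.
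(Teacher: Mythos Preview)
Your proposal is correct and follows essentially the same strategy as the paper: decompose $\osym$ into hypercube complexes indexed by the white addable/removable slot set $W(C)$, observe that nontrivial hypercubes are contractible, and identify the singletons as Lima partitions by the row-pairing induction. Your treatment is somewhat more explicit than the paper's in two places---you spell out the invariance of $W(C)$ under white-box toggling (the paper compresses this into the one-line lemma ``adding or removing a white box creates no new addable white boxes'') and you give the mapping-cone contraction and the quotient description of $\osym_n$ in detail, whereas the paper simply cites the contractibility of $Y_k$ from Subsection~\ref{subsubsec-hypercube} and dismisses Statement~2 as ``an easy check of the boundary cases''---but there is no substantive difference in approach.
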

\begin{proof} By the urns-and-balls interpretation above, $\osym$ is a direct sum of hypercube complexes.  The odd Schur functions appearing in one indecomposable summand are exactly those obtainable from any one of them by adding and removing black boxes.  Therefore the cohomology $\mH(\osym)$ is spanned by the classes of those $s_\lambda$ for which $\lambda$ has no addable and no removable white boxes.  It is easy to see that this set contains the set of Lima partitions; we claim these two sets are equal.

Suppose $\lambda$ has no addable and no removable white boxes.  If $\lambda_1$ is odd, then a white box can be added in the first row, so $\lambda_1$ must be even.  The second row must be nonempty or else there would be an addable white box just below the top-left box.  Since the rightmost box of the first row is white and $\lambda$ has no removable white boxes, we must have $\lambda_1=\lambda_2$.  Thus the first two rows are of equal, even length.  The first claim now follows by induction, since $\lambda=(\lambda_1\geq\lambda_2\geq\ldots\geq\lambda_r)$ is Lima if and only if $\frac{2}{\lambda}=(\lambda_3\geq\lambda_4\geq\ldots\geq\lambda_r)$ is Lima.

The second claim follows from the first and an easy check of the boundary cases.
\end{proof}

The first few hypercube subcomplexes of $\osym$ are (drawing a partition $\lambda$ to stand for $s_\lambda$):
\begin{equation*}\begin{split}
&\varnothing\qquad\qquad\qquad\qquad\qquad\tiny\ytableaushort{\none,\none}*{2,2}*[*(black)]{1,1+1}\\
\\
\xymatrix{
        &\tiny\ytableaushort{\none,\none}*{1,1}*[*(black)]{1}\ar[r]
        &\tiny\ytableaushort{\none,\none}*{2,1}*[*(black)]{1}   \\
        \tiny\ytableaushort{\none}*[*(black)]{1}\ar[r]\ar[ur]
        &\tiny\ytableaushort{\none}*{2}*[*(black)]{1}\ar[ur]
    }\quad
&\xymatrix{
        &\tiny\ytableaushort{\none,\none}*{3,1}*[*(black)]{1}*[*(black)]{2+1}\ar[r]
        &\tiny\ytableaushort{\none,\none}*{4,1}*[*(black)]{1}*[*(black)]{2+1}   \\
        \tiny\ytableaushort{\none}*{3}*[*(black)]{1}*[*(black)]{2+1}\ar[r]\ar[ur]
        &\tiny\ytableaushort{\none}*{4}*[*(black)]{1}*[*(black)]{2+1}\ar[ur]
    }\quad
\xymatrix{
        &\tiny\ytableaushort{\none,\none,\none}*{2,1,1}*[*(black)]{1,0,1}\ar[r]
        &\tiny\ytableaushort{\none,\none,\none,\none}*{2,1,1,1}*[*(black)]{1,0,1}   \\
        \tiny\ytableaushort{\none,\none,\none}*{1,1,1}*[*(black)]{1,0,1}\ar[r]\ar[ur]
        &\tiny\ytableaushort{\none,\none,\none,\none}*{1,1,1,1}*[*(black)]{1,0,1}\ar[ur]
    }
\end{split}\end{equation*}

Over a field of characteristic $2$, the cohomology $\mH(\sym)$ is a polynomial algebra on the generators $1,e_2^2,e_4^2,e_6^2,\ldots$ \cite[Proposition 3.8]{EliasQi}.  The na\"{i}ve odd analogue of this statement is not true.  For instance
\begin{equation*}
d(e_2^2)=-2e_4e_1+2e_5\neq0,
\end{equation*}
so $e_2^2$ is not even a cocycle.  The correct odd analogue is the following.

\begin{prop}\label{prop-poly-alg} The cohomology ring $\mH(\osym)$ is a polynomial algebra on either of the sets $\lb s_{(2,\ldots,2)}:k\in\Z_{\geq0}\rb$, $\lb s_{(2k,2k)}:k\in\Z_{\geq0}\rb$.  In the former, there are $2k$ $2's$.\end{prop}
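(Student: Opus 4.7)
The plan is to combine the Lima basis from Proposition \ref{prop-lima-cohomology} with a leading-term (triangularity) argument driven by the odd Littlewood--Richardson rule of \cite{EOddLR}.

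First I would match Hilbert series. Every Lima partition $\lambda$ is uniquely tileable by $2\times 2$ blocks, so the compression $\lambda\mapsto\tilde\lambda$ is a bijection from Lima partitions onto all partitions with $|\lambda|=4|\tilde\lambda|$. Hence the Hilbert series of $\mH(\osym)$ equals $\prod_{k\geq 1}(1-q^{4k})^{-1}$, matching the Hilbert series of both $\Bbbk[r_1,r_2,\ldots]$ with $r_k:=s_{(2^{2k})}$ in degree $4k$ and of $\Bbbk[t_1,t_2,\ldots]$ with $t_k:=s_{(2k,2k)}$ in degree $4k$. Since each graded piece is finite-dimensional, it therefore suffices to prove surjectivity of the natural algebra maps from these polynomial rings into $\mH(\osym)$; algebraic independence will then follow automatically.

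For surjectivity, I would order Lima partitions lexicographically by their compressed shape $\tilde\lambda$ and, for each Lima $\lambda$ with conjugate compressed shape $(\tilde\lambda)'=(k_1\geq\cdots\geq k_s)$, compute $r_{k_1}\cdots r_{k_s}\in\mH(\osym)$. Each $r_k$ is a cocycle (by Proposition \ref{prop-d-schur}, since $(2^{2k})$ admits no addable odd-content box), hence so is the product. Expanding it in $\osym$ via the odd Littlewood--Richardson rule of \cite{EOddLR} gives coefficients equal, up to sign, to classical LR numbers. On passage to $\mH(\osym)$ the non-Lima Schur functions drop out, because they lie in positive-dimensional hypercube subcomplexes (Subsubsection \ref{subsubsec-hypercube}), leaving a $\Z$-linear combination of Lima Schurs. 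Classically $s_{(2^{2k_1})}\cdots s_{(2^{2k_s})}$ contains $s_\lambda$ with coefficient $1$, and $\lambda$ is the unique Lima partition appearing whose compressed shape is maximal for the chosen order; granting the same sign rigidity in the odd setting gives
\begin{equation*}
r_{k_1}\cdots r_{k_s}\;=\;\pm s_\lambda\;+\;\sum_{\mu\,\text{Lima},\ \tilde\mu\prec\tilde\lambda} c_\mu\, s_\mu
\end{equation*}
in $\mH(\osym)$. Unitriangularity (after absorbing signs into the generators) then realizes every Lima Schur as a polynomial in the $r_k$'s, proving surjectivity. The argument for the $t_k$'s is parallel with rows and columns swapped, using $\tilde\lambda$ in place of $(\tilde\lambda)'$.

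The main obstacle is ruling out that the leading $\pm 1$ above secretly vanishes through sign cancellations in the odd LR expansion. One concrete approach is to write down explicitly the unique odd LR tableau of shape $\lambda/(2^{2k_1})$ with content $((2^{2k_2}),\ldots,(2^{2k_s}))$ contributing to this coefficient and to verify its sign directly from the definition in \cite{EOddLR}. A cleaner alternative is to invoke an odd Jacobi--Trudi identity expressing $s_\lambda$ as a signed determinant in the $\et_k$'s and to observe that, modulo coboundaries, such determinants collapse to monomials in the rectangle Schurs $r_k$ (respectively $t_k$), sidestepping the LR combinatorics altogether.
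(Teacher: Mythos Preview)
Your strategy (Hilbert series match plus a leading-term/triangularity argument driven by the odd Littlewood--Richardson rule) is exactly the paper's approach. The paper even writes down essentially your displayed formula, working with the two-row rectangles $s_{(2k,2k)}$:
\[
s_{(2k_1,2k_1)}\cdots s_{(2k_r,2k_r)} \;=\; s_{(2k_1,2k_1,\ldots,2k_r,2k_r)} \;+\; \sum_{\mu>(2k_1,2k_1,\ldots,2k_r,2k_r)} a_\mu s_\mu,
\]
and then concludes by a graded-rank comparison, just as you propose.

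The difference is that the paper \emph{does} close the gap you flag as the ``main obstacle,'' and does so in one line: for Lima partitions, the odd Littlewood--Richardson coefficients of \cite[Theorem~4.8]{EOddLR} are equal to the usual (even) Littlewood--Richardson coefficients. In particular there are no signs at all in the Lima-to-Lima structure constants, so the leading coefficient is exactly $+1$ (not merely $\pm 1$), Lima Schur functions pairwise commute, and unitriangularity is immediate. You do not need to chase a specific odd LR tableau or invoke an odd Jacobi--Trudi identity; the latter suggestion is vague as stated and would be considerably more work than simply quoting the Lima-case equality of odd and even LR numbers. Once you insert that citation, your proof is complete and coincides with the paper's.
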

\begin{proof}
We will prove this for the second generating set; the other case is similar.  For Lima partitions, the odd Littlewood-Richardson coefficients of \cite[Theorem 4.8]{EOddLR} are equal to the usual (even) Littlewood-Richardson coefficients.  In particular, Lima Schur functions pairwise commute.

A routine application of the Littlewood-Richardson rule proves that all products of the form
$$s_{(2k_1,2k_1)}s_{(2k_2,2k_2)}\cdots s_{(2k_r,2k_r)}$$
are nonzero:  the key observation is that
\begin{equation}\label{eqn-Lima-product}
s_{(2k_1,2k_1)}s_{(2k_2,2k_2)}\cdots s_{(2k_r,2k_r)}=s_{(2k_1,2k_1,\ldots,2k_r,2k_r)}+\sum_{\mu>(2k_1,2k_1,\ldots,2k_r,2k_r)}a_\mu s_\mu
\end{equation}
for any $k_1\geq\cdots\geq k_r\geq0$ and certain non-negative integers $a_\mu$.

Thus the subalgebra generated by this generating set is a free abelian group of graded rank equal to the the graded rank of a polynomial on generators of degree $0,4,8,12,\ldots$.  This is also the graded rank of $\mH(\osym)$ itself.  The result now follows, noting that the coefficient of $s_{(2k_1,2k_1,\ldots,2k_r,2k_r)}$ in the left-hand side product is always $1$.
\end{proof}

\subsection{The filtration on $Z_n$}\label{subsec-gen-zoid-app}

The isomorphism $Z_n\cong U_n\otimes_\Bbbk\osym_n$ of Subsection \ref{subsec-cat-small} determines a filtration on the right dg $\osym_n$-module $Z_n$ with subquotients isomorphic to (shifts of) the right regular representation.  In other words, this isomorphism shows $Z_n$ to be finite-cell.  The goal of this subsection is to understand the chain complex $U_n$ and hence the filtration on $Z_n$.

Recall $U_n$ has $\Bbbk$-basis
\begin{equation*}
B'_n=\lbrace x^a\onez:0\leq i\leq i-1\rbrace
\end{equation*}
and differential given by \eqref{eqn-d-u},
\begin{equation*}
d(x^a\onez)=\sum_i\lb a_i+i-1\rb x_ix^a\onez.
\end{equation*}
For small $n$, we can draw $U_n$ (an arrow means a nonzero term of $d$, always with coefficient $\pm1$):
\begin{equation*}\begin{split}
&n=2:\qquad\xymatrix{1\ar[r]&x_2}\\
\\
&n=3:\qquad\xymatrix{&x_2&x_2x_3\ar[r]&x_2x_3^2\\1\ar[ur]&x_3\ar[r]\ar[ur]&x_3^2\ar[ur]}
\end{split}\end{equation*}
Already for $n=2,3$ the features of the general pattern can be seen.  $U_n$ is a direct sum of subcomplexes, each isomorphic to (a shift of) a hypercube complex.

In the monomial basis element $x^a\onez$, an empty urn is a factor $x_i^{a_i}$ with $i\equiv a_i$ mod $2$ and an urn with a ball in it is a factor $x_i^{a_i}$ with $i\equiv a_i+1$ mod $2$.  Adding a ball to this urn increases $a_i$ by $1$ and leaves the other $a_j$ unchanged.

For $n=5$, the set of initial vectors in $U_n$ is
\begin{equation*}
\lb x_3^ax_4^bx_5^c:a\in\lb0,1\rb,b\in\lb0,2\rb,c\in\lb0,1,3\rb\rb.
\end{equation*}
For $n=6$, take all products consisting of one element from the previous and one element from $\lb x_6^0,x_6^2,x_6^4\rb$.  And so forth for all $n$.

\subsection{The filtration on $\osym_a\otimes_\Bbbk\osym_b$}\label{apx-cohomology-thick}

Next we will analyze the structure of the chain complex $V_{a,b}$ defined in Subsection \ref{subsec-cat-big} in order to understand the filtration on $\osym_a\otimes_\Bbbk\osym_b$ by shifts of the right regular $\osym_{a+b}$ dg module.  As proved there, $V_{a,b}$ has basis
\begin{equation*}
B''_{a,b}=\lb s_\lambda\onez:\lambda\in\Par(b,a)\rb
\end{equation*}
and differential given by
\begin{equation*}
d(s_\lambda\onez)=\sum_{\mu=\lambda+\boxi}(-1)^{\lvert\frac{\lambda}{i}\rvert+i-1}\lb a+\ct(\boxi)\rb s_\mu\onez.
\end{equation*}
For $a\equiv0~(\mathrm{mod}~2)$, this is the same differential as on $\osym$.  So by the results of Subsection \ref{subsec-cohomology-osym}, it follows that $V_{a,b}$ is a direct sum of the same hypercube complexes occurring in the dg algebra $\oh_{a,b}$.  The cohomology of this chain complex is the $\Bbbk$-span of the classes of Lima Schur functions.

For $ a\equiv1~(\mathrm{mod}~2)$, the roles of black and white boxes are reversed.  It is not hard to prove directly that the resulting cohomology group is $0$: this is equivalent to the statement that no partition has neither addable nor removable white boxes.  For $a,b$ large and $a$ odd, the first few hypercube subcomplexes are (as above, $\lambda$ stands for $s_\lambda$):
\begin{equation*}\begin{split}
&\xymatrix{
    \varnothing\ar[r]
    &\tiny\ytableaushort{\none}*[*(black)]{1}
}\qquad\qquad
\xymatrix{
    \tiny\ytableaushort{\none,\none}*{1,1}*[*(black)]{1}\ar[r]
    &\tiny\ytableaushort{\none,\none,\none}*{1,1,1}*[*(black)]{1,0,1}
}\qquad\qquad
\xymatrix{
    \tiny\ytableaushort{\none}*{2}*[*(black)]{1}\ar[r]
    &\tiny\ytableaushort{\none}*{3}*[*(black)]{1}*[*(black)]{2+1}
}\\
\\
&\xymatrix{
    &\tiny\ytableaushort{\none,\none,\none}*{2,1,1}*[*(black)]{1,0,1}\ar[r]\ar[dr]
    &\tiny\ytableaushort{\none,\none,\none}*{2,2,1}*[*(black)]{1,1+1,1}\ar[dr]  \\
    \tiny\ytableaushort{\none,\none}*{2,1}*[*(black)]{1}\ar[ur]\ar[r]\ar[dr]
    &\tiny\ytableaushort{\none,\none}*{2,2}*[*(black)]{1,1+1}\ar[ur]\ar[dr]
    &\tiny\ytableaushort{\none,\none,\none}*{3,1,1}*[*(black)]{1}*[*(black)]{2+1,0,1}\ar[r]
    &\tiny\ytableaushort{\none,\none,\none}*{3,2,1}*[*(black)]{1}*[*(black)]{2+1,1+1,1} \\
    &\tiny\ytableaushort{\none,\none}*{3,1}*[*(black)]{1}*[*(black)]{2+1}\ar[r]\ar[ur]
    &\tiny\ytableaushort{\none,\none}*{3,2}*[*(black)]{1}*[*(black)]{2+1,1+1}\ar[ur]
}
\end{split}\end{equation*}

\subsection{Slash cohomology of $p$-dg symmetric functions}

For this final subsection we work with $p$-dg algebras over a field $\Bbbk$ of characteristic $p>0$.  For an introduction to the theory of $p$-dg algebras see \cite[Section 2]{KQ}.  We will give a simple combinatorial proof of a result of Ben Elias and the second author \cite[Proposition 3.8]{EliasQi}.  \emph{En route}, we will give a basis for the slash cohomology of $\sym$ in terms of Schur functions.

\subsubsection{$p$-complexes and slash cohomology}

Define the graded algebra $\mathbf{H}=\Bbbk[d]/(d^p)$, $\deg(d)=2$.  It is a finite-dimensional graded Hopf algebra when given the coproduct
\begin{equation}
\Delta(d)=d\otimes1+1\otimes d.
\end{equation}
Graded modules over $\mathbf{H}$ are called \emph{$p$-complexes}.  In other words, a $p$-complex is a graded $\Bbbk$-module equipped with a degree-$2$ differential $d$ which satisfies $d^p=0$.

Up to isomorphism and grading shift, the indecomposable $H$-modules are
\begin{equation*}
\lb V_i=\mathbf{H}/(d^{i+1}):i=0,1,\ldots,p-1\rb.
\end{equation*}
For $p=2$, these are the chain complexes
\begin{equation*}
V_0=\left(\xymatrix{0\ar[r]&\Bbbk\ar[r]&0}\right)\qquad\text{and}\qquad
V_1=\left(\xymatrix{0\ar[r]&\Bbbk\ar[r]^-1&\Bbbk\ar[r]&0}\right).
\end{equation*}
In the classical setting of chain complexes, ``taking cohomology'' simply kills all summands isomorphic to (shifts of) $V_1$.  One analogue of this in the setting of $p$-complexes is \emph{slash cohomology}; for a $p$-complex $V$ and $k\in\lb0,1,\ldots,p-2\rb$, this is
\begin{equation}
\mH_{/k}(V)=\mathrm{Ker}(d^k)/\left(\mathrm{Im}(d^{p-k-1})+\mathrm{Ker}(d^{k+1})\right).
\end{equation}
When $p=2$, the only slash cohomology group is just usual cohomology, $\mH=\mH_{/0}$.  Each group $\mH_{/k}$ inherits a $\Z$-grading from the Hopf algebra grading on $\mathbf{H}$.

It follows from the results of \cite[Section 2]{KQ} that
\begin{equation}\label{eqn-slash-V}
\mH_{/k}(V_i)=
\begin{cases}
\text{span}_\Bbbk(v_{i-k})  &   0\leq k\leq i\text{ and }i<p-1, \\
0&  i+1\leq k\leq p-2\text{ or }i=p-1.
\end{cases}
\end{equation}
The K\"{u}nneth-type formula
\begin{equation*}
\mH_/(V\otimes_\Bbbk W)\cong \mH_/(V)\otimes_\Bbbk \mH_/(W),
\end{equation*}
does not necessarily hold on the nose: the tensor product among $V_i$'s may produce acyclic summands. For instance $V_{p-2}\otimes V_{p-2}$ is only isomorphic to $V_0\{2p-4\}$ after removing a $(p-2)$ graded copies of $V_{p-1}$. Instead, one has the following weaker version of the K\"{u}nneth property. If $V$ is  a $p$-complex that satisfies
$
\mH_{/0}(V)\cong \mH_{/}(V)
$
and $W$ is an arbitrary $p$-complex, then
\[
\mH_{/}(V\otimes W)\cong \mH_{/0}(V)\otimes_\Bbbk \mH_{/}(W).
\]

So just as the cohomology of a chain complex detects summands $V_0$, the $k$-th slash cohomology of a $p$-complex detects summands $V_i$ with $k\leq i\leq p-2$.  Acyclic summands $V_{p-1}$ contribute nothing to $\mH_{/k}$.

Tensor powers of $V_i$ admit a combinatorial description which generalizes the ``balls and urns'' description of the hypercube complexes $Y_k$ of Subsection \ref{subsec-cohomology-osym}.  The $p$-complex $V_i^{\otimes\ell}$ has a basis
\begin{equation*}
\lb v_{\undvarepsilon}:\undvarepsilon\in\lb 0,1,\ldots,i\rb^\ell\rb,
\end{equation*}
where $v_{\undvarepsilon}=d^{\undvarepsilon_1}\otimes\cdots\otimes d^{\undvarepsilon_\ell}$.  The differential acts on this basis as
\begin{equation*}
d(v_{\undvarepsilon})=\sum_{|\undeta|=1+|\undvarepsilon|}v_{\undeta}.
\end{equation*}
We can think of each tensor-factor of $V_i$ as an urn which can hold up to $i$ balls; the differential is a sum over ways to add a ball to an urn.  The coefficient of $v_{\undeta}$ in $d^j(v_{\undvarepsilon})$ is the number of sequences of ball-additions to $\undvarepsilon$ which yield $\undeta$.  For $p=3$, $\ell=2$, and $i=2$, the picture is:
\begin{equation*}
\xymatrix{&&v_{20}\ar[r]&v_{21}\ar[r]&v_{22}\\
V_2^{\otimes2}\quad=&v_{10}\ar[r]\ar[ur]&v_{11}\ar[r]\ar[ur]&v_{12}\ar[ur]\\
v_{00}\ar[r]\ar[ur]&v_{01}\ar[r]\ar[ur]&v_{02}\ar[ur]}
\end{equation*}

\subsubsection{$p$-Lima Schur functions}

Define a $p$-differential $d$ on $f\in\Bbbk[x_1,\ldots,x_n]$ by
\begin{equation*}
d(x_i)=x_i^2,\qquad d(fg)=d(f)g+fd(g).
\end{equation*}
So $\Bbbk[x_1,\ldots,x_n]$ is a $p$-dg algebra (that is, an $\mathbf{H}$-module-algebra).  The subring $\sym_n$ of symmetric polynomials is closed under $d$.  The map $d$ is compatible with the inverse system
\begin{equation*}\begin{split}
\sym_{n+1}&\to\sym_n\\
x_i&\mapsto x_i\text{ if }1\leq i\leq n\\
x_{n+1}&\mapsto0,
\end{split}\end{equation*}
so there is an induced $p$-dg algebra structure on the limit $\sym$.  This differential acts on elementary, complete, and Schur functions as in \cite{EliasQi}:
\begin{equation}
d(e_k)=e_1e_k-e_{k+1},
\end{equation}
\begin{equation}
d(h_k)=h_{k+1}-h_1h_k,
\end{equation}
\begin{equation}
d(s_\lambda)=\sum_{\mu=\lambda+B}\ct(B)s_\mu.
\end{equation}
The summation is over all partitions $\mu$ obtained by adding one box $B$ to $\lambda$ and $\ct(B)$ is the content of the new box (column number less row number).

We saw in \ref{subsubsec-lima-cohomology} that the action of the differential on $\osym$ (in characteristic zero) can be interpreted in terms of urns and balls.  Considering content modulo $2$, an addable box of content $\onebar$ is an urn with $0$ balls and a removable box of content $\zerobar$ is an urn with $1$ ball.  The situation of $\sym$ in characteristic $p>0$ is similar.

Let $\ctbar(B)\in\Z/p\Z$ be the mod $p$ reduction of the content of a box $B$.  An addable box $B$ with $\ctbar(B)=\bar{i}$ corresponds to an urn with $i-1$ balls in it (taking the representative $i=p$ in place of $i=0$ for $\overline{i}=0$).  Any urn can hold up to $p-1$ balls.  It is straightforward to verify, then, that $\sym$ is a direct sum of $p$-complexes isomorphic to (shifts of) $V_{p-1}^{\otimes\ell}$ for various $\ell\geq0$.

Since $V_{p-1}$ is acyclic, the only summands which are homologically nontrivial are those for which $\ell=0$.  These correspond to partitions all of whose addable and removable boxes have content $\bar{p}$.  These are exactly those partitions built out of $p\times p$ squares of boxes; we call these \emph{$p$-Lima partitions}.  We have proven the first part of the following.

\begin{thm}
\begin{enumerate}
\item Let $B_{\sym,p}=\lb s_\lambda:\lambda\text{ is }p\text{-Lima}\rb$.  Then $\mH_{/k}(\sym)=0$ for $k>0$ and $\mH_{/0}(\sym)=\text{span}_{\Bbbk}(B_{\sym,p})$.
\item The inclusion $\Bbbk[e_p^p,e_{2p}^p,e_{3p}^p,\ldots]\hookrightarrow\sym$ is a quasi-isomorphism of $p$-dg algebras.
\end{enumerate}
\end{thm}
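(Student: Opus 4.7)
My plan is to reduce Part 2 to three steps: verify that the inclusion is a map of $p$-dg algebras, match Hilbert series on the two sides, and then use a dominance-triangularity argument via iterated Pieri to show the induced map on slash cohomology is an isomorphism.

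For the first two steps, since $d$ is a derivation of $\sym$ and $\mathrm{char}(\Bbbk)=p$, we have $d(f^p)=pf^{p-1}d(f)=0$ for any $f\in\sym$; hence each $e_{kp}^p$ is a cocycle, the subalgebra $R:=\Bbbk[e_p^p,e_{2p}^p,\ldots]$ inherits the zero differential, and the inclusion is a $p$-dg algebra map. The elementary symmetric functions $e_{kp}$ are algebraically independent in $\sym$, so their $p$-th powers are too; thus $R$ is a polynomial ring with Hilbert series $\prod_{k\ge1}(1-q^{kp^2})^{-1}$. By Part 1, $\mH_/(\sym)=\mH_{/0}(\sym)$ has basis $B_{\sym,p}$, and the collapse of each $p\times p$ block to a single box gives a bijection between $p$-Lima partitions of size $p^2N$ and ordinary partitions of $N$; so the Hilbert series of $\mH_{/0}(\sym)$ also equals $\prod_{k\ge1}(1-q^{kp^2})^{-1}$. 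With matching Hilbert series, it suffices to prove that the induced map $R\to\mH_{/0}(\sym)$ is injective.

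The core step is a Pieri calculation. For each partition $\nu=(\nu_1\ge\cdots\ge\nu_r>0)$, set $m_\nu:=\prod_{i=1}^{r}e_{\nu_ip}^p$; these form a basis for $R$. Written out, $m_\nu$ is a product of $pr$ elementary symmetric functions whose sorted exponent sequence forms a partition $\alpha$ listing each $\nu_ip$ with multiplicity $p$. Iterated Pieri (equivalently the expansion $e_\alpha=\sum_\mu K_{\mu',\alpha}\,s_\mu$) bounds the Schur support by $\mu\trianglelefteq\alpha'$, with $K_{\alpha'',\alpha}=K_{\alpha,\alpha}=1$. A direct computation gives $\alpha'=((p\nu_1')^p,(p\nu_2')^p,\ldots)$, which is exactly the $p$-Lima partition $\lambda(\nu')$ associated to the conjugate $\nu'$. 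Moreover, by comparing partial sums at positions divisible by $p$, dominance among $p$-Lima partitions $\lambda(\eta)$ agrees with dominance among the underlying partitions $\eta$. Hence the $p$-Lima projection of $m_\nu$ in $\mH_{/0}(\sym)$ takes the form
\[
[m_\nu] \;=\; s_{\lambda(\nu')} \;+\; \sum_{\eta\triangleleft\nu'} c_{\nu,\eta}\,s_{\lambda(\eta)},
\]
unitriangular in the basis $\{s_{\lambda(\eta)}\}_\eta$ ordered by dominance of the underlying partitions. Since $\nu\mapsto\nu'$ is an involution on partitions, the leading terms $s_{\lambda(\nu')}$ sweep out the whole basis of $\mH_{/0}(\sym)$, so $R\to\mH_{/0}(\sym)$ is an isomorphism. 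Combined with $\mH_{/k}(R)=0$ for $k>0$ (trivially) and $\mH_{/k}(\sym)=0$ for $k>0$ (Part 1), this gives the claimed quasi-isomorphism.

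The main obstacle is the triangularity calculation: identifying $\alpha'$ with the $p$-Lima partition $\lambda(\nu')$, and checking that dominance on $p$-Lima shapes reduces to dominance on the underlying partitions. Both are short combinatorial exercises with $p\times p$-block tilings, closely paralleling the Littlewood--Richardson argument in the proof of Proposition~\ref{prop-poly-alg}; no new idea beyond careful bookkeeping should be required.
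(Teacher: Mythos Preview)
Your argument for Part~2 is correct and follows essentially the same strategy as the paper: match graded ranks/Hilbert series and then establish a unitriangular expansion with leading coefficient~$1$ to conclude the inclusion induces an isomorphism on slash cohomology. The only cosmetic difference is that the paper's referenced Proposition~\ref{prop-poly-alg} phrases the triangularity via the Littlewood--Richardson rule (for Schur-function generators), whereas you use iterated Pieri/Kostka numbers directly on the $e_{kp}^p$, which is the natural and slightly more direct tool for these particular generators; your identification $\alpha'=\lambda(\nu')$ and the dominance check at positions divisible by $p$ are both correct.
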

\begin{proof}
The first part was proven above.  The second follows from an argument similar to that in the proof of Proposition \ref{prop-poly-alg}.
\end{proof}

\addcontentsline{toc}{section}{References}


\bibliographystyle{alpha}
\bibliography{ellis-bib}

\begin{thebibliography}{CFLW14}

\bibitem[BL94]{BL}
J.~Bernstein and V.~Lunts.
\newblock {\em Equivariant Sheaves and Functors}, volume 1578 of {\em Lecture
  Notes in Mathematics}.
\newblock Springer-Verlag, Berlin, 1994.

\bibitem[CFLW14]{CFLW}
S.~Clark, Z.~Fan, Y.~Li, and W.~Wang.
\newblock Quantum supergroups {III}. {T}wistors.
\newblock {\em Communications in Mathematical Physics}, (332):415--436, 2014.
\newblock \href{http://arxiv.org/abs/1307.7056}{arXiv:1307.7056}.

\bibitem[CW13]{ClarkWang}
S.~Clark and W.~Wang.
\newblock Canonical basis for quantum $\mathfrak{osp}(1|2)$.
\newblock {\em Letters in Mathematical Physics}, 103(2):207--231, 2013.
\newblock \href{http://arxiv.org/abs/1204.3940}{arXiv:1204.3940}.

\bibitem[EK12]{EK}
A.~P. Ellis and M.~Khovanov.
\newblock The {H}opf algebra of odd symmetric functions.
\newblock {\em Advances in Mathematics}, 231(2):965--999, 2012.
\newblock \href{http://arxiv.org/abs/1107.5610}{arXiv:1107.5610v2}.

\bibitem[EKL12]{EKL}
A.~P. Ellis, M.~Khovanov, and A.~Lauda.
\newblock The odd nil{H}ecke algebra and its diagrammatics.
\newblock {\em International Mathematics Research Notices}, 2014(4):991--1062,
  2012.
\newblock \href{http://arxiv.org/abs/1111.1320}{arXiv:1111.1320}.

\bibitem[EL13]{EL}
A.~P. Ellis and A.~Lauda.
\newblock An odd categorification of quantum $sl(2)$.
\newblock {\em Quantum Topology (to appear)}, 2013.
\newblock \href{http://arxiv.org/abs/1307.7816}{arXiv:1307.7816v2}.

\bibitem[Ell13]{EOddLR}
A.~P. Ellis.
\newblock The odd {L}ittlewood-{R}ichardson rule.
\newblock {\em Journal of Algebraic Combinatorics}, 37(4):777--799, 2013.
\newblock \href{http://arxiv.org/abs/1111.3932}{arXiv:1111.3932}.

\bibitem[EQ15]{EliasQi2}
B.~Elias and Y.~Qi.
\newblock A categorification of quantum $\mathfrak{sl}(2)$ at prime roots of
  unity.
\newblock 2015.
\newblock \href{http://arxiv.org/abs/1503.05114}{arXiv:1503.05114}.

\bibitem[EQ16]{EliasQi}
B.~Elias and Y.~Qi.
\newblock A categorification of some small quantum groups {II}.
\newblock {\em Advances in Mathematics}, 288:81--151, 2016.
\newblock \href{http://arxiv.org/abs/1302.5478}{arXiv:1302.5478}.

\bibitem[FL15]{FanLi}
Z.~Fan and Y.~Li.
\newblock A geometric setting for quantum $osp(1|2)$.
\newblock {\em Transactions of the American Mathematical Society},
  367:7895--7916, 2015.
\newblock \href{http://arxiv.org/abs/1305.0710}{arXiv:1305.0710}.

\bibitem[HW15]{HillWang}
D.~Hill and W.~Wang.
\newblock Categorification of quantum {K}ac-{M}oody superalgebras.
\newblock {\em Transactions of the American Mathematical Society},
  367:1183--1216, 2015.
\newblock \href{http://arxiv.org/abs/1202.2769}{arXiv:1202.2769}.

\bibitem[Kel94]{Ke1}
B.~Keller.
\newblock Deriving {DG} categories.
\newblock {\em Annales Scientifiques de l'\'{E}cole Normale Sup\'{e}rieure, 4me
  S\'{e}rie}, 27, 1994.

\bibitem[Kho00]{KhHom}
M.~Khovanov.
\newblock A categorification of the {J}ones polynomial.
\newblock {\em Duke Mathematical Journal}, 101(3):359--426, 2000.
\newblock \href{http://arxiv.org/pdf/math/9908171.pdf}{arXiv:math/9908171}.

\bibitem[Kho06]{Hopforoots}
M.~Khovanov.
\newblock {H}opfological algebra and categorification at a root of unity: the
  first steps.
\newblock 2006.
\newblock
  \href{http://front.math.ucdavis.edu/math.QA/0509083}{arXiv:math/0509083}.

\bibitem[Kho14]{KhoGL12}
M.~Khovanov.
\newblock How to categorify one-half of quantum $gl(1|2)$.
\newblock In {\em Knots in {P}oland {III}. {P}art {III}}, volume 103 of {\em
  Banach Center Publ.}, pages 211--232. Polish Acad. Sci. Inst. Math., Warsaw,
  2014.
\newblock \href{http://arxiv.org/abs/1007.3517}{arXiv:1007.3517}.

\bibitem[KKT11]{KKT}
S.-J. Kang, M.~Kashiwara, and S.~Tsuchioka.
\newblock Quiver {H}ecke superalgebras.
\newblock {\em Journal für die reine und angewandte Mathematik (to appear)},
  (DOI 10.1515/ crelle-2013-0089), 2011.
\newblock \href{http://arxiv.org/abs/1107.1039}{arXiv:1107.1039v1}.

\bibitem[KL09]{KL1}
M.~Khovanov and A.~Lauda.
\newblock A diagrammatic approach to categorification of quantum groups {I}.
\newblock {\em Representation Theory}, 13:309--347, 2009.
\newblock \href{http://arxiv.org/abs/0803.4121}{arXiv:0803.4121}.

\bibitem[KL10]{KL3}
M.~Khovanov and A.~Lauda.
\newblock A diagrammatic approach to categorification of quantum groups {III}.
\newblock {\em Quantum Topology}, 1:1--92, 2010.
\newblock \href{http://arxiv.org/abs/0807.3250}{arXiv:0807.3250}.

\bibitem[KL11]{KL2}
M.~Khovanov and A.~Lauda.
\newblock A diagrammatic approach to categorification of quantum groups {II}.
\newblock {\em Transactions of the American Mathematical Society},
  363:2685--2700, 2011.
\newblock \href{http://arxiv.org/abs/0804.2080}{arXiv:0804.2080}.

\bibitem[KLMS12]{KLMS}
M.~Khovanov, A.~D. Lauda, M.~Mackaay, and M.~Sto{\v{s}}i{\'c}.
\newblock Extended graphical calculus for categorified quantum {${\rm sl}(2)$}.
\newblock {\em Memoirs of the American Mathematical Society}, 219(1029):vi+87,
  2012.
\newblock \href{http://arxiv.org/abs/1006.2866}{arXiv:1006.2866}.

\bibitem[KQ12]{KQ}
M.~Khovanov and Y.~Qi.
\newblock An approach to categorification of some small quantum groups.
\newblock {\em Quantum Topology}, 6(2):185--311, 2012.
\newblock \href{http://arxiv.org/abs/1208.0616}{arXiv:1208.0616}.

\bibitem[KS14]{KhovanovSussan}
M.~Khovanov and J.~Sussan.
\newblock A categorification of the positive half of quantum $gl(m|1)$.
\newblock 2014.
\newblock \href{http://arxiv.org/abs/1406.1676}{arXiv:1406.1676}.

\bibitem[KW08]{KW1}
T.~Khongsap and W.~Wang.
\newblock {H}ecke-{C}lifford algebras and spin {H}ecke algebras {I}: {T}he
  classical affine type.
\newblock {\em Transformation Groups}, 13:389--412, 2008.
\newblock \href{http://arxiv.org/abs/0704.0201}{arXiv:0704.0201}.

\bibitem[Lau10]{Lau1}
A.~Lauda.
\newblock A categorification of quantum sl(2).
\newblock {\em Advances in Mathematics}, 225(6):3327--3424, 2010.
\newblock \href{http://arxiv.org/abs/0803.3652}{arXiv:0803.3652}.

\bibitem[Lau11]{Lau2}
A.~Lauda.
\newblock Categorified quantum $sl(2)$ and equivariant cohomology of iterated
  flag varieties.
\newblock {\em Algebras and Representation Theory}, 14(2):253--282, 2011.
\newblock \href{http://arxiv.org/abs/0803.3848}{arXiv:0803.3848}.

\bibitem[LOT08]{LOT}
R.~Lipshitz, P.~Ozsv\'{a}th, and D.~Thurston.
\newblock Bordered {H}eegaard {F}loer homology: Invariance and pairing.
\newblock 2008.
\newblock \href{http://arxiv.org/abs/0810.0687}{arXiv:0810.0687}.

\bibitem[Lus93]{Lus4}
G.~Lusztig.
\newblock {\em Introduction to quantum groups}, volume 110 of {\em Progress in
  Mathematics}.
\newblock Birkh\"auser Boston Inc., Boston, MA, 1993.

\bibitem[Mur92]{Murakami2}
J.~Murakami.
\newblock The multi-variable {A}lexander polynomial and a one-parameter family
  of representations of $\mathcal{U}_q(\mathfrak{sl}(2,\mathbb{C}))$ at
  $q^2=-1$.
\newblock In {\em Quantum groups (Leningrad, 1990)}, volume 1510 of {\em
  Lecture Notes in Mathematics}, pages 350--353. Springer, Berlin, 1992.

\bibitem[Mur93]{Murakami1}
J.~Murakami.
\newblock A state model for the multivariable {A}lexander polynomial.
\newblock {\em Pacific Journal of Mathematics}, 157(1):109--135, 1993.

\bibitem[ORS13]{ORS}
P.~Ozsv\'{a}th, J.~Rasmussen, and Z.~Szab\'{o}.
\newblock Odd {K}hovanov homology.
\newblock {\em Algebraic and Geometric Topology}, 13(3):1465--1488, 2013.
\newblock \href{http://arxiv.org/abs/0710.4300}{arXiv:0710.4300}.

\bibitem[OS04]{OSHFK}
P.~Ozsv\'{a}th and Z.~Szab\'{o}.
\newblock Holomorphic disks and knot invariants.
\newblock {\em Advances in Mathematics}, 186(1):58--116, 2004.
\newblock \href{http://arxiv.org/abs/math/0209056}{arXiv:math/0209056}.

\bibitem[Qi14]{QYHopf}
Y.~Qi.
\newblock Hopfological algebra.
\newblock {\em Compositio Mathematica}, 150(1):1--45, 2014.
\newblock \href{http://arxiv.org/abs/1205.1814}{arXiv:1205.1814}.

\bibitem[QS13]{QiSussan}
Y.~Qi and J.~Sussan.
\newblock A categorification of the {B}urau representation at prime roots of
  unity.
\newblock 2013.
\newblock \href{http://arxiv.org/abs/1312.7692}{arXiv:1312.7692}.

\bibitem[Rou08]{Rou2}
R.~Rouquier.
\newblock 2-{K}ac-{M}oody algebras, 2008.
\newblock \href{http://arxiv.org/abs/0812.5023}{arXiv:0812.5023}.

\bibitem[RS93]{RozanskySaleur}
L.~Rozansky and H.~Saleur.
\newblock {$S$ and $T$} matrices for the super {$U(1,1)$} {WZW} model.
  {A}pplication to surgery and 3-manifold invariants based on the {A}lexander
  {C}onway polynomial.
\newblock {\em Nuclear Physics B}, pages 365--423, 1993.
\newblock
  \href{http://arxiv.org/pdf/hep-th/9203069v1.pdf}{arXiv:hep-th/9203069}.

\bibitem[RT90]{RT}
N.~Y. Reshetikhin and V.~G. Turaev.
\newblock Ribbon graphs and their invariants derived from quantum groups.
\newblock {\em Communications in Mathematical Physics}, 127(1):1--26, 1990.

\bibitem[Sch11]{SchPos}
O.~M. Schn\"{u}rer.
\newblock Perfect derived categories of positively graded {DG} algebras.
\newblock {\em Applied Categorical Structures}, 19(5):757--782, 2011.
\newblock \href{http://arxiv.org/abs/0809.4782}{arXiv:0809.4782}.

\bibitem[Vir06]{Viro}
O.~Viro.
\newblock Quantum relatives of the {A}lexander polynomial.
\newblock {\em Algebra i Analiz}, 18(3):63--157, 2006.
\newblock (English translation in \emph{St.~Petersburg Mathematical~Journal},
  18(3): 391-�457, 2007,
  \href{http://arxiv.org/abs/math/0204290v1}{arXiv:math/0204290}).

\bibitem[Wan09]{Wang}
W.~Wang.
\newblock Double affine {H}ecke algebras for the spin symmetric group.
\newblock {\em Mathematics Research Letters}, 16:1071--1085, 2009.
\newblock \href{http://arxiv.org/abs/math/0608074}{arXiv:0608074}.

\bibitem[Web13]{Webster}
B.~Webster.
\newblock Knot invariants and higher representation theory, 2013.
\newblock \href{http://arxiv.org/abs/1309.3796}{arXiv:1309.3796}.

\bibitem[Wit89]{Witten}
E.~Witten.
\newblock Quantum field theory and the {J}ones polynomial.
\newblock {\em Communications in Mathematical Physics}, 121(3):351--399, 1989.

\end{thebibliography}

%

\vspace{0.1in}

\noindent A.P.E.: { \sl \small Department of Mathematics, University of Oregon, Eugene, OR 97403, USA} \newline \noindent {\tt \small email: ellis@uoregon.edu}

\vspace{0.1in}

\noindent Y.Q.: { \sl \small Department of Mathematics, Yale University, New Haven, CT 06511, USA} \newline \noindent {\tt \small email: you.qi@yale.edu}

%
\end{document}